\documentclass[reqno,11pt]{amsart}

\usepackage{a4wide}
\usepackage{color}
\usepackage{mathrsfs}
\usepackage{mathtools}
\usepackage{amsmath}
\usepackage{amssymb}
\usepackage{esint}
\numberwithin{equation}{section}
\usepackage[colorlinks,citecolor=green,linkcolor=red]{hyperref}
\usepackage{enumitem}

\usepackage[latin1]{inputenc}

\newcommand{\nchi}{{\raise.3ex\hbox{\(\chi\)}}}
\newcommand{\N}{\mathbb{N}}
\newcommand{\R}{\mathbb{R}}

\newcommand{\D}{{\rm D}}

\newcommand{\sfd}{{\sf d}}
\renewcommand{\d}{{\mathrm d}}
\newcommand{\e}{{\rm e}}
\newcommand{\X}{{\rm X}}
\newcommand{\Y}{{\rm Y}}

\newcommand{\mm}{\mathfrak{m}}

\newcommand{\Mod}{{\rm Mod}}

\newcommand{\LIP}{{\rm LIP}}
\newcommand{\Lip}{{\rm Lip}}
\newcommand{\lip}{{\rm lip}}

\newcommand{\ppi}{{\mbox{\boldmath\(\pi\)}}}

\newcommand{\sppi}{{\mbox{\scriptsize\boldmath\(\pi\)}}}

\newcommand{\limi}{\varliminf}
\newcommand{\lims}{\varlimsup}

\newcommand{\eps}{\varepsilon}

\newcommand{\distd}{{\rm dist}_{\sfd}}
\newcommand{\AMMod}{{\rm Mod}_{\rm AM}}
\newcommand{\AM}{{\rm AM}}
\def\vint_#1{\mathchoice
	{\mathop{\vrule width 6pt height 3 pt depth -2.5pt
			\kern -8pt \intop}\nolimits_{#1}}%
	{\mathop{\vrule width 5pt height 3 pt depth -2.6pt
			\kern -6pt \intop}\nolimits_{#1}}%
	{\mathop{\vrule width 5pt height 3 pt depth -2.6pt
			\kern -6pt \intop}\nolimits_{#1}}%
	{\mathop{\vrule width 5pt height 3 pt depth -2.6pt
			\kern -6pt \intop}\nolimits_{#1}}}

\newcommand{\fr}{\penalty-20\null\hfill\(\blacksquare\)}

\newtheorem{theorem}{Theorem}[section]
\newtheorem{corollary}[theorem]{Corollary}
\newtheorem{lemma}[theorem]{Lemma}
\newtheorem{proposition}[theorem]{Proposition}
\newtheorem{definition}[theorem]{Definition}
\newtheorem{example}[theorem]{Example}
\newtheorem{remark}[theorem]{Remark}

\title[Equivalence of BV notions]
{On the equivalence of BV notions \\ in metric measure spaces}

\keywords{Functions of bounded variation, Modulus of curves, Metric measure spaces}
\subjclass[2020]{30L99, 26A45, 31E05}

\author[L. Ambrosio]{Luigi Ambrosio}
\address{Scuola Normale Superiore, Piazza dei Cavalieri 7, 56126 Pisa}
\email{luigi.ambrosio@sns.it}

\author[E. Pasqualetto]{Enrico Pasqualetto}
\address{Department of Mathematics and Statistics,
P.O.\ Box 35 (MaD), FI-40014 University of Jyvaskyla}
\email{enrico.e.pasqualetto@jyu.fi}

\author[N. Shanmugalingam]{Nageswari Shanmugalingam}
\address{Department of Mathematical Sciences, P.O.~Box 210025, University of Cincinnati, Cincinnati, OH~45221-0025, U.S.A.}
\email{shanmun@uc.edu}
\begin{document}
\date{\today}

\begin{abstract} The aim of the paper is to compare in detail several notions of $BV$ space of
functions of bounded variation in metric measure spaces $({\rm X},\mathsf d,\mathfrak m)$. Informally,
they can be grouped in two classes, either by a relaxation procedure starting from a class of nicer functions
(and with different notions of pseudo-gradient in the relaxation procedure) or by requiring good
behaviour along a rich class of absolutely continuous curves. In the second approach, richness
can be understood according to the notion of approximation modulus of \cite{Martio} or according to the
notion of test plan introduced in \cite{Inventiones}. Extending \cite{Amb:DiMa:14}, we prove that all these
approaches are isometrically equivalent in any locally complete metric measure space.
\end{abstract}

\maketitle
\tableofcontents
\section{Introduction}

In this paper we study in depth several notions of $BV$ classes in metric measure spaces $(\X,\sfd,\mm)$. 
After the seminal paper \cite{Miranda}, which provided a definition based on the relaxation of the local Lipschitz
constant for Lipschitz functions, in \cite{Amb:DiMa:14} an equivalent notion was provided, based on the behaviour
of the function along almost every curve, in the sense provided by the theory of transport plans introduced in
\cite{Inventiones}.
This equivalence result, based on ideas first developed in the Sobolev setting (see for instance
\cite{Density,HKST}) was established in a very strong sense in complete metric measure spaces, 
independently of any other structural assumption, with exact coincidence of the norms arising from the two
definitions. 

After these papers, more recently other notions of $BV$ classes based on good behaviour along curves
appeared, relying on the notion of AM-modulus, see \cite{Martio,HMMartio-1} and also \cite{DuCa:ErBiq:Ko:Shan:19}, where an equivalence result
with other consolidated definitions has been established under doubling and Poincar\'e assumptions
on the metric measure structure. The results of \cite{DuCa:ErBiq:Ko:Shan:19}, which also provide equivalence between different
notions of Poincar\'e inequalities, are in part reproduced in Section~\ref{sec:doubling}, with the aim to rectify a typo
in the definition of the so-called AM-bounding sequence, see also the discussion after Definition~\ref{defAMbounding} and
Example~\ref{ex:strong_am_bound}.\footnote{A similar issue occurs in \cite[Appendix A]{Nob:Pas:Sch:22}:
due to an imprecise interpretation of the definition of AM-upper bound, the proof of \cite[Theorem A.6]{Nob:Pas:Sch:22}
is incorrect, and it is still unclear whether its statement is in fact true or not.}

The aim of this paper is to improve the result of \cite{DuCa:ErBiq:Ko:Shan:19}, getting an equivalence result between
various (and basically all) definitions independently of doubling and Poincar\'e assumptions, in any locally complete
metric measure space (hence, in any open subset of a complete metric space). Our result, which relies essentially on a refinement and adaptation of the tools
developed in \cite{Amb:DiMa:14}, provides also exact coincidence of the norms arising from the different
definitions. 

First, we discuss informally the sequence of inclusions below, which hold in {\it any} metric measure
space, thanks to a direct comparison of the definitions (see Theorem~\ref{thm:incl_BV_spaces}):
\begin{equation}\label{chain1}
BV^\star(\X)\subseteq BV(\X)\subseteq BV_N(\X)
\end{equation}
\begin{equation}\label{chain2}
BV_N(\X)\subseteq BV_{am}^0(\X)\subseteq BV_{\AM}(\X)\cap BV_{am}^1(\X)
\end{equation}
\begin{equation}\label{chain3}
BV_{\AM}(\X)\cup BV_{am}^1(\X)\subseteq BV_{tp}^\star(\X).
\end{equation}

The space $BV^\star(\X)$ provides the most restrictive notion, as it involves a relaxation procedure applied to
a small class of functions (the Lipschitz functions with bounded support) and the largest pseudo-gradient 
(the asymptotic Lipschitz constant \eqref{asylip}). If we consider, instead, the larger class of locally Lipschitz functions with bounded support, we are led to the larger space $BV(\X)$ or to the even larger space $BV_N(\X)$, if in the approximation
functions in the Newtonian space $N^{1,1}(\X)$ are considered, together with their (minimal) weak upper gradients.
In connection with the invariance of the choice of the pseudo gradient in the approximation procedure,
see also the more general discussion in Appendix~\ref{sec:genrel}, essentially based on Lemma~\ref{lem:lip_a_vs_lip}.

In \eqref{chain2} we can compare $BV_N(\X)$ with notions based on the AM-modulus of curves, denoted in this paper by
${\rm AM}_0$. The space corresponding to this notion of modulus, denoted by
$BV_\AM(\X)$, was first proposed by Martio in~\cite{Martio, Plans_modulus_duality} and subsequently also
considered in~\cite{DuCa:ErBiq:Ko:Shan:19}. 
We discuss three different notions, which can be naturally ordered when different exceptions are considered in the basic inequality
of the theory, namely
$$
|f(\gamma_b)-f(\gamma_a)|\leq\varliminf_k\int_\gamma\rho_k\qquad\text{with $\gamma:[a,b]\to\X$}.
$$
Indeed, we can measure exceptional curves either with the ${\rm AM}_0$-modulus of with the ${\rm AM}_1$-modulus
(see Definition~\ref{AML}), but we can also measure exceptional endpoints either using an $\mm$-negligible set $N\subseteq\X$, or a $\gamma$-dependent null set $N_\gamma$ with $\mathscr{H}^1(\gamma(N_\gamma))=0$, 
see Section~\ref{sec:BVAM} for the formal definitions. 

The final inclusion in \eqref{chain2} and the initial one in \eqref{chain3} are written in this way because,
at this very general level, we have not been able to compare $BV_{\AM}(\X)$ with $BV_{am}^1(\X)$. In any
case both are proved to be always included in the space $BV_{tp}^\star(\X)$, whose definition is very close
to the one proposed in Remark~7.2 of \cite{Amb:DiMa:14} (see the beginning of Section~\ref{sec:tpp} and Appendix~\ref{sec:tppp}
for a more precise discussion). 

Eventually, in Theorem~\ref{thm:equiv_BV_complete}, adapting ideas of \cite{Amb:DiMa:14} recalled in Appendix~\ref{sec:Cheeger1}, we prove that the ``largest''
space $BV_{tp}^\star(\X)$ coincides with the ``smallest'' space $BV^\star(\X)$ in complete metric measure
spaces, so that in the end \textit{all} spaces coincide with equality of the total variation norms. By a localization procedure, this result can be extended to locally complete metric measure spaces,
see Theorem~\ref{thm:equiv_BV_loc-complete}; here we dropped the smaller space $BV^\star(\X)$ because we cannot expect equality with $BV(\X)$ in the locally but not globally complete context, see for instance Example~\ref{ex:different_BV}.
\medskip

We close this introduction with the discussion of three open questions.

\smallskip

$\bullet$ \textbf{Question 1.} We prove in Lemma~\ref{lem:Nages2} (see also Proposition \ref{prop:bdry_reg_tp}) that
$BV$ functions have {\it essential} bounded variation along ${\rm AM}_0$-almost every curve.
A more difficult question would be to prove the existence of a {\it precise} representative $f^*$ in the Lebesgue equivalence class with the
property that $f^*\circ\gamma$ has {\it pointwise} finite variation 
along ${\rm AM}_0$-almost every curve, as it happens in the Sobolev theory
(see \cite{HKST} for instance), see the discussion in Subsection~\ref{subsec:BV-real},
where the essential variation of $f$ is denoted ${\rm eV}(f)$ while the pointwise
finite variation is denoted ${\rm pV}(f)$. This problem seems to be open even if the doubling and Poincar\'e assumptions are added.
\smallskip

$\bullet$ \textbf{Question 2.} Do we have an example of metric measure space where \(BV(\X)\neq BV_{tp}^\star(\X)\)?
In view of Theorem~\ref{thm:equiv_BV_loc-complete}, such a space cannot be locally complete.

\smallskip

$\bullet$ \textbf{Question 3.} According to the chosen axiomatisation in this note, the reference measure \(\mm\) of a metric measure space is assumed to be Radon
(and also test plans are assumed to be Radon). However, since (in general) we are not assuming separability nor completeness of the metric space,
it is not clear whether the total variation measure \(|Df|\) of a BV function, though $\sigma$-additive on Borel sets, is Radon. 
Are there examples where \(|Df|\) is not Radon?
\subsubsection*{Acknowledgements}
We thank Francesco Nobili and Timo Schultz for the useful discussions on the topics of this paper.
L.A.'s work is partially supported by a grant of the Balzan Foundation. E.P.'s work is partially supported by
the Research Council of Finland, grant 362898. N.S.'s work is partially supported by a grant from the National
Science Foundation (USA), DMS~\#2348748.
\section{Preliminaries}
Let us fix general notations and conventions, which will be used throughout the whole paper. Given two sets \(E\subseteq\X\),
we denote by \(\nchi_E\colon\X\to\{0,1\}\) the characteristic function of \(E\). Given a map \(\varphi\colon\X\to\Y\)
between two sets \(\X\), \(\Y\) and a subset \(E\subseteq\X\), we denote by \(\varphi_{|E}\colon E\to\Y\) the restriction of
\(\varphi\) to \(E\). The one-dimensional Lebesgue measure on a Euclidean interval
will be denoted by \(\mathscr L^1\) and the one-dimensional
Hausdorff measure on a metric space $(\X, \sfd)$ is denoted by \(\mathscr H^1\). Moreover, if $A,B\subseteq\X$
are non-empty sets, then
the distance $\distd(A,B)$ between the two sets is the number
\[
\distd(A,B):=\inf\{\sfd(x,y)\, :\, x\in A,\, y\in B\}.
\]
\subsection{Functions of bounded variation on open intervals}\label{subsec:BV-real}
While some classical textbooks on functions of bounded variation on an interval usually look at closed intervals
(see for example~\cite[Section~6.3]{RoyFitz}), 
in this section we follow the text~\cite[Section~3.2]{Amb:Fus:Pal:00} and discuss functions of bounded variation on an open interval.
For $a,b\in[-\infty,\infty]$ with $a<b$, the pointwise variation (also called total variation in some classical textbooks) 
of a function $f:(a,b)\to\R$ is the number
\[
{\rm pV}(f,(a,b))=\sup\bigg\lbrace \sum_{k=1}^{n-1}|f(x_{k+1})-f(x_k)|\, :\, n\ge 2\text{ and }a<x_1<\cdots<x_n<b\bigg\rbrace.
\]
A function of bounded variation on $(a,b)$ can be extended to the closure (in $\R$) of the interval $(a,b)$ as a function of bounded
variation in the classical sense.
The \emph{essential variation} ${\rm eV}(f,(a,b))$ is the number
\[
{\rm eV}(u,(a,b))\coloneqq\inf\{{\rm pV}(v,(a,b))\, :\, v=u\  \  \mathscr L^1{\rm -a.e.~in }\, (a,b)\}.
\]
This notion of essential variation has a natural extension to open subsets of $\R$, for such open sets are countable disjoint unions of open intervals,
and so the essential variation of the function on the open set is the sum of the essential variations on these open intervals.
Given a function \(u\colon(a,b)\to\R\) on an interval $(a,b)$
and an open set \(\Omega\subseteq(a,b)\), we denote by \({\rm eV}(u,\Omega)\) the \emph{essential variation}
of \(u\) in \(\Omega\). When \({\rm eV}(u,(a,b))<\infty\), the function $u$ belongs to
$L^\infty(a,b)$ and the set-function
\(\Omega\mapsto{\rm eV}(u,\Omega)\), defined on open subsets $\Omega$ of $(a,b)$,
can be uniquely extended to a finite Borel measure \({\rm eV}(u,\cdot)\) on \((a,b)\). This Borel measure 
coincides with $|Du|$, the total variation of the distributional derivative of $u$ (cf.\ \cite[Section~3.2]{Amb:Fus:Pal:00}).
\subsection{Lipschitz maps}
Let \((\X,\sfd_\X)\), \((\Y,\sfd_\Y)\) be given metric spaces. We denote by
\[
B_r(x)\coloneqq\{y\in\X\;|\;\sfd_\X(x,y)<r\},\qquad\bar B_r(x)\coloneqq\{y\in\X\;|\;\sfd_\X(x,y)\leq r\}
\]
the \emph{open ball} and the \emph{closed ball} with
center \(x\in\X\) and radius \(r>0\), respectively
(occasionally denoted also by $B(x,r)$ and $\bar B(x,r)$). We denote by \(C(\X;\Y)\) the space
of continuous maps from \(\X\) to \(\Y\), and by \(C_b(\X;\Y)\) its subspace consisting of those maps \(\varphi\in C(\X;\Y)\) whose image
\(\varphi(\X)\) is a bounded subset of \(\Y\). For any \(\varphi\in C(\X;\Y)\), we denote by \(\Lip(\varphi)\in[0,\infty]\) its \emph{Lipschitz constant}.
Recall that \(\varphi\) is a Lipschitz map if and only if \(\Lip(\varphi)<\infty\). Moreover, we say that a map \(\varphi\in C(\X;\Y)\) is
\emph{locally Lipschitz continuous} if 
for every \(x\in\X\) there exists \(r_x>0\) such that \(\varphi_{|B_{r_x}(x)}\) is Lipschitz continuous. We define
\begin{align*}
\LIP_{loc}(\X;\Y)&\coloneqq\big\{\varphi\in C(\X;\Y)\;\big|\;\varphi\text{ is locally Lipschitz}\big\},\\
\LIP_{b,loc}(\X;\Y)&\coloneqq C_b(\X;\Y)\cap\LIP_{loc}(\X;\Y),\\
\LIP(\X;\Y)&\coloneqq\big\{\varphi\in C(\X;\Y)\;\big|\; \Lip(\varphi)<\infty\big\},\\
\LIP_b(\X;\Y)&\coloneqq C_b(\X;\Y)\cap\LIP(\X;\Y).
\end{align*}
In the specific case where \((\Y,\sfd_\Y)\) is the real line \(\R\) together with the Euclidean distance, we write \(C(\X)\), \(C_b(\X)\), \(\LIP_{loc}(\X)\), \(\LIP_{b,loc}(\X)\),
\(\LIP(\X)\) and \(\LIP_b(\X)\) instead. Recall that \(C_b(\X)\) is a Banach space with respect to the supremum norm \(\|f\|_{C_b(\X)}\coloneqq\sup_\X|f|\).
The \emph{support} \({\rm spt}(f)\) of a function \(f\in C(\X)\) is defined as the closure of \(\{f\neq 0\}\) in \(\X\). We then define
\[
C_{bs}(\X)\coloneqq\big\{f\in C(\X)\;\big|\;{\rm spt}(f)\text{ is bounded}\big\},\quad\LIP_{bs}(\X)\coloneqq C_{bs}(\X)\cap\LIP(\X)\subseteq\LIP_b(\X).
\]
Given any \(f\in\LIP_{loc}(\X)\), we define the \emph{local Lipschitz constant} \(\lip(f)\colon\X\to[0,\infty)\) of \(f\) as
\begin{equation}\label{eq:def_slope}
\lip(f)(x)\coloneqq\lims_{y\to x}\frac{|f(y)-f(x)|}{\sfd(y,x)}\quad\text{ for every accumulation point }x\in\X
\end{equation}
and \(\lip(f)(x)\coloneqq 0\) for every isolated point \(x\in\X\). The function \(\lip(f)\) is Borel, see for 
instance~\cite[Lemma~6.2.5]{HKST}.
We define its \emph{asymptotic Lipschitz constant} \(\lip_a(f)\colon\X\to[0,\infty)\) as
\begin{equation}\label{asylip}
\lip_a(f)(x)\coloneqq\inf_{r>0} \, \sup_{z,w\in B_r(x); z\ne w}\ \frac{|f(z)-f(w)|}{d(z,w)}
\quad\text{ for every }x\in\X.
\end{equation}
The function \(\lip_a(f)\) is upper semicontinuous, thus in particular it is a Borel function.
The asymptotic Lipschitz constant satisfies several elementary calculus rules, among which the following Leibniz-type inequality:
for any \(f,g\in\LIP_{loc}(\X)\), it holds that \(fg\in\LIP_{loc}(\X)\) and
\begin{equation}\label{eq:Leibniz_lip_a}
\lip_a(fg)\leq|f|\lip_a(g)+|g|\lip_a(f).
\end{equation}

A Lipschitz function on a subset of a metric space can be extended to be a Lipschitz function on the entire metric space
without increasing the Lipschitz constant of the function; this result is originally from the work of McShane~\cite{McShane}, and a simple
truncation also ensures that such an extension has the same maximum and minimum values on the entire metric space as the original
function does on its domain of definition. However, such an extension may not preserve the asymptotic Lipschitz constant even on
the domain of definition of the original function. Indeed,~\cite{DiMa:Gig:Pra:21} gives an example which demonstrates the possibility that
such an extension cannot happen.
The following relaxation allows us to preserve the asymptotic Lipschitz constant at the 
expense of slightly increasing the global Lipschitz constant.
\begin{theorem}[Di Marino--Gigli--Pratelli \cite{DiMa:Gig:Pra:21}]\label{thm:DiMa_Gig_Pra}
Let \((\X,\sfd)\) be a metric space and \(E\subseteq\X\) an arbitrary set. Fix any \(f\in\LIP_b(E)\) and \(\varepsilon>0\).
Then there exists an extension \(\bar f\in\LIP_b(\X)\) of the function \(f\) such that \(\inf_E f\leq\bar f\leq\sup_E f\)
on \(\X\), as well as \(\Lip(\bar f)\leq\Lip(f)+\varepsilon\) and
\[
\lip_a(\bar f)(x)=\lip_a(f)(x)\quad\text{ for every }x\in E.
\]
If in addition \({\rm spt}(f)\) is bounded, then \(\bar f\) can be also chosen so that \({\rm spt}(\bar f)\) is bounded.
\end{theorem}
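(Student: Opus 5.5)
The plan is a Whitney-type construction: start from the McShane extension and correct it near \(E\), at scales comparable to the distance from \(E\), by convex combinations of local McShane extensions of \(f\) restricted to small pieces of \(E\). Write \(L\coloneqq\Lip(f)\), \(m\coloneqq\inf_E f\), \(M\coloneqq\sup_E f\), and let \(F\) be the truncated McShane extension \(F=\min\{M,\max\{m,\inf_{y\in E}(f(y)+L\sfd(\cdot,y))\}\}\). We have \(\Lip(F)\le L\), but \(\lip_a(F)(x)\) for \(x\in E\) can exceed \(\lip_a(f)(x)\), because near \(x\) the value of \(F\) is influenced by points of \(E\) far from \(x\).

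\textbf{Step 1 (local extensions).} For \(x\in E\) and \(r>0\), set \(L_{x,r}\coloneqq\Lip(f_{|E\cap B_r(x)})\), so that \(\lip_a(f)(x)=\inf_r L_{x,r}\). Let \(F_{x,r}\) be the truncated McShane extension of \(f_{|E\cap B_{r}(x)}\), with constant \(L_{x,r}\). The aim is an extension that, in a neighbourhood of \(x\) of size \(\sim r\), coincides with a combination of such \(F_{x',r'}\) with \(r'\lesssim r\). Then the Lipschitz constant near \(x\) is controlled by \(\sup_{x'\in B_{Cr}(x)\cap E}L_{x',r'}\le L_{x,(C+1)r}\), which tends to \(\lip_a(f)(x)\). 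We also need \(\bar f=f\) on \(E\) and \(\lip_a(\bar f)\ge\lip_a(f)\) on \(E\); the latter is automatic because \(\bar f\) extends \(f\).

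\textbf{Step 2 (dyadic gluing).} Choose scales \(r_k=2^{-k}\). On each annular region \(A_k=\{2^{-k-1}<\distd(\cdot,E)\le2^{-k}\}\), take a partition of unity \(\{\psi_{k,i}\}\) subordinate to balls of radius \(\sim 2^{-k}\) centred at points \(x_{k,i}\in E\), chosen as a maximal \(2^{-k}\)-separated net. Each \(\psi_{k,i}\) is Lipschitz with constant \(\lesssim 2^{k}\). Then define
\[
\bar f\coloneqq\sum_{k,i}\psi_{k,i}\,F_{x_{k,i},C2^{-k}}\ \text{ off }E,\qquad \bar f\coloneqq f\text{ on }E.
\]
The bound \(m\le\bar f\le M\) is preserved because the combination is convex. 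Estimating \(|\bar f(z)-\bar f(w)|\) uses two ingredients.
\begin{enumerate}
\item The first is the Lipschitz bound of each \(F_{x_{k,i},\cdot}\).
\item The second is the discrepancy term \(\sum|\nabla\psi|\,|F_{x_{k,i}}-F_{x_{k',j}}|\). Since two overlapping local extensions agree on common pieces of \(E\) and each is \(L\)-Lipschitz, this difference is \(\lesssim L\cdot\distd(z,E)\), which only gives an \(O(L)\) error, not an \(\varepsilon\) one.
\end{enumerate}

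\textbf{Main obstacle.} This gluing error is the real difficulty. The naive partition of unity produces constants like \(C\cdot L\), whereas the statement requires the global constant \(L+\varepsilon\) and exact equality of \(\lip_a\) on \(E\), in a general metric space with no doubling. To overcome it I would first replace the partition of unity by slowly varying, logarithmic-scale cutoffs. The interpolation between scale \(k\) and scale \(k+N\) is spread over \(N\) dyadic layers, so the gradient of the cutoff is \(\lesssim 2^{k}/N\), and the discrepancy contributes only \(\lesssim L/N<\varepsilon\). Second, I would use inf/sup-convolution (McShane-type) formulas instead of sums. This avoids any bounded-overlap requirement: for instance,
\[
\bar f(z)=\inf_{x\in E}\big(f(x)+\Phi(x,z)\big),
\]
where \(\Phi(x,z)\) behaves like \(L_{x,\rho}\,\sfd(x,z)\) when \(\sfd(x,z)\lesssim\rho\) and like \((L+\varepsilon)\sfd(x,z)\) otherwise, with \(\rho\) depending on \(\distd(z,E)\). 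I would then check three things: that \(\Phi\) can be chosen so that \(\bar f\) is \((L+\varepsilon)\)-Lipschitz (the slack \(\varepsilon\) absorbs the switching between regimes), that \(\bar f=f\) on \(E\), and that near \(x\in E\) only points of \(E\) within distance \(O(r)\) are active, giving \(\lip_a(\bar f)(x)\le L_{x,Cr}\). Finally I would truncate between \(m\) and \(M\). For bounded support, I would multiply by a Lipschitz cutoff equal to \(1\) on a large neighbourhood of \({\rm spt}(f)\); this is admissible when the cutoff decays slowly over distance \(\gg \sup|f|/\varepsilon\), using \eqref{eq:Leibniz_lip_a}.
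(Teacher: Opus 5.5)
The paper gives no proof of this statement. It is quoted from \cite{DiMa:Gig:Pra:21}, so your proposal has to stand on its own. As it stands it has a genuine gap: the construction that would prove the theorem is never given.

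Your Steps~1--2 fail, as you note yourself, and the logarithmic cutoffs do not rescue them. They only smooth the passage between scales. At a fixed scale $h=\distd(z,E)$, the spatial partition of unity still has gradients of order $1/h$ multiplying discrepancies of order $Lh$ between neighbouring local extensions, and without doubling the overlap is unbounded, so the error stays of order $L$.

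Everything therefore rests on the inf-convolution. There, $\Phi$ is never defined, and the three properties you ``would check'' are the entire content of the theorem. Moreover, the mechanism you describe does not work as stated. Tying the switching radius to $\rho(\distd(z,E))$ makes each competitor $z\mapsto f(x)+\Phi(x,z)$ depend on $z$ through $\rho$. Crossing the regime boundary changes the slope by $L+\varepsilon-L_{x,\rho}$, which is of order $L$ exactly where you want a gain; in addition, $\rho\mapsto L_{x,\rho}$ can jump. So nothing supports the claim that $\varepsilon$ absorbs the switching.

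Taken literally (slope $L_{x,\rho}$ whenever $\sfd(x,z)\le\rho$), the recipe already fails with two points. Suppose $\rho(a)\ge a$ for some $a>0$; otherwise the near regime contains no point of $E$. Let $c\coloneqq\max\{2a,\rho(a)\}$, $E\coloneqq\{0,c\}\subseteq\R$, $f(0)\coloneqq Lc$, $f(c)\coloneqq 0$ (so $\Lip(f)=L$), and $z\coloneqq a$. Then $\distd(z,E)=a$, $\sfd(c,z)=c-a\le\rho(a)$ and $E\cap B_{\rho(a)}(c)=\{c\}$. Hence $L_{c,\rho(a)}=0$ and $\bar f(a)\le f(c)=0$, while $\bar f(0)=Lc\ge 2La$, so $\Lip(\bar f)\ge 2L$.

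What is missing is a profile whose scale is attached to $\sfd(x,z)$ rather than to $\distd(z,E)$. It must follow the local Lipschitz constant at \emph{every} scale, with a slack that is positive at every scale but vanishes as the scale goes to zero. For instance, take $\lambda_x(r)\coloneqq\Lip(f_{|E\cap B_r(x)})$, $\phi(t)\coloneqq Kt\log^2(3+1/t)$ and $\kappa(t)\coloneqq\varepsilon/\log(3+1/t)$, with $K\ge\max\{2,8L/\varepsilon\}$, and set
\[
\Psi_x(s)\coloneqq\int_0^s\min\big\{\lambda_x(\phi(t))+\kappa(t),\,L+\varepsilon\big\}\,\d t,\qquad\bar f(z)\coloneqq\min\Big\{\sup_E f,\ \inf_{x\in E}\big(f(x)+\Psi_x(\sfd(x,z))\big)\Big\}.
\]
Then:
\begin{enumerate}
\item Every competitor is $(L+\varepsilon)$-Lipschitz, because the profile depends on $z$ only through $\sfd(x,z)$.
\item A short computation gives $\Psi_x(s)\ge\Lip(f_{|E\cap\bar B_s(x)})\,s+m(s)$ with $m(s)\coloneqq\varepsilon s/(4\log(3+2/s))$, whence $\bar f=f$ on $E$.
\item The positive margin $m$ forces $\eta$-minimisers (with $\eta\le(L+\varepsilon)\delta$) for points of $B_\delta(x_0)$ to lie within $\theta(\delta)\coloneqq\sup\{D:\,m(D)\le 3(L+\varepsilon)\delta\}$ of $x_0$, and $\theta(\delta)\to0$.
\item Consequently $\Lip(\bar f_{|B_\delta(x_0)})\le\lambda_{x_0}(\phi(\sigma)+\theta(\delta))+\kappa(\sigma)$ with $\sigma\coloneqq\delta+\theta(\delta)$. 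Letting $\delta\to0$ gives $\lip_a(\bar f)(x_0)\le\lip_a(f)(x_0)$.
\end{enumerate}
Your cutoff argument for the bounded-support clause is fine.
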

\begin{proposition}[Gutev {\cite[Proposition 4.4]{Gut:20}}]\label{prop:Gutev}
Let \((\X,\sfd)\) be a metric space and \(C\subseteq\X\) a closed set. Fix any \(f\in\LIP_{b,loc}(C)\). Then \(f\) can be extended
to a function \(\bar f\in\LIP_{b,loc}(\X)\).
\end{proposition}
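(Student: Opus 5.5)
Since \(f\in\LIP_{b,loc}(C)\), for every \(x\in C\) there is \(r_x>0\) such that \(f\) is Lipschitz on \(C\cap B_{r_x}(x)\). Set \(M\coloneqq\sup_C|f|<\infty\). The plan is to build \(\bar f\) separately near \(C\) and far from \(C\), gluing the pieces with a locally finite Lipschitz partition of unity; the only genuinely delicate region is near the boundary of \(C\).

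\textbf{Step 1: the open set \(U\) where the local extensions live.} Let \(U\coloneqq\bigcup_{x\in C}B_{r_x/2}(x)\), an open set containing \(C\). For each \(x\in C\), apply McShane's extension (truncated at levels \(\pm M\)) to \(f\) restricted to \(C\cap B_{r_x}(x)\). This gives \(g_x\in\LIP_b(\X)\) with \(|g_x|\le M\) and \(g_x=f\) on \(C\cap B_{r_x}(x)\).

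\textbf{Step 2: gluing the local extensions.} The family \(\{B_{r_x/2}(x)\}_{x\in C}\), together with \(\X\setminus C\), is an open cover of \(\X\). By Stone's theorem, metric spaces are paracompact, and locally finite open refinements of a cover of a metric space admit subordinate \emph{locally Lipschitz} partitions of unity. Concretely, with \(\{V_i\}\) the refinement, one takes
\[
\psi_i\coloneqq\frac{\distd(\cdot,\X\setminus V_i)}{\sum_j\distd(\cdot,\X\setminus V_j)},
\]
which is locally Lipschitz because locally only finitely many terms are nonzero and the denominator is locally bounded below. Choose the refinement so that each \(V_i\) either lies in some \(B_{r_x/2}(x)\) or in \(\X\setminus C\). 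Then define
\[
\bar f\coloneqq\sum_{i\,:\,V_i\subseteq B_{r_{x_i}/2}(x_i)}\psi_i\,g_{x_i}.
\]
Terms with \(V_i\subseteq\X\setminus C\) contribute \(0\).

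\textbf{Step 3: verifying the properties.} Local finiteness makes \(\bar f\) locally Lipschitz, and \(|\bar f|\le M\) since the \(\psi_i\) sum to at most \(1\). For \(y\in C\), only the indices \(i\) of the first type with \(\psi_i(y)>0\) contribute, because \(\psi_i(y)=0\) whenever \(V_i\subseteq\X\setminus C\). For those indices \(y\in V_i\subseteq B_{r_{x_i}/2}(x_i)\), so \(g_{x_i}(y)=f(y)\). Hence
\[
\bar f(y)=f(y)\sum_i\psi_i(y)=f(y).
\]

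\textbf{Main obstacle.} Closedness of \(C\) is exactly what makes \(\X\setminus C\) open and therefore admissible in the cover. The step needing the most care is the existence of a locally finite partition of unity subordinate to an arbitrary open cover of a possibly nonseparable metric space, with the \(\psi_i\) locally Lipschitz rather than merely continuous. I would obtain this from Stone's theorem giving a \(\sigma\)-discrete (hence locally finite) refinement, and then use the distance-function formula above, checking that the denominator is locally bounded away from zero. If that needs a margin, I would replace each \(V_i\) by a shrunk cover \(W_i\) with \(\overline{W_i}\subseteq V_i\), which is available by normality of metric spaces.
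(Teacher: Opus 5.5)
Your proof is correct. The paper gives no proof of this proposition and simply quotes it from Gutev, so your argument is a genuinely different, self-contained route. It is essentially the gluing scheme the paper itself uses in the proof of Theorem~\ref{thm:loc_Lip_extension}:
\begin{itemize}
\item a locally finite refinement of $\{\X\setminus C\}$ together with the balls on which $f$ is Lipschitz;
\item local extensions on the balls;
\item a subordinate Lipschitz partition of unity.
\end{itemize}
You replace Theorem~\ref{thm:DiMa_Gig_Pra} by McShane's extension, and you set the piece living on $\X\setminus C$ to zero. The key observation is right: every $\psi_i$ attached to some $V_i\subseteq\X\setminus C$ vanishes on $C$. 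Hence at $y\in C$ only values $g_{x_i}(y)=f(y)$ are averaged, and no extension off $C$ is needed at all. What this buys is that the result reduces to paracompactness plus McShane. The same remark applies to the proof of Theorem~\ref{thm:loc_Lip_extension}: there $\tilde f$ could be replaced by a constant in $[\inf_C f,\sup_C f]$, because ${\rm spt}(\eta)\subseteq\X\setminus C$ forces $\eta$ to vanish on a neighbourhood of each point of $C$.

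There are two minor inaccuracies in your justification of the partition of unity.
\begin{itemize}
\item $\sigma$-discreteness does not imply local finiteness. Stone's theorem provides a refinement that is both locally finite and $\sigma$-discrete, and you only use local finiteness, i.e.\ paracompactness.
\item The shrinking $\overline{W_i}\subseteq V_i$ is unnecessary. Near any point, the denominator $\sum_j\distd(\cdot,\X\setminus V_j)$ is a finite sum of $1$-Lipschitz functions, and it is strictly positive because the $V_j$ cover $\X$. By continuity it is therefore bounded below on a small ball, and each $\psi_i\in[0,1]$ is Lipschitz there.
\end{itemize}
The degenerate case $V_j=\X$ is handled by using $\min\{1,\distd(\cdot,\X\setminus V_j)\}$ in place of $\distd(\cdot,\X\setminus V_j)$.
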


The closedness assumption in Proposition \ref{prop:Gutev} cannot be dropped (e.g.\ the locally Lipschitz function
\((0,1]\ni t\mapsto\sin(1/t)\in[-1,1]\) cannot be extended to a continuous function on \([0,1]\)).
Combining Theorem \ref{thm:DiMa_Gig_Pra} with Proposition \ref{prop:Gutev}, we obtain the following result.
\begin{theorem}[Global \(\lip_a\)-preserving extensions]\label{thm:loc_Lip_extension}
Let \((\X,\sfd)\) be a metric space and \(C\subseteq\X\) a closed set. Fix any \(f\in\LIP_{b,loc}(C)\).
Then there exists an extension \(\bar f\in\LIP_{b,loc}(\X)\) of the function \(f\) such that \(\inf_C f\leq\bar f\leq\sup_C f\) on \(\X\) and
\[
\lip_a(\bar f)(x)=\lip_a(f)(x)\quad\text{ for every }x\in C.
\]
\end{theorem}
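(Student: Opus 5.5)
Our approach is to combine the two preceding results through a partition of unity subordinate to a cover of a neighbourhood of \(C\) by balls where \(f\) is Lipschitz, with Proposition~\ref{prop:Gutev} handling the region away from \(C\). First apply Proposition~\ref{prop:Gutev} to get some \(g\in\LIP_{b,loc}(\X)\) extending \(f\); truncating, we may assume \(\inf_C f\le g\le\sup_C f\). The defect of \(g\) is that \(\lip_a(g)\) on \(C\) may exceed \(\lip_a(f)\), because \(\lip_a(g)(x)\) for \(x\in C\) also sees points outside \(C\). The fix is to modify \(g\) near \(C\), where \(f\) is locally Lipschitz, using Theorem~\ref{thm:DiMa_Gig_Pra} locally.

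For each \(x\in C\) choose \(r_x>0\) with \(f\) Lipschitz on \(C\cap B_{2r_x}(x)\). Apply Theorem~\ref{thm:DiMa_Gig_Pra} to \(f_{|C\cap B_{2r_x}(x)}\) (with \(E=C\cap B_{2r_x}(x)\)) to get \(h_x\in\LIP_b(\X)\) extending it, with the same bounds and \(\lip_a(h_x)=\lip_a(f_{|E})\) on \(E\). On \(E\cap B_{r_x}(x)\), the set \(E\) is relatively open in \(C\), so \(\lip_a(f_{|E})=\lip_a(f)\) there. Let \(U=\bigcup_x B_{r_x}(x)\), an open set containing \(C\). Metric spaces are paracompact (Stone), so there is a locally Lipschitz partition of unity \(\{\psi_i\}\) on \(\X\) subordinate to the open cover \(\{B_{r_x}(x)\}_{x\in C}\cup\{\X\setminus C\}\). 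For instance take \(\psi_i=\phi_i/\sum_j\phi_j\) with \(\phi_i\) distance functions to the complements of a locally finite refinement, which are locally Lipschitz and locally finitely many. Each \(\psi_i\) is supported either in some \(B_{r_{x_i}}(x_i)\), where we set \(u_i=h_{x_i}\), or in \(\X\setminus C\), where we set \(u_i=g\). Define \(\bar f=\sum_i\psi_iu_i\).

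Then \(\bar f\) is locally a finite sum of locally Lipschitz functions, so it is locally Lipschitz. Being a convex combination of functions with values in \([\inf_C f,\sup_C f]\), it is bounded with the same bounds. It also extends \(f\), since every \(u_i\) whose \(\psi_i\) is nonzero at a point of \(C\) equals \(f\) there.

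The key step, which we expect to be the main obstacle, is the identity for \(\lip_a\) at \(x_0\in C\). Near \(x_0\), write \(\bar f-h=\sum_i\psi_i(u_i-h)\), where \(h\) is one fixed active \(h_{x_i}\); this uses \(\sum\psi_i=1\). By the Leibniz rule \eqref{eq:Leibniz_lip_a},
\[
\lip_a(\bar f)(x_0)\le\lip_a(h)(x_0)+\sum_i\big(|u_i-h|(x_0)\lip_a(\psi_i)(x_0)+\psi_i(x_0)\lip_a(u_i-h)(x_0)\big).
\]
This is useless as it stands, since \(\lip_a(u_i-h)\) need not vanish. We therefore argue more carefully. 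We have \(|u_i-h|(x_0)=0\), and because \(\lip_a\) is subadditive we get \(\lip_a(\bar f)(x_0)\le\sum_i\psi_i(x_0)\lip_a(u_i)(x_0)+\sum_i|u_i-h|(x_0)\lip_a(\psi_i)(x_0)\). More precisely, expand \(\bar f(z)-\bar f(w)=\sum_i\psi_i(z)(u_i(z)-u_i(w))+\sum_i(\psi_i(z)-\psi_i(w))(u_i(w)-h(w))\). The second sum is \(o(\sfd(z,w))\) as \(z,w\to x_0\), by the continuity of \(u_i-h\) (which vanishes at \(x_0\)) together with the local Lipschitz bound on \(\psi_i\). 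The first sum gives \(\limsup\le\sum_i\psi_i(x_0)\lip_a(u_i)(x_0)=\lip_a(f)(x_0)\). Finally, we must ensure that \(u_i=g\) is never active at \(x_0\), which holds because the supports of those \(\psi_i\) lie in \(\X\setminus C\), but need not be closed away from \(C\). We choose the refinement so that each \(\psi_i\) with \(u_i=g\) vanishes on a neighbourhood of \(C\); for example, use the cover \(\{B_{r_x}(x)\}\cup\{\X\setminus\overline{V}\}\) with \(C\subseteq V\), \(\overline V\subseteq U\) (by normality).

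The reverse inequality \(\lip_a(\bar f)(x_0)\ge\lip_a(f)(x_0)\) is immediate, because \(\bar f_{|C}=f\) and restricting can only decrease \(\lip_a\).
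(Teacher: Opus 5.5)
Your proposal is correct and follows essentially the paper's route: Gutev's extension for the region away from \(C\), Di Marino--Gigli--Pratelli extensions on balls centred at points of \(C\), a locally Lipschitz partition of unity subordinate to these balls and to an open set disjoint from \(C\), and a first-order cancellation using \(\sum_i\psi_i=1\) and \(u_i(x_0)=f(x_0)\) to remove the \(\lip_a(\psi_i)\) terms. The paper does the last step more directly by applying the Leibniz rule to \(\eta_i(\bar f_i-f(x))\); subtracting the constant \(f(x_0)\) rather than a function \(h\) is exactly what makes your first display work. It also uses closed supports \({\rm spt}(\eta_i)\subseteq B_i\) where you use the \(2r_x\) margin to guarantee \(u_i(x_0)=f(x_0)\) for indices whose supports only touch \(x_0\), and your normality step is harmless but unnecessary, since \(g(x_0)=f(x_0)\) anyway.
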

\begin{proof}
If \(C=\X\), there is nothing to prove. Thus, assume that \(U\coloneqq\X\setminus C\neq\varnothing\), and let \(\bar x\in U\).
For any \(x\in C\), take some \(r_x>0\) for which \(f_{|B_r(x)\cap C}\) is Lipschitz continuous and \(\bar x\notin B_{r_x}(x)\). Each metric
space is paracompact~\cite{Sto:48}, and \(\{U\}\cup\{B_{r_x}(x):x\in C\}\) is an open cover of \(\X\).
Thus we can find a family of points \(\{x_i:i\in I\}\subseteq C\) such that \(\{U\}\cup\{B_i:i\in I\}\) is a
locally-finite cover of \(\X\), where we set \(B_i\coloneqq B_{r_{x_i}}(x_i)\) for brevity; note that the extracted subcover must
contain \(U\), as \(\bar x\notin\bigcup_{x\in C}B_{r_x}(x)\). However, the balls $B_i$ together forms an open cover of the closed
set $C$. The plan is to first extend $f$ to $\bigcup_{i\in I}B_i$ using Theorem~\ref{thm:DiMa_Gig_Pra} on each $B_i$, and then 
to modify using
a partition of unity subordinate to the cover $\{B_i\}_{i\in I}$. Finally, we use Proposition~\ref{prop:Gutev} to obtain a global extension, and
modify this extension using the function from the previous step so that the asymptotic Lipschitz constant function on $C$ is not modified from the
previous step.

Given any \(i\in I\), from Theorem~\ref{thm:DiMa_Gig_Pra} there
exists an extension \(\bar f_i\in\LIP_b(\X)\) of \(f_{|B_i\cap C}\) such that \(\lip_a(\bar f_i)(x)=\lip_a(f_{|B_i\cap C})(x)\) for
every \(x\in B_i\cap C\) and \(\inf_{B_i\cap C}f\leq\bar f_i\leq\sup_{B_i\cap C}f\) on \(\X\). Moreover, we know from Proposition \ref{prop:Gutev}
that there exists an extension \(\tilde f\in\LIP_{loc}(\X)\) of \(f_{|C}\). Up to a truncation, we can also assume that \(\inf_C f\leq\tilde f\leq\sup_C f\) on \(\X\).
Now, fix a Lipschitz partition of unity \(\{\eta\}\cup\{\eta_i:i\in I\}\subseteq\LIP(\X;[0,1])\) subordinated to \(\{U\}\cup\{B_i:i\in I\}\), i.e.\ \({\rm spt}(\eta_i)\subseteq B_i\)
for every \(i\in I\), \({\rm spt}(\eta)\subseteq U\) and \(\eta(x)+\sum_{i\in I}\eta_i(x)=1\) for all \(x\in\X\);
see e.g.\ \cite[Theorem 2.6.5]{Cob:Mic:Nic:19}. Let us then define
\[
\bar f\coloneqq\eta\tilde f+\sum_{i\in I}\eta_i\bar f_i\in\LIP_{loc}(\X).
\]
Note that \(\inf_C f\leq\bar f\leq\sup_C f\) on \(\X\) and \(\bar f|_C=f\). Given any \(x\in C\), there exists
a finite set \(F_x\subseteq I\) such that some neighbourhood of \(x\) does not intersect \(\bigcup_{i\in I\setminus F_x}B_i\),
thus in particular we have \(\eta+\sum_{i\in F_x}\eta_i=1\) and \(\bar f=\eta\tilde f+\sum_{i\in F_x}\eta_i\bar f_i\) on some
neighbourhood of \(x\). Therefore,
\begin{align*}
&\lip_a(\bar f)(x)=\lip_a\bigg(\eta\tilde f+\sum_{i\in F_x}\eta_i\bar f_i\bigg)(x)
=\lip_a\bigg(\eta(\tilde f-f(x))+\sum_{i\in F_x}\eta_i(\bar f_i-f(x))\bigg)(x)\\
\leq\,&\lip_a\big(\eta(\tilde f-f(x))\big)(x)+\sum_{i\in F_x}\lip_a\big(\eta_i(\bar f_i-f(x))\big)(x)\\
\leq\,&\eta(x)\lip_a(\tilde f)(x)+|\tilde f(x)-f(x)|\lip_a(\eta)(x)+\sum_{i\in F_x}\big(\eta_i(x)\lip_a(\bar f_i)(x)+|\bar f_i(x)-f(x)|\lip_a(\eta_i)(x)\big)\\
=\,&\sum_{i\in F_x}\eta_i(x)\lip_a(\bar f_i)(x)=\sum_{i\in F_x}\eta_i(x)\lip_a(f_{|B_i\cap C})(x)\leq\lip_a(f)(x).
\end{align*}
Since \(\bar f\) extends \(f\), we also have \(\lip_a(f)(x)\leq\lip_a(\bar f)(x)\), thus the statement is proved.
\end{proof}
\subsection{Metric measure spaces}
Given a measure space \((\X,\Sigma,\mu)\) and an exponent \(p\in[1,\infty]\), we denote by \(\mathcal L^p(\X)\) (or \(\mathcal L^p(\mu)\))
the space of all Borel functions \(f\colon\X\to\R\) that are \(p\)-integrable with respect to \(\mu\) (to be interpreted as `bounded' for \(p=\infty\)).
Recall that \((\mathcal L^p(\X),\|\cdot\|_{\mathcal L^p(\X)})\) is a complete seminormed space, where we define \(\|f\|_{\mathcal L^p(\X)}\coloneqq(\int_{\X}|f|^p\,\d\mu)^{1/p}\)
when \(p<\infty\), or \(\|f\|_{\mathcal L^\infty(\X)}\coloneqq\sup_\X|f|\). Taking the quotient up to \(\mu\)-a.e.\ equality, we obtain
the \emph{\(p\)-Lebesgue space} \(L^p(\X)\) (or \(L^p(\mu)\)), which is Banach with respect to the quotient norm \(\|\cdot\|_{L^p(\X)}\).
We denote by \([f]_\mu\in L^p(\X)\) the equivalence class of a function \(f\in\mathcal L^p(\X)\). 
\medskip

Let \((\X,\sfd)\) be a metric space. We denote by \(\tau(\X)\) the topology induced by \(\sfd\), and by \(\mathscr B(\X)\) the Borel \(\sigma\)-algebra
of \((\X,\sfd)\). Any $\sigma$-additive set-function \(\mu\colon\mathscr B(\X)\to[0,\infty]\) with \(\mu(\varnothing)=0\)
is said to be a \emph{Borel measure} on \(\X\). Given a set \(E\in\mathscr B(\X)\),
we define the restricted measure \(\mu\llcorner E\colon\mathscr B(\X)\to[0,\infty]\) as
\[
(\mu\llcorner E)(F)\coloneqq\mu(E\cap F)\quad\text{ for every }F\in\mathscr B(\X).
\]
We say that \(\mu\) is \emph{locally finite} if for any \(x\in\X\) there exists \(r_x>0\) such that \(\mu(B_{r_x}(x))<\infty\), while it is said to be
\emph{boundedly finite} if \(\mu(B)<\infty\) for every bounded Borel set \(B\subseteq\X\). Recall that \(\mu\) is called a \emph{Radon measure} provided
it satisfies the following properties:
\begin{enumerate}[label=(\roman*)]
\item \(\mu(K)<\infty\) for every compact set \(K\subseteq\X\).
\item \(\mu\) is \emph{inner regular on open sets}, meaning that
\[
\mu(U)=\sup\big\{\mu(K)\;\big|\;K\subseteq\X\text{ compact, }K\subseteq U\big\}\quad\text{ for every }U\in\tau(\X).
\]
\item \(\mu\) is \emph{outer regular (on Borel sets)}, meaning that
\[
\mu(E)=\inf\big\{\mu(U)\;\big|\;U\subseteq\X\text{ open, }E\subseteq U\big\}\quad\text{ for every }E\in\mathscr B(\X).
\]
\end{enumerate}

It is well known that if \((\X,\sfd)\) is a complete separable metric space, then every locally-finite Borel measure on \(\X\) is a Radon measure,
see for instance~\cite[Theorem~3.4.19]{Srivastava}.
If \(\varphi\colon\X\to\Y\) is a Borel map between two metric spaces \((\X,\sfd_\X)\), \((\Y,\sfd_\Y)\) and \(\mu\) is a Borel measure on \(\X\), then we
denote by \(\varphi_\#\mu\) the \emph{pushforward measure} of \(\mu\) under \(\varphi\), which is the Borel measure on \(\Y\) that is defined as
\[
\varphi_\#\mu(E)\coloneqq\mu(\varphi^{-1}(E))\quad\text{ for every }E\in\mathscr B(\Y).
\]
Clearly, if \(\mu\) is a finite measure, then \(\varphi_\#\mu\) is a finite measure
and \(\varphi_\#\mu(\Y)=\mu(\X)\). Moreover, if \(\mu\) is a finite Radon measure and
\(\varphi\) is continuous, then also \(\varphi_\#\mu\) is a Radon measure.
\medskip

In this paper, we adopt the following definition of a metric measure space.
\begin{definition}[Metric measure space]\label{defn:mms}
We say that \((\X,\sfd,\mm)\) is a \emph{metric measure space} provided \((\X,\sfd)\) is a metric space
and \(\mm\) is a (non-negative) \(\sigma\)-finite Radon measure on \(\X\).
\end{definition}

Given a metric measure space \((\X,\sfd,\mm)\) and a Borel set \(E\subseteq\X\), it is easy to check that both
\((E,\sfd_{|E\times E},\mm_{|\mathscr B(E)})\) and \(\X_E\coloneqq(\X,\sfd,\mm\llcorner E)\) are metric measure spaces as well.
\medskip

The \emph{support} of the measure \(\mm\) is defined as
\[
{\rm spt}(\mm)\coloneqq\big\{x\in\X\;\big|\;\mm(B_r(x))>0\text{ for every }r>0\big\}.
\]
In the following result, we collect the main properties of the support \({\rm spt}(\mm)\).
\begin{proposition}\label{prop:main_Radon}
Let \((\X,\sfd)\) be a metric space and \(\mm\) a  Borel measure on \(\X\). Then:
\begin{enumerate}[label=(\roman*)]
\item\label{it:spt_closed} The support \({\rm spt}(\mm)\) is closed.
\item\label{it:spt_separable} If \(\mm\) is \(\sigma\)-finite, then \({\rm spt}(\mm)\) is separable.
\item\label{it:spt_concentr} If \(\mm\) is a Radon measure, then \(\mm\) is concentrated on \({\rm spt}(\mm)\), i.e.\ \(\mm(\X\setminus{\rm spt}(\mm))=0\).
\end{enumerate} 
\end{proposition}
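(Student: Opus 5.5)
Item \ref{it:spt_closed} follows directly from the definition. I will show that the complement \(\X\setminus{\rm spt}(\mm)\) is open. Take \(x\notin{\rm spt}(\mm)\). Then there is \(r>0\) with \(\mm(B_r(x))=0\). For any \(y\in B_r(x)\), the triangle inequality gives \(B_{r-\sfd(x,y)}(y)\subseteq B_r(x)\), so \(\mm(B_{r-\sfd(x,y)}(y))=0\). Hence \(y\notin{\rm spt}(\mm)\), and \(B_r(x)\) lies entirely in the complement.

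For \ref{it:spt_separable}, write \(\X=\bigcup_n E_n\) with \(\mm(E_n)<\infty\), and set \(S\coloneqq{\rm spt}(\mm)\). I argue by contradiction: suppose \(S\) is not separable. A standard fact about metric spaces is that a non-separable metric space contains, for some \(\eps>0\), an uncountable \(\eps\)-separated set. Otherwise, for each \(k\) a maximal \(1/k\)-separated subset would be countable and \(1/k\)-dense, and the union over \(k\) would be a countable dense subset. So take an uncountable family \(\{x_\alpha\}\subseteq S\) with \(\sfd(x_\alpha,x_\beta)\ge\eps\) for \(\alpha\neq\beta\). The balls \(B_{\eps/2}(x_\alpha)\) are then pairwise disjoint, and each has positive measure because \(x_\alpha\in S\).

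By continuity from below, \(\mm(B_{\eps/2}(x_\alpha)\cap F_n)\to\mm(B_{\eps/2}(x_\alpha))>0\), where \(F_n\coloneqq E_1\cup\dots\cup E_n\). Hence for every \(\alpha\) there is some \(n\) with \(\mm(B_{\eps/2}(x_\alpha)\cap F_n)>0\). Since there are uncountably many \(\alpha\) and only countably many \(n\), a single \(n\) works for uncountably many \(\alpha\). Pigeonholing again, there is \(m\) such that \(\mm(B_{\eps/2}(x_\alpha)\cap F_n)>1/m\) for infinitely many \(\alpha\). The balls are disjoint, so summing over these \(\alpha\) contradicts \(\mm(F_n)<\infty\).

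For \ref{it:spt_concentr}, the open set \(U\coloneqq\X\setminus{\rm spt}(\mm)\) is covered by balls \(B_{r_x}(x)\) with \(x\in U\) and \(\mm(B_{r_x}(x))=0\). By inner regularity on open sets, \(\mm(U)=\sup\mm(K)\) over compact \(K\subseteq U\). Each such \(K\) is covered by finitely many of these null balls, so \(\mm(K)=0\). Therefore \(\mm(U)=0\).

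The only delicate point is in \ref{it:spt_separable}: a null union of balls need not be countable in a non-separable space. This is exactly why \ref{it:spt_concentr} uses Radon inner regularity and compactness rather than a countable-union argument.
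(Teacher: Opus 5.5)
Your proof is correct and follows the paper's argument essentially step for step: the same ball-inclusion argument for (i), the same uncountable separated set from maximal separated subsets plus pigeonholing on the lower bound $\mm(E_k\cap B)\geq 1/j$ over disjoint balls for (ii), and the same inner-regularity-plus-finite-subcover argument for (iii). One small slip: your closing remark should point to (iii), not (ii), since the null-ball cover that may be uncountable is the cover of the possibly non-separable open set $\X\setminus{\rm spt}(\mm)$.
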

\begin{proof} \ref{it:spt_closed}
Given any \(x\in\X\setminus{\rm spt}(\mm)\), we have \(\mm(B_r(x))=0\) for some \(r>0\). For any \(y\in B_r(x)\), we have that
\(B_{r-\sfd(x,y)}(y)\subseteq B_r(x)\) and thus \(\mm(B_{r-\sfd(x,y)}(y))\leq\mm(B_r(x))=0\). This shows that \(B_r(x)\subseteq\X\setminus{\rm spt}(\mm)\),
which means that \(\X\setminus{\rm spt}(\mm)\) is open (and thus \({\rm spt}(\mm)\) is closed).\\
\ref{it:spt_separable} We argue by contradiction: assume \({\rm spt}(\mm)\) is not separable. Given any \(n\in\N\),
by using Zorn's lemma we obtain a maximal \(\frac{2}{n}\)-separated subset \(M_n\) of \({\rm spt}(\mm)\). Since \(\bigcup_{n\in\N}M_n\) is dense in \({\rm spt}(\mm)\),
and the latter set is non-separable, we deduce that \(M_{n_0}\) is uncountable for some \(n_0\in\N\). By the \(\sigma\)-finiteness of \(\mm\),
we can find a Borel covering \((E_k)_{k\in\N}\) of \(\X\) such that \(\mm(E_k)<+\infty\) for every \(k\in\N\). Hence, for any \(x\in M_{n_0}\) there exists
an index \(k_x\in\N\) such that \(\mm(E_{k_x}\cap B_{1/n_0}(x))>0\). Let us now define
\[
S_{k,j}\coloneqq\big\{x\in M_{n_0}\;\big|\;k_x=k,\,\mm(E_k\cap B_{1/n_0}(x))\geq 1/j\big\}\quad\text{ for every }k,j\in\N.
\]
Since \(B_{1/n_0}(x)\cap B_{1/n_0}(y)=\varnothing\) if \(x,y\in M_{n_0}\) are distinct, for any \(k,j\in\N\) we can estimate
\[
\frac{{\rm card}(F)}{j}\leq\sum_{x\in F}\mm(E_k\cap B_{1/n_0}(x))\leq\mm(E_k)\quad\text{ for every finite subset }F\text{ of }S_{k,j},
\]
thus \({\rm card}(S_{k,j})\leq j\mm(E_k)\). Then \(M_{n_0}=\bigcup_{k,j\in\N}S_{k,j}\) is countable, leading to a contradiction.\\
\ref{it:spt_concentr} Since \(\X\setminus{\rm spt}(\mm)\) is open and \(\mm\) is inner regular on open sets,
we find a sequence \((K_n)_n\) of compact subsets of \(\X\setminus{\rm spt}(\mm)\) such that \(\mm(\X\setminus{\rm spt}(\mm))=\sup_{n\in\N}\mm(K_n)\).
Fix \(n\in\N\). Then we can find \(x_1,\ldots,x_k\in K_n\) and \(r_1,\ldots,r_k>0\) such that \(\mm(B_{r_i}(x_i))=0\) for every \(i=1,\ldots,k\) and
\(K_n\subseteq\bigcup_{i=1}^k B_{r_i}(x_i)\). It follows that \(\mm(K_n)=0\), thus accordingly \(\mm(\X\setminus{\rm spt}(\mm))=0\).
\end{proof}
\begin{lemma}[Density of Lipschitz functions in \(\mathcal L^p(\X)\)]\label{lem:dens_Lip}
Let \((\X,\sfd,\mm)\) be a metric measure space and \(p\in[1,\infty)\). Then 
\(\LIP_{bs}(\X)\cap\mathcal L^p(\X)\) is dense in \(\mathcal L^p(\X)\).
\end{lemma}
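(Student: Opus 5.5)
The plan is to reduce the claim to approximating characteristic functions of sets of finite measure, and then to approximate those sets by compact sets from inside and open sets from outside. This uses the Radon property of \(\mm\), specifically outer regularity and inner regularity on open sets. Since simple functions built on Borel sets of finite measure are dense in \(\mathcal L^p(\X)\) for \(p<\infty\), and \(\LIP_{bs}(\X)\cap\mathcal L^p(\X)\) is a vector space, it suffices to show the following. For every Borel set \(E\) with \(\mm(E)<\infty\) and every \(\varepsilon>0\) there is \(g\in\LIP_{bs}(\X)\) with \(\|g-\nchi_E\|_{\mathcal L^p}<\varepsilon\). Such a \(g\) is automatically in \(\mathcal L^p\) once it takes values in \([0,1]\) and is supported in a set of finite measure.

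\textbf{Construction of the open and compact sets.} By outer regularity, I will pick an open set \(U\supseteq E\) with \(\mm(U\setminus E)<\varepsilon\), so in particular \(\mm(U)<\infty\). Inner regularity of \(\mm\) on the open set \(U\) then gives a compact \(K\subseteq U\) with \(\mm(U\setminus K)<\varepsilon\). Thus \(\mm(E\,\triangle\,K)\le\mm(U\setminus K)+\mm(U\setminus E)<2\varepsilon\).

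\textbf{Cutoff function.} Since \(K\) is compact and \(\X\setminus U\) is closed and disjoint from \(K\), we have \(\delta:=\distd(K,\X\setminus U)>0\), taking \(\delta=1\) if \(U=\X\). Define
\[
g(x):=\max\Big\{0,\,1-\tfrac{2}{\delta}\,\distd(\{x\},K)\Big\}.
\]
This function is \(\tfrac2\delta\)-Lipschitz, takes values in \([0,1]\), and equals \(1\) on \(K\). Its support is contained in the closed \(\delta/2\)-neighbourhood of \(K\), which is bounded because \(K\) is bounded, and which lies inside \(U\). Hence \(g\in\LIP_{bs}(\X)\), and \(|g-\nchi_E|\le\nchi_{U\setminus K}+\nchi_{E\setminus K}\le 2\,\nchi_{U\setminus K}+\nchi_{U\setminus E}\), which makes \(\|g-\nchi_E\|_{\mathcal L^p}^p\lesssim\varepsilon\). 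When \(K=\varnothing\) we simply take \(g=0\).

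\textbf{Density of simple functions and the main obstacle.} The remaining step is the density of simple functions built on finite-measure sets. This is standard: truncate \(f\) at levels \(\pm n\), restrict it to \(\{|f|>1/n\}\), which has finite measure by Chebyshev's inequality, and use dominated convergence. The only point requiring care, and the one I expect to be the main (mild) obstacle, is that we must not assume separability or completeness of \(\X\). This is exactly why the argument goes through outer regularity and inner regularity on open sets, as guaranteed by the Radon assumption, rather than through a countable dense set. Note that \(\sigma\)-finiteness is not even needed here.
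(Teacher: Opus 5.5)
Your proof is correct, but it takes a different route from the paper's. The paper works with $f$ directly. It isolates the finite-measure set $E=\{\delta\le|f|^p\le\delta^{-1}\}$, which carries most of the $\mathcal L^p$ norm, and fixes a compact $K\subseteq E$. It then invokes the finite-measure density result \cite[Lemma 2.1.27]{Sav:22} to approximate $f_{|K}$ by some $\tilde h\in\LIP_b(K)$, extends $\tilde h$ to $\X$ by McShane, and multiplies by a cut-off equal to $1$ on $K$ and $0$ outside an open set $\Omega\supseteq K$ of nearly the same measure. You instead use density of finite-measure simple functions to reduce to indicators $\nchi_E$. You then approximate each indicator by the explicit cut-off $\max\{0,1-\frac{2}{\delta}\distd(\cdot,K)\}$, using only outer regularity and inner regularity on open sets.

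What your route buys:
\begin{itemize}
\item It is fully self-contained, with no external lemma and no Lipschitz extension.
\item Because you take $K$ inside the open set $U\supseteq E$ rather than inside $E$, you never need inner regularity of $\mm$ on arbitrary finite-measure Borel sets. The paper's choice of a compact $K\subseteq E$ uses this without comment; it does follow from the Radon axioms, but needs a short extra argument.
\end{itemize}
The paper's route is more modular: one compact-set, extension and cut-off step handles a general $f$, and the measure-theoretic approximation is delegated to the cited finite-measure lemma.

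One small slip to fix. Your intermediate bound $|g-\nchi_E|\le\nchi_{U\setminus K}+\nchi_{E\setminus K}$ fails on $K\setminus E$, where $g=1$ and $\nchi_E=0$. The correct pointwise bound is $|g-\nchi_E|\le\nchi_{U\setminus K}+\nchi_{K\setminus E}\le\nchi_{U\setminus K}+\nchi_{U\setminus E}$. Your final estimate $\|g-\nchi_E\|^p_{\mathcal L^p(\X)}\lesssim\varepsilon$ therefore still stands.
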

\begin{proof}
In the case where \(\mm\) is finite, the statement follows from \cite[Lemma 2.1.27]{Sav:22}. The general case can be deduced as follows.
Given any \(f\in\mathcal L^p(\X)\) and \(\varepsilon>0\), we know (by the dominated convergence theorem) that there exists \(\delta\in(0,1)\)
such that the Borel set
\(E\coloneqq\{\delta\leq|f|^p\leq\delta^{-1}\}\) satisfies \(\int_{\X\setminus E}|f|^p\,\d\mm\leq\varepsilon^p\).
Since \(f\geq\delta\) on \(E\), we have \(\mm(E)\leq\frac{1}{\delta}\int|f|^p\,\d\mm<+\infty\), thus \(\mm\llcorner E\) is a finite
Radon measure. Fix a compact set \(K\subseteq E\) such that \(\mm(E\setminus K)\leq\delta\varepsilon^p\). 
By~\cite[Lemma 2.1.27]{Sav:22}, we find a function
\(\tilde h\in\LIP_b(K)\) such that \(\|\tilde h-f_{|K}\|_{\mathcal L^p(K)}\leq\varepsilon\). Now, fix an open set \(\Omega\subseteq\X\) such that \(\mm(\Omega\setminus K)\leq\varepsilon^p/M^p\),
where \(M\coloneqq\max_K|\tilde h|\). 
Such an open set can be found by the fact that $\mm$ is a Radon measure. Indeed, we can find an open set $\Omega\supset E$ such that
$\mm(\Omega\setminus E)<\varepsilon/2$.
Let \(h\colon\X\to[-M,M]\) be a Lipschitz function that extends \(\tilde h\), whose existence is guaranteed by the 
McShane--Whitney extension theorem \cite{McShane}.
Fix a Lipschitz cut-off function \(\eta\in\LIP_{bs}(\X;[0,1])\) such that \(\eta=1\) in \(K\) and \(\eta=0\) in \(\X\setminus\Omega\). Define
\[
g\coloneqq\eta h\in\LIP_{bs}(\X).
\]
Since \(\mm(\Omega)\leq\mm(K)+\varepsilon<\infty\) and \(|g|\leq M\), we have \(g\in\mathcal L^p(\X)\). Moreover, we can estimate
\[\begin{split}
\|g-f\|_{\mathcal L^p(\X)}&\leq\|g_{|K}-f_{|K}\|_{\mathcal L^p(K)}+\|\nchi_{\X\setminus K}g\|_{\mathcal L^p(\X)}+\|\nchi_{\X\setminus K}f\|_{\mathcal L^p(\X)}\\
&\leq\|\tilde h-f_{|K}\|_{\mathcal L^p(K)}+\mm(\Omega\setminus K)^{1/p}M+\|\nchi_{\X\setminus E}f\|_{\mathcal L^p(\X)}+\|\nchi_{E\setminus K}f\|_{\mathcal L^p(\X)}\\
&\leq 3\varepsilon+\frac{1}{\delta^{1/p}}\mm(E\setminus K)^{1/p}\leq 4\varepsilon,
\end{split}\]
which shows that \(\LIP_{bs}(\X)\cap\mathcal L^p(\X)\) is dense in \(\mathcal L^p(\X)\).
\end{proof}
\begin{remark}\label{rmk:good_cover}{\rm
Given a metric measure space \((\X,\sfd,\mm)\), there exists an increasing sequence \((\Omega_n)_n\)
of open sets in \(\X\) such that \({\rm spt}(\mm)\subseteq\bigcup_{n\in\N}\Omega_n\) and
\(\mm(\overline\Omega_n)<\infty\) for all \(n\in\N\). 
The $\sigma$-finiteness of $\mm$ together with the Radon property 
immediately give us the existence of such a sequence of open sets. However, we give an explicit construction
that also helps us in establishing additional properties of the sets $\Omega_n$, see the last paragraph of this remark below.
Indeed, for any \(x\in{\rm spt}(\mm)\) we can take
\(r_x>0\) such that \(\mm(\bar B_{r_x}(x))<\infty\); existence of such $r_x>0$
follows from the outer regularity
of \(\mm\) (as \(\{x\}\) is compact and thus \(\mm(\{x\})<\infty\)). Since \({\rm spt}(\mm)\)
is separable by Proposition \ref{prop:main_Radon}, we can find \((x_n)_n\subseteq{\rm spt}(\mm)\)
such that \({\rm spt}(\mm)\subseteq\bigcup_{n\in\N}B_{r_{x_n}}(x_n)\). Therefore, the sought sequence
\((\Omega_n)_n\) can be taken as \(\Omega_n\coloneqq\bigcup_{i=1}^n B_{r_{x_i}}(x_i)\) for every \(n\in\N\).

If in addition \((\X,\sfd)\) is locally complete, then we can further require that \((\overline\Omega_n,\sfd)\)
is complete for all \(n\in\N\). To this aim, it suffices to choose \(r_x>0\) so that \((\bar B_{r_x}(x),\sfd)\)
is complete.
\fr}\end{remark}
\begin{theorem}\label{thm:cor_of_DeGiorgi-Letta}
Let \((\X,\sfd)\) be a metric space. Assume that \(\phi\colon\tau(\X)\to[0,\infty)\) is a set-function that satisfies the following conditions:
\begin{enumerate}[label=(\roman*)]
\item\label{it:phi_empty} \(\phi(\varnothing)=0\).
\item\label{it:isoton} \(\phi(U)\leq\phi(V)\) for every \(U,V\in\tau(\X)\) with \(U\subseteq V\).
\item\label{it:sigma-subadd} \(\phi\big(\bigcup_{n\in\N}U_n\big)\leq\sum_{n=1}^\infty\phi(U_n)\) 
for every sequence \((U_n)_n\subseteq\tau(\X)\).
\item\label{it:superadd_on_disj} \(\phi(U\cup V)\geq\phi(U)+\phi(V)\) for every \(U,V\in\tau(\X)\) with \(\distd(U,V)>0\).
\end{enumerate}
Then there exists a unique outer regular Borel measure \(\mu\) on \(\X\) that extends \(\phi\).
\end{theorem}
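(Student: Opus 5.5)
We follow the classical De Giorgi--Letta route through Carathéodory's construction. First we extend \(\phi\) to all subsets of \(\X\) by outer regularity. For every \(E\subseteq\X\) we set
\[
\phi^*(E)\coloneqq\inf\big\{\phi(U)\;\big|\;U\in\tau(\X),\,E\subseteq U\big\}.
\]
By \ref{it:isoton}, \(\phi^*=\phi\) on open sets, and by \ref{it:phi_empty}, \(\phi^*(\varnothing)=0\). The set-function \(\phi^*\) is monotone. It is also countably subadditive: given \(E_n\subseteq U_n\) with \(\phi(U_n)\leq\phi^*(E_n)+2^{-n}\varepsilon\), property \ref{it:sigma-subadd} applied to \(\bigcup_n U_n\) gives the claim. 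Hence \(\phi^*\) is an outer measure.

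Second, we show that \(\phi^*\) is a metric outer measure: \(\phi^*(A\cup B)=\phi^*(A)+\phi^*(B)\) whenever \(\distd(A,B)>0\). Let \(\delta\coloneqq\distd(A,B)/3\). Given an open \(W\supseteq A\cup B\), we consider
\[
U\coloneqq W\cap\{\distd(\cdot,A)<\delta\},\qquad V\coloneqq W\cap\{\distd(\cdot,B)<\delta\}.
\]
These are open, contain \(A\) and \(B\) respectively, and satisfy \(\distd(U,V)\geq\delta>0\). By \ref{it:superadd_on_disj} and \ref{it:isoton},
\[
\phi(W)\geq\phi(U\cup V)\geq\phi(U)+\phi(V)\geq\phi^*(A)+\phi^*(B).
\]
Taking the infimum over \(W\) gives the superadditivity; subadditivity is already known. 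By Carathéodory's criterion, every closed set is \(\phi^*\)-measurable, hence every Borel set is. So \(\mu\coloneqq\phi^*|_{\mathscr B(\X)}\) is a Borel measure. It extends \(\phi\) and is outer regular by construction.

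We expect the main obstacle to be this Carathéodory criterion step, because here it must be checked without separability. The standard proof does not use separability, but we would recheck it anyway. Let \(C\) be closed and \(T\) a test set with \(\phi^*(T)<\infty\). Put \(C_n\coloneqq\{\distd(\cdot,C)\geq1/n\}\) and use the rings \(R_k\coloneqq\{1/(k+1)\leq\distd(\cdot,C)<1/k\}\). Metric additivity applied to alternate rings gives \(\sum_k\phi^*(T\cap R_k)\leq2\phi^*(T)<\infty\). Therefore \(\phi^*(T\setminus C)\leq\phi^*(T\cap C_n)+\sum_{k\geq n}\phi^*(T\cap R_k)\), and the tail tends to zero. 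Hence \(\phi^*(T)\geq\phi^*(T\cap C)+\phi^*(T\setminus C)\), which is the required inequality. Finiteness of \(\phi\) ensures that all quantities are finite, so no infinite-case subtlety arises.

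Finally, uniqueness. If \(\nu\) is another outer regular Borel measure with \(\nu=\phi\) on \(\tau(\X)\), then for every Borel set \(E\)
\[
\nu(E)=\inf\{\nu(U):U\supseteq E\text{ open}\}=\inf\{\phi(U):U\supseteq E\text{ open}\}=\mu(E).
\]
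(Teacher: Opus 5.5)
Your proof is correct, and it takes a genuinely different route from the paper's.

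\textbf{The paper's route.} The paper uses the same envelope $\phi^*(E)=\inf\{\phi(U):U\in\tau(\X),\,E\subseteq U\}$. It then proves that $\phi$ is continuous along increasing sequences of open sets. To do this, it decomposes $\bigcup_n U_n$ into an inner set $V_{4j-2}$ and two families of alternate ``rings'' $A_j$, $B_j$ at positive distance from one another. It uses $\sum_j\phi(A_j)\le\phi(\X)<\infty$ to make the tail small. Finally it invokes the De Giorgi--Letta criterion \cite[Theorem~1.53]{Amb:Fus:Pal:00}, with that continuity property, together with (i), (ii), (iv) and finite subadditivity, as the input.

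\textbf{Your route.} You use the fact that $\sigma$-subadditivity on open sets is assumed outright, so $\phi^*$ is an outer measure immediately. Hypothesis (iv) together with monotonicity then makes $\phi^*$ a metric outer measure. Your device of taking $\delta$-neighbourhoods of $A$ and $B$ intersected with $W$ is exactly right. Carathéodory's criterion then finishes the argument.

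\textbf{Comparison.} The ring trick appears in both proofs. You use it once, inside Carathéodory's criterion, instead of using it to establish continuity from below and then appealing to an external theorem. Your argument is self-contained and needs no separability, as you note. It also never uses finiteness of $\phi$: if $\phi^*(T)=\infty$ the Carathéodory inequality is trivial, so it works verbatim for $[0,\infty]$-valued set functions. The paper's continuity step, by contrast, explicitly uses $\phi(\X)<\infty$. In exchange, the paper isolates continuity from below on open sets as a standalone fact and plugs it into a standard reference.

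\textbf{One step to make explicit.} In the Carathéodory step, your final inequality needs $\phi^*(T)\ge\phi^*(T\cap C)+\phi^*(T\cap C_n)$. This is metric additivity, since $\distd(C,C_n)\ge 1/n$. Combined with your tail estimate, it gives the claim as $n\to\infty$.
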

\begin{proof}
Let us define the set-function \(\mu\colon\mathscr B(\X)\to[0,\infty)\) as
\begin{equation}\label{eq:def_ext_phi}
\mu(E)\coloneqq\inf\big\{\phi(U)\;\big|\;U\in\tau(\X),\,E\subseteq U\big\}\quad\text{ for every }E\in\mathscr B(\X).
\end{equation}
Choosing the sequence \(U,V,\varnothing,\varnothing,\ldots\) in \ref{it:sigma-subadd}, we obtain that
\begin{equation}\label{eq:subadd}
\phi(U\cup V)\leq\phi(U)+\phi(V)\quad\text{ for every }U,V\in\tau(\X).
\end{equation}
Next, fix an increasing sequence of sets \((U_n)_n\subseteq\tau(\X)\) and denote \(U\coloneqq\bigcup_{n\in\N}U_n\).
Given \(n\in\N\), consider the open set \(V_n\coloneqq\{x\in\X:\sfd(x,\X\setminus U_n)>1/n\}\subseteq U_n\). 
Note that \(V_n\subseteq V_{n+1}\) and \(\sfd(V_n,\X\setminus V_{n+1})>0\)
for every \(n\in\N\), and \(U=\bigcup_{n\in\N}V_n\). We claim that \(\phi(U)\leq\sup_{n\in\N}\phi(V_n)\), 
so that \(\phi(U)=\lim_n\phi(U_n)\) by \ref{it:isoton}. To see the truth of this claim, we define the
sequences of open sets \((A_j)_j\) and \((B_j)_j\) as
\[
A_j\coloneqq V_{4j}\setminus\overline{V_{4j-3}},\quad B_j\coloneqq V_{4j+2}\setminus\overline{V_{4j-1}}\quad\text{ for every }j\in\N.
\]
Since \(\sfd(A_j,\X\setminus A_{j+1})>0\) for all \(j\in\N\) by construction, we deduce from \ref{it:isoton} and \ref{it:superadd_on_disj} that \(\sum_{j=1}^\infty\phi(A_j)\leq\phi(\X)<\infty\). Note that as $\X\in\tau(\X)$, by hypothesis $\phi(\X)<\infty$.
Similarly, we also have that \(\sum_{j=1}^\infty\phi(B_j)<\infty\).
Hence, for any given \(\varepsilon>0\), we can find \(j_\varepsilon\in\N\) such that
\(\sum_{j=j_\varepsilon}^\infty\phi(A_j)+\phi(B_j)\leq\varepsilon\). Since we have that
\[
U=V_{4j_\varepsilon-2}\cup\bigcup_{j\geq j_\varepsilon}A_j\cup\bigcup_{j\geq j_\varepsilon}B_j
\]
by construction, it follows from \ref{it:sigma-subadd} that
\[
\phi(U)\leq\phi(V_{4j_\varepsilon-2})+\sum_{j=j_\varepsilon}^\infty\phi(A_j)+\sum_{j=j_\varepsilon}^\infty\phi(B_j)\leq\sup_{n\in\N}\phi(V_n)+\varepsilon.
\]
By the arbitrariness of \(\varepsilon>0\), we conclude that \(\phi(U)\leq\sup_{n\in\N}\phi(V_n)\). Hence, we proved that
\begin{equation}\label{eq:cont_from_below}
\phi\bigg(\bigcup_{n\in\N}U_n\bigg)=\lim_{n\to\infty}\phi(U_n)\quad\text{ for every increasing sequence }(U_n)_n\subseteq\tau(\X).
\end{equation}
In view of \ref{it:phi_empty}, \ref{it:isoton}, \ref{it:superadd_on_disj}, \eqref{eq:subadd} and \eqref{eq:cont_from_below}, we know  
(see for instance \cite[Theorem~1.53]{Amb:Fus:Pal:00})
that the set-function \(\mu\) defined in \eqref{eq:def_ext_phi} is the unique outer regular Borel measure on \(\X\) that extends \(\phi\).
\end{proof}
\subsection{Curves and path integrals}
Let \((\X,\sfd)\) be a metric space. By a \emph{curve} in \(\X\) we mean a continuous map \(\gamma\colon I\to\X\) defined on
some real interval \(I\subseteq\R\). We denote by \(\Gamma(\X)\) the space of all curves in \(\X\) whose domain is a non-empty
compact interval, namely
\[
\Gamma(\X)\coloneqq\bigcup_{\substack{a,b\in\R:\\a\leq b}}C([a,b];\X).
\]
Given any curve \(\gamma\in\Gamma(\X)\), we denote by \([a_\gamma,b_\gamma]\) its domain. The \emph{length} of \(\gamma\) is defined as
\[
\ell(\gamma)\coloneqq\sup\bigg\{\sum_{i=1}^n\sfd(\gamma_{t_i},\gamma_{t_{i-1}})\;\bigg|\;n\in\N,\,a_\gamma=t_0\leq t_1<\cdots\leq t_n=b_\gamma\bigg\}\in[0,\infty].
\]
We say that \(\gamma\) is \emph{rectifiable} if
\(\ell(\gamma)<\infty\). We denote by \(\mathscr R(\X)\subseteq\Gamma(\X)\) the space of all
rectifiable curves in \(\X\). 
In particular, given any \(\gamma\in\mathscr R(\X)\), there exists a unique Lipschitz curve
\[
\hat\gamma\colon[0,\ell(\gamma)]\to\X,
\]
which we call the \emph{arc-length parameterisation} of \(\gamma\), such that
\[
\gamma_t=\hat\gamma_{\ell(\gamma_{|[a_\gamma,t]})}\quad\text{ for every }t\in[a_\gamma,b_\gamma].
\]
Such a parametrization also satisfies
\(\ell(\hat\gamma_{|[r,s]})=s-r\) for every \(r,s\in[0,\ell(\gamma)]\) with \(r\leq s\), thus in particular \(\hat\gamma\) is \(1\)-Lipschitz
and \(\ell(\hat\gamma)=\ell(\gamma)\). 

Henceforth, we will often consider continuous rectifiable curves with their absolutely continuous
and, when necessary, arc-length parameterisation.

For any Borel function \(g\colon\X\to[0,\infty]\), we define the \emph{path integral} of \(g\) over a curve \(\gamma\in\mathscr R(\X)\) as
\[
\int_\gamma g\coloneqq\int_0^{\ell(\gamma)}g(\hat\gamma_s)\,\d s\quad\text{ if }\gamma\in\mathscr R(\X).
\]
More generally, for any absolutely continuous $\gamma\colon[a,b]\to\X$ and any Borel function \(g\colon\X\to[0,\infty]\), we have
\begin{equation}\label{eq:intrinsic}
\int_\gamma g=\int_a^b g(\gamma_t)|\dot\gamma_t|\,\d t=\int_{\gamma([a,b])}g(x){\rm card}(\gamma^{-1}(x))\,\d\mathscr{H}^1(x),
\end{equation}
where \(|\dot\gamma_t|\coloneqq\lims_{h\to 0}\sfd(\gamma_{t+h},\gamma_t)/|h|\in[0,\infty)\) denotes the \emph{metric speed} of \(\gamma\) at \(t\), which
exists for \(\mathscr L^1\)-a.e.\ \(t\in (a,b)\) and we recall that $\mathscr{H}^1$ stands for the 1-dimensional Hausdorff measure in $(\X,\sfd)$
(see for instance \cite{Ambrosio_Tilli}).
Observe that \(\int_{\hat\gamma}g=\int_\gamma g\) for every \(\gamma\in\mathscr R(\X)\).

\medskip

We endow the space \(C([0,1];\X)\) with the supremum distance \(\sfd_{C([0,1];\X)}\), which we define as
\[
\sfd_{C([0,1];\X)}(\gamma,\sigma)\coloneqq\sup_{t\in[0,1]}\sfd(\gamma_t,\sigma_t)\quad\text{ for every }\gamma,\sigma\in C([0,1];\X).
\]
The space \(C([0,1];\X)\) is complete (resp.\ separable) if and only if \(\X\) is complete (resp.\ separable). The \emph{evaluation map} \(\e_t\colon C([0,1];\X)\to\X\)
at time \(t\in[0,1]\) is defined as \(\e_t(\gamma)\coloneqq\gamma_t\) for every \(\gamma\in C([0,1];\X)\). Note that \(\e_t\) is \(1\)-Lipschitz. The space
\(\LIP([0,1];\X)\) is a countable union of closed subsets of \(C([0,1];\X)\), thus in particular it is a Borel subset of \(C([0,1];\X)\). %
\section{Measures on curves}
\subsection{1-modulus}

We recall the classical definition of modulus
in a metric measure space \((\X,\sfd,\mm)\) (we do not emphasize here the dependence of this concept on the integrability exponent
$p$, as only the case $p=1$ is relevant for this paper). 

\begin{definition}[Modulus]
Given \(\Gamma\subseteq\Gamma(\X)\), we denote by $\mathcal{A}(\Gamma)$
the class of Borel functions \(\rho\colon\X\to[0,\infty]\) such that $\int_\gamma\rho\geq 1$ for all 
$\gamma\in\Gamma\cap\mathscr R(\X)$. Note
that if $\Gamma$ contains even one constant path, then $\mathcal{A}(\Gamma)$ is empty. Then, we set
\[
\Mod(\Gamma):=\inf_{\rho\in\mathcal{A}(\Gamma)}\, \int_{\X}\rho\, \d\mm\in [0,\infty],
\]
so that $\Mod(\Gamma)=\infty$ if $\Gamma$ contains a constant path.
\end{definition}

By construction, $\Mod(\Gamma)=\Mod(\Gamma\cap\mathscr R(\X))$. It is well known (see for instance
\cite{HKST}) that $\Mod$ is an outer measure on $\Gamma(X)$. 

The following lemma is originally due to Fuglede, see for instance \cite{Koskela_MacManus}.

\begin{lemma}\label{lem:Fuglede1}
Suppose that $\Gamma\subseteq\Gamma(\X)$. Then $\Mod(\Gamma)=0$ if and only if there is a Borel function $\rho_\Gamma:X\to[0,\infty]$
such that $\int_{\X}\rho_\Gamma\,\d\mm<\infty$ but $\int_\gamma\rho_\Gamma=\infty$ for each $\gamma\in\Gamma\cap\mathscr R(\X)$.
\end{lemma}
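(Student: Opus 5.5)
The plan is to prove the two implications separately, using only the definition of $\Mod$ and the observation that $\mathcal{A}(\Gamma)$ does not see non-rectifiable curves, so we may replace $\Gamma$ by $\Gamma\cap\mathscr R(\X)$ throughout.

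\emph{The easy direction ($\Leftarrow$).} Suppose $\rho_\Gamma$ is Borel, with $\int_\X\rho_\Gamma\,\d\mm<\infty$ and $\int_\gamma\rho_\Gamma=\infty$ for every $\gamma\in\Gamma\cap\mathscr R(\X)$. For each $\varepsilon>0$, the function $\varepsilon\rho_\Gamma$ belongs to $\mathcal{A}(\Gamma)$, since $\int_\gamma\varepsilon\rho_\Gamma=\infty\geq 1$. Hence $\Mod(\Gamma)\leq\varepsilon\int_\X\rho_\Gamma\,\d\mm$. Letting $\varepsilon\downarrow 0$ gives $\Mod(\Gamma)=0$.

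\emph{The main direction ($\Rightarrow$).} Suppose $\Mod(\Gamma)=0$. For each $k\in\N$ choose $\rho_k\in\mathcal{A}(\Gamma)$ with $\int_\X\rho_k\,\d\mm\leq 2^{-k}$; this is possible because the infimum defining $\Mod(\Gamma)$ is $0$. Then set
\[
\rho_\Gamma\coloneqq\sum_{k=1}^\infty\rho_k .
\]
This is a countable sum of nonnegative Borel functions, so it is a Borel function with values in $[0,\infty]$. By monotone convergence, $\int_\X\rho_\Gamma\,\d\mm\leq\sum_k 2^{-k}=1<\infty$.

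For $\gamma\in\Gamma\cap\mathscr R(\X)$ the path integral is an integral over $[0,\ell(\gamma)]$ of $\rho_\Gamma\circ\hat\gamma$. Monotone convergence for $\mathscr L^1$ on this interval therefore gives
\[
\int_\gamma\rho_\Gamma=\sum_k\int_\gamma\rho_k\geq\sum_k 1=\infty .
\]

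The step needing the most care is precisely this interchange of sum and path integral, together with checking that $\rho_\Gamma$ is Borel with possibly infinite values, as the definition allows. Both follow from monotone convergence applied to the composition $\rho_k\circ\hat\gamma$, which is Borel because $\hat\gamma$ is continuous. The degenerate case is also consistent: if $\Gamma$ contains a constant curve, then $\mathcal{A}(\Gamma)=\varnothing$ and $\Mod(\Gamma)=\infty\neq0$, and no $\rho_\Gamma$ can exist either, since the path integral over a constant curve is $0$.
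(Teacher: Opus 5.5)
Your proof is correct and is the standard Fuglede argument that the paper cites from \cite{Koskela_MacManus} rather than proving: rescale $\rho_\Gamma$ by $\varepsilon$ for one direction, and sum admissible $\rho_k$ with $\int_\X\rho_k\,\d\mm\leq 2^{-k}$ for the other. The paper uses the same scheme for the ${\rm AM}_0$ analogue in Proposition~\ref{prop:properties_AM}\ref{Fuglede2}, and your remark on constant curves correctly handles the degenerate case.
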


For $N\subseteq\X$, we set $\Gamma_N$ to be the collection of all curves $\gamma\in \mathscr R(\X)$ that intersect $N$.
In addition, 
we denote by $\Gamma_N^+\subseteq\Gamma_N$ the collection of all curves $\gamma\in \mathscr R(\X)$ for which
\[
\mathscr{L}^1(\hat{\gamma}^{-1}(N))>0.
\]
Here, by $\mathscr{L}^1(\hat{\gamma}^{-1}(N))$ we mean the Lebesgue outer measure of $\hat{\gamma}^{-1}(N)$.
Let $N$ be a Borel subset of $\X$.
By \eqref{eq:intrinsic} with $g=\nchi_N$, we have that $\gamma\in\Gamma_N^+$ if and only if 
$\mathscr{H}^1(\gamma([a_\gamma,b_\gamma])\cap N)>0$.
If $\mathscr{L}^1(\hat{\gamma}^{-1}(N))=0$, then for every $\eps>0$ we can find a pairwise disjoint (relatively) open
cover $\{(a_i,b_i)\}_i$ of $\hat{\gamma}^{-1}(N)$ such that $\sum_i(b_i-a_i)<\eps$. Note that $\{\hat{\gamma}((a_i,b_i))\}_i$ is a
cover of $N\cap \gamma([a_\gamma,b_\gamma])$ by subcurves, and as $\hat{\gamma}$ is an arc-length parametrization,
the diameter of each $\hat{\gamma}((a_i,b_i))$ is at most $b_i-a_i$, and so $\mathscr{H}^1(\hat{\gamma}([a_\gamma,b_\gamma])\cap N)$
is at most $\sum_i2\, (b_i-a_i)\le 2\eps$; that is, $\mathscr{H}^1(\hat{\gamma}([a_\gamma,b_\gamma])\cap N)=0$. This argument shows that
if $\mathscr{H}^1(\gamma([a_\gamma,b_\gamma])\cap N)>0$, then 
$\mathscr{L}^1(\hat{\gamma}^{-1}(N))>0$.
On the other hand, if $\mathscr{L}^1(\hat{\gamma}^{-1}(N))>0$, then, choosing
$g=\nchi_{N}$, we have $\int_\gamma g=\int_0^{\ell(\gamma)}g\circ\hat{\gamma}(t)\,\d t>0$. 
Now from~\eqref{eq:intrinsic} it follows that 
$\mathscr{H}^1(\gamma([a_\gamma,b_\gamma])\cap N)>0$.

\begin{lemma}\label{Nages1}
Suppose that $N\subseteq\X$ is such that $\mm(N)=0$. Then $\Mod(\Gamma_N^+)=0$.
\end{lemma}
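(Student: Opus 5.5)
The plan is to exhibit a single admissible density of arbitrarily small (in fact zero) integral that is infinite along every curve of $\Gamma_N^+$, and then invoke Lemma~\ref{lem:Fuglede1} or argue directly from the definition of $\Mod$.

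First I would reduce to a Borel null set. Since $\mm$ is outer regular, for each $k\in\N$ there is an open $U_k\supseteq N$ with $\mm(U_k)\le 2^{-k}$. Setting $\tilde N\coloneqq\bigcap_k U_k$ gives a Borel ($G_\delta$) set with $N\subseteq\tilde N$ and $\mm(\tilde N)=0$. Because $\hat\gamma^{-1}(N)\subseteq\hat\gamma^{-1}(\tilde N)$ and outer Lebesgue measure is monotone, we get $\Gamma_N^+\subseteq\Gamma_{\tilde N}^+$. By monotonicity of $\Mod$ it therefore suffices to treat $N$ Borel.

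For Borel $N$ with $\mm(N)=0$, define $\rho\coloneqq\infty\cdot\nchi_N$, i.e.\ $\rho=\infty$ on $N$ and $0$ elsewhere. This $\rho$ is Borel and $\int_\X\rho\,\d\mm=0$. For $\gamma\in\Gamma_N^+$, the set $\hat\gamma^{-1}(N)$ is Borel with positive Lebesgue measure, so
\[
\int_\gamma\rho=\int_0^{\ell(\gamma)}\rho(\hat\gamma_s)\,\d s=\infty\cdot\mathscr L^1(\hat\gamma^{-1}(N))=\infty .
\]
Hence $\rho\in\mathcal A(\Gamma_N^+)$. This yields $\Mod(\Gamma_N^+)\le 0$ directly from the definition, or alternatively via Lemma~\ref{lem:Fuglede1}.

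The only delicate point is that the definition of $\Gamma_N^+$ uses the outer measure of $\hat\gamma^{-1}(N)$ for a possibly non-Borel $N$. The reduction to the Borel hull $\tilde N$ handles this, since the inclusion $\Gamma_N^+\subseteq\Gamma_{\tilde N}^+$ needs only monotonicity of outer measure. If one prefers finite densities, one can take $\rho\coloneqq\sum_k\nchi_{U_k}$ instead. Then $\int\rho\,\d\mm\le1$, and $\rho=\infty$ on $\tilde N$, so the path integral is still infinite.
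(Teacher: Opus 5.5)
Your proposal is correct and follows the paper's proof: it reduces to a Borel $\mm$-null hull $\tilde N\supseteq N$ via the inclusion $\Gamma_N^+\subseteq\Gamma_{\tilde N}^+$, then notes that $\rho=\infty\cdot\nchi_{\tilde N}$ has zero integral but infinite path integral along every curve of $\Gamma_{\tilde N}^+$, which gives $\Mod(\Gamma_N^+)=0$ either directly from the definition or through Lemma~\ref{lem:Fuglede1}. The only difference is cosmetic: you build the Borel ($G_\delta$) hull explicitly from outer regularity, where the paper simply invokes Borel regularity of $\mm$; both are valid here.
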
 

\begin{proof}
Since $\mu$ is Borel regular, we can find a Borel set $N_0\subseteq\X$ such that $N\subseteq N_0$ and $\mm(N_0)=0$. Moreover,
we have that $\Gamma_N^+\subseteq\Gamma_{N_0}^+$. Therefore, without loss of generality, we may assume that $N$ itself is a Borel set.
Then the function $\rho=\infty\, \nchi_N$ satisfies the criterion set forth in Lemma~\ref{lem:Fuglede1}, and the conclusion follows now from that lemma.
\end{proof}

\begin{definition}[Approximation \(\lambda\)-modulus]\label{AML}
Let \((\X,\sfd,\mm)\) be a metric measure space and \(\lambda\in\{0,1\}\). Let \(\Gamma\subseteq \Gamma(\X)\) be given.
Then a sequence \((\rho_i)_i\) of Borel functions \(\rho_i\colon\X\to[0,\infty]\) is said to be
\emph{\({\rm AM}_\lambda\)-admissible} for \(\Gamma\) provided it holds that
\[
\limi_{i\to\infty}\left(\lambda\big(\rho_i(\gamma_{a_\gamma})+\rho_i(\gamma_{b_\gamma})\big)+\int_\gamma\rho_i\right)\geq 1
\quad\text{ for every }\gamma\in\Gamma\cap\mathscr R(\X).
\]
We denote by \(\mathcal{AM}_\lambda(\Gamma)\) the collection of all \({\rm AM}_\lambda\)-admissible sequences for \(\Gamma\).
Moreover, we define the \emph{approximation \(\lambda\)-modulus} of \(\Gamma\) as
\[
{\rm AM}_\lambda(\Gamma)
 \coloneqq\inf\left\{\limi_{i\to\infty}\int_{\X}\rho_i\,\d\mm\;\middle|\;(\rho_i)_i\in{\mathcal{AM}}_\lambda(\Gamma)\right\}\in[0,\infty].
\]
\end{definition}
By construction, \({{\rm AM}}_\lambda(\Gamma)={{\rm AM}}_\lambda(\Gamma\cap\mathscr R(\X))\).
Notice also that \({\rm AM}_0\) coincides with Martio's approximation modulus $\AMMod$, as defined in \cite{Martio},
but we use a different notation due to the $\lambda$-dependence we introduced.

If $\rho\in\mathcal{A}(\Gamma)$, then the constant sequence $(\rho)_i$ belongs to ${\mathcal{AM}}_\lambda(\Gamma)$, and so it follows that
\[
{\rm AM}_0(\Gamma)\le \Mod(\Gamma).
\]
In the following result, we collect important properties of the approximation \(\lambda\)-modulus.
\begin{proposition}[Some properties of \({\rm AM}_\lambda\)]\label{prop:properties_AM}
Let \((\X,\sfd,\mm)\) be a metric measure space. Then:
\begin{enumerate}[label=(\roman*)]
\item\label{it:AM_outer_meas} \({\rm AM}_\lambda\) is an outer measure on \(\Gamma(\X)\) for both \(\lambda\in\{0,1\}\), namely
\begin{align}
\label{eq:AM_zero}
&{\rm AM}_\lambda(\varnothing)=0,\\
\label{eq:AM_monot}
&{\rm AM}_\lambda(\Gamma)\leq{\rm AM}_\lambda(\tilde\Gamma)\quad\text{ for every }\Gamma\subseteq\tilde\Gamma\subseteq\Gamma(\X),\\
\label{eq:AM_subadd}
&{\rm AM}_\lambda\bigg(\bigcup_{n=1}^\infty\Gamma_n\bigg)\leq\sum_{n=1}^\infty{\rm AM}_\lambda(\Gamma_n)
\quad\text{ if }\Gamma_n\subseteq\Gamma(\X)\text{ for all }n\in\N.
\end{align}
\item\label{it:ineq_AM} \({\rm AM}_1(\Gamma)\leq{\rm AM}_0(\Gamma)\le\Mod(\Gamma)\) 
for every \(\Gamma\subseteq\Gamma(\X)\).
\item\label{it:wlog_const_speed_AM} \({\rm AM}_\lambda(\Gamma)={\rm AM}_\lambda(\{\hat\gamma:\gamma\in\Gamma\cap\mathscr R(\X)\})\)
for all \(\lambda\in\{0,1\}\) and \(\Gamma\subseteq\Gamma(\X)\). 
\item\label{it:ineq_AM_mm} Given any Borel set \(E\subseteq\X\), we have that
\begin{equation}\label{eq:ineq_AM_mm}
\max\bigg\lbrace{\rm AM}_1\left(\left\{\gamma\in\Gamma(\X)\;\middle|\;\gamma_{a_\gamma}\in E\right\}\right),\
{\rm AM}_1\left(\left\{\gamma\in\Gamma(\X)\;\middle|\;\gamma_{b_\gamma}\in E\right\}\right)\bigg\rbrace\leq\mm(E).
\end{equation}
\item\label{subcurves} If $\Gamma,\tilde{\Gamma}\subseteq\mathscr R(\X)$ and
any $\gamma\in\Gamma$ contains a subcurve $\tilde\gamma\in\tilde\Gamma$, then ${\rm AM}_0(\Gamma)\leq {\rm AM}_0(\tilde\Gamma)$.
The same holds for ${\rm AM}_1$ if the subcurve $\tilde\gamma$ has the same endpoints of $\gamma$.
\item\label{Fuglede2}
Given any $\Gamma\subseteq\Gamma(\X)$, we have that ${\rm AM}_0(\Gamma)=0$ if and only if there is a sequence of Borel functions $\rho_i\colon\X\to[0,\infty]$
such that $\sup_i\int\rho_i\,\d\mm<\infty$  but $\varliminf_i\int_\gamma\rho_i=\infty$ for each $\gamma\in\Gamma\cap\mathscr R(\X)$.
\end{enumerate}
\end{proposition}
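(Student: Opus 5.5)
The six items are mostly direct consequences of Definition~\ref{AML}. I will treat them one at a time, working with admissible sequences and choosing near-optimal ones.

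For \ref{it:AM_outer_meas}, \eqref{eq:AM_zero} holds because the zero sequence is admissible for \(\varnothing\). Monotonicity \eqref{eq:AM_monot} holds because \(\mathcal{AM}_\lambda(\tilde\Gamma)\subseteq\mathcal{AM}_\lambda(\Gamma)\). For \eqref{eq:AM_subadd}, assume the right-hand side is finite and fix \(\varepsilon>0\).
\begin{itemize}
\item For each \(n\), choose \((\rho^n_i)_i\in\mathcal{AM}_\lambda(\Gamma_n)\) with \(\liminf_i\int\rho^n_i\,\d\mm\le{\rm AM}_\lambda(\Gamma_n)+2^{-n}\varepsilon\).
\item Pass to a subsequence so that the \(\liminf\) of the integrals becomes a limit. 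Admissibility survives this, since a \(\liminf\) along a subsequence is at least the full \(\liminf\).
\item Set \(\rho_i\coloneqq\sup_n\rho^n_i\). Then \(\rho_i\ge\rho^n_i\) pointwise, so admissibility for each \(\Gamma_n\) passes to \((\rho_i)_i\), and hence \((\rho_i)_i\) is admissible for the union.
\end{itemize}
The difficulty is bounding \(\liminf_i\int\sup_n\rho^n_i\,\d\mm\). The standard route is to use the sum \(\sum_n\rho^n_i\) in place of the supremum, and then Fatou's lemma in the wrong direction becomes the issue. I would avoid this by a diagonal truncation. Choose indices so that \(\int\rho^n_i\,\d\mm\le{\rm AM}_\lambda(\Gamma_n)+2^{1-n}\varepsilon\) for all \(i\ge i_n\), with \(i_n\) increasing. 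Then define \(\rho_i\coloneqq\sum_{n:\,i_n\le i}\rho^n_i\), a finite sum. Each fixed \(n\) eventually enters the sum, so \(\rho_i\ge\rho^n_i\) for \(i\ge i_n\) and admissibility is preserved. Moreover \(\int\rho_i\,\d\mm\le\sum_n{\rm AM}_\lambda(\Gamma_n)+2\varepsilon\) for every \(i\). I expect this uniform tail control to be the main obstacle.

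For \ref{it:ineq_AM}, any \({\rm AM}_0\)-admissible sequence is \({\rm AM}_1\)-admissible, because the endpoint terms are nonnegative. The inequality with \(\Mod\) was noted right after Definition~\ref{AML}.

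For \ref{it:wlog_const_speed_AM}, \(\int_\gamma\rho=\int_{\hat\gamma}\rho\) and \(\gamma,\hat\gamma\) have the same endpoints. So the admissibility conditions for \(\gamma\) and for \(\hat\gamma\) coincide, and the two families have the same admissible sequences.

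For \ref{it:ineq_AM_mm}, take open sets \(U\supseteq E\) with \(\mm(U)\le\mm(E)+\varepsilon\), using outer regularity, and the constant sequence \(\rho_i=\nchi_U\). Every rectifiable curve starting or ending in \(E\) gets \(\rho_i(\gamma_{a_\gamma})+\rho_i(\gamma_{b_\gamma})\ge1\), so the sequence is \({\rm AM}_1\)-admissible. If \(\mm(E)=\infty\) there is nothing to prove.

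For \ref{subcurves}, \(\int_{\tilde\gamma}\rho\le\int_\gamma\rho\) for nonnegative \(\rho\). Hence an \({\rm AM}_0\)-admissible sequence for \(\tilde\Gamma\) is admissible for \(\Gamma\). In the \({\rm AM}_1\) case the endpoint terms are identical because the endpoints agree.

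For \ref{Fuglede2}:
\begin{itemize}
\item If such \(\rho_i\) exist with \(\sup_i\int\rho_i\,\d\mm\le C\), then \((\rho_i/k)_i\) is admissible for every \(k\), so \({\rm AM}_0(\Gamma)\le C/k\to0\).
\item Conversely, if \({\rm AM}_0(\Gamma)=0\), pick admissible \((\rho^k_i)_i\) with \(\int\rho^k_i\,\d\mm\le 2^{-k}\) for all \(i\ge i_k\), after passing to subsequences as above.
\item Then set \(\rho_i\coloneqq\sum_{k:\,i_k\le i}\rho^k_i\). This gives \(\int\rho_i\,\d\mm\le1\) for every \(i\), and \(\liminf_i\int_\gamma\rho_i\ge K\) for every \(K\), since eventually the first \(K\) summands are all present and each contributes a \(\liminf\) of at least \(1\).
\end{itemize}
This reuses the same diagonal device as in \ref{it:AM_outer_meas}.
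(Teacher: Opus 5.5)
Your proposal is correct and follows essentially the paper's argument item by item. The differences are cosmetic. In (i) and (vi) the paper avoids your diagonal truncation by passing to tails of the admissible sequences, which remain admissible, so that $\int\rho^n_i\,\d\mm\le{\rm AM}_\lambda(\Gamma_n)+\varepsilon 2^{-n}$ holds for \emph{every} $i$; the full sum $\sum_n\rho^n_i$ is then controlled by monotone convergence, and there is no Fatou issue at all. In (iv) the paper simply uses the constant sequence $\nchi_E$, which is already Borel, so outer regularity is not needed.
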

\begin{proof} \ref{it:AM_outer_meas} Fix \(\lambda\in\{0,1\}\). Taking \(\rho_i\coloneqq 0\) for all \(i\in\N\), we have
\((\rho_i)_i\in\mathcal {AM}_\lambda(\varnothing)\), which gives \eqref{eq:AM_zero}. Given any
\(\Gamma\subseteq\tilde\Gamma\subseteq\Gamma(\X)\), we have \(\mathcal {AM}_\lambda(\tilde\Gamma)\subseteq\mathcal {AM}_\lambda(\Gamma)\) and thus
\eqref{eq:AM_monot} holds. To prove that \({\rm AM}_\lambda\) is an outer measure, 
it only remains to check its \(\sigma\)-subadditivity \eqref{eq:AM_subadd}.
Without loss of generality, we can assume that \(\sum_{n=1}^\infty{\rm AM}_\lambda(\Gamma_n)<\infty\).
Fix any \(\varepsilon>0\). For any \(n\in\N\), we can find a sequence \((\rho^n_i)_i\subseteq\mathcal {AM}_\lambda(\Gamma_n)\)
such that \(\sup_i\int\rho^n_i\,\d\mm\leq{\rm AM}_\lambda(\Gamma_n)+\varepsilon 2^{-n}\) (since \(\mathcal {AM}_\lambda(\Gamma_n)\)
is closed under passing to subsequences). Define \(\rho_i\coloneqq\sum_{n=1}^\infty\rho^n_i\) for every \(i\in\N\). Note that
\((\rho_i)_i\in\mathcal {AM}_\lambda\big(\bigcup_{n\in\N}\Gamma_n\big)\). Moreover, by the monotone convergence theorem we get
\begin{align*}
{\rm AM}_\lambda\bigg(\bigcup_{n=1}^\infty\Gamma_n\bigg)&\leq\limi_{i\to\infty}\int\rho_i\,\d\mm
=\limi_{i\to\infty}\sum_{n=1}^\infty\int_{\X}\rho^n_i\,\d\mm
\leq\limi_{i\to\infty}\sum_{n=1}^\infty({\rm AM}_\lambda(\Gamma_n)+\varepsilon 2^{-n})\\
&\leq\varepsilon+\sum_{n=1}^\infty{\rm AM}_\lambda(\Gamma_n).
\end{align*}
Letting \(\varepsilon\searrow 0\) we conclude that \eqref{eq:AM_subadd} holds, thus accordingly
\({\rm AM}_\lambda\) is an outer measure.\\
\ref{it:ineq_AM} It trivially follows from the observation that \(\mathcal {AM}_0(\Gamma)\subseteq\mathcal {AM}_1(\Gamma)\).\\
\ref{it:wlog_const_speed_AM}
Since \(\int_\gamma\rho_i=\int_{\hat\gamma}\rho_i\), and \(\big(\rho_i(\gamma_{a_\gamma}),\rho_i(\gamma_{b_\gamma})\big)
=\big(\rho_i(\hat\gamma_0),\rho_i(\hat\gamma_{\ell(\gamma)})\big)\) for every \(\gamma\in\Gamma\cap\mathscr R(\X)\) and \(i\in\N\),
we have that \(\mathcal {AM}_\lambda(\Gamma\cap\mathscr R(\X))=\mathcal {AM}_\lambda(\{\hat\gamma:\gamma\in\Gamma\cap\mathscr R(\X)\})\).\\
\ref{it:ineq_AM_mm} Letting \(\rho^E_i\coloneqq\nchi_E\) for every \(i\in\N\), we clearly have
\((\rho_i)_i\in\mathcal {AM}_1(\{\gamma\in\Gamma(\X):\gamma_{a_\gamma}\in E\})\), thus
\[
{\rm AM}_1\left(\left\{\gamma\in\Gamma(\X)\;\middle|\;\gamma_{a_\gamma}\in E\right\}\right)
\leq\limi_{i\to\infty}\int_{\X}\rho^E_i\,\d\mm=\mm(E).
\]
Similarly for \({\rm AM}_1(\{\gamma\in\Gamma(\X):\gamma_{b_\gamma}\in E\})\).\\
\ref{subcurves} It is a simple consequence of the fact that, by monotonicity, any sequence $(\rho_i)_i$
admissible for $\tilde\gamma$ is admissible for $\gamma$, with the additional constraint that $\tilde\gamma$
must have the same endpoints of $\gamma$ when $\lambda=1$.\\
\ref{Fuglede2} If there is a sequence $(\rho_i)_i$ satisfying the second criterion stated in (vi),
then for each $\eps>0$ we know that the sequence $(g_i)_i$ given by $g_i=\eps\rho_i$ satisfies $(g_i)_i\in {\mathcal{AM}}_0(\Gamma)$,
and so  ${\rm Mod}_0(\Gamma)\le \eps\, \varliminf_i\int\rho_i\,\d\mm$. Letting $\eps\to 0^+$ tells us that ${\rm Mod}_0(\Gamma)=0$.
Now suppose that $\Gamma\subseteq\Gamma(\X)$ with ${\rm AM}_0(\Gamma)=0$. 
We now use the fact that subsequences of sequences in ${\mathcal{AM}}_0(\Gamma)$ also belong to ${\mathcal{AM}}_0(\Gamma)$.
Then for each positive integer $k$ we can find a sequence
$(\rho_{k,i})_i\in {\mathcal{AM}}_0(\Gamma)$ such that $\sup_i\int\rho_{k,i}\,\d\mm< 2^{-k}$. Now the sequence
$(\rho_i)_i$ given by $\rho_i\coloneqq\sum_{k=1}^\infty \rho_{k,i}$ satisfies the second criterion stated in (vi),
completing the proof.
\end{proof}

The following simple example shows that the inequality ${\rm AM}_1\leq {\rm AM}_0$ can be strict.
\begin{example}\label{ex:AM_segment}{\rm
As a metric measure space, consider \(\R\) with the Euclidean distance and the Lebesgue measure. Let \(a,b\in\R\) with \(a<b\) be given.
Let \(\sigma\colon[0,b-a]\to\R\) denote the arc-length parameterisation of the interval \([a,b]\),
i.e.\ \(\sigma_t=a+t\) for every \(t\in[0,b-a]\). Then
\[
{\rm AM}_0(\{\sigma\})=1,\qquad{\rm AM}_1(\{\sigma\})=0.
\]
Indeed, for any \({\rm AM}_0\)-admissible sequence \((\rho_i)_i\) for \(\{\sigma\}\) we have that
\[
\varliminf_{i\to\infty}\int_\R\rho_i(t)\,\d t\geq\varliminf_{i\to\infty}\int_\sigma\rho_i\geq 1,
\]
thus \({\rm AM}_0(\{\sigma\})\geq 1\). Choosing the constant sequence \(((b-a)^{-1}\nchi_{[a,b]})_i\) as an \({\rm AM}_0\)-admissible sequence
for \(\{\sigma\}\), we then see that \({\rm AM}_0(\{\sigma\})=1\). Moreover, the constant sequence \((\nchi_{\{a,b\}}/2)_i\) is \({\rm AM}_1\)-admissible
for \(\{\sigma\}\), thus \({\rm AM}_1(\{\sigma\})\leq\frac{1}{2}\int_\R\nchi_{\{a,b\}}(t)\,\d t=0\).\fr}
\end{example}
Given any rectifiable curve \(\gamma\in\mathscr R(\X)\) in a metric space \((\X,\sfd)\) and \(\lambda\in\{0,1\}\),
we define the finite Borel measure \(\theta^\lambda_\gamma\) on the interval \([0,\ell(\gamma)]\subseteq\R\) as
\[
\theta^\lambda_\gamma\coloneqq\lambda(\delta_0+\delta_{\ell(\gamma)})+\mathscr L^1\llcorner(0,\ell(\gamma)).
\]
\begin{corollary}\label{cor:exceptional_AM}
Let \((\X,\sfd,\mm)\) be a metric measure space and \(\lambda\in\{0,1\}\). Let \(N\subseteq\X\) be a Borel set
such that \(\mm(N)=0\). Then it holds that
\[
\theta^\lambda_\gamma\big(\big\{t\in[0,\ell(\gamma)]\;\big|\;\hat\gamma_t\in N\big\}\big)=0
\quad\text{ for }{\rm AM}_\lambda\text{-a.e.\ }\gamma\in\mathscr R(\X).
\]
In particular, for \({\rm AM}_\lambda\)-a.e.\ curve \(\gamma\in\mathscr R(\X)\) it holds that
\[
\gamma\big([a,b]\cap\gamma^{-1}(\X\setminus N)\big)\text{ is dense in }\gamma([a,b])
\quad\text{ for every }a,b\in[a_\gamma,b_\gamma]\text{ with }a<b.
\]
\end{corollary}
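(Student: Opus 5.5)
The plan is to exhibit a single admissible sequence for the family of bad curves and let its cost go to zero. Set
\[
\Gamma_{\rm bad}\coloneqq\big\{\gamma\in\mathscr R(\X)\;\big|\;\theta^\lambda_\gamma(\{t:\hat\gamma_t\in N\})>0\big\}.
\]
We use the constant sequence $\rho_i\coloneqq\infty\,\nchi_N$, which is Borel because $N$ is Borel, and satisfies $\int_\X\rho_i\,\d\mm=0$ since $\mm(N)=0$ (with the convention $\infty\cdot 0=0$).

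\textbf{Admissibility.} Take $\gamma\in\Gamma_{\rm bad}$. There are two cases.
\begin{itemize}
\item If $\mathscr L^1(\{t\in(0,\ell(\gamma)):\hat\gamma_t\in N\})>0$, then $\int_\gamma\rho_i=\int_0^{\ell(\gamma)}\infty\,\nchi_N(\hat\gamma_s)\,\d s=\infty$.
\item Otherwise $\lambda=1$ and one endpoint $\hat\gamma_0$ or $\hat\gamma_{\ell(\gamma)}$ lies in $N$. Then $\lambda(\rho_i(\gamma_{a_\gamma})+\rho_i(\gamma_{b_\gamma}))=\infty$.
\end{itemize}
In both cases the $\liminf$ in Definition~\ref{AML} is at least $1$, so $(\rho_i)_i\in\mathcal{AM}_\lambda(\Gamma_{\rm bad})$. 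Hence ${\rm AM}_\lambda(\Gamma_{\rm bad})\le 0$, which gives the first claim. For $\lambda=0$ this is essentially Lemma~\ref{Nages1} combined with ${\rm AM}_0\le\Mod$.

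\textbf{Density statement.} Let $\gamma\notin\Gamma_{\rm bad}$ be rectifiable. Its reparametrisation satisfies $\gamma_t=\hat\gamma_{s(t)}$, where $s(t)=\ell(\gamma_{|[a_\gamma,t]})$ is continuous and nondecreasing.

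Fix $a<b$ and a point $\gamma_t$ with $t\in[a,b]$. We distinguish two situations.
\begin{itemize}
\item \emph{$s$ is constant on $[a,b]$.} Then $\gamma([a,b])$ is the single point $\hat\gamma_{s(a)}$. If $0<s(a)<\ell(\gamma)$, I am unsure at first whether this point can lie in $N$; I handle it below.
\item \emph{$s(a)<s(b)$.} The set $\{u\in(s(a),s(b)):\hat\gamma_u\notin N\}$ has full $\mathscr L^1$-measure, hence is dense in $[s(a),s(b)]$. Given $t\in[a,b]$, pick such $u$ close to $s(t)$ and choose $t'\in[a,b]$ with $s(t')=u$; this is possible by the intermediate value theorem. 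Then $\gamma_{t'}=\hat\gamma_u\notin N$, and $\sfd(\gamma_{t'},\gamma_t)\le|u-s(t)|$ because $\hat\gamma$ is $1$-Lipschitz. This gives density.
\end{itemize}

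\textbf{Main obstacle: constant-$s$ subintervals.} This is the step I expect to be the real difficulty. On $[a,b]$ the curve sits at one point $p$, and we need $p\notin N$.

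For $\lambda=1$ with $p$ an endpoint of $\hat\gamma$, the condition is exactly what the $\theta^1$ Dirac masses exclude. For an interior value of $s$, I would try enlarging the exceptional family. The curves that are constant on some nondegenerate subinterval at a point of $N$ are non-injective in a specific way. However, with $\rho$ concentrated on $N$ they do not pick up infinite integral, since the arc-length integral ignores stops.

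I would therefore first check whether the intended reading of the statement implicitly uses $\hat\gamma$, i.e.\ curves parametrised by arc length, as Proposition~\ref{prop:properties_AM}\ref{it:wlog_const_speed_AM} permits. In that case $s$ is the identity and the difficulty disappears. If not, I would argue that such $t$ still have $\gamma_t$ in the closure of good points by passing to the nearest $t'$ outside the constancy interval, where $s$ increases. This fails only if $s$ is constant on all of $[a_\gamma,b_\gamma]$, which is a constant curve, and those are excluded from $\mathscr R(\X)$-admissibility considerations or handled by the endpoint term. Even so, the argument only makes $p$ a limit of good points from outside $[a,b]$, which is not what density in $\gamma([a,b])$ requires. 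So I expect to rely on the arc-length reduction.
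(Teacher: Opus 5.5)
Your proof of the first claim is correct and follows the paper's argument in condensed form. The paper treats the Lebesgue part of $\theta^\lambda_\gamma$ via Lemma~\ref{Nages1} (Fuglede's lemma with $\rho=\infty\,\nchi_N$, together with ${\rm AM}_1\leq{\rm AM}_0\leq\Mod$) and the Dirac part via \eqref{eq:ineq_AM_mm} with $E=N$. Your single sequence $\infty\,\nchi_N$ handles both parts at once. For the density claim, your case $s(a)<s(b)$ is exactly the paper's argument with $\hat a=s(a)$, $\hat b=s(b)$. Your intermediate value step is what justifies the identity $\gamma([a,b]\cap\gamma^{-1}(\X\setminus N))=\hat\gamma([\hat a,\hat b]\cap\hat\gamma^{-1}(\X\setminus N))$ that the paper uses without comment.

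The obstacle you isolate is genuine and cannot be closed, because the ``in particular'' part is false as literally stated. The paper's proof has the same hole: it asserts that $[\hat a,\hat b]\cap\hat\gamma^{-1}(\X\setminus N)$ is dense in $[\hat a,\hat b]$, which fails when $\hat a=\hat b$ and $\hat\gamma_{\hat a}\in N$. Take $\R$ with the Lebesgue measure and $N=\{0\}$.

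For $\lambda=0$, the constant curve $\gamma\equiv 0$ on $[0,1]$ has $\theta^0_\gamma=0$ but violates density on $[0,1]$. Moreover ${\rm AM}_0(\{\gamma\})=\infty$, since $\int_\gamma\rho=0$ for every $\rho$ and so no sequence is admissible. Contrary to your remark, constant curves are therefore not excluded; they are the simplest counterexample. The curve $\gamma^j$ of Example~\ref{ex:strong_am_bound}, with $N=\{q_j\}$, is another: ${\rm AM}_0(\{\gamma^j\})=1$ and density fails on $[1,2]$.

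For $\lambda=1$, take $y\in[-\frac12,\frac12]$ and let $\gamma^y\colon[0,3]\to\R$ run linearly from $y-1$ to $0$, rest at $0$ on $[1,2]$, then run linearly from $0$ to $y+1$. Each $\gamma^y$ satisfies $\theta^1_{\gamma^y}(\{t:\hat\gamma^y_t\in N\})=0$ but violates density on $[1,2]$. Integrating the ${\rm AM}_1$-admissibility condition $\varliminf_i\big(\rho_i(y-1)+\rho_i(y+1)+\int_{y-1}^{y+1}\rho_i\big)\geq 1$ over $y\in[-\frac12,\frac12]$ and applying Fatou's lemma gives ${\rm AM}_1(\{\gamma^y\}_y)\geq\frac12$.

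So the ``in particular'' part must be weakened to: for ${\rm AM}_\lambda$-a.e.\ $\gamma$, density holds for every $a<b$ such that $\gamma_{|[a,b]}$ is non-constant (equivalently, the claim holds for $\hat\gamma$). This is exactly what your argument proves. Your arc-length reduction is therefore the right fix, but it restricts the statement rather than reading it. Proposition~\ref{prop:properties_AM}\ref{it:wlog_const_speed_AM} transfers null families from $\hat\gamma$ to $\gamma$, but density for $\gamma$ depends on its rest intervals, which ${\rm AM}_\lambda$ does not see.

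The weakened version is all that Lemma~\ref{lem:sum_am-bounds} needs. If $\gamma$ is constant on $[a,b]$, the inequality there is trivial. Otherwise, choose the partition for $\hat\gamma$ on $[s(a),s(b)]$ with nodes $u_j$ satisfying $\hat\gamma_{u_j}\notin N$, and take $a_j\in s^{-1}(u_j)$; then $\gamma([a_j,a_{j+1}])=\hat\gamma([u_j,u_{j+1}])$.
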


\begin{proof} 
The first part of the statement is a direct consequence of 
a combination of
Lemma~\ref{Nages1} (because
${\rm AM}_1\leq{\rm AM}_0\le \Mod$) and~\eqref{eq:ineq_AM_mm} with the choice of $E=N$.
To prove the last part of the statement, fix a curve \(\gamma\in\mathscr R(\X)\) such that
$\theta_\gamma^\lambda(\{t\in [0,\ell(\gamma)]\, :\, \hat{\gamma}_t\in N\})=0$; then
\(\mathscr L^1(\{t\in (0,\ell(\gamma)):\hat\gamma_t\in N\})=0\). For $a,b\in[a_\gamma,b_\gamma]$ with
$a_\gamma\le a<b\le b_\gamma$, letting
\(\hat a\coloneqq\ell(\gamma_{|[a_\gamma,a]})\) and \(\hat b\coloneqq\ell(\gamma_{|[a_\gamma,b]})\),
we have that \(\gamma_a=\hat\gamma_{\hat a}\) and \(\gamma_b=\hat\gamma_{\hat b}\). Since
\(\mathscr L^1([\hat a,\hat b]\cap\hat\gamma^{-1}(N))=0\), we deduce that \([\hat a,\hat b]\cap\hat\gamma^{-1}(\X\setminus N)\)
is dense in \([\hat a,\hat b]\). In view of the continuity of \(\hat\gamma\), we finally conclude that
\(\gamma([a,b]\cap\gamma^{-1}(\X\setminus N))=\hat\gamma([\hat a,\hat b]\cap\hat\gamma^{-1}(\X\setminus N))\)
is dense in \(\hat\gamma([\hat a,\hat b])=\gamma([a,b])\).
\end{proof}
\subsection{Test plans}\label{SubSec:TestPlans}
Now we recall the concept of test plans, first given in~\cite{Inventiones}, somewhat
reminiscent of Semmes' pencil of curves, introduced in \cite{Amb:DiMa:14}, but
adapting it to the limiting case $p=\infty$. 
Following the nomenclature of~\cite{Inventiones},
this should be called an $\infty$-test plan, but as we are not interested in $p$-test plans for other values of $p$, we will simply
refer to them as test plans.
\begin{definition}[Test plan]\label{def:test_plan}
Let \((\X,\sfd,\mm)\) be a metric measure space. Then a Radon probability measure
\(\ppi\) on \(C([0,1];\X)\) is called a \emph{test plan} on \(\X\) if the following conditions hold:
\begin{enumerate}[label=(\roman*)]
\item \textsc{Bounded compressibility.} There exists a constant \(C>0\) such that
\begin{equation}\label{eq:compr_const}
(\e_t)_\#\ppi\leq C\mm\quad\text{ for every }t\in[0,1].
\end{equation}
\item\label{it:Lip_plan} \textsc{Lipschitzianity.} There exists a constant \(L\geq 0\) such that
\begin{equation}\label{eq:Lip_const_plan}
\Lip(\gamma)\leq L\quad\text{ for }\ppi\text{-a.e.\ }\gamma\in C([0,1];\X).
\end{equation}
\end{enumerate}
The minimal constant $C$ satisfying~\eqref{eq:compr_const} is 
denoted \({\rm Comp}(\ppi)>0\) and is called the \emph{compression constant}
of \(\ppi\), while the minimal constant $L$ satisfying \eqref{eq:Lip_const_plan} is 
denoted \(\Lip(\ppi)\geq 0\) and is called the
\emph{Lipschitz constant} of \(\ppi\). We denote by \(\Pi(\X)\) the collection of all test plans on \(\X\).
\end{definition}

Given the requirement of absolute continuity as described in~\eqref{eq:compr_const}, the constant 
${\rm Comp}(\ppi)$ must be positive, for if not, then $\ppi$ cannot be a probability measure.
Note that Definition \ref{def:test_plan}\ref{it:Lip_plan} implies that \(\ppi\) is concentrated on \(\LIP([0,1];\X)\),
and the quantity \(\Lip(\ppi)\) coincides with the \(L^\infty\)-norm in \((\LIP([0,1];\X),\ppi)\) of
\(\LIP([0,1];\X)\ni\gamma\mapsto\Lip(\gamma)\).

In the following proposition we state a weak duality between $\text{AM}_0$-modulus
and test plans, see \cite{ADS} and \cite{Plans_modulus_duality}
for full duality statements between modulus, $\text{AM}_0$-modulus and plans.
\begin{proposition}[Relation between \({\rm AM}_\lambda\) and test plans]\label{prop:AM_vs_tp}
Let \((\X,\sfd,\mm)\) be a metric measure space and \(\lambda\in\{0,1\}\). Fix any \(\Gamma\subseteq C([0,1];\X)\).
Then there exists a Borel set \(\tilde\Gamma\subseteq C([0,1];\X)\) such that \(\Gamma\subseteq\tilde\Gamma\) and
\begin{equation}\label{eq:tildeAM_vs_tp}
\ppi(\tilde\Gamma)\leq{\rm Comp}(\ppi)(\Lip(\ppi)+2\lambda){\rm AM}_\lambda(\Gamma)\quad\text{ for every }\ppi\in\Pi(\X).
\end{equation}
In particular, if a set 
\(\mathcal N\subseteq C([0,1];\X)\) satisfies \({\rm AM}_\lambda(\mathcal N)=0\), then there
exists a Borel set \(\tilde{\mathcal N}\subseteq C([0,1];\X)\) such that \(\mathcal N\subseteq\tilde{\mathcal N}\) and
\[
\ppi(\tilde{\mathcal N})=0\quad\text{ for every }\ppi\in\Pi(\X).
\]
\end{proposition}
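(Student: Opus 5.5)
The idea is to build \(\tilde\Gamma\) from near-optimal admissible sequences, as a \(\liminf\)-superlevel set of Borel functionals on curves, and then to integrate against an arbitrary test plan using Fatou's lemma.

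If \({\rm AM}_\lambda(\Gamma)=\infty\), we take \(\tilde\Gamma\coloneqq C([0,1];\X)\) and there is nothing to prove, since \({\rm Comp}(\ppi)>0\). Otherwise, for each \(k\in\N\) we pick \((\rho^k_i)_i\in\mathcal{AM}_\lambda(\Gamma)\) with \(\limi_i\int\rho^k_i\,\d\mm\leq{\rm AM}_\lambda(\Gamma)+1/k\). Passing to a subsequence, we may assume \(\lim_i\int\rho^k_i\,\d\mm\) exists and equals this \(\liminf\); admissibility survives this, since a \(\liminf\) along a subsequence is at least the full \(\liminf\).

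For each \(i\), define the functional \(F^k_i\colon C([0,1];\X)\to[0,\infty]\) by
\[
F^k_i(\gamma)\coloneqq\lambda\big(\rho^k_i(\gamma_0)+\rho^k_i(\gamma_1)\big)+\int_0^1\rho^k_i(\gamma_t)|\dot\gamma_t|\,\d t
\]
when \(\gamma\) is Lipschitz (equivalently, absolutely continuous), and \(F^k_i(\gamma)\coloneqq+\infty\) otherwise. We then set
\[
\tilde\Gamma_k\coloneqq\Big\{\gamma:\limi_i F^k_i(\gamma)\geq1\Big\},\qquad \tilde\Gamma\coloneqq\bigcap_k\tilde\Gamma_k .
\]

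By \eqref{eq:intrinsic}, \(\int_\gamma\rho=\int_{\hat\gamma}\rho\) equals the time integral above, and the endpoint values agree. Hence every rectifiable \(\gamma\in\Gamma\) lies in each \(\tilde\Gamma_k\) by admissibility. A non-rectifiable \(\gamma\in\Gamma\) is not Lipschitz, so \(F^k_i(\gamma)=\infty\) and \(\gamma\in\tilde\Gamma_k\) as well. Therefore \(\Gamma\subseteq\tilde\Gamma\).

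The main technical step is Borel measurability. We need the map \((\gamma,t)\mapsto\rho(\gamma_t)|\dot\gamma_t|\) to be Borel on \(\LIP([0,1];\X)\times[0,1]\), so that by Tonelli the functional \(\gamma\mapsto\int_0^1\rho(\gamma_t)|\dot\gamma_t|\,\d t\) is Borel. The factor \(\rho\circ\e_t\) is Borel because \((\gamma,t)\mapsto\gamma_t\) is continuous. For the speed, we write \(|\dot\gamma_t|=\lims_{h\to0}\sfd(\gamma_{t+h},\gamma_t)/|h|\); restricting \(h\) to rationals is legitimate because the difference quotient is continuous in \(h\) for \(h\neq0\), so this is a countable limsup of continuous functions and hence Borel. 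Combined with the fact that \(\LIP([0,1];\X)\) is Borel, and that evaluation at \(0\) and \(1\) is continuous, each \(F^k_i\) is Borel, so \(\tilde\Gamma\) is Borel. A subtlety is that \(\rho\) may take the value \(\infty\), but Borel maps into \([0,\infty]\) are handled fine by Tonelli.

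For the estimate, fix \(\ppi\in\Pi(\X)\) and \(k\). Since \(\ppi\) is concentrated on curves with \(\Lip(\gamma)\leq\Lip(\ppi)\), we have \(|\dot\gamma_t|\leq\Lip(\ppi)\) for \(\ppi\)-a.e.\ \(\gamma\) and a.e.\ \(t\). Fatou's lemma, Tonelli and bounded compressibility \eqref{eq:compr_const} then give
\[
\ppi(\tilde\Gamma)\leq\ppi(\tilde\Gamma_k)\leq\int\limi_i F^k_i\,\d\ppi\leq\limi_i\int F^k_i\,\d\ppi
\leq\limi_i\Big(2\lambda\,{\rm Comp}(\ppi)+\Lip(\ppi){\rm Comp}(\ppi)\Big)\int\rho^k_i\,\d\mm .
\]
The last quantity is at most \({\rm Comp}(\ppi)(\Lip(\ppi)+2\lambda)({\rm AM}_\lambda(\Gamma)+1/k)\). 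Letting \(k\to\infty\) yields \eqref{eq:tildeAM_vs_tp}.

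The final claim follows at once: if \({\rm AM}_\lambda(\mathcal N)=0\), the Borel set \(\tilde{\mathcal N}\) produced above satisfies \(\ppi(\tilde{\mathcal N})=0\) for every \(\ppi\in\Pi(\X)\). Note that \(\tilde\Gamma\) does not depend on \(\ppi\), which is essential for this.
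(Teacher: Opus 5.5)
Your proof is correct and is essentially the paper's argument: near-optimal admissible sequences, \(\tilde\Gamma\) as a countable intersection of \(\liminf\)-superlevel sets, then Fatou, Tonelli, \(|\dot\gamma_t|\leq\Lip(\ppi)\) and bounded compressibility. The only difference is in the measurability step: the paper replaces the \(\rho_i\) with lower semicontinuous majorants via Vitali--Carath\'eodory, whereas you show directly that \((\gamma,t)\mapsto\rho(\gamma_t)|\dot\gamma_t|\) is Borel on \(\LIP([0,1];\X)\times[0,1]\) and set \(F^k_i=+\infty\) off \(\LIP([0,1];\X)\). That works, and Tonelli applies because \([0,1]\) is second countable, so the Borel \(\sigma\)-algebra of the product coincides with the product \(\sigma\)-algebra even when \(C([0,1];\X)\) is not separable.

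There are only two minor slips. First, Lipschitz is not equivalent to absolutely continuous; this is harmless, since you only use \(\LIP([0,1];\X)\). Second, when \({\rm AM}_\lambda(\Gamma)=\infty\) with \(\lambda=0\) and \(\Lip(\ppi)=0\), the right-hand side is \(0\cdot\infty\), so your ``nothing to prove'' tacitly reads it as \(\infty\). The paper also leaves this degenerate case implicit.
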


\begin{proof}
Given any \(n\in\N\), we take an \({\rm AM}_\lambda\)-admissible sequence \((\rho_i^n)_i\) for \(\Gamma\) such that
\[
\limi_{i\to\infty}\int_{\X}\rho_i^n\,\d\mm\leq{\rm AM}_\lambda(\Gamma)+\frac{1}{n}.
\]
Thanks to the Vitali--Carath\'{e}odory theorem, given a Borel function $\rho$ with
$\int_{\X} \rho\, \d\mm$ finite, we can find sequences of
lower semicontinuous functions that are larger than $\rho$ pointwise and approximate $\rho$ both pointwise $\mm$-a.e.\ and
in $L^1(\mm)$; hence we can assume that the functions $\rho_i^n$ are lower semicontinuous. It follows that the maps
$\mathscr{R}(X)\ni \gamma\mapsto \int_\gamma \rho_i^n$ and 
$\mathscr{R}(X)\ni \gamma\mapsto \rho_i^n\circ e_1(\gamma)+\rho_i^n\circ e_0(\gamma)$ are Borel maps, see 
also~\cite[Corollary~2.23]{AmbrosioIkonenPasqualetto}, where a stronger statement, that for every non-negative Borel function $g$
the map $\gamma\mapsto\int_\gamma g$ is Borel, is established using the above-mentioned lower semicontinuous approximations.
We then define the family of curves \(\Gamma_n\) as
\[
\Gamma_n\coloneqq\left\{\gamma\in C([0,1];\X)\;\middle|\;
\limi_{i\to\infty}\left(\lambda\big(\rho_i^n(\gamma_0)+\rho_i^n(\gamma_1)\big)+\int_\gamma\rho_i^n\right)\geq 1\right\}.
\]
Note that each set \(\Gamma_n\) is Borel and $\Gamma\subseteq\Gamma_n$, 
and so \(\tilde\Gamma\coloneqq\bigcap_{n\in\N}\Gamma_n\subseteq C([0,1];\X)\)
is Borel as well with $\Gamma\subseteq\tilde\Gamma$.
Making use of the inclusion \(\Gamma\subseteq\tilde\Gamma\) and of Fatou's lemma, for any \(\ppi\in\Pi(\X)\)
and \(n\in\N\) we obtain that
\begin{align*}
\ppi(\tilde\Gamma)&\leq\ppi(\Gamma_n)=\int_{C([0,1];\X)}\nchi_{\Gamma_n}\,\d\ppi
\leq\int_{C([0,1];\X)}\,\limi_{i\to\infty}\left(\lambda\big(\rho_i^n(\gamma_0)
+\rho_i^n(\gamma_1)\big)+\int_\gamma\rho_i^n\right)\,\d\ppi(\gamma)\\
&\leq\limi_{i\to\infty}\lambda\left(\int_{C([0,1];\X)}\rho_i^n\circ\e_0\,\d\ppi
+\int_{C([0,1];\X)}\rho_i^n\circ\e_1\,\d\ppi\right)+\int_{C([0,1];\X)}\,
\int_0^1\rho_i^n(\gamma_t)|\dot\gamma_t|\,\d t\,\d\ppi(\gamma)\\
&\leq\limi_{i\to\infty}\lambda\left(\int_{C([0,1];\X)}\rho_i^n\,\d(\e_0)_\#\ppi
+\lambda\int_{C([0,1];\X)}\rho_i^n\,\d(\e_1)_\#\ppi\right)+\Lip(\ppi)\int_0^1\!\!\!\int_{C([0,1];\X)}\rho_i^n\,\d(\e_t)_\#\ppi\,\d t\\
&\leq{\rm Comp}(\ppi)\big(2\lambda+\Lip(\ppi)\big)\limi_{i\to\infty}\int_{\X}\rho_i^n\,\d\mm\\
&\leq{\rm Comp}(\ppi)\big(2\lambda+\Lip(\ppi)\big)\left({\rm AM}_\lambda(\Gamma)+\frac{1}{n}\right),
\end{align*}
which gives \eqref{eq:tildeAM_vs_tp} by letting \(n\to\infty\). The last part of the claim immediately follows.
\end{proof}

Given any \(r,s\in[0,1]\) with \(r<s\), we define the map \({\rm restr}_r^s\colon C([0,1];\X)\to C([0,1];\X)\) as
\[
{\rm restr}_r^s(\gamma)_t\coloneqq\gamma_{(1-t)r+ts}\quad\text{ for every }\gamma\in C([0,1];\X)\text{ and }t\in[0,1].
\]
Note that \({\rm restr}_r^s\) is \(1\)-Lipschitz. Moreover, it is easy to check that for any \(\ppi\in\Pi(\X)\) we have
\begin{equation}\label{eq:restr_tp}
\ppi_r^s\coloneqq({\rm restr}_r^s)_\#\ppi\in\Pi(\X),\quad{\rm Comp}(\ppi_r^s)\leq{\rm Comp}(\ppi),
\quad\Lip(\ppi_r^s)\leq(s-r)\Lip(\ppi).
\end{equation}
\begin{remark}\label{rmk:tp_conc_on_spt}{\rm
Let \((\X,\sfd,\mm)\) be a metric measure space and \(\ppi\in\Pi(\X)\). Then we claim that
\[
\gamma([0,1])\subseteq{\rm spt}(\mm)\quad\text{ for }\ppi\text{-a.e.\ }\gamma\in C([0,1];\X).
\]
To see this, we argue as follows. By Corollay~\ref{cor:exceptional_AM} with the choice
of $N=X\setminus{\rm spt}(\mm)$, we know that $\AM_\lambda$-almost every $\gamma$ lies entirely inside ${\rm spt}(\mm)$ because the
support of $\mm$ is closed and $\mm(N)=0$. Now the claim follows from 
Proposition~\ref{prop:AM_vs_tp}. By Proposition~\ref{prop:AM_vs_tp} we know that
$\AM_\lambda$-negligible collections are $\ppi$-negligible.
\fr}\end{remark}
\subsection{Newtonian spaces \texorpdfstring{$\widetilde{N}^{1,1}(\X,\sfd,\mm)$}{tildeN11}, \texorpdfstring{$N^{1,1}(\X,\sfd,\mm)$}{N11} and upper gradients}

Given an $\mm$-measurable function $u:\X\to[-\infty,\infty]$ with $\int |u|\,\d\mm<\infty$, 
we say that $u\in \widetilde{N}^{1,1}(\X,\sfd,\mm)$ if there is a nonnegative Borel function $g\colon \X\to [0,\infty]$
with $\int_{\X} g \,\d\mm<\infty$ such that 
\begin{equation}\label{eq:upper-grad}
|u(\gamma_{b_\gamma}))-u(\gamma_{a_\gamma})|\le \int_\gamma g\quad\text{ for every }\gamma\in \mathscr R(\X).
\end{equation}
Since subcurves of $\gamma\in \mathscr R(\X)$ also belong to $\mathscr R(\X)$, we can also state the above condition as
\[
|u(\gamma_t)-u(\gamma_s)|\le \int_{\gamma_{|[s,t]}}\hskip -5pt g
\]
whenever $s,\,t\in[a_\gamma,b_\gamma]$ with $s\leq t$. Such a function $g$ is said to be an \emph{upper gradient} of $u$.
It is well known that $\lip_a$ (or even smaller notions of pseudo-modulus of the gradient, as the local Lipschitz
constant in \eqref{eq:def_slope}, see also Appendix~\ref{sec:genrel}) is an upper gradient for locally Lipschitz functions $u$,
see for instance~\cite[Lemma~6.2.6]{HKST}.
The above condition also should be interpreted to mean that $\int_\gamma g=\infty$ when
at least one of $u(\gamma_t)$ and $u(\gamma_s)$ belongs to the $\mm$-negligible set $N\coloneqq\{x\in\X\, :\, |u(x)|=\infty\}$.
This requirement avoids the issue of interpreting $\infty-\infty$  and, at the same time, ensures together with 
Lemma~\ref{lem:Fuglede1} that $\Mod(\Gamma_N)=0$ for any $u\in\tilde{N}^{1,1}(\X,\sfd,\mm)$, where we recall that $\Gamma_N$
stands for the collection of rectifiable curves that intersect $N$. 

Although the property of being in 
$\widetilde{N}^{1,1}(\X,\sfd,\mm)$ is not stable
under modification of the function on $\mm$-negligible sets,
thanks to Lemma~\ref{lem:Fuglede1},
we can modify the function on $\mm$-negligible sets $N$ for which $\Mod(\Gamma_N)=0$. See
Lemma~\ref{lem:exceptional} below for more on this.

For functions $u\in\widetilde{N}^{1,1}(\X,\sfd,\mm)$ we set
\[
\|u\|_{N^{1,1}(\X,\sfd,\mm)}:=\int_{\X}|u|\, \d\mm+\inf_g\int_{\X} g\,\d\mm,
\]
where the infimum is over all upper gradients $g$ of $u$. 
The collection $\widetilde{N}^{1,1}(\X,\sfd,\mm)$ is a vector space, and $\|\cdot\|_{N^{1,1}(\X,\sfd,\mm)}$ is a seminorm on this collection.
We set
\[
N^{1,1}(\X,\sfd,\mm)\coloneqq\widetilde{N}^{1,1}(\X,\sfd,\mm)/\hskip -3pt\sim,
\]
where $\sim$ is the 
equivalence relation given by $u\sim v$ when $\|u-v\|_{N^{1,1}(\X,\sfd,\mm)}=0$. The following lemma gives us a way to understand
the  equivalence relation $\sim$ (see for instance \cite{HKST}).
\begin{lemma}\label{lem:exceptional}
Suppose that $u\in \widetilde{N}^{1,1}(\X,\sfd,\mm)$ and $v:X\to[-\infty,\infty]$ is $\mm$-measurable. Then 
$v\in \widetilde{N}^{1,1}(\X,\sfd,\mm)$ with $v\sim u$ if only if the set $K\coloneqq\{u\neq v\}$ is $\mm$-negligible and
$\Mod(\Gamma_K)=0$.
\end{lemma}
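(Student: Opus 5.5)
We prove the two implications separately. Throughout we use Lemma~\ref{lem:Fuglede1}, together with the fact (recalled before the statement) that for $u\in\widetilde N^{1,1}$ the family $\Gamma_N$ of rectifiable curves meeting $N=\{|u|=\infty\}$ is $\Mod$-null.

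\emph{Sufficiency.} Assume $\mm(K)=0$ and $\Mod(\Gamma_K)=0$. Take an upper gradient $g$ of $u$ with $\int g\,\d\mm<\infty$. Lemma~\ref{lem:Fuglede1} gives a Borel $\rho_K\in L^1(\mm)$ with $\int_\gamma\rho_K=\infty$ for every $\gamma\in\Gamma_K$; similarly it gives $\rho_N$ for $N$. We show that $g+\rho_K$ is an upper gradient of $v$, checking \eqref{eq:upper-grad} one curve at a time:
\begin{itemize}
\item If $\gamma$ meets $K$, then $\int_\gamma(g+\rho_K)=\infty$ and there is nothing to prove.
\item Otherwise $v=u$ along $\gamma$, so the inequality for $g$ and $u$ transfers directly. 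This remains consistent with the convention for infinite values, because $\{|v|=\infty\}\setminus K\subseteq N$.
\end{itemize}
Since $\mm(K)=0$, we get $\int|v|\,\d\mm=\int|u|\,\d\mm<\infty$, so $v\in\widetilde N^{1,1}$. Applying the same argument to $u-v$, which vanishes off $K$ and is defined off $K\cup N$, the function $\varepsilon^{-1}\rho_K$ is an upper gradient of $u-v$ for every $\varepsilon>0$. Hence
\[
\|u-v\|_{N^{1,1}}\le 0+\varepsilon\int\rho_K\,\d\mm,
\]
and letting $\varepsilon\to0$ gives $u\sim v$.

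\emph{Necessity.} Assume $v\in\widetilde N^{1,1}$ and $\|u-v\|_{N^{1,1}}=0$. Set $w=u-v$. Then $\int|w|\,\d\mm=0$, so $\mm(K)=0$, where $K=\{w\ne0\}$ (with points where $w$ is undefined, or where $u,v$ are infinite, added to $K$; this only adds an $\mm$-null set whose curve family is already $\Mod$-null). There are upper gradients $g_j$ of $w$ with $\int g_j\,\d\mm<2^{-j}$. Put $\rho=\sum_j g_j$. Then $\rho\in L^1(\mm)$ and
\[
|w(\gamma_b)-w(\gamma_a)|\le 2^{-j}\ \text{-small multiples}\quad\text{in the sense that}\quad |w(\gamma_b)-w(\gamma_a)|\le\int_\gamma g_j\ \text{ for every } j.
\]
Consequently, on every rectifiable $\gamma$ with $\int_\gamma\rho<\infty$ we have $\int_\gamma g_j\to0$, so $w$ is constant along $\gamma$ and all its subcurves.

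The remaining task is to show that $\Mod(\Gamma_K)=0$. Let $\gamma\in\Gamma_K$ with $\int_\gamma\rho<\infty$. Two cases arise:
\begin{itemize}
\item If $\mathscr L^1(\hat\gamma^{-1}(\X\setminus K))>0$, then $\gamma$ passes through a point where $w=0$. Since $w$ is constant along $\gamma$, this contradicts $\gamma$ meeting $K$.
\item Otherwise $\gamma\in\Gamma_K^+$, which is $\Mod$-null by Lemma~\ref{Nages1}.
\end{itemize}
Hence
\[
\Gamma_K\subseteq\Big\{\gamma:\int_\gamma\rho=\infty\Big\}\cup\Gamma_K^+,
\]
and both families are $\Mod$-null (the first by Lemma~\ref{lem:Fuglede1}). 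By subadditivity, $\Mod(\Gamma_K)=0$.

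The main obstacle is the careful handling of infinite values and of the definition of $w$ on $N$. These contribute only $\Mod$-null families, via $\Gamma_N$, and must be absorbed into the exceptional set throughout.
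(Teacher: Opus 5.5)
Your proof is correct. It is the standard argument from \cite{HKST}, which the paper cites instead of giving its own proof: Lemma~\ref{lem:Fuglede1} for sufficiency, and for necessity the series $\rho=\sum_j g_j$ of upper gradients of $u-v$ with $\int g_j\,\d\mm<2^{-j}$, combined with $\Mod(\Gamma_K^+)=0$ from Lemma~\ref{Nages1}.

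Two small points need fixing. First, your step ``otherwise $\gamma\in\Gamma_K^+$'' only holds for non-constant $\gamma$, so $\Gamma_K$ must be read as a family of non-constant curves, as in Theorem~\ref{thm:identification_gf}. Under the paper's literal convention, $\Mod(\Gamma_K)=\infty$ as soon as $K\neq\varnothing$, and the statement could not hold. Second, in the sufficiency part the upper gradient of $u-v$ should be $\varepsilon(\rho_K+\rho_N)$ rather than $\varepsilon^{-1}\rho_K$. The extra $\rho_N$ is needed because $u-v$ is undefined on $N\setminus K$, as you note yourself.
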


Another useful property of functions in $\widetilde{N}^{1,1}(\X,\sfd,\mm)$ is the following.

\begin{theorem}\label{thm:identification_gf}
For any $f\in\widetilde{N}^{1,1}(\X,\sfd,\mm)$
there is a unique, up to $\mm$-negligible sets,
Borel function $g_u:\X\to[0,\infty]$ such that the 
collection of all non-constant compact rectifiable curves in $X$ for which the
upper gradient inequality~\eqref{eq:upper-grad} 
with $g=g_u$ fails is $\Mod$-negligible, and 
\[
\|u\|_{N^{1,1}(\X,\sfd,\mm)}=\int_{\X}|u|\, \d\mm+\int_{\X} g_u\, \d\mm.
\]
Furthermore, $g_u\leq g$ $\mm$-a.e. for any upper gradient $g$ of $u$.
\end{theorem}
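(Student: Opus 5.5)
The plan is the standard ``minimal $p$-weak upper gradient'' argument for $p=1$. I would work with $p$-weak upper gradients and then identify $g_u$ as the $\mm$-a.e.\ minimal one. Say that a Borel $g\geq 0$ is a \emph{$1$-weak upper gradient} of $u$ if \eqref{eq:upper-grad} holds for all $\gamma\in\mathscr R(\X)$ outside a $\Mod$-negligible family. Let $\mathcal D(u)$ denote the set of integrable $1$-weak upper gradients; it is nonempty because $u\in\widetilde N^{1,1}$.

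\textbf{Step 1: lattice property.} If $g_1,g_2\in\mathcal D(u)$ and $E$ is Borel, then $g\coloneqq g_1\nchi_E+g_2\nchi_{\X\setminus E}\in\mathcal D(u)$. To see this, discard the $\Mod$-null families where $g_1$ or $g_2$ fails on some subcurve; the family of curves having a subcurve on which the inequality fails is still null, since any admissible $\rho$ for the subcurve family is admissible for the larger one (equivalently, by Lemma~\ref{lem:Fuglede1}). For the remaining arc-length parametrised $\gamma$, the function $t\mapsto u(\hat\gamma_t)$ is absolutely continuous with $|(u\circ\hat\gamma)'|\leq g_j\circ\hat\gamma$ a.e.\ for $j=1,2$. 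Hence $|(u\circ\hat\gamma)'|\leq g\circ\hat\gamma$ a.e., and integrating gives the upper gradient inequality. The delicate point is that the upper gradient inequality on all subcurves yields absolute continuity when $\int_\gamma g_j<\infty$. Curves with $\int_\gamma g_j=\infty$ need no argument, and those with $\int_\gamma g_1<\infty$ but $\int_\gamma g_2=\infty$ are handled by using $g_1$ alone. The set $\{g_1\le g_2\}$ is Borel, so in particular $\min(g_1,g_2)\in\mathcal D(u)$.

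\textbf{Step 2: existence.} Let $m\coloneqq\inf_{g\in\mathcal D(u)}\int g\,\d\mm$ and choose $g_n\in\mathcal D(u)$ with $\int g_n\to m$. Replacing $g_n$ by $\min(g_1,\dots,g_n)$, we may assume the $g_n$ are decreasing; let $g_u\coloneqq\lim g_n$. Then $\int g_u=m$ by monotone convergence. For $g_u\in\mathcal D(u)$, Fuglede's lemma gives a subsequence with $\int_\gamma g_n\to\int_\gamma g_u$ for $\Mod$-a.e.\ $\gamma$; here we use $\|g_n-g_u\|_{L^1}\to0$, and the exceptional family is controlled via $\rho=\sum_k|g_{n_k}-g_u|$ with $\sum_k\|g_{n_k}-g_u\|_1<\infty$ together with Lemma~\ref{lem:Fuglede1}. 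A countable union of null families is null, so we can pass to the limit in the inequality.

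\textbf{Step 3: minimality and uniqueness.} If $g\in\mathcal D(u)$ (in particular any upper gradient) and $\mm(\{g<g_u\})>0$, then $\min(g,g_u)\in\mathcal D(u)$ has integral strictly less than $m$, which contradicts the definition of $m$. Hence $g_u\le g$ $\mm$-a.e. Uniqueness follows by applying this in both directions.

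\textbf{Step 4: the norm identity.} The inequality $\geq$ follows from minimality. For $\leq$, given $\varepsilon>0$ I would take $\rho$ from Lemma~\ref{lem:Fuglede1} with $\int\rho<\infty$ and $\int_\gamma\rho=\infty$ on the exceptional family of $g_u$. Then $g_u+\varepsilon\rho/(1+\int\rho)$ is a genuine upper gradient with integral at most $\int g_u+\varepsilon$. Some care is needed at curves meeting $\{|u|=\infty\}$, but that family has modulus zero, so it can be absorbed into $\rho$ in the same way.

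I expect the main obstacle to be Step 1, namely the pointwise a.e.\ derivative argument that makes the gluing of two weak upper gradients valid. Borel measurability is also a concern there: the statement allows $\mm$-measurable $u$, so I would ensure that the composition $u\circ\hat\gamma$ is handled via the upper gradient inequality on subcurves rather than through measurability of $u$.
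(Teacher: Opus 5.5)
The paper gives no proof of this statement; it cites Haj\l asz \cite[Theorem~7.16]{Hajlasz} and \cite{HKST}. Your plan is the standard argument from those references: a gluing lemma, then a decreasing minimizing sequence with Fuglede's lemma, then minimality via $\min(g,g_u)$, and finally adding a small multiple of a Fuglede function to get a genuine upper gradient. Steps~2--4 are fine as written. The problem is the treatment of curves with infinite line integrals in Step~1.

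On a curve $\gamma$ you must prove $|u(\gamma_{b_\gamma})-u(\gamma_{a_\gamma})|\leq\int_\gamma g$ with $g=g_1\nchi_E+g_2\nchi_{\X\setminus E}$. This integral can be finite even when $\int_\gamma g_1=\int_\gamma g_2=\infty$, so such curves do not ``need no argument''. If $\int_\gamma g_1<\infty=\int_\gamma g_2$, using $g_1$ alone only bounds the difference by $\int_\gamma g_1$, not by $\int_\gamma g$. Moreover, the bound $|(u\circ\hat\gamma)'|\leq g_2\circ\hat\gamma$ on $\hat\gamma^{-1}(\X\setminus E)$ cannot be extracted from the upper gradient inequality for $g_2$ when $g_2\circ\hat\gamma$ is not locally integrable.

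This genuinely fails curve by curve. Take $\X=[0,1]$ with Lebesgue measure and $u(x)=x$. Let $C\ni 0,1$ be a closed nowhere dense set with $\mathscr L^1(C)>0$ and gaps $(a_i,b_i)$. Put $G=\frac12$ on $C$ and $G(t)=(t-a_i)^{-1}+(b_i-t)^{-1}$ on $(a_i,b_i)$.
\begin{itemize}
\item Every nondegenerate interval meeting $C$ has infinite $G$-integral, and $G\geq 4$ on the gaps, so $G$ is an upper gradient of $u$.
\item Glue $g_1\equiv 1$ on $E=\X\setminus C$ with $g_2=G$ on $C$. On the unit segment you get $\int_\gamma g_1=1<\infty=\int_\gamma g_2$, which is exactly your case.
\item The glued function gives $\int_0^1 g=1-\frac12\mathscr L^1(C)<1$ along this segment, and the segment alone has modulus $1$.
\end{itemize}

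The repair is one line. Since $g_1+g_2\in\mathcal L^1(\X)$, the family $\{\gamma:\int_\gamma(g_1+g_2)=\infty\}$ is $\Mod$-negligible by Lemma~\ref{lem:Fuglede1}, with $\rho_\Gamma=g_1+g_2$. Add it to the exceptional family. On every remaining curve both $g_j\circ\hat\gamma$ are integrable, and your derivative argument goes through.

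This is the only place where integrability of the elements of $\mathcal D(u)$ is used, and the example shows it cannot be dropped. There $g_u=1$ a.e.\ while $G<g_u$ on $C$. So the minimality in Step~3 holds only for integrable upper gradients. That matches reading ``upper gradient'' in the statement as integrable, as in the paper's definition of $\widetilde N^{1,1}(\X)$.
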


The function $g_u$ is called the \emph{minimal weak upper gradient} of $u$. In this context, the existence of such a function was
established in \cite[Theorem~7.16]{Hajlasz}. 

In the sequel, when there is no ambiguity we use also the simpler notation $\widetilde{N}^{1,1}(\X)$, $N^{1,1}(\X)$.
\section{Notions of metric BV functions}
\subsection{BV functions via relaxation}

\begin{definition}[The space \(BV(\X)\)]
Let \((\X,\sfd,\mm)\) be a metric measure space and \(f\in L^1(\X)\). Given any open
set \(\Omega\subseteq\X\), we define \(|Df|(\Omega)\) by
\begin{equation}\label{eq:def_|Df|}
|Df|(\Omega)\coloneqq\inf\left\{\limi_{n\to\infty}\int_\Omega\lip_a(f_n)\,\d\mm\;\middle|
\;(f_n)_n\subseteq\LIP_{b,loc}(\Omega)\cap\mathcal L^1(\Omega),\,f_n\to f\text{ in }L^1(\Omega)\right\}.
\end{equation}
Then we declare that \(f\) belongs to \(BV(\X)\), provided \(|Df|(\X)<\infty\).
\end{definition}

\begin{remark}
{\rm Note that if $A\subseteq \X$ is not an open set, then $|Df|(A)$ is not defined using approximations $f_n\in \LIP_{b,loc}(A)$
unless $A$ is seen as a metric measure space in its own right and $f$ is in $BV(A)$ but is not expected to be in
$BV(\X)$. Given this ambiguity, in situations where we wish to emphasize that $f\in BV(A)$, we represent its BV-gradient by
$|D_Af|$.

Moreover, the notation $BV(\X)$ is itself ambiguous, as the $BV$ property depends on the
full metric measure structure. So, the
correct notation should be $BV(\X,\sfd,\mm)$, as we did for the
Newtonian spaces, and we use $BV(\X)$ and $|Df|(\X)$ for notational
simplicity. When we want to compare
the $BV$ property and the total variation with respect to different
metric measure structures we emphasize the full
dependence, as in Lemma~\ref{lem:localisation_BV_star}. Thus, the reference to $\X$ implicitly assumes
the metric measure space $(\X,\sfd,\mm)$.
\fr}\end{remark}

A non-negative Borel function $g$ on a metric measure space $(Z,\sfd_Z,\mu_Z)$ induces a weighted measure $\nu_g$ on $Z$
given by $\d\nu_g=g\, \d\mu_Z$. We will use the more descriptive notation $g\, \mu_Z$ to denote this weighted measure $\nu_g$.

Given a function \(f\in L^1(\X)\), arguing as in \cite{Miranda} (see also the proof of
Theorem~\ref{thm:cor_of_DeGiorgi-Letta}), it can be proved that the set-function \(|Df|\colon\tau(\X)\to[0,\infty]\)
defined in \eqref{eq:def_|Df|} can be uniquely extended to an outer regular Borel measure on \(\X\). We denote this
Borel measure also
by \(|Df|\) and we call this measure the \emph{total variation measure} of \(f\). If a 
sequence \((f_n)_n\subseteq\LIP_{b,loc}(\X)\cap L^1(\X)\) is chosen so that \(f_n\to f\) in \(L^1(\X)\) and
\(\int_{\X}\lip_a(f_n)\,\d\mm\to|Df|(\X)\), then we have that
\begin{equation}\label{eq:weak_conv_to_|Df|}
\lip_a(f_n)\mm\rightharpoonup|Df|\;\text{ weakly,}\quad\text{i.e.\ }\int_{\X} h\,\lip_a(f_n)\,\d\mm\to\int_{\X} h\,\d|Df|
\text{ for every }h\in C_b(\X).
\end{equation}
Indeed, we have that \(|Df|(U)\leq\limi_n(\lip_a(f_n)\mm)(U)=\limi_n\int_U\lip_a(f_n)\,\d\mm\) for every open set \(U\subseteq\X\),
and \((\lip_a(f_n)\mm)(\X)\to|Df|(\X)\), whence \eqref{eq:weak_conv_to_|Df|} follows by the Portmanteau theorem.
\begin{definition}[The space \(BV_N(\X)\)]\label{BV_Newtonian}
Let \((\X,\sfd,\mm)\) be a metric measure space and \(f\in L^1(\X)\). Given any open
set \(\Omega\subseteq\X\), we define \(|Df|_N(\Omega)=|D_\X f|_N(\Omega)\) as
\begin{equation}\label{eq:def_|Df|_N}
|Df|_N(\Omega)\coloneqq\inf\left\{\limi_{n\to\infty}\int_\Omega g_{f_n}\,\d\mm\;\middle|
\;(f_n)_n\subseteq N^{1,1}(\Omega),\,f_n\to f\text{ in }L^1(\Omega)\right\},
\end{equation}
where $g_{f_n}$ is the minimal weak upper gradient of $f_n$ in $\Omega$ in the sense of
Theorem~\ref{thm:identification_gf}.
Then we declare that \(f\) belongs to \(BV_N(\X)\), provided \(|Df|_N(\X)<\infty\).
\end{definition}

As in the case of $BV(\X)$, for $f\in L^1(\X)$ the above 
construction provides an outer regular Borel measure in $\X$,
denoted $|Df|_N$. Furthermore, since $\lip_a$ is an upper gradient for locally Lipschitz
functions, it is immediately seen that 
\begin{equation}\label{Mir_in_New}
BV(\X)\subseteq BV_N(\X)\qquad
\text{with}\qquad \text{$|Df|_N(\X)\leq|Df|(\X)$ for all $f\in BV(\X)$.}
\end{equation}
\begin{remark}[Locality properties] \label{rmk:local_property_BV}{\rm
Let \(\Omega\subseteq\X\) be open and \(f\in L^1(\X)\) be given. Note that both
\((\Omega,\sfd_{|\Omega\times\Omega},\mm_{|\mathscr B(\Omega)})\) and \(\X_\Omega\coloneqq(\X,\sfd,\mm\llcorner\Omega)\)
are metric measure spaces. We also have \(f_{|\Omega}\in L^1(\Omega)\) and \(f\in L^1(\X_\Omega)\).
It is easy to deduce from the definition that \(|D_\X f|(\Omega)=|D_\Omega f_{|\Omega}|(\Omega)=|D_{\X_\Omega}f|(\X)\).
Since the notion of minimal weak upper gradient is local on open sets, the same remarks above apply also
to the spaces $BV_N(X)$ of Definition~\ref{BV_Newtonian}.

In addition, given a Borel set \(E\subseteq\X\) and a function \(f\in BV(\X)\), it is easy to check that
\begin{equation}\label{eq:restr_BV}
f_{|E}\in BV(E),\qquad|Df_{|E}|(E)\leq|Df|(\X).
\end{equation}
Note however that the above inequality can be strict if $E$ is not an open set.
\fr}
\end{remark}
\begin{lemma}[A Leibniz formula for \(BV(\X)\)]
Let \((\X,\sfd,\mm)\) be a metric measure space. Fix \(f\in BV(\X)\) and \(\eta\in\LIP_b(\X)\).
Then it holds that \(\eta f\in BV(\X)\) and
\begin{equation}\label{eq:Leibniz_BV}
|D(\eta f)|\leq|\eta|\, |Df|+|f|\lip_a(\eta)\, \mm.
\end{equation}
\end{lemma}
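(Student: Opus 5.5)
The plan is to prove the inequality of measures on open sets first and then pass to Borel sets by outer regularity. Fix an open set \(\Omega\subseteq\X\) and a sequence \((f_n)_n\subseteq\LIP_{b,loc}(\Omega)\cap\mathcal L^1(\Omega)\) with \(f_n\to f\) in \(L^1(\Omega)\) and \(\int_\Omega\lip_a(f_n)\,\d\mm\to|Df|(\Omega)\). As a preliminary step, truncation will not be needed for \(\eta\), which is bounded. Then \(\eta f_n\in\LIP_{b,loc}(\Omega)\cap\mathcal L^1(\Omega)\) and \(\eta f_n\to\eta f\) in \(L^1(\Omega)\), since \(|\eta f_n-\eta f|\leq\sup|\eta|\,|f_n-f|\). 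By the Leibniz inequality \eqref{eq:Leibniz_lip_a},
\[
\int_\Omega\lip_a(\eta f_n)\,\d\mm\leq\int_\Omega|\eta|\lip_a(f_n)\,\d\mm+\int_\Omega|f_n|\lip_a(\eta)\,\d\mm .
\]
The second term converges to \(\int_\Omega|f|\lip_a(\eta)\,\d\mm\) because \(\lip_a(\eta)\leq\Lip(\eta)\) is bounded and \(f_n\to f\) in \(L^1\).

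For the first term I would apply the weak convergence \eqref{eq:weak_conv_to_|Df|}, used in the space \(\X_\Omega\), or equivalently in \(\Omega\) by the locality in Remark~\ref{rmk:local_property_BV}. It gives \(\lip_a(f_n)\mm\llcorner\Omega\rightharpoonup|Df|\llcorner\Omega\), and \(|\eta|\) is a bounded continuous function. Hence \(\int_\Omega|\eta|\lip_a(f_n)\,\d\mm\to\int_\Omega|\eta|\,\d|Df|\). Taking the liminf in the definition \eqref{eq:def_|Df|} yields
\[
|D(\eta f)|(\Omega)\leq\int_\Omega|\eta|\,\d|Df|+\int_\Omega|f|\lip_a(\eta)\,\d\mm\qquad\text{for every open }\Omega.
\]
With \(\Omega=\X\) this shows \(\eta f\in BV(\X)\).

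The last step extends the inequality to all Borel sets. The right-hand side \(\nu\coloneqq|\eta|\,|Df|+|f|\lip_a(\eta)\,\mm\) is a finite Borel measure. The measure \(|D(\eta f)|\) is outer regular by construction. For a Borel set \(E\) I take open sets \(U\supseteq E\), so that \(|D(\eta f)|(E)\leq|D(\eta f)|(U)\leq\nu(U)\). I expect the main obstacle here: I need \(\inf_{U\supseteq E}\nu(U)=\nu(E)\), that is, outer regularity of \(\nu\), while \(\mm\) is Radon but \(|Df|\) is only known to be outer regular.

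To handle this I would use that \(|Df|\) is outer regular and finite, so \(|\eta|\,|Df|\) is outer regular as well. Concretely, given \(\eps>0\), I pick an open \(U\supseteq E\) with \(|Df|(U\setminus E)<\eps\). Then \(\int_{U\setminus E}|\eta|\,\d|Df|\leq\sup|\eta|\,\eps\). Next, \(|f|\lip_a(\eta)\mm\) is absolutely continuous with respect to the Radon measure \(\mm\) and has finite mass. Outer regularity of \(\mm\) on the sets \(\{|f|\lip_a(\eta)\leq k\}\), combined with an absolute continuity argument, gives an open \(U'\supseteq E\) with small excess. Intersecting the two open sets gives \(\nu(U\cap U')\leq\nu(E)+C\eps\), and letting \(\eps\to0\) concludes the proof.
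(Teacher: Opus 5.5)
Your proof is correct. It differs from the paper's in how the inequality is localised to an open set.

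\textbf{Comparison with the paper.} The paper fixes one recovery sequence for $f$ on all of $\X$, so that $\lip_a(f_n)\mm\rightharpoonup|Df|$ in $\X$. Since $\nchi_U|\eta|$ is not a continuous test function, it passes to inner sets $U_k=\{x\in U:\sfd(x,\X\setminus U)>\varepsilon_k\}$ chosen so that your $\nu$ does not charge $\partial U_k$. It then tests with continuous $\psi_j\searrow\nchi_{\bar U_k}$ and lets $U_k\uparrow U$.

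You instead take a recovery sequence adapted to each open $\Omega$ and run the Portmanteau argument in the intrinsic space $(\Omega,\sfd,\mm)$. There the restriction of $|\eta|$ is a legitimate element of $C_b(\Omega)$, so the boundary-null approximation disappears entirely. The price is a locality fact: the relaxed measure of $f_{|\Omega}$ computed in $\Omega$ agrees with $|Df|$ on Borel subsets of $\Omega$. This holds because the relaxations on open $V\subseteq\Omega$ are literally the same and both measures are outer regular.

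\textbf{Two cautions.}

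First, use $\Omega$, not $\X_\Omega$; the two are not interchangeable. Your $f_n$ are defined only on $\Omega$ and need not extend to $\X$. Moreover, the relaxation in $\X_\Omega$ can charge $\partial\Omega$. For example, in $\R$ with $\Omega=(0,1)\cup(1,2)$ and $f=\nchi_{(1,2)}$, one has $|Df|(\Omega)=0$ but $|D_{\X_\Omega}f|(\X)=1$.

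Second, the step you flag as the main obstacle is not one. Every finite Borel measure on a metric space is outer regular. Closed sets are $G_\delta$, so the Borel sets that can be squeezed between a closed and an open set up to $\varepsilon$ form a $\sigma$-algebra containing the closed sets. This is exactly what the paper uses, tacitly, for the right-hand side. Your sketch via outer regularity of $\mm$ would in fact need more care as written. When $\mm(E)=\infty$, outer regularity of $\mm$ gives no open $U'\supseteq E$ with $\mm(U'\setminus E)$ small. You would have to localise on the finite-measure open sets of Remark~\ref{rmk:good_cover}, together with the $\mm$-null open set $\X\setminus{\rm spt}(\mm)$.
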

\begin{proof}
Take \((f_n)_n\subseteq\LIP_{b,loc}(\X)\cap L^1(\X)\) with \(f_n\to f\) in \(L^1(\X)\)
and \(\int_{\X}\lip_a(f_n)\,\d\mm\to|Df|(\X)\). Recall
that \(\lip_a(f_n)\mm\rightharpoonup|Df|\) by~\eqref{eq:weak_conv_to_|Df|}.
By~\eqref{eq:Leibniz_lip_a}, we have that
\((\eta f_n)_n\subseteq\LIP_{b,loc}(\X)\cap L^1(\X)\) and
\(\lip_a(\eta f_n)\leq|\eta|\lip_a(f_n)+|f_n|\lip_a(\eta)\) for every \(n\in\N\). 

Next, fix an open set \(U\subseteq\X\).
Given that the measure
\(\mu\coloneqq|\eta|\, |Df|+|f|\lip_a(\eta)\, \mm\) is a finite measure, for any \(k\in\N\)
we can find \(\varepsilon_k\in(0,1/k)\) such that \(\mu(\partial U_k)=0\), where the open set \(U_k\) is given by
\(U_k\coloneqq\{x\in U:\ \sfd(x,\X\setminus U)>\varepsilon_k\big\}\). Note that \(U=\bigcup_{k\in\N}U_k\). Now, fix \(k\in\N\).
We can find a sequence \((\psi_j)_j\) of continuous functions \(\psi_j\colon\X\to[0,1]\) satisfying
\(\psi_j(x)\searrow\nchi_{\bar U_k}(x)\) for all \(x\in\X\). Since \(\eta f_n\to\eta f\) in \(L^1(\X)\), we have
\begin{align*}
|D(\eta f)|(U_k)&\leq\limi_{n\to\infty}\int_U\lip_a(\eta f_n)\,\d\mm\leq\limi_{n\to\infty}\int_{\X}\psi_j\lip_a(\eta f_n)\,\d\mm\\
&\leq\lim_{n\to\infty}\int_{\X}\psi_j|\eta|\lip_a(f_n)\,\d\mm+\lim_{n\to\infty}\int_{\X}\psi_j|f_n|\lip_a(\eta)\,\d\mm\\
&=\int_{\X}\psi_j|\eta|\,\d|Df|+\int_{\X}\psi_j|f|\lip_a(\eta)\,\d\mm=\int_{\X}\psi_j\,\d\mu\quad\text{ for every }j\in\N.
\end{align*}
By the dominated convergence theorem, we deduce that
\[
|D(\eta f)|(U_k)\leq\lim_{j\to\infty}\int_{\X}\psi_j\,\d\mu=\mu(\bar U_k)=\mu(U_k)\leq\mu(U)\quad\text{ for every }k\in\N,
\]
whence it follows that \(|D(\eta f)|(U)=\sup_{k\in\N}|D(\eta f)|(U_k)\leq\mu(U)\). Thanks to the outer regularity of
\(|D(\eta f)|\) and \(\mu\), we conclude that \(|D(\eta f)|\leq\mu\). In particular, \(\eta f\in BV(\X)\).
\end{proof}

In order to compare the space $BV_N(\X)$
with other $BV$-spaces, the following result will be useful.

\begin{lemma}\label{lem:BV_in_BV_tildeam}
Let \((\X,\sfd,\mm)\) be a metric measure space, \(\Omega\subseteq\X\) be open,
and \(f\in \mathcal L^1(\Omega)\) with \([f]_\mm\in BV_N(\Omega)\). Then there exists a 
sequence \((f_n)_n\subseteq N^{1,1}(\Omega)\) and a Borel set \(N\subseteq\Omega\) with
\(\mm(N)=0\) such that \(\int_\Omega g_{f_n}\,\d\mm\to|\D f|_N(\Omega)\) and
\begin{equation}\label{eq:compitino}
|f(\gamma_{b_\gamma})-f(\gamma_{a_\gamma})|\leq\varliminf_{n\to\infty}\int_\gamma g_{f_n}
\quad\text{ if }\gamma\in\mathscr R(\X)\text{, }\gamma([a_\gamma,b_\gamma])\subseteq\Omega\text{ and }
\gamma_{a_\gamma},\gamma_{b_\gamma}\notin N.
\end{equation}
\end{lemma}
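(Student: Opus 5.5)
The plan is to pass to pointwise representatives, apply the upper gradient inequality for each $f_n$ along curves, and let $n\to\infty$ at the endpoints. By definition of $|Df|_N(\Omega)$ there are $(f_n)_n\subseteq N^{1,1}(\Omega)$ with $f_n\to f$ in $L^1(\Omega)$ and $\int_\Omega g_{f_n}\,\d\mm\to|Df|_N(\Omega)$. Passing to a subsequence does not change the limit of the energies, so we may assume $f_n\to f$ $\mm$-a.e. in $\Omega$. We fix representatives $f_n\in\widetilde N^{1,1}(\Omega)$ and Borel representatives of $g_{f_n}$. We then let $N_0\subseteq\Omega$ be a Borel $\mm$-null set such that outside $N_0$ the functions $f$ and $f_n$ are finite and $f_n(x)\to f(x)$. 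If, for each $n$, we knew the inequality
\[
|f_n(\gamma_{b_\gamma})-f_n(\gamma_{a_\gamma})|\leq\int_\gamma g_{f_n}
\]
for every $\gamma\in\mathscr R(\X)$ with $\gamma([a_\gamma,b_\gamma])\subseteq\Omega$ and endpoints outside $N$, then \eqref{eq:compitino} would follow by letting $n\to\infty$. Indeed, the left-hand side converges to $|f(\gamma_{b_\gamma})-f(\gamma_{a_\gamma})|$ when $\gamma_{a_\gamma},\gamma_{b_\gamma}\notin N\supseteq N_0$, while we keep the $\liminf$ on the right.

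So the whole point is to choose, for each $n$, a Borel representative of $g_{f_n}$ that is a genuine upper gradient of $f_n$ on $\Omega$, up to exceptional endpoints collected in a null set. By Theorem~\ref{thm:identification_gf}, the family $\Gamma_n$ of curves in $\Omega$ along which the inequality with $g_{f_n}$ fails is $\Mod$-null. Lemma~\ref{lem:Fuglede1} then gives $\rho_n\in\mathcal L^1(\Omega)$, $\rho_n\geq0$, with $\int_\gamma\rho_n=\infty$ for all $\gamma\in\Gamma_n$. I will try to replace $g_{f_n}$ by a representative $\tilde g_n$ which is still equal to $g_{f_n}$ $\mm$-a.e. (hence has the same integral, and is still ``the'' minimal weak upper gradient) but which is $+\infty$ on a suitable $\mm$-null set $Z_n$. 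Since $\rho_n\in L^1$, the set $\{\rho_n=\infty\}$ is $\mm$-null, and I put it into $Z_n$. Then all curves in $\Gamma_n$ that spend positive length in $Z_n$ are handled, because $\int_\gamma\tilde g_n=\infty$ on them. The curves in $\Gamma_n$ with $\mathscr L^1(\hat\gamma^{-1}(Z_n))=0$ remain to be treated.

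This step is the main obstacle. For those remaining curves I plan to use the $N^{1,1}$-structure of $f_n$ rather than $g_{f_n}$ alone. I first choose upper gradients $h_{n,k}\geq g_{f_n}$ of $f_n$ with $\|h_{n,k}-g_{f_n}\|_{L^1}\to0$ as $k\to\infty$. I can further assume $h_{n,k}-g_{f_n}\to0$ pointwise off an $\mm$-null Borel set $Z_n'$, with a summable rate in $k$. The set $Z_n'$ is added to $Z_n$, and all $Z_n$ are collected, together with $N_0$, into $N$. For a curve $\gamma$ with endpoints outside $N$ I then write
\[
|f_n(\gamma_{b_\gamma})-f_n(\gamma_{a_\gamma})|\leq\int_\gamma h_{n,k}.
\]
I then try to pass $k\to\infty$ inside the path integral by dominated or monotone convergence along $\gamma$, using that $\sum_k(h_{n,k}-g_{f_n})$ has finite line integral for curves outside a $\Mod$-null family, which is again killed by making $\tilde g_n=\infty$ on the corresponding non-integrability set. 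If this convergence along curves fails for some residual family, I would instead use the freedom in choosing the approximating sequence: replace $f_n$ by suitable functions whose upper gradients are exactly known (for instance truncations), so that the representative can be fixed once and for all. Everything outside this step is routine.
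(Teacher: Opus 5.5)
Your first paragraph is exactly the paper's proof. It takes an optimal sequence, passes to a subsequence converging pointwise off an $\mm$-null Borel set $N$, applies the upper gradient inequality to each $f_n$, and passes to the limit at the endpoints. The paper then uses Theorem~\ref{thm:identification_gf} and the $\sigma$-subadditivity of $\Mod$ to get one family $\Gamma\subseteq\mathscr R(\Omega)$ with $\Mod(\Gamma)=0$, off which $|f_n(\gamma_{b_\gamma})-f_n(\gamma_{a_\gamma})|\le\int_\gamma g_{f_n}$ holds for all $n$. It proves \eqref{eq:compitino} only for $\gamma\notin\Gamma$. That $\Mod$-a.e.\ version is all that is ever used: in Theorem~\ref{thm:incl_BV_spaces} the lemma only serves to show that $(g_{f_n})_n$ is an ${\rm am}_0$-bounding sequence, which is an ${\rm AM}_0$-a.e.\ notion, and ${\rm AM}_0\le\Mod$. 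Read this way, your first paragraph is already a complete proof.

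The rest of your proposal tries to remove the exceptional family by re-choosing representatives, and that part has a genuine gap. Redefining $g$ on an $\mm$-null set $Z$ changes $\int_\gamma g$ only for $\gamma\in\Gamma^+_Z$, but a $\Mod$-null family need not be contained in any $\Gamma^+_Z$. The $h_{n,k}$ device does not help either. It merely replaces $\Gamma_n$ by the $\Mod$-null family of curves with $\int_\gamma\sum_k(h_{n,k}-g_{f_n})=\infty$. That is a condition on curves, not a null set of points on which you could put $+\infty$.

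In fact, the representative you want need not exist. Work in $\R^2$ with Lebesgue measure and let $u(x)=\psi(x/|x|)\eta(|x|)$. Here $\psi$ is smooth on the unit circle, equal to $1$ on a short arc $I$ and to $0$ on $-I$, and $\eta$ is a smooth cut-off equal to $1$ on $[0,1]$.
\begin{itemize}
\item Let $\rho(x)=1/(|x|\log(1/|x|))$ on $B_{1/2}(0)$ and $\rho=0$ elsewhere. Since $\int_\gamma\rho=\infty$ for every non-constant curve through $0$, the function $|\nabla u|+\varepsilon\rho$ is an integrable upper gradient of $u$ for every $\varepsilon>0$.
\item Hence $u\in\widetilde N^{1,1}(\R^2)$, and $g_u\le|\nabla u|=0$ a.e.\ on the open double cone $C\subseteq B_1(0)$ over $I\cup(-I)$.
\item Take any Borel representative $\tilde g$ of $g_u$ and any Lebesgue-null $N$. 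By polar coordinates, for a.e.\ $v\in I$ the line $\R v$ meets $N\cup(C\cap\{\tilde g\neq 0\})$ in an $\mathscr H^1$-null set.
\item Pick $r,s\in(0,1)$ with $rv,-sv\notin N$, and let $\gamma$ be the segment from $-sv$ to $rv$. Then $\int_\gamma\tilde g=0$, while $|u(\gamma_{b_\gamma})-u(\gamma_{a_\gamma})|=1$.
\end{itemize}

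So your construction already fails for the constant (and optimal) sequence $f_n=u$ approximating $f=u$. Your fallback of changing the approximating sequence is not carried out, and there is no general mechanism for it in an arbitrary metric measure space. Drop that part and keep the $\Mod$-null exceptional family, as the paper does.
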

\begin{proof} 
Note that the condition $\gamma\in\mathscr R(\X)$ together with $\gamma([a_\gamma,b_\gamma])\subseteq\Omega$
is equivalent to the condition that $\gamma\in\mathscr R(\Omega)$, and so we will continue to use the simpler notation
$\gamma\in\mathscr R(\Omega)$.

Take any \((f_n)_n\subseteq\tilde{N}^{1,1}(\Omega)\) such that \(\int_\Omega|f_n-f|\,\d\mm\to 0\) 
and \(\int_\Omega g_{f_n}\,\d\mm\to|\D f|_N(\Omega)\). Up to passing to a non-relabelled subsequence, we can
assume that there exists an \(\mm\)-null Borel set \(N\subseteq\Omega\) such that \(f(x)=\lim_n f_n(x)\) for every
\(x\in\Omega\setminus N\). Also, we can find a
set $\Gamma\subseteq\mathscr R(\Omega)$ with $\Mod(\Gamma)=0$ such
that the upper gradient property holds for $(f_n,g_{f_n})$ for any 
$\gamma\in\mathscr R(\Omega)\setminus\Gamma$, see Theorem~\ref{thm:identification_gf} above.
Now, fix a curve \(\gamma\in\mathscr R(\Omega)\setminus\Gamma\) satisfying \(\gamma([a_\gamma,b_\gamma])\subseteq\Omega\)
and \(\gamma_{a_\gamma},\gamma_{b_\gamma}\notin N\). For \(n\in\N\),  the upper gradient property of $g_{f_n}$ gives
\(|f_n(\gamma_{a_\gamma})-f_n(\gamma_{b_\gamma})|
\leq\int_\gamma g_{f_n}\).
Therefore, we can conclude that
\[
|f(\gamma_{b_\gamma})-f(\gamma_{a_\gamma})|=\lim_{n\to\infty}|f_n(\gamma_{b_\gamma})-f_n(\gamma_{a_\gamma})|
\leq\varliminf_{n\to\infty}\int_\gamma g_{f_n},
\]
thus proving the statement.
\end{proof}
In the final part of this section we discuss one more relaxed notion of $BV$ function. 
Even though this notion is hard to localize, it provides the smaller space in the chain of inclusions \eqref{chain1}, \eqref{chain2},
\eqref{chain3}, and is equal to
the larger space \(BV_{tp}^\star(\X)\) in complete metric measure spaces.

Given a Borel function $f$ on $\X$, by $[f]_{\mm}$ we mean the class of all functions $f_0$ on $\X$ that agree
with $f$ $\mm$-a.e.~in $\X$. If $f\in\mathcal{L}^1(\X)$, then $[f]_{\mm}\in L^1(X)$. If we have a sequence of functions
$f_n\in\mathcal{L}^1(X)$, by the less cumbersome notation
$f_n\to f$ in $L^1(X)$ we mean that $[f_n]_{\mm}\to[f]_{\mm}$ in $L^1(\X)$, or more specifically,
$\lim_{n\to\infty}\, \int_{\X}|f-f_n|\,\d\mm=0$.

\begin{definition}[The space \(BV^\star(\X)\)]\label{def:BV-star}
Let \((\X,\sfd,\mm)\) be a metric measure space and \(f\in \mathcal{L}^1(\X)\). We define the
quantity \(|Df|^\star(\X)=|D_\X f|^\star(\X)\in[0,\infty]\) as
\[
|Df|^\star(\X)\coloneqq\inf\left\{\limi_{n\to\infty}\int_{\X}\lip_a(f_n)\,\d\mm\;\middle|
\;(f_n)_n\subseteq\LIP_{bs}(\X)\cap \mathcal{L}^1(\X),\,f_n\to f\text{ in } L^1(\X)\right\}.
\]
Then we declare that \([f]_{\mm}\) belongs to \(BV^\star(\X)\) provided \(|Df|^\star(\X)<\infty\). 
\end{definition}
Unlike in the setting of Newton-Sobolev spaces $N^{1,1}(\X)$, here we are allowed to perturb BV functions arbitrarily on
sets of $\mm$-measure zero. Hence, to simplify notation, we will continue to refer to $[f]_{\mm}\in BV(\X)$ also as
$f\in BV(\X)$ without ambiguity.

Given that \(\LIP_{bs}(\X)\subseteq\LIP_{b,loc}(\X)\), it can be readily checked that
\begin{equation}\label{eq:triv_ineq_BV}
|Df|(\X)\leq|Df|^\star(\X)\quad\text{ for every }f\in L^1(\X),
\end{equation} 
in particular \(BV^\star(\X)\subseteq BV(\X)\). The following example shows that the inclusion can be strict
and even that, in general, the definition of \(|Df|^\star\)
cannot be localised in order to obtain a total variation measure (in fact, not even a finitely-additive measure).
\begin{example}\label{ex:different_BV}{\rm
Let us equip the space \(\X\coloneqq\bigcup_{k\in\N}(k-2^{-k},k+2^{-k})\setminus\{k\}\) with the Euclidean distance \(\sfd\)
and the restricted Lebesgue measure \(\mm\). Note that \((\X,\sfd)\) is not complete, but it is locally complete.
Let us consider the function
\[
f\coloneqq\sum_{k\in\N}\nchi_{(k,k+2^{-k})}\in BV(\X).
\]
As it is proved in \cite[Example 3.6]{Pas:Sod:25}, we have that \(|Df|(\X)=0\) 
and \(|Df|^\star(\X)=\infty\). Thus in particular,
\(f\notin BV^\star(\X)\). Furthermore,
letting \(\Omega\coloneqq(1,3/2)\) and \(g\coloneqq\nchi_\Omega\in L^1(\X)\), we have
\(\Omega,\X\setminus\Omega\in\tau(\X)\) with
\(|Dg_{|\Omega}|^\star(\Omega)=|Dg_{|\X\setminus\Omega}|^\star(\X\setminus\Omega)=0\),
whereas \(|Dg|^\star(\X)=1\).
\fr}\end{example}
\begin{lemma}\label{lem:localisation_BV_star}
Let \((\X,\sfd,\mm)\) be a metric measure space and \(f\in \mathcal L^1(\X)\), with $\tilde{f}=[f]_\mm$. Assume that \(\Omega\subseteq\X\)
is an open set such that \(\distd(\{f\neq 0\},\X\setminus\Omega)>0\)
and \(\tilde{f}\in BV^\star(\X_\Omega)\),
where \(\X_\Omega\coloneqq(\X,\sfd,\mm\llcorner\Omega)\). Then it holds that \(\tilde{f}\in BV^\star(\X)\) and
\(|D_\X \tilde{f}|^\star(\X)=|D_{\X_\Omega}\tilde{f}|^\star(\X)\).
\end{lemma}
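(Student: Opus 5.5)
We prove the two inequalities $|D_\X\tilde f|^\star(\X)\le|D_{\X_\Omega}\tilde f|^\star(\X)$ and $|D_{\X_\Omega}\tilde f|^\star(\X)\le|D_\X\tilde f|^\star(\X)$ separately. Set $\delta\coloneqq\distd(\{f\neq0\},\X\setminus\Omega)>0$, $F\coloneqq\{f\neq 0\}$ and $\Omega'\coloneqq\{x:\distd(x,F)<\delta/2\}$, so that $\distd(\Omega',\X\setminus\Omega)\ge\delta/2$. Let $\eta\in\LIP_b(\X;[0,1])$ be the cut-off $\eta(x)\coloneqq\max\{0,1-4\distd(x,F)/\delta\}$. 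Then $\eta=1$ on $F$, $\eta=0$ outside $\{\distd(\cdot,F)<\delta/4\}\subseteq\Omega'$, and $\Lip(\eta)\le 4/\delta$.

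\textbf{The inequality $|D_{\X_\Omega}\tilde f|^\star\le|D_\X\tilde f|^\star$.} This direction is immediate. Any sequence $(f_n)\subseteq\LIP_{bs}(\X)$ with $f_n\to f$ in $L^1(\mm)$ also converges in $L^1(\mm\llcorner\Omega)$. Moreover, $\int\lip_a(f_n)\,\d(\mm\llcorner\Omega)\le\int\lip_a(f_n)\,\d\mm$, because $\lip_a$ depends only on the metric and not on the measure.

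\textbf{The inequality $|D_\X\tilde f|^\star\le|D_{\X_\Omega}\tilde f|^\star$.} Take $(f_n)\subseteq\LIP_{bs}(\X)$ with $f_n\to f$ in $L^1(\mm\llcorner\Omega)$ and $\int_\Omega\lip_a(f_n)\,\d\mm\to|D_{\X_\Omega}\tilde f|^\star(\X)$. We replace $f_n$ by $\eta f_n\in\LIP_{bs}(\X)$.

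First, convergence: $\eta f_n$ vanishes outside $\Omega$, and on $\Omega$ we have $|\eta f_n-f|=\eta|f_n-f|\le|f_n-f|$ since $\eta f=f$. Hence $\eta f_n\to f$ in $L^1(\mm)$.

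Second, the energy. By \eqref{eq:Leibniz_lip_a},
\[
\lip_a(\eta f_n)\le\eta\,\lip_a(f_n)+|f_n|\lip_a(\eta),
\]
and $\lip_a(\eta f_n)=0$ on the open set $\{\distd(\cdot,F)>\delta/4\}$, which contains $\X\setminus\Omega$. The naive bound therefore gives
\[
\int_\X\lip_a(\eta f_n)\,\d\mm\le\int_\Omega\lip_a(f_n)\,\d\mm+\frac4\delta\int_{\Omega\setminus F}|f_n|\,\d\mm,
\]
and the last term tends to $0$ because $f_n\to f=0$ in $L^1(\Omega\setminus F)$. (If $F$ is not Borel, we replace it by a Borel set containing it, possibly after changing $f$ on an $\mm$-null set, keeping the distance condition.) Taking the $\liminf$ gives the claim.

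\textbf{Main obstacle.} The delicate point is the Leibniz step near $\partial F$ and the exact cut-off construction. The factor $\eta\le1$ is harmless. The term $|f_n|\lip_a(\eta)$ needs $\lip_a(\eta)$ to be controlled by a bounded function supported in $\Omega$, which holds because $\lip_a(\eta)\le 4/\delta$ everywhere and $\lip_a(\eta)=0$ on the open set $\{\distd(\cdot,F)>\delta/4\}$. No completeness is needed, and the argument uses only that $\lip_a$ is defined metrically and that $\eta$ is globally Lipschitz with support well inside $\Omega$.
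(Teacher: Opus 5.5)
Your strategy is the paper's: multiply a near-optimal sequence for $\X_\Omega$ by a Lipschitz cut-off and apply \eqref{eq:Leibniz_lip_a}. The easy inequality $|D_{\X_\Omega}\tilde f|^\star(\X)\le|D_\X\tilde f|^\star(\X)$ is fine. However, your key estimate
\[
\int_\X\lip_a(\eta f_n)\,\d\mm\le\int_\Omega\lip_a(f_n)\,\d\mm+\frac4\delta\int_{\Omega\setminus F}|f_n|\,\d\mm
\]
does not follow from the Leibniz rule and is false in general. To confine the term $|f_n|\lip_a(\eta)$ to $\Omega\setminus F$ you need $\lip_a(\eta)=0$ on $F$. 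Your $\eta=\max\{0,1-4\distd(\cdot,F)/\delta\}$ is $\equiv1$ only on $\overline F$, not on a neighbourhood of $F$. At points of $F$ that are limits of points outside $\overline F$, $\lip_a(\eta)$ (even $\lip(\eta)$) can equal $4/\delta$, and there $|f_n|\lip_a(\eta)\to|f|\lip_a(\eta)\neq0$.

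Concretely, take $\X=\{0\}\cup\{1/k:k\in\N\}\cup\{5\}\subseteq\R$, with $\mm$ giving mass $1$ to $0$ and to $5$ and mass $2^{-k}$ to $1/k$. Let $\Omega=\X\setminus\{5\}$ and $f=\nchi_{\{0\}}$, so $F=\{0\}$ and $\delta=5$. The functions $f_n\coloneqq\nchi_{\{0\}\cup\{1/k:\,k>n\}}\in\LIP_{bs}(\X)$ satisfy $\lip_a(f_n)\equiv0$ and $f_n\to f$ in $L^1(\mm\llcorner\Omega)$. So they form an optimal sequence and $|D_{\X_\Omega}\tilde f|^\star(\X)=0$. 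But $(\eta f_n)(0)=1$ and $(\eta f_n)(1/k)=1-\frac{4}{5k}$ for $k>n$, so $\lip_a(\eta f_n)(0)\ge\frac45$. Hence $\int_\X\lip_a(\eta f_n)\,\d\mm\ge\frac45$ for every $n$, while the right-hand side of your estimate equals $\frac45\,2^{-n}$.

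The missing ingredient is exactly what the paper stresses: $\{\eta=1\}$ must contain a neighbourhood of $\{f\neq0\}$. For instance, take $\eta\coloneqq\min\{1,\max\{0,2-4\distd(\cdot,F)/\delta\}\}$. It is $\equiv1$ on the open set $\{\distd(\cdot,F)<\delta/4\}\supseteq F$ and $\equiv0$ on the open set $\{\distd(\cdot,F)>\delta/2\}\supseteq\X\setminus\Omega$. Hence $\eta\le\nchi_\Omega$ and $\lip_a(\eta)\le\frac4\delta\nchi_{\{\delta/4\le\distd(\cdot,F)\le\delta/2\}}$, and the latter set is contained in $\Omega\setminus F$. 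With this choice your displayed estimate holds pointwise via \eqref{eq:Leibniz_lip_a}, and the rest of your argument goes through unchanged.

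The other feature of your cut-off, vanishing on a whole neighbourhood of $\X\setminus\Omega$, is worth keeping. The paper's $\eta=(r^{-1}\distd(\cdot,\X\setminus\Omega))\wedge1$ vanishes only on $\X\setminus\Omega$ itself, so $\lip_a(\eta)$ can be $1/r$ on $\partial\Omega$. The paper's first inequality integrates against $\mm\llcorner\Omega$ rather than $\mm$, which tacitly discards $\partial\Omega$. That step needs justification when $\mm(\partial\Omega)>0$, because convergence in $L^1(\mm\llcorner\Omega)$ gives no control of $f_n$ on $\partial\Omega$. A cut-off with both properties, as above, gives a complete proof.
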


\begin{proof}
Fix $r>0$ such that $r<\distd(\{f\neq 0\},\X\setminus\Omega)$,
and set \(\eta\coloneqq(r^{-1}\, \distd(\cdot,\X\setminus\Omega))\wedge 1\).
Note that \(\{\eta=1\}\) contains a neighbourhood of \(\{f\neq 0\}\). Take any \((f_n)_n\subseteq\LIP_{bs}(\X)\cap L^1(\X_\Omega)\)
such that \(f_n\to \tilde{f}\) in \( L^1(\X_\Omega)\) and \(\int_{\X}\lip_a(f_n)\,\d(\mm\llcorner\Omega)\to|D_{\X_\Omega}\tilde{f}|^\star(\X)\).
Letting \(g_n\coloneqq\eta f_n\in\LIP_{bs}(\X)\cap L^1(\X)\) for every \(n\in\N\), we clearly have that
\(g_n\to \eta \tilde{f}=\tilde{f}\) in \(L^1(\X)\). Furthermore, \eqref{eq:Leibniz_lip_a} yields
\[
\lip_a(g_n)\leq\eta\,\lip_a(f_n)+|f_n|\lip_a(\eta)\leq\nchi_\Omega\,\lip_a(f_n)+\frac{1}{r}\nchi_{\Omega\cap\{f=0\}}|f_n|\quad\text{ on }\X.
\]
Integrating with respect to \(\mm\) and then letting \(n\to\infty\), we conclude that
\begin{align*}
|D_\X \tilde{f}|^\star(\X)&\leq\limi_{n\to\infty}\int_{\X}\lip_a(g_n)\,\d(\mm\llcorner\Omega)\\
&\leq\lim_{n\to\infty}\int_{\X}\lip_a(f_n)\,\d(\mm\llcorner\Omega)+\frac{1}{r}\lim_{n\to\infty}\int_{\{f=0\}}|f_n|\,\d(\mm\llcorner\Omega)
=|D_{\X_\Omega}\tilde{f}|^\star(\X),
\end{align*}
thus in particular \(\tilde{f}\in BV^\star(\X)\). Finally, the converse inequality
\(|D_{\X_\Omega}\tilde{f}|^\star(\X)\leq|D_\X \tilde{f}|^\star(\X)\) easily follows from the fact that \(\mm\llcorner\Omega\leq\mm\).
\end{proof}
\begin{theorem}[Invariance properties of \(BV(\X)\) and \(BV^\star(\X)\)]\label{thm:invar_BV}
Let \((\X,\sfd,\mm)\) be a metric measure space and let \(f\in L^1(\X)\). Then \(f\in BV(\X)\) if
and only if \(f_{|{\rm spt}(\mm)}\in BV({\rm spt}(\mm))\) and, analogously, \(f\in BV^\star(\X)\) 
if and only if \(f_{|{\rm spt}(\mm)}\in BV^\star({\rm spt}(\mm))\). 
Moreover, it holds that
\begin{align}
\label{eq:inv_BV}
|Df|(\X)=|Df_{|{\rm spt}(\mm)}|({\rm spt}(\mm))&\quad\text{ for every }f\in BV(\X),\\
\label{eq:inv_BV_star}
|Df|^\star(\X)=|Df_{|{\rm spt}(\mm)}|^\star({\rm spt}(\mm))&\quad\text{ for every }f\in BV^\star(\X).
\end{align}
\end{theorem}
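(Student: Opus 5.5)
Write \(S\coloneqq{\rm spt}(\mm)\), which is closed by Proposition~\ref{prop:main_Radon}\ref{it:spt_closed} and satisfies \(\mm(\X\setminus S)=0\) by Proposition~\ref{prop:main_Radon}\ref{it:spt_concentr}. Hence \(L^1(\X)\) and \(L^1(S)\) are identified via restriction. I would prove each of the two equalities by proving the two inequalities separately, one by restriction and one by extension.

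\textbf{Restriction inequality (\(\geq\)).} Take \(f_n\in\LIP_{b,loc}(\X)\cap\mathcal L^1(\X)\) (resp.\ \(\LIP_{bs}(\X)\)) with \(f_n\to f\) in \(L^1(\X)\). Then \((f_n)_{|S}\) lies in \(\LIP_{b,loc}(S)\) (resp.\ \(\LIP_{bs}(S)\)), and \((f_n)_{|S}\to f_{|S}\) in \(L^1(S)\). Since the suprema defining \(\lip_a\) over a smaller set are smaller, we have \(\lip_a((f_n)_{|S})\leq\lip_a(f_n)\) on \(S\). Integrating against \(\mm\), which lives on \(S\), gives \(|Df_{|S}|(S)\leq|Df|(\X)\) and the analogous inequality for \(|D\cdot|^\star\). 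For \(BV\) this is also \eqref{eq:restr_BV}.

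\textbf{Extension inequality (\(\leq\)).} Take an optimal sequence \(h_n\) on \(S\). For \(BV\), with \(h_n\in\LIP_{b,loc}(S)\), I would apply Theorem~\ref{thm:loc_Lip_extension}, which is allowed because \(S\) is closed. It gives \(\bar h_n\in\LIP_{b,loc}(\X)\) with \(\lip_a(\bar h_n)=\lip_a(h_n)\) on \(S\) and the same bounds. Integrability is automatic, since \(\mm\) is concentrated on \(S\), so \(\int_\X|\bar h_n-f|\,\d\mm=\int_S|h_n-f|\,\d\mm\) and \(\int_\X\lip_a(\bar h_n)\,\d\mm=\int_S\lip_a(h_n)\,\d\mm\). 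This yields \(|Df|(\X)\leq|Df_{|S}|(S)\).

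For \(BV^\star\), with \(h_n\in\LIP_{bs}(S)\), I would use Theorem~\ref{thm:DiMa_Gig_Pra}. Its boundedness clause gives \(\bar h_n\in\LIP_b(\X)\) with bounded support and \(\lip_a(\bar h_n)=\lip_a(h_n)\) on \(S\). Hence \(\bar h_n\in\LIP_{bs}(\X)\cap\mathcal L^1(\X)\), and the same computation gives \(|Df|^\star(\X)\leq|Df_{|S}|^\star(S)\).

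Combining both inequalities gives \eqref{eq:inv_BV} and \eqref{eq:inv_BV_star}, and the equivalences of membership follow, since finiteness transfers in each direction.

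\textbf{Main obstacle.} The delicate point is that \(\lip_a\) computed in \(S\) and in \(\X\) may differ at points of \(S\). This is exactly why plain McShane extensions are insufficient and the \(\lip_a\)-preserving extensions are needed. For the locally Lipschitz class this also requires closedness of \(S\), through Proposition~\ref{prop:Gutev} inside Theorem~\ref{thm:loc_Lip_extension}. A minor check is that \(\lip_a\) of a function on \(S\) is taken with respect to the induced metric on \(S\), which is consistent with how both theorems are stated.
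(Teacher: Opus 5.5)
Your proposal is correct and follows the paper's proof: the restriction inequality comes from \eqref{eq:restr_BV} and its \(\LIP_{bs}\) analogue. The reverse inequality extends an optimal sequence on \({\rm spt}(\mm)\) using the \(\lip_a\)-preserving extensions, Theorem~\ref{thm:loc_Lip_extension} for \(BV\) (relying on the closedness of \({\rm spt}(\mm)\)) and Theorem~\ref{thm:DiMa_Gig_Pra} with its bounded-support clause for \(BV^\star\), together with the fact that \(\mm\) is concentrated on \({\rm spt}(\mm)\).
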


\begin{proof}
It follows from \eqref{eq:restr_BV} that if \(f\in BV(\X)\), then \(f_{|{\rm spt}(\mm)}\in BV({\rm spt}(\mm))\)
and that the inequality \(|Df_{|{\rm spt}(\mm)}|({\rm spt}(\mm))\leq|Df|(\X)\) holds. Conversely, assume
\(f_{|{\rm spt}(\mm)}\in BV({\rm spt}(\mm))\), 
and take a sequence \((f_n)_n\subseteq\LIP_{b,loc}({\rm spt}(\mm))\cap L^1({\rm spt}(\mm))\)
such that \(f_n\to f_{|{\rm spt}(\mm)}\) in \(L^1({\rm spt}(\mm))\) and
\(\int_{{\rm spt}(\mm)}\lip_a(f_n)\,\d\mm\to|Df_{|{\rm spt}(\mm)}|({\rm spt}(\mm))\). By Theorem \ref{thm:loc_Lip_extension},
we can extend each \(f_n\) to a function \(\bar f_n\in\LIP_{b,loc}(\X)\) satisfying
\(\lip_a(\bar f_n)(x)=\lip_a(f_n)(x)\) for every \(x\in{\rm spt}(\mm)\). Since \(\mm\) is concentrated on \({\rm spt}(\mm)\)
by Proposition ~\ref{prop:main_Radon}, we have that \((\bar f_n)_n\subseteq\LIP_{b,loc}(\X)\cap L^1(\X)\),
\(\bar f_n\to f\) in \(L^1(\X)\) and
\[
|Df|(\X)\leq\lim_{n\to\infty}\int_{\X}\lip_a(\bar f_n)\,\d\mm=\lim_{n\to\infty}\int_{{\rm spt}(\mm)}\lip_a(f_n)\,\d\mm
=|Df_{|{\rm spt}(\mm)}|({\rm spt}(\mm)).
\]
Repeating the same arguments, but using Theorem \ref{thm:DiMa_Gig_Pra} instead of Theorem \ref{thm:loc_Lip_extension},
one can prove that \(f\in BV^\star(\X)\) if and only if \(f_{|{\rm spt}(\mm)}\in BV^\star({\rm spt}(\mm))\),
and that \eqref{eq:inv_BV_star} holds.
\end{proof}
\subsection{BV functions via the approximation modulus}\label{sec:BVAM}
Now we recall the definition of AM-bounding sequence 
from~\cite{DuCa:ErBiq:Ko:Shan:19} and the corresponding notion of BV functions. This definition was motivated by the
paper~\cite{Martio-ConfGeomDyn}, which is based on the earlier works~\cite{Martio, HMMartio-1}. The definition adopted
in~\cite{DuCa:ErBiq:Ko:Shan:19} is motivated by~\cite[Lemma~2]{Martio-ConfGeomDyn}.

\begin{definition}[AM-bounding sequence]\label{defAMbounding}
Let $f:\X\to [-\infty,\infty]$ be an $\mm$-measurable function with $\int_{\X}|f|\,\d\mm<\infty$ and $\Omega\subseteq\X$ be an open set. 
We say that a sequence $(\rho_j)_j$ of nonnegative Borel functions from $\X$ to $[0,\infty]$ is an \emph{AM-bounding sequence} for $f$ in
$\Omega$ if there is a family $\Gamma\subseteq\mathscr R(\X)$
with ${\rm AM}_0(\Gamma)=0$ such that for each $\gamma\in \mathscr R(\X)\setminus\Gamma$ there exists a set 
$N_\gamma\subseteq[a_\gamma,b_\gamma]$ with $\mathscr{H}^1(\gamma(N_\gamma))=0$ satisfying
\begin{equation}\label{eq:BVAM0-1}
|f(\gamma_b)-f(\gamma_a)|\le \varliminf_j\int_{\gamma_{|[a,b]}}\, \rho_j
\end{equation}
for all $a,\,b\in[a_\gamma,b_\gamma]\setminus N_\gamma$ with $a<b$ and $\gamma([a,b])\subseteq\Omega$.
\end{definition}
In the above definition, we can equivalently consider only curves in $\mathscr R(\Omega)\subseteq\mathscr R(\X)$. Indeed,
if the exceptional family $\Gamma$ is only a subfamily of $\mathscr R(\Omega)$, let $\Gamma_{\X}$
be the collection of all $\gamma\in\mathscr R(\X)$ such that $\gamma$ has a sub-curve in $\Gamma$; then 
${\rm AM}_0(\Gamma_{\X})=0$ (see for instance
Proposition~\ref{prop:properties_AM}). Any curve $\gamma\in\mathscr R(\X)\setminus\Gamma_{\X}$ satisfies the condition that
for each $a_0,b_0\in[a_\gamma,b_\gamma]\cap\mathbb{Q}$ with $a_0<b_0$ and $\gamma([a_0,b_0])\subseteq\Omega$ we have
$\gamma\vert_{[a_0,b_0]}\in\mathscr R(\Omega)\setminus\Gamma$, and so there is a set $N[a_0,b_0]\subseteq[a_0,b_0]$ with
$\mathscr{H}^1(\gamma(N[a_0,b_0]))=0$ such that  for each $a,b\in[a_0,b_0]\setminus N[a_0,b_0]$ with $a<b$
the inequality~\eqref{eq:BVAM0-1} holds. With $\mathbb{Q}_\gamma$ the collection of all such pairs $(a_0,b_0)$, 
we know that $\mathscr{H}^1(N_\gamma)=0$ where $N_\gamma=\bigcup_{(a_0,b_0)\in \mathbb{Q}_\gamma}N[a_0,b_0]$
serving the role delineated in the above definition for $\gamma$.

Also this definition is invariant under modification of $f$ in $\mm$-negligible sets, as the following simple lemma shows.

\begin{lemma}\label{lem:BVAM-measurable}
If $(\rho_j)_j$ is an AM-bounding sequence for $f$ in $\Omega$, then it is an AM-bounding sequence in $\Omega$ for any
$\tilde{f}$ such that $\mm(\Omega\cap\{\tilde{f}\neq f\})=0$.
\end{lemma}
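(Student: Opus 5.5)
The plan is to absorb the set where $f$ and $\tilde f$ differ into the exceptional objects already allowed by Definition~\ref{defAMbounding}. The curve family may be enlarged by an ${\rm AM}_0$-null family, and along each good curve we may discard an extra set of parameters whose image is $\mathscr H^1$-null.

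First, put $E\coloneqq\Omega\cap\{\tilde f\neq f\}$. Since $\mm(E)=0$ and $\mm$ is Radon, hence Borel regular, we can choose a Borel set $E_0\supseteq E$ with $\mm(E_0)=0$. Let $\Gamma$ be the ${\rm AM}_0$-null family given by the AM-bounding property of $(\rho_j)_j$ for $f$. By Lemma~\ref{Nages1} we have $\Mod(\Gamma^+_{E_0})=0$. Proposition~\ref{prop:properties_AM}\ref{it:ineq_AM} then gives ${\rm AM}_0(\Gamma^+_{E_0})=0$, and by subadditivity $\tilde\Gamma\coloneqq\Gamma\cup\Gamma^+_{E_0}$ is still ${\rm AM}_0$-null.

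Next, fix $\gamma\in\mathscr R(\X)\setminus\tilde\Gamma$ and define
\[
\tilde N_\gamma\coloneqq N_\gamma\cup\big(\gamma^{-1}(E_0)\cap[a_\gamma,b_\gamma]\big).
\]
We need $\mathscr H^1(\gamma(\tilde N_\gamma))=0$. This holds because $\gamma(\tilde N_\gamma)\subseteq\gamma(N_\gamma)\cup\big(\gamma([a_\gamma,b_\gamma])\cap E_0\big)$. The first set is $\mathscr H^1$-null by hypothesis. The second is $\mathscr H^1$-null because $\gamma\notin\Gamma^+_{E_0}$, using the equivalence established right before Lemma~\ref{Nages1} between $\mathscr L^1(\hat\gamma^{-1}(E_0))>0$ and $\mathscr H^1(\gamma([a_\gamma,b_\gamma])\cap E_0)>0$ for Borel $E_0$.

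Finally, take $a<b$ in $[a_\gamma,b_\gamma]\setminus\tilde N_\gamma$ with $\gamma([a,b])\subseteq\Omega$. Then $\gamma_a,\gamma_b\in\Omega\setminus E_0$, so $\tilde f(\gamma_a)=f(\gamma_a)$ and $\tilde f(\gamma_b)=f(\gamma_b)$. Since $a,b\notin N_\gamma$, inequality \eqref{eq:BVAM0-1} for $f$ transfers verbatim to $\tilde f$.

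The only point requiring care is the measurability issue: $E$ itself need not be Borel. This is handled by passing to the Borel null superset $E_0$, exactly as in the proof of Lemma~\ref{Nages1}. The integrability requirement on $\tilde f$ is inherited from $f$. There is one subtlety if $\mm(\{\tilde f\neq f\}\setminus\Omega)>0$, since then $\int|\tilde f|$ could differ; it is harmless in the intended reading, where $\tilde f$ is also an admissible integrable function. No further obstacle is expected.
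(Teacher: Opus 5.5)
Your proposal is correct and is essentially the paper's proof: add $\Gamma^+_N$ with $N=\Omega\cap\{\tilde f\neq f\}$ to the exceptional curve family (it is $\Mod$-null by Lemma~\ref{Nages1}, hence ${\rm AM}_0$-null), and replace $N_\gamma$ by $N_\gamma\cup\gamma^{-1}(N)$. Passing to a Borel superset $E_0$ is a harmless extra precaution, since the paper defines $\Gamma_N^+$ through the Lebesgue outer measure and can use $N$ directly. Your caveat that $\tilde f$ must itself be integrable is a fair point that the paper leaves implicit.
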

\begin{proof}
If $N\subseteq\Omega$ is an $\mm$-negligible set, then by Lemma~\ref{Nages1} we know that
$0\leq {\rm AM}_0(\Gamma_N^+)\leq \Mod(\Gamma_N^+)=0$, and if $\gamma$ is a curve in
$\mathscr R(\X)\setminus\Gamma_N^+$, then by definition $\mathscr{H}^1(\gamma([a_\gamma,b_\gamma])\cap N)=0$. Hence, we may take 
$\tilde{N}_\gamma\coloneqq N_\gamma\cup\gamma^{-1}(N)$ with $N\coloneqq\Omega\cap\{f\neq\tilde{f}\}$.
\end{proof}

\begin{definition}[The space $BV_{\AM}(\X)$]
We say that $f\in L^1(\X)$ belongs to $BV_{\AM}(\X)$ if there exists an AM-bounding sequence $(\rho_j)_j$ for $f$ in $\X$ such that
$\varliminf_j\int_{\X}\rho_j\,\d\mm$ is finite. For any open set $\Omega\subseteq\X$, we define
\[
|Df|_{\AM}(\Omega):=\inf\bigg\{\varliminf_j\, \int_\Omega\rho_j\, \d\mm\;\bigg|\;\text{$(\rho_j)_j$ 
AM-bounding sequence for $f$ in $\Omega$}\bigg\}.
\]
\end{definition}

A suitable modification of
the proofs of Lemma~\ref{lem:sum_am-bounds} and Proposition~\ref{prop:ext_|Df|_am_to_meas} below
can be used to verify that given any \(f\in BV_{\AM}(\X)\), the set-function $|Df|_{\AM}$
has a unique extension to an outer regular Borel measure on \(\X\). 

We highlight that in Definition~\ref{defAMbounding} it is necessary to describe the exceptional 
sets $N_\gamma$ in intrinsic (i.e.\ invariant under reparameterization) terms, either using 
the Hausdorff measure $\mathscr{H}^1$ in $\X$ as we did, or working with the arc-length parameterization \(\hat\gamma\) of \(\gamma\) 
(in this case the condition simply becomes 
$\mathscr{H}^1(N_{\hat\gamma})=0$ with $N_{\hat\gamma}\subseteq [0,\ell_\gamma]$), as 
Example~\ref{ex:strong_am_bound} below illustrates. 

First, we fix some auxiliary notation. We say that \((\rho_j)_j\subseteq\mathcal L^1(X)^+\) is a \emph{strong} AM-bound for \(f\) on \(\Omega\)
if for \({\rm AM}_0\)-a.e.\ \(\gamma\in\mathscr R(\X)\)
there exists a Lebesgue null set \(N_\gamma\subseteq[a_\gamma,b_\gamma]\) such that
\[
|f(\gamma_b)-f(\gamma_a)|\leq\varliminf_{n\to\infty}\int_{\gamma_{|[a,b]}}g_n
\quad\text{ if }a,b\in[a_\gamma,b_\gamma]\setminus N_\gamma\text{, }a<b\text{ and }\gamma([a,b])\subseteq\Omega.
\]
Then we define the quantity \(|Df|_{s}(\Omega)\in[0,\infty]\) as
\[
|Df|_{s}(\Omega)\coloneqq\inf_{(\rho_j)_j}\limi_{j\to\infty}\int_\Omega\rho_j\,\d\mm,
\]
the infimum being taken over all strong AM-bounds \((\rho_j)_j\) for \(f\) on \(\Omega\). It is easy to check
that \(|Df|_{s}\colon\tau(\X)\to[0,\infty]\) is monotone nondecreasing and superadditive on disjoint sets.
\begin{example}\label{ex:strong_am_bound}{\rm
Let us consider \(\R\) as a metric measure space (together with the Euclidean distance and the Lebesgue measure)
and the function \(\nchi_{\mathbb Q}\in\mathcal L^1(\R)\). Since \(\nchi_{\mathbb Q}\) is a.e.\ equivalent to
the null function, we have that \(\nchi_{\mathbb Q}\in BV_{\AM}(\R)\) with $|D\nchi_{\mathbb Q}|_{\AM}(\R)=0$. 
On the other hand, we claim that
\[
|D\nchi_{\mathbb Q}|_{s}(\Omega)=\infty\quad\text{ for every non-empty open set }\Omega\subseteq\R.
\]
To prove it, fix a non-empty open set \(\Omega\subseteq\R\) and \(k\in\N\). Take \(a_1<b_1<\cdots<a_k<b_k\) such that \([a_1,b_1],\ldots,[a_k,b_k]\)
are contained in \(\Omega\). Take pairwise disjoint open subsets \(\Omega_1,\ldots,\Omega_k\) of \(\Omega\) such that \([a_j,b_j]\subseteq\Omega_j\)
for every \(j=1,\ldots,k\). Now, fix any \(j=1,\ldots,k\) and denote by \(\sigma^j\colon[0,b_j-a_j]\to\R\) the arc-length parameterisation of \([a_j,b_j]\).
Pick any rational number \(q_j\in(a_j,b_j)\). We then define \(\gamma^j\colon[0,3]\to\R\) as the unique curve such that
\begin{itemize}
\item \(\gamma^j_{|[0,1]}\) is the constant-speed parameterisation of \([a_j,q_j]\),
\item \(\gamma^j_t=q_j\) for every \(t\in[1,2]\),
\item \(\gamma^j_{|[2,3]}\) is the constant-speed parameterisation of \([q_j,b_j]\).
\end{itemize}
Note that \(\gamma^j\) is rectifiable and \(\hat\gamma^j=\sigma^j\), thus \({\rm AM}_0(\{\gamma^j\})={\rm AM}_0(\{s^j\})=1\) by Example \ref{ex:AM_segment}.
In particular, fixed a strong pointwise AM-bound \((\rho_n)_n\) for \(\nchi_{\mathbb Q}\) on \(\Omega_j\), we can find a Lebesgue null set
\(N_j\subseteq[0,3]\) such that
\[
|\nchi_{\mathbb Q}(\gamma^j_t)-\nchi_{\mathbb Q}(\gamma^j_{t'})|\leq\varliminf_{n\to\infty}\int_{\gamma^j_{|[t',t]}}\rho_n
\quad\text{ for every }t',t\in[0,3]\setminus N_j\text{ with }t'<t.
\]
There exist \(t_1\in(0,1)\setminus N_j\) and \(t_3\in(2,3)\setminus N_j\) with \(\gamma^j_{t_1},\gamma^j_{t_3}\notin\mathbb Q\). Pick \(t_2\in(1,2)\setminus N_\gamma\). Then
\[\begin{split}
\varliminf_{n\to\infty}\int_{\Omega_j}\rho_n(t)\,\d t&\geq\varliminf_{n\to\infty}\int_{\gamma^j_{|[t_1,t_2]}}\rho_n+\varliminf_{n\to\infty}\int_{\gamma^j_{|[t_2,t_3]}}\rho_n\\
&\geq|\nchi_{\mathbb Q}(\gamma^j_{t_2})-\nchi_{\mathbb Q}(\gamma^j_{t_1})|+|\nchi_{\mathbb Q}(\gamma^j_{t_3})-\nchi_{\mathbb Q}(\gamma^j_{t_2})|=\nchi_{\mathbb Q}(q_j)+\nchi_{\mathbb Q}(q_j)=2.
\end{split}\]
By the arbitrariness of \((\rho_n)_n\), we deduce that \(|D\nchi_{\mathbb Q}|_{s}(\Omega_j)\geq 2\). Therefore, we have that
\[
|D\nchi_{\mathbb Q}|_{s}(\Omega)\geq|D\nchi_{\mathbb Q}|_s(\Omega_1)+\cdots+|D\nchi_{\mathbb Q}|_s(\Omega_k)\geq 2k.
\]
By the arbitrariness of \(k\in\N\), we conclude that \(|D\nchi_{\mathbb Q}|_{s}(\Omega)=\infty\).
\fr}\end{example}

In order to compare $BV_{\AM}(\X)$ with other spaces the following lemma will be useful. Recall the definition of
essential variation of functions on intervals from Subsection~\ref{subsec:BV-real}.

\begin{lemma}\label{lem:Nages2}
Suppose that $u\in BV_{\AM}(\X)$. Then there is an ${\rm AM}_0$-negligible family $\Gamma\subseteq\mathscr R(\X)$ such that 
$u\circ\hat\gamma\in BV([0,\ell(\gamma)])$ 
whenever $\gamma\in \mathscr R(\X)\setminus\Gamma$ and
\begin{equation}\label{eq:Nages3}
|D(u\circ\hat{\gamma})|([0,\ell(\gamma)]))\leq\varliminf_k\int_\gamma\rho_k
\qquad\text{for ${\rm AM}_0$-a.e.\ $\gamma$}
\end{equation}
for any AM-bounding sequence $(\rho_k)_k$.
In particular, the essential variation ${\rm eV}\bigl(f\circ\hat\gamma, (0,\ell(\gamma))\bigr)$ of
$f\circ\hat\gamma$ in $(0,\ell(\gamma))$ is finite for ${\rm AM}_0$-a.e.\ curve $\gamma$.
\end{lemma}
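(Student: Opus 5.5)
Fix an AM-bounding sequence \((\rho_k)_k\) for \(u\) in \(\X\) with \(\varliminf_k\int_\X\rho_k\,\d\mm<\infty\). Let \(\Gamma_0\) be the \({\rm AM}_0\)-negligible family from Definition~\ref{defAMbounding}. By Proposition~\ref{prop:properties_AM}\ref{Fuglede2}, applied to the family of curves along which \(\varliminf_k\int_\gamma\rho_k=\infty\), I would also discard such curves. To see that this family is \({\rm AM}_0\)-null, note that for every \(\eps>0\) the sequence \((\eps\rho_k)_k\) is admissible for it, so its modulus is at most \(\eps\varliminf_k\int\rho_k\,\d\mm\). Set \(\Gamma\) to be the union of these two families. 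By \eqref{eq:AM_subadd}, \(\Gamma\) is \({\rm AM}_0\)-negligible. Since \({\rm AM}_0\)-nullity is insensitive to reparametrisation (Proposition~\ref{prop:properties_AM}\ref{it:wlog_const_speed_AM}), I work with \(\hat\gamma\) for \(\gamma\notin\Gamma\).

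Fix \(\gamma\in\mathscr R(\X)\setminus\Gamma\) and write \(L=\ell(\gamma)\), \(v=u\circ\hat\gamma\). The exceptional set \(N_\gamma\) has \(\mathscr H^1(\gamma(N_\gamma))=0\). Transported to the arc-length parameter, the set \(\hat N=\hat\gamma^{-1}(\gamma(N_\gamma))\) satisfies \(\mathscr L^1(\hat N)=0\); this follows from the equivalence established before Lemma~\ref{Nages1}, or directly from \eqref{eq:intrinsic}. Choosing the exceptional set in the form \(\gamma^{-1}(\gamma(N_\gamma))\) makes it compatible with reparametrisation. Then for \(s<t\) in \([0,L]\setminus\hat N\) we have \(|v(t)-v(s)|\le\varliminf_k\int_{\hat\gamma|_{[s,t]}}\rho_k\).

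Take any partition \(s_1<\dots<s_n\) of points outside \(\hat N\). Using superadditivity of \(\varliminf\) over the disjoint subintervals, we get
\[
\sum_i|v(s_{i+1})-v(s_i)|\le\sum_i\varliminf_k\int_{\hat\gamma|_{[s_i,s_{i+1}]}}\rho_k\le\varliminf_k\int_\gamma\rho_k .
\]
Hence the pointwise variation of \(v\) restricted to the full-measure set \([0,L]\setminus\hat N\) is bounded by \(M_\gamma\coloneqq\varliminf_k\int_\gamma\rho_k<\infty\).

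Next, define a representative \(\tilde v\) as follows. On the complement of \(\hat N\) it equals \(v\). On \(\hat N\) (which has empty interior and is null), it is given by a one-sided limit along points of the complement, for instance the \(\limsup\) from the right. The main point to check is that \(\tilde v\) does not increase the variation. This follows because any partition of \(\tilde v\) is approximated by partitions using points outside \(\hat N\), using density from Corollary~\ref{cor:exceptional_AM} and the existence of one-sided limits for a function of bounded variation on a dense set. Therefore \({\rm pV}(\tilde v,(0,L))\le M_\gamma\). Since \(\tilde v=v\) \(\mathscr L^1\)-a.e., we obtain \({\rm eV}(v,(0,L))\le M_\gamma\) and \(v\in BV\), with \(|Dv|((0,L))\le M_\gamma\) by the identification of the essential variation with \(|Dv|\) in Subsection~\ref{subsec:BV-real}.

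There are two remaining issues. The first is the claim \eqref{eq:Nages3} on the closed interval \([0,L]\): for a BV function extended to the closure as in the cited convention, no atoms appear at the endpoints, so this equals the bound on \((0,L)\). The second is the phrase ``for any AM-bounding sequence''. For this, I would apply the above argument to each AM-bounding sequence separately; each produces its own null family, on which \eqref{eq:Nages3} holds \({\rm AM}_0\)-a.e. The BV membership itself only needs the single sequence with finite energy fixed at the start. I expect the construction of the good representative \(\tilde v\) and the careful transfer of the exceptional set to the arc-length parameter to be the only delicate steps.
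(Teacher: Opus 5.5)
Your proof is correct, but it takes a different route from the paper's. The paper fixes a good $\gamma$ and passes to a $\gamma$-dependent subsequence along which $(\rho_k\circ\hat\gamma)\mathscr L^1$ converges weakly to a measure $\nu_\infty$ on $[0,\ell(\gamma)]$. It then discards the countable set $E_\gamma$ of atoms of $\nu_\infty$ together with $K_\gamma=\hat\gamma^{-1}(\gamma(N_\gamma))$. Off this set, $u\circ\hat\gamma+w$ with $w(t)=\nu_\infty([0,t])$ is nondecreasing. Extending it monotonically and subtracting $w$ gives a BV representative that is a difference of two monotone functions.

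You instead bound the pointwise variation of $u\circ\hat\gamma$ on the full-measure good set directly, using only the superadditivity of $\varliminf$ over a finite partition. You then build the representative from one-sided limits along this dense set. Your argument is more elementary: it needs no subsequence, no weak compactness and no bookkeeping of atoms, and the bound comes out directly in terms of the original $\varliminf_k\int_\gamma\rho_k$. The paper's argument, in exchange, gives an explicit Jordan-type decomposition and a domination of $|D(u\circ\hat\gamma)|$ by the single limit measure $\nu_\infty$, whereas you obtain local control only interval by interval.

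Some minor points:
\begin{itemize}
\item Density of $[0,\ell(\gamma)]\setminus\hat N$ follows at once from $\mathscr L^1(\hat N)=0$. Corollary~\ref{cor:exceptional_AM}, which concerns a different $\mm$-null set, is not the right reference here.
\item Proposition~\ref{prop:properties_AM}\ref{it:wlog_const_speed_AM} does not imply $\hat\gamma\notin\Gamma$ when $\gamma\notin\Gamma$. You never need this, though: you transfer the inequality from $\gamma$ to $\hat\gamma$ through $\hat N$, taking $a,b$ with $\ell(\gamma_{|[a_\gamma,a]})=s$ and $\ell(\gamma_{|[a_\gamma,b]})=t$.
\item For AM-bounding sequences of infinite energy, curves with $\varliminf_k\int_\gamma\rho_k=\infty$ satisfy \eqref{eq:Nages3} trivially. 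So ``each produces its own null family'' should read: only that sequence's exceptional family needs to be discarded.
\item At $t=\ell(\gamma)$ you would use the left limit rather than the right one, but this is irrelevant for the variation on $(0,\ell(\gamma))$.
\end{itemize}
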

\begin{proof}
Let $u\in BV_{\AM}(\X)$. Let $(\rho_k)_k$ be an AM-bounding sequence for $u$ such that $\sup_k\int_{\X}\rho_k\,\d\mm$ is finite.
Let $\Gamma_0$ be the exceptional family for $(\rho_k)_k$ as given by the definition of AM-bounding sequences, and let
$\Gamma_1\subseteq \mathscr R(\X)$ be the collection of all curves $\gamma$ for which 
$\varliminf_k\int_\gamma\rho_k=\infty$.
Then by Lemma~\ref{lem:Fuglede1} and by the sub-additivity of ${\rm AM}_0$, we have that ${\rm AM}_0(\Gamma_0\cup\Gamma_1)=0$.

Now let $\gamma\in\mathscr R(\X)\setminus(\Gamma_0\cup\Gamma_1)$.
Since $\varliminf_k\int_{\hat{\gamma}}\rho_k(\hat{\gamma}(\tau))\,\d\tau=\varliminf_k\int_\gamma\rho_k<\infty$, by passing to a subsequence if necessary
(not reindexed for notational simplicity),
the nonnegative measures $\nu_k\coloneqq(\rho_k\circ\hat\gamma){\mathscr L}^1$ weakly converge on $[0,\ell(\gamma)]$ 
to $\nu_\infty$ with $\nu_\infty([0,\ell(\gamma)])=\varliminf_k\int_\gamma\rho_k$.
Let $E_\gamma\subseteq [0,\ell(\gamma)]$ be the at most countable collection of all $t$
for which $\nu_\infty(\{t\})>0$, and let $K_\gamma:=\hat{\gamma}^{-1}(\gamma(N_\gamma))$.
Then, since by assumption $\mathscr{H}^1(\gamma(N_\gamma))=0$, we have $\mathscr{H}^1(K_\gamma)=0$ 
and, for $s,t\in[0,\ell(\gamma)]\setminus K_\gamma$ with $s<t$, we have that 
\[
|u(\hat{\gamma}(s))-u(\hat{\gamma}(t))|
\le \varliminf_k\int_{\hat{\gamma}_{|[s,t]}}\rho_k(\hat{\gamma}(\tau))\, d\tau
\le \nu_\infty([s,t]).
\]
Set $w:[0,\ell(\gamma)]\to\R$ by $w(t):=\nu_\infty([0,t])$; then $w$ is a bounded, monotone increasing function on $[0,\ell(\gamma)]$, hence a BV function on that interval. Let $v:[0,\ell(\gamma)]\setminus (K_\gamma\cup E_\gamma)\, \to \R$ be given by 
$v(t)=u(\hat{\gamma}(t))+w(t)$. If $t_1,t_2\in[0,\ell(\gamma)]\setminus(K_\gamma\cup E_\gamma)$ with $t_1<t_2$, we have
\begin{align*}
v(t_2)&=u(\hat{\gamma}(t_2))+w(t_2)-u(\hat{\gamma}(t_1))-w(t_1)+v(t_1)\\
    &=u(\hat{\gamma}(t_2))-u(\hat{\gamma}(t_1))+\nu_\infty([t_1,t_2])+v(t_1)\ge v(t_1),
\end{align*}
that is, $v$ is monotone nondecreasing on $[0,\ell(\gamma)]\setminus(K_\gamma\cup E_\gamma)$. As $\mathscr{H}^1(K_\gamma\cup E_\gamma)=0$,
it follows that there is at least one monotone increasing function on $[0,\ell(\gamma)]$ that agrees with $v$ on 
$[0,\ell(\gamma)]\setminus(K_\gamma\cup E_\gamma)$. We choose one such extension and denote that extension also by $v$.
Now we have two monotone increasing functions $w,v$ on $[0,\ell(\gamma)]$, and hence the function $\widetilde{u}=v-w$ is a
function of bounded variation
on $[0,\ell(\gamma)]$. Since $u\circ\hat{\gamma}=\widetilde{u}$ on 
$[0,\ell(\gamma))\setminus(K_\gamma\cup E_\gamma)$
and $\mathscr{H}^1(K_\gamma\cup E_\gamma)=0$, it follows that
$u\circ\hat{\gamma}$ is a BV function on $[0,\ell(\gamma)]$, with total variation bounded
from above as in \eqref{eq:Nages3} thanks to $\nu_\infty([0,\ell(\gamma)])=\varliminf_k\int_\gamma\rho_k<\infty$.
\end{proof}

Now we introduce two more variants, where the exceptional points are measured in terms of the intersection of $\gamma$
with a given null set in $\X$ (but, possibly dependent on the sequence $(\rho_j)_j$). The advantage of this formulation is
that we need only to consider the endpoints of $\gamma$, as in the definitions based on test plans (also in analogy with the
Newtonian space definition). Exceptional curves can be measured either in terms of ${\rm AM}_0$ or in terms
of ${\rm AM}_1$, and this leads to two definitions.
\begin{definition}[\({\rm am}_0\)-bounding sequence]
Let \((\X,\sfd,\mm)\) be a metric measure space. Fix \(f\in\mathcal L^1(\X)\) and let \(\Omega\subseteq\X\) be open.
Then we say that \((\rho_j)_j\subseteq\mathcal L^1(\Omega)^+\) is an \emph{\({\rm am}_0\)-bounding sequence} for \(f\) on \(\Omega\)
if there exists an \(\mm\)-null Borel set \(N\subseteq\Omega\) such that for \({\rm AM}_0\)-a.e.\ \(\gamma\in\mathscr R(\Omega)\)
it holds that
\begin{equation}\label{eq:tildeam-bound-alt_w}
|f(\gamma_{b_\gamma})-f(\gamma_{a_\gamma})|\leq\varliminf_{j\to\infty}\int_{\gamma}\rho_j
\quad\text{ whenever }
\gamma_{a_\gamma},\gamma_{b_\gamma}\notin N.
\end{equation}
\end{definition}

\begin{remark}[Equivalent formulation of ${\rm am}_0$-bounding sequence]\label{rem:localize_ambound}{\rm
Since the class of curves $\gamma$ in $\mathscr R(\X)$ satisfying $\gamma([a_\gamma,b_\gamma])\subseteq\Omega$ and
\[
|f(\gamma_{b_\gamma})-f(\gamma_{a_\gamma})|>\varliminf_{j\to\infty}\int_{\gamma} \rho_j\quad\text{and}\quad
\gamma_{a_\gamma},\gamma_{b_\gamma}\notin N
\]
is, by definition of ${\rm am}_0$-bound on $\Omega$, ${\rm AM}_0$-negligible, Proposition \ref{prop:properties_AM}\ref{subcurves}
ensures that an equivalent formulation of ${\rm am}_0$-bounding sequence is based on
\begin{equation}\label{eq:tildeam-bound-alt}
|f(\gamma_b)-f(\gamma_a)|\leq\varliminf_{j\to\infty}\int_{\gamma_{|[a,b]}}\rho_j
\quad\text{ if }a_\gamma\leq a<b\leq b_\gamma\text{, }\gamma([a,b])\subseteq\Omega\text{ and }\gamma_a,\gamma_b\notin N
\end{equation}
for ${\rm AM}_0$-a.e.\ $\gamma$.
\fr}\end{remark}

\begin{definition}[\({\rm am}_1\)-bounding sequence]\label{def:AM-lambda-bounding}
Let \((\X,\sfd,\mm)\) be a metric measure space. Fix \(f\in\mathcal L^1(\X)\) and let \(\Omega\subseteq\X\) be open.
Then we say that \((g_n)_n\subseteq\mathcal L^1(\Omega)^+\) is an \emph{\({\rm am}_1\)-bounding sequence} for \(f\) on \(\Omega\)
if for \({\rm AM}_1\)-a.e.\ \(\gamma\in\mathscr R(\X)\) it holds that
\begin{equation}\label{eq:tildeam-bound-alt_w_1}
|f(\gamma_{b_\gamma})-f(\gamma_{a_\gamma})|\leq\varliminf_{j\to\infty}\int_{\gamma}\rho_j
\quad\text{whenever}\quad\gamma([a_\gamma,b_\gamma])\subseteq\Omega.
\end{equation}
\end{definition}
Note that, as ${\rm AM}_1$ provides a better control on the measure
at the endpoints, for the notion of ${\rm am}_1$-bounding sequence we do not need
to postulate an exceptional $\mm$-negligible set as we did for ${\rm am}_0$-bounding sequences.
Clearly, every subsequence of an \({\rm am}_\lambda\)-bounding sequence for \(f\) on \(\Omega\) is still an \({\rm am}_\lambda\)-bounding sequence for \(f\) on \(\Omega\).
It is easily seen that, as for all the other notions we introduced so far, for both notions of ${\rm am}$-bounding sequences there is stability with respect to modifications of $f$ in $\mm$-negligible sets (for the case of the \({\rm am}_0\)-bound simply enlarge $N$, for the case of the \({\rm am}_1\)-bound
see Proposition \ref{prop:properties_AM}\ref{it:ineq_AM_mm}). Hence, the following definitions are well posed
even though we conflate functions $f$ with equivalence classes $[f]_{\mm}$.
\begin{definition}[The spaces \(BV_{am}^0(\X)\), \(BV_{am}^1(\X)\)]\label{def:BV_am}
Let \((\X,\sfd,\mm)\) be a metric measure space, \(\lambda\in\{0,1\}\) and \(f\in \mathcal{L}^1(\X)\). Then we declare that \(f\) belongs to \(BV_{am}^\lambda(\X)\)
if there exists an \({\rm am}_\lambda\)-bounding sequence \((\rho_j)_j\) for \(f\) on \(\X\) such that \(\sup_{j\in\N}\|\rho_j\|_{L^1(\X)}<\infty\). Given any function
\(f\in BV_{am}^\lambda(\X)\), we define
\begin{equation}\label{eq:def_|Df|_am}
|Df|_{am}^\lambda(\Omega)\coloneqq\inf_{(\rho_j)_j}\limi_{j\to\infty}\int_\Omega\rho_j\,\d\mm\quad\text{ for every open set }\Omega\subseteq\X,
\end{equation}
where the infimum is taken over all \({\rm am}_\lambda\)-bounding sequences \((\rho_j)_j\) for \(f\) on \(\Omega\).
\end{definition}

Based on the discussion in the paragraph preceding Definition~\ref{def:AM-lambda-bounding}, we know that
if $f\in BV_{am}^\lambda(\X)$ and $\widehat{f}:\X\to[-\infty,\infty]$ such that $f=\widehat{f}$ $\mm$-a.e.~in $\X$, then
$\widehat{f}\in BV_{am}^\lambda(\X)$ with $|Df|_{am}^\lambda(\Omega)=|D\widehat{f}|_{am}^\lambda(\Omega)$ for all
$\Omega\in\tau(\X)$; hence we can consider $f\in L^1(\X)$ rather than focusing on specific representative $f\in\mathcal{L}^1(\X)$
in the above definition.
The inequality ${\rm AM}_1\leq {\rm AM}_0$, combined with Corollary \ref{cor:exceptional_AM}, 
provides the inclusion
\[
BV_{am}^0(\X)\subseteq BV_{am}^1(\X),
\]
as well as the inequality $|Df|_{am}^1(\Omega)\leq |Df|_{am}^0(\Omega)$ for every \(f\in BV_{am}^0(\X)\) and \(\Omega\in\tau(\X)\).
\medskip

Our next goal is to show that, given \(f\in BV_{am}^0(\X)\), the set-function \(|Df|_{am}^0\colon\tau(\X)\to[0,\infty)\)
defined in \eqref{eq:def_|Df|_am} can be uniquely extended to an outer regular Borel measure \(|Df|_{am}^0\) on \(\X\).

\begin{lemma}\label{lem:sum_am-bounds}
Let \((\X,\sfd,\mm)\) be a metric measure space. Let \(f\in\mathcal L^1(\X)\) be fixed and let \((\Omega_k)_k\) be a sequence of open subsets of \(\X\).
For any \(k\in\N\), let \((g_n^k)_n\) be an \({\rm am}_0\)-bounding sequence for \(f\) on \(\Omega_k\). Assume also that
\begin{equation}\label{eq:sum_tildeam-bounds_alt}
g_n\coloneqq\sum_{k=1}^\infty\nchi_{\Omega_k}\, g_n^k\in\mathcal L^1(\X)\quad\text{ for every }n\in\N.
\end{equation}
Then \((g_n)_n\) is an \({\rm am}_0\)-bounding sequence for \(f\) on \(\bigcup_{k\in\N}\Omega_k\).
In particular, if \(f\in BV_{am}^0(\X)\), then
\[
|\D f|_{am}^0\bigg(\bigcup_{k\in\N}\Omega_k\bigg)\leq\sum_{k=1}^\infty|\D f|_{am}^0(\Omega_k).
\]
\end{lemma}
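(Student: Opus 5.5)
We reduce the claim to a covering argument along each individual curve. Set \(\Omega\coloneqq\bigcup_k\Omega_k\). For each \(k\) we have an \(\mm\)-null Borel set \(N_k\subseteq\Omega_k\) and an \({\rm AM}_0\)-null family \(\Gamma_k\) from the definition of \({\rm am}_0\)-bounding sequence. By Remark~\ref{rem:localize_ambound} we may use the subcurve formulation \eqref{eq:tildeam-bound-alt}: for \(\gamma\notin\Gamma_k\), inequality \eqref{eq:tildeam-bound-alt} holds for \((g^k_n)_n\) whenever \(\gamma([a,b])\subseteq\Omega_k\) and \(\gamma_a,\gamma_b\notin N_k\). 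We then define \(N\coloneqq\bigcup_kN_k\), which is \(\mm\)-null and Borel.

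The exceptional family is
\[
\Gamma'\coloneqq\bigcup_k\Gamma_k\cup\Gamma_N^+\cup\Gamma_\infty,
\]
where \(\Gamma_\infty\) consists of the curves with \(\varliminf_n\int_\gamma g_n=\infty\). The family \(\Gamma_N^+\) is \({\rm AM}_0\)-null by Lemma~\ref{Nages1} and Proposition~\ref{prop:properties_AM}\ref{it:ineq_AM}. In fact \(\Gamma_\infty\) need not be excluded, since the target inequality is trivial on it. By \(\sigma\)-subadditivity, \(\Gamma'\) is \({\rm AM}_0\)-null.

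Now fix \(\gamma\in\mathscr R(\Omega)\setminus\Gamma'\) with \(\gamma_{a_\gamma},\gamma_{b_\gamma}\notin N\), and assume without loss of generality that \(\gamma\) is parametrised by arc length. The key step is a chain argument. By compactness, \(\gamma([a_\gamma,b_\gamma])\) is covered by finitely many \(\Omega_{k_1},\dots,\Omega_{k_m}\). Using a Lebesgue number for the cover \(\{\gamma^{-1}(\Omega_{k_i})\}\) of \([a_\gamma,b_\gamma]\), we choose a partition \(a_\gamma=t_0<t_1<\dots<t_M=b_\gamma\) such that each \([t_{i-1},t_i]\) is mapped into a single \(\Omega_{k(i)}\). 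Since \(\gamma\notin\Gamma_N^+\), the set \(\gamma^{-1}(N)\) is Lebesgue-null (Corollary~\ref{cor:exceptional_AM}). We can therefore perturb the interior points \(t_i\) slightly, keeping the Lebesgue-number property, so that \(\gamma_{t_i}\notin N\).

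Applying \eqref{eq:tildeam-bound-alt} on each piece, and using \(g_n\geq\nchi_{\Omega_{k}}g_n^{k}\) with \(\gamma([t_{i-1},t_i])\subseteq\Omega_{k(i)}\), we get
\[
|f(\gamma_{b_\gamma})-f(\gamma_{a_\gamma})|\le\sum_{i=1}^M\varliminf_n\int_{\gamma|_{[t_{i-1},t_i]}}g_n^{k(i)}\le\sum_{i=1}^M\varliminf_n\int_{\gamma|_{[t_{i-1},t_i]}}g_n.
\]
The main obstacle is the final step: a sum of \(\liminf\)'s is not bounded by the \(\liminf\) of the sum, because the wrong superadditivity direction appears. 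For a single curve, the inequality \(\sum_i\varliminf_n a^i_n\le\varliminf_n\sum_i a^i_n\) does hold for finitely many nonnegative sequences. This is exactly the direction we need, since \(\int_\gamma g_n=\sum_i\int_{\gamma|_{[t_{i-1},t_i]}}g_n\). So the concatenation closes and the estimate \(|f(\gamma_{b_\gamma})-f(\gamma_{a_\gamma})|\le\varliminf_n\int_\gamma g_n\) follows. This proves that \((g_n)_n\) is an \({\rm am}_0\)-bounding sequence on \(\Omega\).

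For the inequality of measures, we may assume \(\sum_k|Df|_{am}^0(\Omega_k)<\infty\). Given \(\varepsilon>0\), for each \(k\) we pick an \({\rm am}_0\)-bounding sequence \((g_n^k)_n\) on \(\Omega_k\) with \(\sup_n\int_{\Omega_k}g_n^k\,\d\mm\le|Df|_{am}^0(\Omega_k)+\varepsilon2^{-k}\). This is possible after passing to a subsequence realising the \(\liminf\) and discarding finitely many terms, as in the proof of Proposition~\ref{prop:properties_AM}. Each \(g_n\) is then in \(\mathcal L^1\) with \(\int_{\Omega}g_n\,\d\mm\le\sum_k|Df|_{am}^0(\Omega_k)+\varepsilon\). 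The first part therefore applies, and the bound follows by letting \(\varepsilon\to0\).
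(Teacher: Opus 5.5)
Your proof is correct and follows the paper's argument essentially step by step: the same null set $N=\bigcup_k N_k$, a finite chain of subarcs each lying in a single $\Omega_k$ with junction points off $N$, superadditivity of $\varliminf$ together with $\nchi_{\Omega_k}g_n^k\le g_n$, and the same $\varepsilon 2^{-k}$ construction for the subadditivity. Your arc-length reduction with a Lebesgue number is simply a concrete way of producing the junction points that the paper obtains from the density statement in Corollary~\ref{cor:exceptional_AM}.

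There are two harmless slips. First, drop $\Gamma_\infty$ from $\Gamma'$: without a bound on $\varliminf_n\int g_n\,\d\mm$ it need not be ${\rm AM}_0$-null (e.g.\ $g_n=n\nchi_{[0,1]}$ on $\R$). Second, your sentence claiming that a sum of $\varliminf$'s is not bounded by the $\varliminf$ of the sum is backwards: $\varliminf$ is superadditive, which is exactly the inequality you then (correctly) use.
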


\begin{proof}
Given any \(k\in\N\), we can find an \(\mm\)-null Borel set \(N_k\subseteq\Omega_k\) such that
for \({\rm AM}_0\)-a.e.\ curve \(\gamma\in\mathscr R(\X)\) it holds that
\(|f(\gamma_b)-f(\gamma_a)|\leq\varliminf_n\int_{\gamma_{|[a,b]}}g_n^k\) whenever \(a_\gamma\leq a<b\leq b_\gamma\)
with \(\gamma([a,b])\subseteq\Omega_k\) and 
\(\gamma_a,\gamma_b\notin N_k\). Define \(N\coloneqq\bigcup_{k\in\N}N_k\)
and note that \(\mm(N)=0\). Taking also Corollary \ref{cor:exceptional_AM} into account, we can thus find a set
\(\mathcal N\subseteq\mathscr R(\X)\) with \({\rm AM}_0(\mathcal N)=0\) having the following properties:
for any \(\gamma\in\mathscr R(\X)\setminus\mathcal N\) and \(k\in\N\), we have that
\begin{equation}\label{eq:choice_mathcalN_1}
|f(\gamma_b)-f(\gamma_a)|\leq\varliminf_{n\to\infty}\int_{\gamma_{|[a,b]}}g_n^k
\quad\text{ if }a_\gamma\leq a<b\leq b_\gamma\text{, }\gamma([a,b])\subseteq\Omega_k\text{ and }\gamma_a,\gamma_b\notin N,
\end{equation}
along with
\begin{equation}\label{eq:choice_mathcalN_2}
\gamma\big([a,b]\cap\gamma^{-1}(\X\setminus N)\big)\text{ is dense in }\gamma([a,b])
\quad\text{ for every }a,b\in[a_\gamma,b_\gamma]\text{ with }a<b.
\end{equation}
Now, fix \(\gamma\in\mathscr R(\X)\setminus\mathcal N\) and
\(a,b\in[a_\gamma,b_\gamma]\) with \(a<b\) such that \(\gamma([a,b])\subseteq\Omega\coloneqq\bigcup_{k\in\N}\Omega_k\)
and \(\gamma_a,\gamma_b\notin N\). By the compactness of \(\gamma([a,b])\) and by~\eqref{eq:choice_mathcalN_2}, we can
find \(m\in\N\), \(a_1,\ldots,a_m\in(a,b)\) with \(a\eqqcolon a_0<a_1<\cdots<a_m<a_{m+1}\coloneqq b\)
and \(k_0,\ldots,k_m\in\N\) such that \(\gamma([a_j,a_{j+1}])\subseteq\Omega_{k_j}\) and \(\gamma_{a_j}\notin N\)
for every \(j=0,\ldots,m\). Hence, using \eqref{eq:choice_mathcalN_1} and Fatou's lemma we can estimate
\begin{align*}
|f(\gamma_b)-f(\gamma_a)|&\leq\sum_{j=0}^m|f(\gamma_{a_{j+1}})-f(\gamma_{a_j})|\leq
\sum_{j=0}^m\varliminf_{n\to\infty}\int_{\gamma_{|[a_j,a_{j+1}]}}g_n^{k_j}\\
&\leq\varliminf_{n\to\infty}\sum_{j=0}^m\int_{\gamma_{|[a_j,a_{j+1}]}}\nchi_{\Omega_{k_j}}g_n^{k_j}
\leq\varliminf_{n\to\infty}\sum_{j=0}^m\int_{\gamma_{|[a_j,a_{j+1}]}}g_n=\varliminf_{n\to\infty}\int_{\gamma_{|[a,b]}}g_n,
\end{align*}
which shows that \((g_n)_n\) is an \({\rm am}_0\)-bounding sequence for \(f\) on \(\Omega\). 

To prove the last part of the statement,
assume that \(f\in BV_{am}^0(\X)\). If \(\sum_{k=1}^\infty|\D f|_{am}^0(\Omega_k)=\infty\), then there is nothing to prove. Thus, assume
also that \(\sum_{k=1}^\infty|\D f|_{am}^0(\Omega_k)<\infty\). Given \(\varepsilon>0\) and \(k\in\N\), we can take an \({\rm am}_0\)-bounding sequence \((g_n^k)_n\)
for \(f\) on \(\Omega_k\) such that \(\int_{\Omega_k}g_n^k\,\d\mm\leq|\D f|_{am}^0(\Omega_k)+\varepsilon2^{-k}\) for every \(n\in\N\). We know from the first part
of the statement that \((g_n)_n\) is an \({\rm am}_0\)-bounding sequence for \(f\) on \(\Omega\), thus
\[
|\D f|_{am}^0(\Omega)\leq\varliminf_{n\to\infty}\int_\Omega g_n\,\d\mm=\varliminf_{n\to\infty}\sum_{k=1}^\infty\int_{\Omega_k}g_n^k\,\d\mm
\leq\varepsilon+\sum_{k=1}^\infty|\D f|_{am}^0(\Omega_k).
\]
By the arbitrariness of \(\varepsilon>0\), we conclude that \(|\D f|_{am}^0(\Omega)\leq\sum_{k=1}^\infty|\D f|_{am}^0(\Omega_k)\).
\end{proof}
Combining Lemma \ref{lem:sum_am-bounds} with the criterion stated in Theorem \ref{thm:cor_of_DeGiorgi-Letta}, we obtain the following.
\begin{proposition}\label{prop:ext_|Df|_am_to_meas}
Let \((\X,\sfd,\mm)\) be a metric measure space. Let \(f\in BV_{am}^0(\X)\) be given. Then the set-function
\(|\D f|_{am}^0\colon\tau(\X)\to[0,\infty)\) defined in \eqref{eq:def_|Df|_am} can be uniquely extended to
an outer regular Borel measure on \(\X\), which we still denote by \(|\D f|_{am}^0\).
\end{proposition}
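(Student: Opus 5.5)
The plan is to verify the four hypotheses of Theorem~\ref{thm:cor_of_DeGiorgi-Letta} for the set-function $\phi\coloneqq|\D f|_{am}^0\colon\tau(\X)\to[0,\infty)$. First, finiteness: since $f\in BV_{am}^0(\X)$ there is an ${\rm am}_0$-bounding sequence $(\rho_j)_j$ on $\X$ with $\sup_j\|\rho_j\|_{L^1}<\infty$. Its restriction to any open $\Omega$ is an ${\rm am}_0$-bounding sequence on $\Omega$, because curves in $\mathscr R(\Omega)$ are curves in $\mathscr R(\X)$ and $N\cap\Omega$ is still $\mm$-null. Hence $\phi(\Omega)\le\phi(\X)<\infty$.

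Next come the easy axioms. For \ref{it:phi_empty}, the zero sequence is trivially admissible on $\varnothing$ (the family $\mathscr R(\varnothing)$ is empty). For \ref{it:isoton}, if $U\subseteq V$, any ${\rm am}_0$-bounding sequence for $f$ on $V$ is one on $U$ with the null set $N\cap U$, and $\int_U\rho_j\,\d\mm\le\int_V\rho_j\,\d\mm$; taking infima gives $\phi(U)\le\phi(V)$. Axiom \ref{it:sigma-subadd} is exactly the last part of Lemma~\ref{lem:sum_am-bounds}.

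The main point is superadditivity \ref{it:superadd_on_disj}. Take $U,V$ open with $\distd(U,V)>0$ and an ${\rm am}_0$-bounding sequence $(\rho_j)_j$ on $U\cup V$ with $\varliminf_j\int_{U\cup V}\rho_j\le\phi(U\cup V)+\varepsilon$. Since $U$ and $V$ are disjoint, $\int_{U\cup V}\rho_j=\int_U\rho_j+\int_V\rho_j$. Restricting $(\rho_j)$ to $U$ and to $V$ gives bounding sequences there, as in the monotonicity step. The obstacle is that the $\varliminf$ of a sum dominates the sum of the $\varliminf$'s only in the wrong direction. To handle this, pass to a subsequence realising $\varliminf_j\int_{U\cup V}\rho_j$ as a limit; subsequences of bounding sequences remain bounding. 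Then pass to a further subsequence along which $\int_U\rho_j$ converges. Along this subsequence $\int_V\rho_j$ also converges, and
\[
\phi(U)+\phi(V)\le\lim_j\int_U\rho_j\,\d\mm+\lim_j\int_V\rho_j\,\d\mm=\lim_j\int_{U\cup V}\rho_j\,\d\mm\le\phi(U\cup V)+\varepsilon .
\]
Letting $\varepsilon\to0$ gives \ref{it:superadd_on_disj}. (Only disjointness is actually used here, which is weaker than the positive-distance hypothesis.)

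With all four hypotheses verified, Theorem~\ref{thm:cor_of_DeGiorgi-Letta} yields a unique outer regular Borel measure extending $|\D f|_{am}^0$, which is the claim.
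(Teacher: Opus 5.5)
Your proof is correct and follows the same route as the paper: you check the four hypotheses of Theorem~\ref{thm:cor_of_DeGiorgi-Letta}, taking $\sigma$-subadditivity from Lemma~\ref{lem:sum_am-bounds} and getting monotonicity and superadditivity by restricting a bounding sequence on the larger set. Your $\varepsilon$-almost-optimal choice is, if anything, a bit more careful than the paper's choice of a sequence with $\int_{U\cup V}g_n\,\d\mm\to|\D f|_{am}^0(U\cup V)$; the one cosmetic misstep is the claimed ``obstacle'', since $\varliminf_j\int_U\rho_j\,\d\mm+\varliminf_j\int_V\rho_j\,\d\mm\le\varliminf_j\int_{U\cup V}\rho_j\,\d\mm$ is already the direction you need, so your subsequence extraction is harmless but unnecessary.
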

\begin{proof}
We check that \(\phi\coloneqq|Df|_{am}^0\colon\tau(\X)\to[0,\infty)\) fulfils the assumptions of Theorem \ref{thm:cor_of_DeGiorgi-Letta}:
\begin{enumerate}[label=(\roman*)]
\item Theorem \ref{thm:cor_of_DeGiorgi-Letta}\ref{it:phi_empty} trivially holds.
\item If \(U,V\in\tau(\X)\) and \(U\subseteq V\), then \(({g_n}_{|U})_n\) is an \({\rm am}_0\)-bounding sequence for \(f\) on \(U\) when \((g_n)_n\) is
an \({\rm am}_0\)-bound for \(f\) on \(V\). This implies that \(|\D f|_{am}^0(U)\leq|\D f|_{am}^0(V)\), so that \(\phi\) satisfies
Theorem \ref{thm:cor_of_DeGiorgi-Letta}\ref{it:isoton}.
\item Theorem \ref{thm:cor_of_DeGiorgi-Letta}\ref{it:sigma-subadd} was proved in Lemma \ref{lem:sum_am-bounds}.
\item Fix two
sets \(U,V\in\tau(\X)\) with $\distd(U,V)>0$. 
Take an \({\rm am}_0\)-bounding sequence \((g_n)_n\) for \(f\) on \(U\cup V\) such that \(\int_{U\cup V}g_n\,\d\mm\to|\D f|_{am}^0(U\cup V)\).
Since \(({g_n}|_U)_n\) and \(({g_n}|_V)_n\) are \({\rm am}_0\)-bounding sequences for \(f\) on \(U\) and \(V\), respectively, we can estimate
\begin{align*}
|\D f|_{am}^0(U)+|\D f|_{am}^0(V)&\leq\varliminf_{n\to\infty}\int_U g_n\,\d\mm+\varliminf_{n\to\infty}\int_V g_n\,\d\mm\\
&=\lim_{n\to\infty}\int_{U\cup V}g_n\,\d\mm=|\D f|_{am}^0(U\cup V).
\end{align*}
This proves that \(\phi\) satisfies Theorem~\ref{thm:cor_of_DeGiorgi-Letta}\ref{it:superadd_on_disj}.
\end{enumerate}
Therefore, by applying Theorem \ref{thm:cor_of_DeGiorgi-Letta} we achieve the statement.
\end{proof}
It is not clear to us whether, in general, the set-function \(|Df|_{am}^1\colon\tau(\X)\to[0,\infty)\) is the restriction of a Borel measure for every
\(f\in BV_{am}^1(\X)\). However, this is the case at least when \((\X,\sfd)\) is locally complete (see Remark \ref{rmk:produce_meas_loc-compl}).
\begin{proposition}\label{prop:am_0_vs_am_0_pt}
Let \((\X,\sfd,\mm)\) be a metric measure space. Then \(BV_{am}^0(\X)\subseteq BV_{\AM}(\X)\) and
\[
|Df|_{\AM}\leq|Df|_{am}^0\quad\text{ for every }f\in BV_{am}^0(\X).
\]
\end{proposition}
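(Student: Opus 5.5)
The plan is to show that every ${\rm am}_0$-bounding sequence $(\rho_j)_j$ for $f$ on an open set $\Omega$ is automatically an AM-bounding sequence for $f$ in $\Omega$ in the sense of Definition~\ref{defAMbounding}. Once this is done, the inequality $|Df|_{\AM}(\Omega)\leq|Df|_{am}^0(\Omega)$ follows immediately by taking the infimum over all ${\rm am}_0$-bounding sequences, and the inclusion $BV_{am}^0(\X)\subseteq BV_{\AM}(\X)$ follows by taking $\Omega=\X$.

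Fix an ${\rm am}_0$-bounding sequence $(\rho_j)_j$ for $f$ on $\Omega$, together with its $\mm$-null Borel set $N\subseteq\Omega$. By Remark~\ref{rem:localize_ambound}, there is an ${\rm AM}_0$-negligible family $\Gamma_0$ with the following property: every $\gamma\in\mathscr R(\X)\setminus\Gamma_0$ satisfies \eqref{eq:tildeam-bound-alt} for all $a<b$ with $\gamma([a,b])\subseteq\Omega$ and $\gamma_a,\gamma_b\notin N$. Strictly speaking, the Remark refers to curves in $\Omega$. To pass to curves in $\X$, I will use the same rational-subinterval argument given after Definition~\ref{defAMbounding}, together with Proposition~\ref{prop:properties_AM}\ref{subcurves}. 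Namely, I collect all curves having a sub-curve (with rational endpoints inside $\Omega$) in the exceptional family; this enlarged family is still ${\rm AM}_0$-null.

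Next, I use Lemma~\ref{Nages1} together with ${\rm AM}_0\leq\Mod$ to get ${\rm AM}_0(\Gamma_N^+)=0$. I then set $\Gamma\coloneqq\Gamma_0\cup\Gamma_N^+$, which satisfies ${\rm AM}_0(\Gamma)=0$ by subadditivity. For $\gamma\in\mathscr R(\X)\setminus\Gamma$, I define the exceptional parameter set $N_\gamma\coloneqq\gamma^{-1}(N)\subseteq[a_\gamma,b_\gamma]$. Since $\gamma\notin\Gamma_N^+$, the discussion preceding Lemma~\ref{Nages1} gives $\mathscr H^1(\gamma(N_\gamma))=\mathscr H^1(\gamma([a_\gamma,b_\gamma])\cap N)=0$.

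For $a<b$ in $[a_\gamma,b_\gamma]\setminus N_\gamma$ with $\gamma([a,b])\subseteq\Omega$, we have $\gamma_a,\gamma_b\notin N$. Hence \eqref{eq:tildeam-bound-alt} yields exactly \eqref{eq:BVAM0-1}, and so $(\rho_j)_j$ is an AM-bounding sequence for $f$ in $\Omega$. Finiteness of $\liminf_j\int_\X\rho_j\,\d\mm$ is inherited from the hypothesis, which gives the inclusion of spaces.

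The main (mild) obstacle is the bookkeeping in passing from the endpoint condition on whole curves to all subintervals of curves that are not contained in $\Omega$. This is handled by Remark~\ref{rem:localize_ambound} combined with the countable rational-subcurve argument. One should also make sure that $N$ can be taken Borel, so that $\Gamma_N^+$ is controlled; this is guaranteed by the definition.
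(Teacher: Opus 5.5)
Your proposal is correct and matches the paper's proof: you show that every ${\rm am}_0$-bounding sequence is AM-bounding by combining Remark~\ref{rem:localize_ambound} with Lemma~\ref{Nages1} (via ${\rm AM}_0\le\Mod$) and taking $N_\gamma=\gamma^{-1}(N)$, so that $\mathscr H^1(\gamma(N_\gamma))=0$ for every curve outside $\Gamma_N^+$. The rational-subinterval bookkeeping is harmless but unnecessary: $N$ is a single $\gamma$-independent null set, so declaring exceptional every curve that has some sub-curve in $\Omega$ violating the endpoint inequality already gives an ${\rm AM}_0$-null family by Proposition~\ref{prop:properties_AM}\ref{subcurves}.
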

\begin{proof}
To prove the statement, it suffices to show that each \({\rm am}_0\)-bounding sequence \((\rho_j)_j\) for \(f\) on an open set \(\Omega\subseteq\X\) is also
an AM-bounding sequence for \(f\) on \(\Omega\). Taking Remark~\ref{rem:localize_ambound} into account, we find an \(\mm\)-null
set \(N\subseteq\Omega\) such that the following holds: for \({\rm AM}_0\)-a.e.\ \(\gamma\in\mathscr R(\Omega)\), we have
\begin{equation}\label{eq:am_0_vs_am_0_pt}
|f(\gamma_b)-f(\gamma_a)|\leq\varliminf_{j\to\infty}\int_{\gamma_{|[a,b]}}\rho_j
\quad\text{ if }a_\gamma\leq a<b\leq b_\gamma
\text{ and }\gamma_a,\gamma_b\notin N.
\end{equation}
Also, Lemma~\ref{Nages1} gives that $\mathscr{H}^1(\gamma([a_\gamma,b_\gamma])\cap N)=0$
for \({\rm AM}_0\)-a.e.\ \(\gamma\in\mathscr R(\Omega)\). Combining this 
with~\eqref{eq:am_0_vs_am_0_pt}, with $N_\gamma=\gamma^{-1}(N)$
we conclude that \((\rho_j)_j\) is
an AM-bounding sequence for \(f\) on \(\Omega\), since $\gamma(N_\gamma)\subseteq N$.
\end{proof}
\subsection{BV functions via test plans}\label{sec:tpp}
In this section we introduce our definition of $BV$ space based on test plans. The original definition 
given in~\cite{Amb:DiMa:14} directly provides a notion of ``measure upper gradient''.
Its equivalence with the definition below is discussed in Appendix~\ref{sec:tppp} (equivalence holds 
at least in locally complete metric measure spaces). Here we opt for a simpler definition, more in the spirit
of the definitions adopted for Sobolev classes, as suggested in Remark~7.2 of  \cite{Amb:DiMa:14} and in
Di Marino's PhD thesis \cite{DiMaPhD:14}. We
only improve it slightly, using the difference of $f$ at the endpoints and not its modulus (see 
Lemma~\ref{lem:Federichi_Nobili} below, borrowed from \cite{Federichi_Nobili}).
This specific approach to defining the BV space had an essential role in the paper \cite{Nob:Pas:Sch:22}.
The notion of test plans is defined in subsection~\ref{SubSec:TestPlans} above; the collection of all
test plans on the metric space $\X$ is denoted there by $\Pi(\X)$.
\begin{definition}[The space \(BV_{tp}^\star(\X)\)]\label{def:BV-test-plan}
Let \((\X,\sfd,\mm)\) be a metric measure space and \(f\in L^1(\X)\). We declare that \(f\)
belongs to \(BV_{tp}^\star(\X)\) if there exists a constant \(C\geq 0\) such that
\begin{equation}\label{eq:def_BV_tp}
\int_{C([0,1]; \X)}\, f(\gamma_1)-f(\gamma_0)\,\d\ppi(\gamma)\leq{\rm Comp}(\ppi)\Lip(\ppi)\, C\quad\text{ for every }\ppi\in\Pi(\X).
\end{equation}
Given any \(f\in BV_{tp}^\star(\X)\) and \(\Omega\in\tau(\X)\), we define the quantity \(|Df|_{tp}^\star(\Omega)\in[0,\infty)\) as
the minimal constant \(C\geq 0\) for which \eqref{eq:def_BV_tp} holds with $\Omega$ playing the role of $\X$, namely
\[
|Df|_{tp}^\star(\Omega)\coloneqq\sup\bigg\{\frac{1}{{\rm Comp}(\ppi)\Lip(\ppi)}\int_{C([0,1];\Omega)} f(\gamma_1)-f(\gamma_0)\,\d\ppi(\gamma)\;\bigg|\;
\ppi\in\Pi(\Omega),\,\Lip(\ppi)\neq 0\bigg\}.
\]
\end{definition}

Note that the definition is well posed, as the very definition of test plan ensures that
$f(\gamma_i)=\tilde{f}(\gamma_i)$, \(i=0,1\) for $\ppi$-a.e.\ \(\gamma\) whenever $\{f\neq\tilde{f}\}$ is
$\mm$-negligible. An equivalent formulation of \eqref{eq:def_BV_tp} would involve the modulus of the
integral  \(\int f(\gamma_1)-f(\gamma_0)\,\d\ppi(\gamma)\), as all the quantities involved in the right
hand side are invariant under time inversion:
\begin{remark}{\rm
One could equivalently define \(BV_{tp}^\star(\X)\) by asking that for some \(C\geq 0\) it holds
\begin{equation}\label{eq:def_BV_tp_alt}
\bigg|\int_{C([0,1]; \X)}\, f(\gamma_1)-f(\gamma_0)\,\d\ppi(\gamma)\bigg|\leq{\rm Comp}(\ppi)\Lip(\ppi)\, C
\quad\text{ for every }\ppi\in\Pi(\X).
\end{equation}
Indeed, on the one hand it is clear that \eqref{eq:def_BV_tp_alt} implies \eqref{eq:def_BV_tp}. On the other hand, we have
\[
\bigg|\int f(\gamma_1)-f(\gamma_0)\,\d\ppi(\gamma)\bigg|=\max\bigg\{\int f(\gamma_1)-f(\gamma_0)\,\d\ppi(\gamma),
\int f(\gamma_1)-f(\gamma_0)\,\d({\sf rev}_\#\ppi)(\gamma)\bigg\}
\]
for every \(\ppi\in\Pi(\X)\), where the map \({\sf rev}\colon C([0,1];\X)\to C([0,1];\X)\) is given by
\({\sf rev}(\gamma)_t\coloneqq\gamma_{1-t}\). Since we have that \({\sf rev}_\#\ppi\in\Pi(\X)\),
\({\rm Comp}({\sf rev}_\#\ppi)={\rm Comp}(\ppi)\) and \(\Lip({\sf rev}_\#\ppi)=\Lip(\ppi)\), we deduce
that \eqref{eq:def_BV_tp} implies \eqref{eq:def_BV_tp_alt}.

Moreover, we can have a potentially stronger condition characterizing membership in $BV_{tp}^\star(\X)$, 
namely that
\begin{equation}\label{eq:Ambrosio-defn}
\int_{C([0,1]; \X)}\, |f(\gamma_1)-f(\gamma_0)|\,\d\ppi(\gamma) \leq{\rm Comp}(\ppi)\Lip(\ppi)\, C
\quad\text{ for every }\ppi\in\Pi(\X).
\end{equation}
To see this, set $\Gamma_+$ to be the collection of all curves $\gamma\in C([0,1];\Omega)$ for which
$f(\gamma_1)-f(\gamma_0)\ge 0$, and for $\Gamma\subseteq C([0,1];\Omega)$ we set the time-reversed family to be
\[
-\Gamma:=\{\gamma\in C([0,1];\Omega)\, :\, [0,1]\ni t\mapsto\gamma(1-t)\, \in \Gamma\}
\]
and $\ppi_-$ be the measure given by $\ppi_-(\Gamma):=\ppi(-\Gamma)$.
Then, given $\ppi\in\Pi(\Omega)$, we can set
\[
\widehat{\ppi}:=\ppi_{|{\Gamma_+}}+(\ppi_-)_{|{C([0,1]\Omega)\setminus\Gamma_+}},
\]
and note that $\widehat{\ppi}\in\Pi(\Omega)$. We can use this to get the equivalence with 
version given by~\eqref{eq:Ambrosio-defn}.
In general the norms associated to these definitions could differ, as ${\rm Comp}(\ppi)\le {\rm Comp}(\widehat{\ppi})\le 2 {\rm Comp}(\ppi)$. In the situation
considered in Theorem~\ref{thm:equiv_BV_loc-complete} the two norms are indeed proved to be equal. The condition~\eqref{eq:Ambrosio-defn},
in analogy with the one of Newtonian spaces (as Definition~\ref{def:BV-test-plan}, without the modulus), was suggested in Remark~7.2 of \cite{Amb:DiMa:14}.
The full equivalence with the even (a priori) stronger original definition of \cite{Amb:DiMa:14}, that has the advantage of providing directly a measure, is discussed in
Appendix~\ref{sec:tppp}.
\fr}\end{remark}

It is not clear to us whether, in general, \(|Df|_{tp}^\star\colon\tau(\X)\to[0,\infty)\) is the restriction of a Borel measure.
However, this is the case at least when \((\X,\sfd)\) is locally complete (see Remark \ref{rmk:produce_meas_loc-compl}),
because of the coincidence on open sets with other notions of total variations for which this property is known.
\medskip

Because $\Pi(E)\subseteq\Pi(\X)$ when a given
Borel set \(E\subseteq\X\) is equipped with the metric $\sfd_{|E\times E}$ and measure $\mm_{|\mathscr B(E)}$,
from the definitions it is directly verifiable that given a function \(f\in BV_{tp}^\star(\X)\),
\begin{equation}\label{eq:restr_BV_tp}
f_{|E}\in BV_{tp}^\star(E)\;\text{ and }\;|Df_{|E}|_{tp}^\star(E)\leq|Df|_{tp}^\star(\X)
\quad\text{ for every }f\in BV_{tp}^\star(\X).
\end{equation}
\begin{proposition}[Invariance properties of \(BV_{tp}^\star(\X)\)]\label{prop:invar_BV_tp}
Let \((\X,\sfd,\mm)\) be a metric measure space and \(f\in\mathcal L^1(\X)\). Then \(f\in BV_{tp}^\star(\X)\) if and only if
\(f_{|{\rm spt}(\mm)}\in BV_{tp}^\star({\rm spt}(\mm))\). Moreover,
\[
|Df|_{tp}^\star(\X)=|Df_{|{\rm spt}(\mm)}|_{tp}^\star({\rm spt}(\mm))\quad\text{ for every }f\in BV_{tp}^\star(\X).
\]
\end{proposition}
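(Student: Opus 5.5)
The plan is to prove the two inequalities between $|Df|_{tp}^\star(\X)$ and $|Df_{|{\rm spt}(\mm)}|_{tp}^\star({\rm spt}(\mm))$ separately; membership in one space iff the other then follows at once. Write $S\coloneqq{\rm spt}(\mm)$, a closed Borel set by Proposition~\ref{prop:main_Radon}\ref{it:spt_closed}.

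The easy direction is that $f\in BV_{tp}^\star(\X)$ implies $f_{|S}\in BV_{tp}^\star(S)$ with $|Df_{|S}|_{tp}^\star(S)\leq|Df|_{tp}^\star(\X)$. This is exactly \eqref{eq:restr_BV_tp} with $E=S$. The underlying reason is that a test plan on $S$ (with the restricted distance and measure) is a Radon probability measure on $C([0,1];S)\subseteq C([0,1];\X)$, and its compression and Lipschitz constants are unchanged when it is viewed in $\X$.

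For the converse, fix $\ppi\in\Pi(\X)$ with $\Lip(\ppi)\neq 0$ and let $\iota\colon C([0,1];S)\to C([0,1];\X)$ be the inclusion, which is an isometric embedding. By Remark~\ref{rmk:tp_conc_on_spt}, $\ppi$ is concentrated on $\Sigma\coloneqq\{\gamma:\gamma([0,1])\subseteq S\}=\iota(C([0,1];S))$. This set is closed, since $S$ is closed and uniform limits preserve the inclusion, so it is Borel. Define $\ppi_S$ on $C([0,1];S)$ by $\ppi_S(B)\coloneqq\ppi(\iota(B))$; this is well defined because Borel sets of the closed subspace $\Sigma$ are Borel in $C([0,1];\X)$.

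Next I will check that $\ppi_S\in\Pi(S)$. It is a probability measure because $\ppi(\Sigma)=1$. It is Radon: inner regularity on open sets follows because compact subsets of $\Sigma$ are compact in $C([0,1];\X)$ and conversely, and $\ppi$ restricted to the closed set $\Sigma$ is still Radon. For compressibility, $(\e_t)_\#\ppi_S=((\e_t)_\#\ppi)_{|\mathscr B(S)}\leq{\rm Comp}(\ppi)\,\mm_{|\mathscr B(S)}$, so ${\rm Comp}(\ppi_S)\leq{\rm Comp}(\ppi)$. Finally $\Lip(\ppi_S)=\Lip(\ppi)$, since $\ppi$-a.e.\ curve lies in $\Sigma$ and the distance is the same.

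With this in hand,
\[
\int f(\gamma_1)-f(\gamma_0)\,\d\ppi=\int f_{|S}(\gamma_1)-f_{|S}(\gamma_0)\,\d\ppi_S\leq{\rm Comp}(\ppi_S)\Lip(\ppi_S)\,|Df_{|S}|_{tp}^\star(S)\leq{\rm Comp}(\ppi)\Lip(\ppi)\,|Df_{|S}|_{tp}^\star(S).
\]
Plans with $\Lip(\ppi)=0$ are concentrated on constant curves, so the left-hand side is $0$ and \eqref{eq:def_BV_tp} holds trivially for them. Taking the supremum over $\ppi$ gives the reverse inequality.

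The only delicate point is the bookkeeping that $\ppi_S$ is a genuine Radon test plan on $S$: measurability and closedness of $\Sigma$, and the fact that Remark~\ref{rmk:tp_conc_on_spt} gives full measure on curves entirely contained in $S$, not merely with endpoints in $S$. The remaining steps are direct.
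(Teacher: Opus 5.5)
Your proposal is correct and follows the paper's proof. The easy direction comes from \eqref{eq:restr_BV_tp}. For the converse, you use Remark~\ref{rmk:tp_conc_on_spt} to regard each $\ppi\in\Pi(\X)$ as a test plan on ${\rm spt}(\mm)$ whose compression and Lipschitz constants are no larger. You add the measure-theoretic bookkeeping that the paper summarizes as ``in a canonical way'' (closedness of $\Sigma$, the Radon property of $\ppi_S$), and your signed integrand matches Definition~\ref{def:BV-test-plan} exactly.
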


\begin{proof}
On the one hand, if \(f\in BV_{tp}^\star(\X)\), then \eqref{eq:restr_BV_tp} gives that \(f_{|{\rm spt}(\mm)}\in BV_{tp}^\star({\rm spt}(\mm))\)
and \(|Df_{|{\rm spt}(\mm)}|_{tp}^\star({\rm spt}(\mm))\leq|Df|_{tp}^\star(\X)\). On the other hand, assume \(f_{|{\rm spt}(\mm)}\in BV_{tp}^\star({\rm spt}(\mm))\)
and fix \(\ppi\in\Pi(\X)\). Remark \ref{rmk:tp_conc_on_spt} gives that \(\ppi\) is concentrated on \(C([0,1];{\rm spt}(\mm))\subseteq C([0,1];\X)\),
thus \(\ppi\) induces a test plan \(\tilde\ppi\in\Pi({\rm spt}(\mm))\) in a canonical way. Therefore, we can estimate
\begin{align*}
\int_{C([0,1]; \X)}\, |f(\gamma_1)-f(\gamma_0)|\,\d\ppi(\gamma)
&=\int_{C([0,1]; {\rm spt}(\mm))}\, \big|f_{|{\rm spt}(\mm)}(\gamma_1)-f_{|{\rm spt}(\mm)}(\gamma_0)\big|\,\d\tilde\ppi(\gamma)\\
&\leq{\rm Comp}(\tilde\ppi)\Lip(\tilde\ppi)|Df_{|{\rm spt}(\mm)}|_{tp}^\star({\rm spt}(\mm))\\
&={\rm Comp}(\ppi)\Lip(\ppi)|Df_{|{\rm spt}(\mm)}|_{tp}^\star({\rm spt}(\mm)),
\end{align*}
whence it follows that \(f\in BV_{tp}^\star(\X)\) and \(|Df|_{tp}^\star(\X)\leq|Df_{|{\rm spt}(\mm)}|_{tp}^\star({\rm spt}(\mm))\), as desired.
\end{proof}
The following lemma will be useful to reduce problems related to BV functions to problems related
to bounded functions in the proof of Theorem~\ref{thm:incl_BV_spaces}.
\begin{lemma}\label{lem:Federichi_Nobili}
Let \((\X,\sfd,\mm)\) be a metric measure space. Assume that \(f\in\mathcal L^1(\X)\) and \(C\geq 0\) satisfy
\[
\int_{C([0,1]; \X)}\,  f(\gamma_1)-f(\gamma_0)\,\d\ppi(\gamma)\leq{\rm Comp}(\ppi)\Lip(\ppi)\, C\quad\text{ for every }\ppi\in\Pi(\X).
\]
Then, for any \(a,b\in\R\) with \(a<b\), we have that 
\(f_{a,b}\coloneqq(f\vee a)\wedge b\in\mathcal L^1(\X)\cap\mathcal L^\infty(\X)\) satisfies
\[
\int_{C([0,1]; \X)}\,  f_{a,b}(\gamma_1)-f_{a,b}(\gamma_0)\,\d\ppi(\gamma)\leq{\rm Comp}(\ppi)\Lip(\ppi)\,C\quad\text{ for every }\ppi\in\Pi(\X).
\]
Conversely, if \(f\in\mathcal L^1(\X)\) and \(C\geq 0\) such that for each $a,b\in\R$ with $a<b$ we have
\[
\int_{C([0,1]; \X)}\,  f_{a,b}(\gamma_1)-f_{a,b}(\gamma_0)\,\d\ppi(\gamma)\leq{\rm Comp}(\ppi)\Lip(\ppi)\,C\quad\text{ for every }\ppi\in\Pi(\X),
\]
then
\[
\int_{C([0,1]; \X)}\,  f(\gamma_1)-f(\gamma_0)\,\d\ppi(\gamma)\leq{\rm Comp}(\ppi)\Lip(\ppi)\, C\quad\text{ for every }\ppi\in\Pi(\X).
\]
\end{lemma}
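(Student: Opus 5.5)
The converse direction is straightforward. If $f\in\mathcal L^1(\X)$, then $f_{-k,k}\to f$ pointwise as $k\to\infty$ and $|f_{-k,k}|\le|f|$. Moreover
\[
\int|f(\gamma_i)|\,\d\ppi=\int|f|\,\d(\e_i)_\#\ppi\le{\rm Comp}(\ppi)\|f\|_{L^1}<\infty .
\]
So $\gamma\mapsto|f(\gamma_0)|+|f(\gamma_1)|$ is a $\ppi$-integrable dominating function. Dominated convergence then passes the inequality for $f_{-k,k}$ to $f$.

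For the direct implication, I would build a new plan $\tilde\ppi$ from $\ppi$ so that the increment of $f$ along $\tilde\ppi$-curves equals the increment of $f_{a,b}$ along $\ppi$-curves. Write $f_{a,b}(\gamma_1)-f_{a,b}(\gamma_0)=\int_a^b(\nchi_{\{f(\gamma_1)>s\}}-\nchi_{\{f(\gamma_0)>s\}})\,\d s$. Along a single curve only the portion between the times where $f$ crosses $a$ and $b$ matters, but $f\circ\gamma$ is not continuous, so the "crossing times" cannot be located. I would instead use a cheaper device that needs only the endpoints. Partition the curves by the position of the endpoint values, i.e.\ consider the Borel sets $A_{ij}=\{\gamma: f(\gamma_0)\in I_i,\ f(\gamma_1)\in I_j\}$, where $I_1=(-\infty,a)$, $I_2=[a,b]$, $I_3=(b,\infty)$. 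On each $A_{ij}$, the increment $f_{a,b}(\gamma_1)-f_{a,b}(\gamma_0)$ is a convex combination of $f(\gamma_1)-f(\gamma_0)$ and $0$ (with weight a Borel function $\lambda(\gamma)\in[0,1]$), having the same sign. Explicitly,
\[
\lambda(\gamma)=\frac{f_{a,b}(\gamma_1)-f_{a,b}(\gamma_0)}{f(\gamma_1)-f(\gamma_0)}\in[0,1]
\]
when the denominator is nonzero, and $\lambda=0$ otherwise. Since truncation is 1-Lipschitz and monotone, this is indeed in $[0,1]$.

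Then I would define the sub-probability measure $\lambda\ppi$ and complete it to a probability by adding $(1-\lambda)$ times constant curves. Concretely, set $\tilde\ppi\coloneqq\lambda\ppi+({\sf c}\circ\e_0)_\#((1-\lambda)\ppi)$, where ${\sf c}(x)$ is the constant curve at $x$. Constant curves contribute zero increment and zero Lipschitz constant. The compression satisfies $(\e_t)_\#\tilde\ppi\le(\e_t)_\#(\lambda\ppi)+(\e_0)_\#((1-\lambda)\ppi)$. This is only bounded by $2{\rm Comp}(\ppi)\mm$ in general, not by ${\rm Comp}(\ppi)\mm$, so there is a loss of a factor $2$.

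This factor $2$ is the main obstacle. To avoid it, I would replace the constant curves by a \emph{stationary} reparametrization: use $\gamma$ itself, frozen for part of the time. Alternatively, and more cleanly, I would use the curves as they are but only on the subset where the truncated increment is nonzero, arguing by the cases above. When $\lambda\in\{0,1\}$ on a case, this works directly: on $A_{ij}$ with both endpoint values in $I_2$, or in a single outer interval, $\lambda$ is $1$ or the increment is $0$. The genuinely fractional cases are those where an endpoint value lies outside $[a,b]$.

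For those cases I would instead apply the hypothesis to restricted plans $\ppi_r^s$ from \eqref{eq:restr_tp}, which have ${\rm Comp}\le{\rm Comp}(\ppi)$ and $\Lip$ scaled by $s-r$. The idea is to combine them via the layer-cake formula in $s$ (the level), averaging over restriction times, so that the total cost stays ${\rm Comp}(\ppi)\Lip(\ppi)C$. Precisely how to realise this time-averaging without doubling the compression is the step I expect to require the most care. A fallback is to show the desired inequality first with constant $2C$, and then to remove the loss by a scaling/iteration argument using that the set of admissible $C$ is closed under such operations.
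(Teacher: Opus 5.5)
Your converse direction is correct. Since $|f_{-k,k}(\gamma_1)-f_{-k,k}(\gamma_0)|\le|f(\gamma_0)|+|f(\gamma_1)|$, and the right-hand side is $\ppi$-integrable by bounded compression, dominated convergence applies. This is more explicit than the paper's one-line appeal to monotone convergence over the sets $\Gamma_{a,b}$.

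\textbf{The direct implication is not proved.} Completing $\lambda\ppi$ with constant curves only gives the bound with $2C$, as you note yourself. None of the proposed repairs closes this. Pausing along $\gamma$ does not change the endpoint increment, and stopping early brings in intermediate values $f(\gamma_t)$, which carry no information about level crossings. This is the same obstruction that blocks the time-averaging of the restricted plans $\ppi_r^s$. Finally, scaling $f$ rescales $C$ linearly, and iterating the truncation only compounds the factor $2$.

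\textbf{The missing idea is to normalize the sub-probability $\lambda\ppi$ rather than complete it.} If $\mu\le\ppi$ is a finite measure with $m\coloneqq\mu(C([0,1];\X))>0$, then $\mu/m\in\Pi(\X)$. It is Radon because it has bounded density with respect to $\ppi$. Moreover ${\rm Comp}(\mu/m)\le{\rm Comp}(\ppi)/m$ and $\Lip(\mu/m)\le\Lip(\ppi)$. Hence the factor $m$ cancels exactly. Your own $\lambda$ is Borel and $[0,1]$-valued, and $f_{a,b}(\gamma_1)-f_{a,b}(\gamma_0)=\lambda(\gamma)\big(f(\gamma_1)-f(\gamma_0)\big)$ for every $\gamma$. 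So for $m\coloneqq\int\lambda\,\d\ppi>0$ you get
\[
\int\big(f_{a,b}(\gamma_1)-f_{a,b}(\gamma_0)\big)\,\d\ppi=m\int\big(f(\gamma_1)-f(\gamma_0)\big)\,\d\Big(\frac{\lambda\ppi}{m}\Big)\le m\,{\rm Comp}\Big(\frac{\lambda\ppi}{m}\Big)\Lip(\ppi)\,C\le{\rm Comp}(\ppi)\Lip(\ppi)\,C.
\]
For $m=0$ the left-hand side vanishes, so no case analysis is needed.

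The paper does the same thing with $\lambda$ replaced by the indicator of the set $\Gamma$ of curves on which the truncated increment is nonnegative. It uses only two facts: on $\Gamma$, the truncated increment is at most the increment of $f$; off $\Gamma$, it is negative. For the first fact one should take $\Gamma$ with \emph{strict} inequality. Where the truncated increment vanishes, the increment of $f$ can be negative, for example when both endpoint values exceed $b$ and $f$ decreases.
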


\begin{proof}
Define \(\Gamma_{a,b}\coloneqq\{\gamma\in C([0,1];\X):f_{a,b}(\gamma_1)\geq f_{a,b}(\gamma_0)\}\) and fix \(\ppi\in\Pi(\X)\).
If \(\ppi(\Gamma_{a,b})=0\), then \(f_{a,b}(\gamma_1)<f_{a,b}(\gamma_0)\) for \(\ppi\)-a.e.\ \(\gamma\) and thus
\[
\int_{C([0,1]; \X)}\,  f_{a,b}(\gamma_1)-f_{a,b}(\gamma_0)\,\d\ppi(\gamma)<0\leq{\rm Comp}(\ppi)\Lip(\ppi)\, C.
\]
If \(\ppi(\Gamma_{a,b})>0\), then note that
\(\ppi_{a,b}\coloneqq\ppi(\Gamma_{a,b})^{-1}\ppi\llcorner\Gamma_{a,b}\in\Pi(\X)\).
Observing that
\[
f_{a,b}(\gamma_1)-f_{a,b}(\gamma_0)=|f_{a,b}(\gamma_1)-f_{a,b}(\gamma_0)|\leq|f(\gamma_1)-f(\gamma_0)|
=f(\gamma_1)-f(\gamma_0)\quad\text{ for all }\gamma\in\Gamma_{a,b},
\]
and using the fact that \({\rm Comp}(\ppi_{a,b})\leq\ppi(\Gamma_{a,b})^{-1}{\rm Comp}(\ppi)\) and
\(\Lip(\ppi_{a,b})\leq\Lip(\ppi)\), we get
\[\begin{split}
\int_{C([0,1]; \X)}\,  f_{a,b}(\gamma_1)-f_{a,b}(\gamma_0)\,\d\ppi(\gamma)&\leq
\ppi(\Gamma_{a,b})\int_{C([0,1]; \X)}\,  f_{a,b}(\gamma_1)-f_{a,b}(\gamma_0)\,\d\ppi_{a,b}(\gamma)\\
&\leq\ppi(\Gamma_{a,b})\int_{C([0,1]; \X)}\,  f(\gamma_1)-f(\gamma_0)\,\d\ppi_{a,b}(\gamma)\\
&\leq\ppi(\Gamma_{a,b}){\rm Comp}(\ppi_{a,b})\Lip(\ppi_{a,b})C\leq{\rm Comp}(\ppi)\Lip(\ppi)C.
\end{split}\]
The converse direction is proved by noting that 
$C([0,1];\X)=\bigcup_{(a,b)\in\mathbb{Q}_P}\Gamma_{a,b}$ with $\mathbb{Q}_P$ the collection of all pairs of rational numbers
$a,b$ with $a<b$, and appealing to a monotone convergence theorem.
\end{proof}
\section{Equivalence of metric BV spaces}
The following theorem summarizes the discussion about the inclusions in \eqref{chain1}, \eqref{chain2}, \eqref{chain3} that
hold unconditionally at the level of general metric measure spaces. We only add here the short proofs of the
inclusions of $BV_N(\X)$ in $BV_{am}^0(\X)$ and of \(BV_{am}^1(\X)\) in \(BV_{tp}^\star(\X)\).
\begin{theorem}\label{thm:incl_BV_spaces}
Let \((\X,\sfd,\mm)\) be a metric measure space. Then
\[
BV^\star(\X)\subseteq BV(\X)\subseteq BV_N(\X)\subseteq BV_{am}^0(\X)\subseteq BV_{\AM}(\X)\cap BV_{am}^1(\X)\]
and finally
\[
BV_{\AM}(\X)\cup BV_{am}^1(\X)\subseteq BV_{tp}^\star(\X).
\]
Moreover, in correspondence to these inclusions, it holds that
\begin{align}
\label{eq:ineq_BV_star-BV}
|Df|(\X)\leq|Df|^*(\X)&\quad\forall f\in BV^\star(\X),\\
\label{eq:ineq_BV_BVN}
|Df|_N\leq|Df|&\quad\forall f\in BV(\X),\\
\label{eq:ineq_BV-BV_am}
|Df|_{am}^0\leq|Df|_N&\quad\forall f\in BV_N(\X),\\
\label{eq:ineq:BV_am0-BVAM}
|Df|_{\AM}\leq |Df|_{am}^0&\quad\forall f\in BV_{am}^0(\X),\\
\label{eq:ineq_BV_am0-am1}
 |Df|_{am}^1(\Omega)\leq|Df|_{am}^0(\Omega)&\quad\forall f\in BV_{am}^0(\X),\,\,\Omega\in\tau(\X),\\
\label{eq:ineq_BV_am-BV_tp}
|Df|_{tp}^\star(\Omega)\leq\min\left\{
|Df|_{am}^1(\Omega),|Df|_{\AM}(\Omega)\right\}&\quad\forall f\in BV_{am}^1(\X)\cup BV_{\AM}(\X),\,\,\Omega\in\tau(\X).
\end{align}
\end{theorem}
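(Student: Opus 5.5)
Most of the chain has already been established, so the plan is to collect those results and prove only the two genuinely new inclusions. Inequality \eqref{eq:ineq_BV_star-BV} is \eqref{eq:triv_ineq_BV}, and \eqref{eq:ineq_BV_BVN} is \eqref{Mir_in_New}; the latter localizes to every open $\Omega$ by Remark~\ref{rmk:local_property_BV}. Inequality \eqref{eq:ineq:BV_am0-BVAM} is Proposition~\ref{prop:am_0_vs_am_0_pt}, and \eqref{eq:ineq_BV_am0-am1} was observed just after Definition~\ref{def:BV_am}, using ${\rm AM}_1\le{\rm AM}_0$ and Corollary~\ref{cor:exceptional_AM}. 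What remains is \eqref{eq:ineq_BV-BV_am} and \eqref{eq:ineq_BV_am-BV_tp}.

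For \eqref{eq:ineq_BV-BV_am}, fix an open $\Omega$ and apply Lemma~\ref{lem:BV_in_BV_tildeam} to get $(f_n)\subseteq N^{1,1}(\Omega)$ with $\int_\Omega g_{f_n}\,\d\mm\to|Df|_N(\Omega)$ and an $\mm$-null $N$ such that \eqref{eq:compitino} holds off a ${\rm Mod}$-null family of curves in $\Omega$. Since ${\rm AM}_0\le{\rm Mod}$, the sequence $(g_{f_n})_n$ is an ${\rm am}_0$-bounding sequence for $f$ on $\Omega$. Hence $|Df|_{am}^0(\Omega)\le|Df|_N(\Omega)$, and in particular $BV_N\subseteq BV_{am}^0$. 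The comparison extends from open sets to Borel sets because both sides are outer regular (Proposition~\ref{prop:ext_|Df|_am_to_meas}).

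For \eqref{eq:ineq_BV_am-BV_tp}, fix $\ppi\in\Pi(\Omega)$ with $\Lip(\ppi)>0$.

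In the ${\rm am}_1$ case, let $(\rho_j)$ be an ${\rm am}_1$-bounding sequence for $f$ on $\Omega$. The exceptional family is ${\rm AM}_1$-null, so Proposition~\ref{prop:AM_vs_tp} makes it $\ppi$-null. For $\ppi$-a.e. $\gamma$ this gives $f(\gamma_1)-f(\gamma_0)\le\varliminf_j\int_\gamma\rho_j$. Integrating and using Fatou (after replacing $\rho_j$ by lower semicontinuous majorants, as in the proof of Proposition~\ref{prop:AM_vs_tp}, for Borel measurability), followed by Fubini, the bound $|\dot\gamma_t|\le\Lip(\ppi)$ and $(\e_t)_\#\ppi\le{\rm Comp}(\ppi)\mm$, I get
\[
\int f(\gamma_1)-f(\gamma_0)\,\d\ppi\le{\rm Comp}(\ppi)\Lip(\ppi)\varliminf_j\int_\Omega\rho_j\,\d\mm.
\]
Two technical points arise here. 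First, Fatou needs a lower bound on the integrand, so I would first reduce to bounded $f$ via Lemma~\ref{lem:Federichi_Nobili}; truncation preserves bounding sequences because $|f_{a,b}(x)-f_{a,b}(y)|\le|f(x)-f(y)|$. Second, $\varliminf_j\int_\Omega\rho_j$ can be taken along a subsequence, which is again a bounding sequence.

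In the ${\rm AM}$ case, the endpoints $\gamma_0,\gamma_1$ may fall in $\gamma(N_\gamma)$, so the argument has to be altered. I would average over endpoints: for $\ppi$-a.e. $\gamma$ and a.e. $(s,t)$ near $(0,1)$, the points $s,t$ avoid $N_\gamma$. This holds because $\mathscr H^1(\gamma(N_\gamma))=0$, and a Lipschitz curve with nonzero speed (or a constant piece) spends zero time there. A constant piece carries no variation, but I would handle it via the restricted plans $\ppi_s^t$ of \eqref{eq:restr_tp}.

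More precisely, write $\int f(\gamma_1)-f(\gamma_0)\,\d\ppi=\lim_{\eps\to0}\frac1{\eps^2}\int_0^\eps\int_{1-\eps}^1\int f(\gamma_t)-f(\gamma_s)\,\d\ppi\,\d t\,\d s$. This identity holds because, for bounded $f$, the maps $s\mapsto\int f\circ\e_s\,\d\ppi$ are... not obviously continuous. Instead I would apply the AM-bound to the reparametrised curves ${\rm restr}_s^t\gamma$ and use $\ppi$-a.e. finiteness of $\varliminf_j\int_\gamma\rho_j$ to pass to the limit. The cleanest route is probably to bypass continuity entirely, using Lemma~\ref{lem:Nages2}. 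For $\ppi$-a.e. $\gamma$, $f\circ\gamma$ has a good representative whose variation on $[0,1]$ is bounded by $\varliminf_j\int_\gamma\rho_j$. Since $(\e_0)_\#\ppi\ll\mm$, the value $f(\gamma_0)$ agrees $\ppi$-a.e. with the one-sided limit of that representative; proving this is exactly the delicate point, and is the main obstacle. Once it is available, the same Fatou/compression estimate yields $|Df|_{tp}^\star(\Omega)\le|Df|_{\AM}(\Omega)$.
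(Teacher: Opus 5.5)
Apart from the $BV_{\AM}$ case, your argument is the paper's. The first four comparisons are quoted from \eqref{eq:triv_ineq_BV}, \eqref{Mir_in_New}, Proposition~\ref{prop:am_0_vs_am_0_pt} and the remark after Definition~\ref{def:BV_am}. The inclusion $BV_N\subseteq BV_{am}^0$ comes from Lemma~\ref{lem:BV_in_BV_tildeam}, ${\rm AM}_0\le{\rm Mod}$ and outer regularity. The ${\rm am}_1$ case is Proposition~\ref{prop:AM_vs_tp} followed by Fatou, Tonelli and compression. Your truncation through Lemma~\ref{lem:Federichi_Nobili} is harmless but unnecessary. Indeed, $f(\gamma_1)-f(\gamma_0)$ is $\ppi$-integrable because $f\in L^1(\mm)$ and $(\e_0)_\#\ppi,(\e_1)_\#\ppi\le{\rm Comp}(\ppi)\mm$, and Fatou is only applied to the nonnegative quantities $\int_\gamma\rho_j$.

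The genuine gap is the inclusion $BV_{\AM}(\X)\subseteq BV_{tp}^\star(\X)$ with $|Df|_{tp}^\star(\Omega)\le|Df|_{\AM}(\Omega)$. You reach the right strategy: use \eqref{eq:Nages3} along $\ppi$-a.e.\ curve (legitimate by Proposition~\ref{prop:AM_vs_tp}), then identify $f(\gamma_0)$, $f(\gamma_1)$ with one-sided limits of the good representative. But you leave that identification as ``the main obstacle'', and it is exactly what turns \eqref{eq:Nages3} into \eqref{eq:essential}.

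The tool that supplies it is the boundary regularity of test plans, Proposition~\ref{prop:bdry_reg_tp}. Whenever ${\rm eV}(f\circ\gamma,(0,1))<\infty$ for $\ppi$-a.e.\ $\gamma$, it gives $|f(\gamma_1)-f(\gamma_0)|\le{\rm eV}(f\circ\gamma,(0,1))$ for $\ppi$-a.e.\ $\gamma$; after that, the ${\rm am}_1$ computation applies verbatim. Its proof does not rest on $(\e_0)_\#\ppi\ll\mm$, which says nothing about how $f(\gamma_0)$ relates to $f(\gamma_t)$ for $t>0$. It rests on compression at all small times combined with the Lipschitz bound. Taking $f_\delta\in\LIP_{bs}(\X)$ with $\|f-f_\delta\|_{L^1(\mm)}\le\delta$ (Lemma~\ref{lem:dens_Lip}), one gets
\[
\frac1\varepsilon\int\!\!\int_0^\varepsilon|f(\gamma_t)-f(\gamma_0)|\,\d t\,\d\ppi(\gamma)\le 2\,{\rm Comp}(\ppi)\,\delta+\Lip(f_\delta)\Lip(\ppi)\,\varepsilon .
\]
Hence, along some $\varepsilon_k\searrow 0$ and for $\ppi$-a.e.\ $\gamma$, $f(\gamma_0)$ is the approximate right limit of $f\circ\gamma$ at $0$. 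It therefore equals the one-sided limit of the BV representative of $f\circ\gamma$, and the same holds at $t=1$. The same estimate shows that $t\mapsto f\circ\e_t$ is continuous in $L^1(\ppi)$, which settles the doubt you raised in your averaging attempt.

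One further point needs care. Lemma~\ref{lem:Nages2} controls $f\circ\hat\gamma$, while Proposition~\ref{prop:bdry_reg_tp} needs a bound on ${\rm eV}(f\circ\gamma,(0,1))$ for the $[0,1]$-parametrisation carried by $\ppi$. Your assertion that curves spend zero time in $\gamma(N_\gamma)$ fails on constant pieces, which is exactly the mechanism of Example~\ref{ex:strong_am_bound}. So this transfer also has to be justified rather than assumed.
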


\begin{proof}
The inclusion \(BV^\star(\X)\subseteq BV(\X)\), as well as the inequality $|Df|(\X)\leq |Df|^*(\X)$ for any $f\in BV^\star(\X)$, have already been justified in \eqref{eq:triv_ineq_BV}. Similarly,
the inclusion \(BV(\X)\subseteq BV_N(\X)\), as well as the inequality $|Df|_N\leq |Df|$ for any $f\in BV(\X)$, have already been justified in \eqref{Mir_in_New}.  

In order to prove the inclusion \(BV_N(\X)\subseteq BV_{am}^0(\X)\) and \eqref{eq:ineq_BV-BV_am}, fix \(f\in BV_N(\X)\) and an open set \(\Omega\subseteq\X\). From Lemma \ref{lem:BV_in_BV_tildeam} 
we deduce that there exist a $\mm$-negligible Borel set $N$ and a sequence
\((f_n)_n\subseteq N^{1,1}(\Omega)\) such that \(\int_\Omega g_{f_n}\,\d\mm\to|Df|_N(\Omega)\)
and \eqref{eq:compitino} holds.  
Hence, $(g_{f_n})_n$ is an ${\rm AM}_0$-bounding sequence in $\Omega$ and 
\(|Df|_{am}^0(\Omega)\leq|Df|_N(\Omega)\). In particular, since $\Omega$ is
arbitrary, \(|\D f|_{am}^0\leq|\D f|_N\) by the outer regularity of \(|\D f|_N\) and $|\D f|_{am}^0$.

The inclusion \(BV^0_{am}(\X)\subseteq BV_{am}^1(\X)\) and \eqref{eq:ineq_BV_am0-am1} are discussed in the paragraph succeeding Definition \ref{def:BV_am}. Similarly, the inclusion
\(BV^0_{am}(\X)\subseteq BV_{\AM}(\X)\) and \eqref{eq:ineq:BV_am0-BVAM} are proved
in Proposition~\ref{prop:am_0_vs_am_0_pt}. 

In order to prove that \(BV_{am}^1(\X)\subseteq BV_{tp}^\star(\X)\) and the inequality
\(|Df|_{tp}^\star(\Omega)\leq |Df|_{am}^1(\Omega)\)
in \eqref{eq:ineq_BV_am-BV_tp}, fix any \(f\in BV_{am}^1(\X)\) and \(\Omega\in\tau(\X)\).
Given any \(\varepsilon>0\), there exists an \({\rm am}_1\)-bounding sequence \((g_n)_n\) for \(f\) on \(\Omega\) such that 
\begin{equation}\label{eq:choice-gn}
\varliminf_n\int_\Omega g_n\,\d\mm\leq|Df|_{am}^1(\Omega)+\varepsilon.
\end{equation}
In particular,
we can find a family of curves \(\mathcal N\subseteq\mathscr R(\X)\) with \({\rm AM}_1(\mathcal N)=0\) 
such that for every $\gamma\in\mathscr R(\X)\setminus\mathcal N$ with \(\gamma([a_\gamma,b_\gamma])\subseteq\Omega\) 
we have
\begin{equation}\label{eq:BV=BV_tildeAM_aux}
|f(\gamma_{b_\gamma})-f(\gamma_{a_\gamma})|\leq\varliminf_{n\to\infty}\int_\gamma g_n.
\end{equation}
Let \(\tilde{\mathcal N}\subseteq C([0,1];\X)\) be a Borel set associated to \(\mathcal N\cap C([0,1];\X)\) 
Note that any test plan $\ppi\in\Pi(\Omega)$ extends as a test plan in $\Pi(\X)$ by setting $\ppi(\Gamma)=0$ with
$\Gamma=C([0,1];\X)\setminus C([0,1];\Omega)$.
as in Proposition~\ref{prop:AM_vs_tp}, and fix any \(\ppi\in\Pi(\Omega)\). Then \(\ppi(\tilde{\mathcal N})=0\) 
by Proposition \ref{prop:AM_vs_tp}. It follows from~\eqref{eq:BV=BV_tildeAM_aux}  
that for curves $\gamma\in\mathscr R(\Omega)\setminus\mathcal N\subseteq\mathscr R(\X)\setminus\mathcal N$, we have
\begin{equation}\label{eq:essential}
f(\gamma_1)-f(\gamma_0)\le
|f(\gamma_1)-f(\gamma_0)|\leq\varliminf_{n\to\infty}\int_0^1 g_n(\gamma_t)|\dot\gamma_t|\,\d t\quad\text{ for $\ppi$-a.e. $\gamma$}.
\end{equation}
From the discussion following Definition~\ref{def:BV_am}, we can choose specific representatives for $f$, and we now
choose a Borel representative. Such a choice allows us to make sense of the integral
$\int_{C([0,1];\Omega)} f(\gamma_1)-f(\gamma_0)\,\d\ppi(\gamma)$ below; however, this integral is independent of the 
specific choice of Borel representative, because of the compression assumption for the test plans $\ppi$ (applied to the
specific choices of $t=0$ and $t=1$).
Integrating both sides of~\eqref{eq:essential} with respect to \(\ppi\) and using Fatou's lemma, we then deduce that
\begin{align*}
\int_{C([0,1];\Omega)} f(\gamma_1)-f(\gamma_0)\,\d\ppi(\gamma)
&\leq\int_{C([0,1];\Omega)}\left(\varliminf_{n\to\infty}\int_0^1 g_n(\gamma_t)|\dot\gamma_t|\,\d t\right)\,\d\ppi(\gamma)\\
&\leq\varliminf_{n\to\infty}\int_{C([0,1];\Omega)}\, \int_0^1 g_n(\gamma_t)|\dot\gamma_t|\,\d t\,\d\ppi(\gamma)\\
&\leq{\rm Comp}(\ppi)\Lip(\ppi)\varliminf_{n\to\infty}\int_\Omega g_n\,\d\mm\\
&\leq{\rm Comp}(\ppi)\Lip(\ppi)(|Df|_{am}^1(\Omega)+\varepsilon),
\end{align*}
where we used Tonelli's theorem together with the 
property of test plans in the penultimate step, and~\eqref{eq:choice-gn} in obtaining the final step above.
Letting \(\varepsilon\searrow 0\), we conclude that 
\[
\int_{C([0,1];\Omega)} f(\gamma_1)-f(\gamma_0)\,\d\ppi(\gamma)\leq{\rm Comp}(\ppi)\Lip(\ppi)|Df|_{am}^1(\Omega)
\]
for every \(\ppi\in\Pi(\Omega)\). Therefore, \(f\in BV_{tp}^\star(\X)\) and \(|Df|_{tp}^\star(\Omega)\leq|Df|_{am}^1(\Omega)\) for every \(\Omega\in\tau(\X)\).

Finally, the inclusion \(BV_{AM}(\X)\subseteq BV_{tp}^\star(\X)\) 
and the inequality \(|Df|_{tp}^\star(\Omega)\leq |Df|_{AM}(\Omega)\)
follow by combining \eqref{eq:Nages3} of Lemma~\ref{lem:Nages2} with Lemma~\ref{lem:Federichi_Nobili}, which provide 
\eqref{eq:essential}, thanks to the inequality \eqref{eq:tildeAM_vs_tp} between ${\rm AM}_0$-modulus and test plans.
Then, the rest of the proof proceeds as in the proof of the inclusion  of \(BV_{am}^1(\X)\) in \(BV_{tp}^\star(\X)\).
\end{proof}
Recall that the measure $\mm$ is boundedly finite when every bounded Borel subset of $\X$ has finite $\mm$-measure.
Note that we also assume the measure $\mm$ to be a Radon measure, see Definition~\ref{defn:mms}.

\begin{theorem}[Equivalence of BV spaces in complete metric spaces]\label{thm:equiv_BV_complete}
Let \((\X,\sfd,\mm)\) be a metric measure space such that \((\X,\sfd)\) is complete and \(\mm\)
is boundedly finite. Then
\[
BV^\star(\X)=BV(\X)=BV_N(\X)=BV_{am}^0(\X)=BV_{am}^1(\X)=BV_{\AM}(\X)=BV_{tp}^\star(\X).
\]
Moreover, for every \(f\in BV(\X)\) one has
\[
|Df|^\star(\X)=|Df|(\X)=|Df|_N(\X)=|Df|_{am}^0(\X)=|Df|_{am}^1(\X)=|Df|_{\AM}(\X)=|Df|_{tp}^\star(\X)
\]
and also $|Df|=|Df|_N=|Df|_{am}^0$ as measures.
\end{theorem}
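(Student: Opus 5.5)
By Theorem~\ref{thm:incl_BV_spaces}, all the inclusions and norm inequalities hold in the direction from $BV^\star(\X)$ to $BV_{tp}^\star(\X)$. It is therefore enough to close the chain. The plan is to prove that $BV_{tp}^\star(\X)\subseteq BV^\star(\X)$ with
\[
|Df|^\star(\X)\leq|Df|_{tp}^\star(\X)\qquad\text{for every }f\in BV_{tp}^\star(\X).
\]
Combined with the chain of inequalities \eqref{eq:ineq_BV_star-BV}--\eqref{eq:ineq_BV_am-BV_tp} evaluated at $\Omega=\X$, this forces all total variations to coincide on $\X$. For the equality of measures, I will apply the same argument on each open set. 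Fix $\Omega\in\tau(\X)$. The inequalities $|Df|_{am}^0(\Omega)\le|Df|_N(\Omega)\le|Df|(\Omega)$ come from the Theorem, and I want to close them with $|Df|(\Omega)\le|Df|_{tp}^\star(\Omega)\le|Df|_{am}^0(\Omega)$. For this I will need the reverse inequality in a localized form, valid on open sets where completeness may fail. Alternatively I can use the fact that $|Df|$ and $|Df|_{am}^0$ are finite outer regular measures with $|Df|_{am}^0\le|Df|$ and equal total mass; this forces $|Df|=|Df|_{am}^0$ as measures, and $|Df|_N$ is squeezed between them. The second route is much cheaper and I will take it.

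The core step is the inequality $|Df|^\star(\X)\le|Df|_{tp}^\star(\X)$, which I will obtain by adapting \cite{Amb:DiMa:14} through a duality/Cheeger-energy argument. By Lemma~\ref{lem:Federichi_Nobili} and lower semicontinuity of $|D\cdot|^\star$ under truncation limits, I may assume $f$ is bounded. By Theorem~\ref{thm:invar_BV} and Proposition~\ref{prop:invar_BV_tp}, I may also replace $\X$ with ${\rm spt}(\mm)$, so that $\X$ is complete and separable.

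Next, I consider the $L^2$-relaxed energy $E(f)=|Df|^\star(\X)$ (lower semicontinuous and convex on $L^1\cap L^2$) and run its gradient flow, or rather the $p\downarrow1$ Hopf--Lax/Kuwada scheme. Concretely, I take $f_t$ solving the $L^2$-gradient flow of the relaxation of $\int\lip_a$ started from $f$, together with the dual Kuwada lemma. This lemma produces, from the flow and the Hopf--Lax semigroup $Q_t$, a test plan $\ppi$ moving mass along the flow lines with ${\rm Comp}(\ppi)$ controlled and $\Lip(\ppi)$ bounded. The energy dissipation identity then gives
\[
\int f(\gamma_1)-f(\gamma_0)\,\d\ppi\approx{\rm Comp}(\ppi)\Lip(\ppi)\,|Df|^\star(\X).
\]
The Hopf--Lax step is where completeness and bounded finiteness enter: $Q_t$ applied to functions in $\LIP_{bs}(\X)$ must stay in $\LIP_{bs}$, and the superposition principle has to yield Radon plans on $C([0,1];\X)$.

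I expect the main obstacle to be exactly this superposition/Kuwada step, because in the $BV$ ($p=1$) case the dual exponent is $\infty$. Following \cite{Amb:DiMa:14}, I would first pass through the $W^{1,p}$ theory for $p>1$ and use the equality of the $p$-weak gradient with the relaxed $\lip_a$ in complete spaces (the Density result of \cite{Density}). I would then let $p\downarrow1$ using $\lip_a$-relaxations with bounded support, which is what pins down $|Df|^\star$ rather than $|Df|$. For the $BV$ limit, I would use an approximation $f_\eps$ such that $\int\lip_a(f_\eps)\,\d\mm$ is comparable with the test-plan bound, extracted via the Hahn--Banach separation, in $L^1$, between $f$ and the convex set of functions with $|D\cdot|^\star$ below the level $|Df|_{tp}^\star(\X)+\eps$. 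The boundedly finite assumption ensures that $\LIP_{bs}$ approximations lie in $L^1$ and that cut-off errors vanish.
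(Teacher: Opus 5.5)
Your outer frame is the paper's. It suffices to show $|Df|^\star(\X)\le|Df|_{tp}^\star(\X)$ for $f\in BV_{tp}^\star(\X)$. The truncation step via Lemma~\ref{lem:Federichi_Nobili} and lower semicontinuity is the same. Your route to the equality of measures is also correct: $|Df|_{am}^0\le|Df|_N\le|Df|$ as measures by Theorem~\ref{thm:incl_BV_spaces}, and these have equal finite total mass.

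\textbf{The gap is the core inequality, which you never derive.} ``$\approx$'' is not an estimate, and the mechanism that makes the gradient-flow argument work, with the sharp constant, is missing.
\begin{itemize}
\item Lemma~\ref{lem:Kuwada} only gives that $t\mapsto f_t\mm$ is $\frac{1}{c}$-Lipschitz in $W_\infty$ when $c\le f\le C$ with $c>0$. It does not itself produce a plan. The plan comes from Theorem~\ref{thm:superpos_principle} after normalising by the mass $M$, and it has ${\rm Comp}(\ppi)\le C/M$ and $\Lip(\ppi)\le 1/c$.
\item The link with the flow is the elementary chain
\[
\int f_0^2\,\d\mm-\int f_t^2\,\d\mm\le 2\int f_0(f_0-f_t)\,\d\mm=2\int f_0(\gamma_0)-f_0(\gamma_t)\,\d\ppi(\gamma),
\]
where the test function is the initial datum $f_0$. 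The right-hand side is bounded using the restricted plan $({\rm restr}_0^t)_\#\ppi$ and the time-reversed $BV_{tp}^\star$ bound. By Theorem~\ref{thm:grad_flow}\ref{it:en_dissip}, the left-hand side equals $2\int_0^t{\rm Ch}(f_s)\,\d s\ge 2t\,{\rm Ch}(f_t)$.
\item This only yields ${\rm Ch}(f_t)\le\frac{C}{c}|Df|_{tp}^\star(\X)$. To get equality of seminorms you must apply it to $f+k$, whose flow is $f_t+k$ with the same ${\rm Ch}$ and $|D\cdot|^\star_{tp}$. Then let $k\to\infty$, so that $\frac{k+\|f\|_\infty}{k}\to 1$, and pass to $t\searrow 0$ by lower semicontinuity.
\end{itemize}

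\textbf{Your reductions also do not reach the setting where this machinery is available.}
\begin{itemize}
\item Replacing $\X$ by ${\rm spt}(\mm)$ gives a complete separable space, but not a length space. A length space is required by \eqref{eq:hp_equiv_BV_basic}, and it is what your Hopf--Lax step needs. The paper embeds ${\rm spt}(\mm)$ in the closed linear span of its Kuratowski image in $\ell^\infty$, where it is closed. It then transfers $BV^\star$ and $BV_{tp}^\star$ using Theorem~\ref{thm:invar_BV} and Proposition~\ref{prop:invar_BV_tp}.
\item The flow needs finite mass and bounded support. The paper restricts the measure to balls $B_{n+2}(\bar x)$, which have finite measure by bounded finiteness, and applies Theorem~\ref{thm:equiv_BV_basic} there. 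It then glues back with the cut-offs $\eta_n$ via Lemma~\ref{lem:localisation_BV_star} and \eqref{eq:Leibniz_BV}. This step cannot be skipped, because $|D\cdot|^\star$ is not local (Example~\ref{ex:different_BV}).
\end{itemize}

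\textbf{The detours you propose for the ``main obstacle'' do not fill this gap.}
\begin{itemize}
\item \cite{Amb:DiMa:14} does not pass through $p>1$. It runs the $L^2$-gradient flow of the $1$-energy and proves Kuwada's lemma directly in $W_\infty$, which is what Appendix~\ref{sec:Cheeger1} reproduces.
\item A $p\downarrow 1$ scheme cannot work as you describe it. The hypothesis $f\in BV_{tp}^\star(\X)$ only constrains $\infty$-test plans (bounded compression, uniformly Lipschitz curves). It gives no control of $p$-weak gradients: $f$ may have jumps and belong to no $W^{1,p}$. Moreover, the plans produced by a $p$-Kuwada argument have only $q$-integrable speed, so they need not be admissible in \eqref{eq:def_BV_tp}.
\item The Hahn--Banach separation of $f$ from $\{h:|Dh|^\star(\X)\le|Df|_{tp}^\star(\X)+\eps\}$ merely restates the duality to be proved. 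From the separating $\xi\in L^\infty(\X)$ you would still have to build a test plan violating \eqref{eq:def_BV_tp}. That is again the Kuwada/superposition construction you left unresolved.
\end{itemize}
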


\begin{proof}
Thanks to Theorem~\ref{thm:incl_BV_spaces}, to prove the first two chains of identities it suffices to prove that
$BV_{tp}^\star(\X)\subseteq BV^\star(\X)$ with the corresponding identity of the energy seminorms.

Let \(f\in BV_{tp}^\star(\X)\) be given.  As in \cite{Amb:DiMa:14}, 
we will apply  
Theorem~\ref{thm:equiv_BV_basic} of Appendix~\ref{sec:Cheeger1}. To do so,
we do a preliminary reduction to the
case of complete length spaces, with $\mm$ finite with bounded support and with $f$ bounded. 

Since \({\rm spt}(\mm)\) is separable by Proposition
\ref{prop:main_Radon}, there exists a complete separable length space \((\Y,\rho)\) containing
\({\rm spt}(\mm)\) (e.g., isometrically embed \({\rm spt}(\mm)\) into the Banach space \(\ell^\infty\)
via a Kuratowski embedding \(\iota\colon{\rm spt}(\mm)\hookrightarrow\ell^\infty\), and consider the closure
of the linear span of \(\iota({\rm spt}(\mm))\) in \(\ell^\infty\)).
We then define the Radon measure \(\mu\) on \(\Y\) as \(\mu(E)\coloneqq\mm(E\cap{\rm spt}(\mm))\)
for every Borel set \(E\subseteq\Y\). Since \({\rm spt}(\mm)\) is closed in \(\X\) by 
Proposition~\ref{prop:main_Radon} and \((\X,\sfd)\) is complete, also
\(({\rm spt}(\mm),\sfd)=({\rm spt}(\mm), \rho_{\vert {\rm spt}(\mm)})\) is complete. In particular, \({\rm spt}(\mm)\) is closed
in \(\Y\), thus \({\rm spt}(\mu)={\rm spt}(\mm)\) and $\mu$ is a Borel measure on $\Y$. Letting \(g\in\mathcal L^1(\Y)\) be the zero
extension of \(f_{|{\rm spt}(\mm)}\) to \(\Y\), we know from Proposition \ref{prop:invar_BV_tp} that
\(g\in BV_{tp}^\star(\Y)\) and \(|Dg|_{tp}^\star(\Y)=|Df|_{tp}^\star(\X)\). Define
\[
g_k\coloneqq(g\wedge k)\vee(-k)\in\mathcal L^1(\Y)\quad\text{ for every }k\in\N.
\]
Since \(g_k=\phi_k\circ g\), where \(\phi_k\) denotes the \(1\)-Lipschitz function
\(\R\ni t\mapsto(t\wedge k)\vee(-k)\in\R\), we know from Lemma~\ref{lem:Federichi_Nobili} that
\(g_k\in BV_{tp}^\star(\Y)\) and \(|Dg_k|_{tp}^\star(\Y)\leq|Dg|_{tp}^\star(\Y)=|Df|_{tp}^\star(\X)\).
Now, we fix some point \(\bar x\in{\rm spt}(\mm)\) and for a positive integer $n$ we consider the ball
$B_{n+2}(\bar x)=\{\bar y\in\Y\, :\, \rho(\bar y,\bar x)<n+2\}$. Let
\(\Y_n\coloneqq(\Y,\rho,\mu\llcorner B_{n+2}(\bar x))\) and
\[
\eta_n\coloneqq(1-\rho(\cdot,B_n(\bar x)))\vee 0\in\LIP_{bs}(\Y)\quad\text{ for every }n\in\N.
\]
From the definition of $BV_{tp}^\star$ found in Definition~\ref{def:BV-test-plan}, we see that \(g_k\in BV_{tp}^\star(\Y_n)\). 
To see that
\(|D_{\Y_n}g_k|_{tp}^\star(\Y)\leq|D_\Y g_k|_{tp}^\star(\Y)\),  first we note that $\Pi(Y_n)\subseteq\Pi(Y)$
with the constants ${\rm Comp}(\ppi)$ and $\Lip(\ppi)$ the same with respect to both $Y$ and $Y_n$
for $\ppi\in \Pi(Y_n)$, and so by the definition of $|D_{\Y_n}g_k|_{tp}^\star$ as a supremum from Definition~\ref{def:BV-test-plan}, this claim follows.

By Theorem \ref{thm:equiv_BV_basic}, we then have \(g_k\in BV^\star(\Y_n)\)
and \(|D_{\Y_n}g_k|^\star(\Y)\leq|D_{\Y_n}g_k|_{tp}^\star(\Y)\leq|Df|_{tp}^\star(\X)\).
Applying Lemma~\ref{lem:localisation_BV_star} and inequality~\eqref{eq:Leibniz_BV}, we obtain that
\(g_{k,n}\coloneqq\eta_n g_k\in BV^\star(\Y)\) for every \(k,n\in\N\) and
\begin{align*}
|D_\Y g_{k,n}|^\star(\Y)&=|D_{\Y_n}g_{k,n}|(\Y)\leq\int\eta_n\,\d|D_{\Y_n}g_k|+\int|g_k|\lip_a(\eta_n)\,\d\mu\\
&\leq|D_{\Y_n}g_k|^\star(\Y)+\int|g_k|\nchi_{\bar B_{n+1}(\bar x)\setminus B_n(\bar x)}\,\d\mu.
\end{align*}
Since \(\int|g_k|\nchi_{\bar B_{n+1}(\bar x)\setminus B_n(\bar x)}\,\d\mu\to 0\) as \(n\to\infty\) by the
dominated convergence theorem, we deduce that \(\limi_n|D_\Y g_{k,n}|^\star(\Y)\leq|Df|_{tp}^\star(\X)\) for
every \(k\in\N\). Since \(g_{k,n}\to g_k\) as \(n\to\infty\) in $\mathcal{L}^1(Y)$ and
\(g_k\to g\) as \(k\to\infty\)
in \(\mathcal L^1(\Y)\), the lower semicontinuity of \(|D_\Y\cdot|^\star(\Y)\) implies that \(g\in BV^\star(\Y)\) 
with
\[
|D_\Y g|^\star(\Y)\leq\limi_{k\to\infty}\limi_{n\to\infty}|D_\Y g_{k,n}|^\star(\Y)\leq|D f|_{tp}^\star(\X).
\]
Applying Theorem~\ref{thm:invar_BV}, we deduce that \(f\in BV^\star(\X)\) and
\(|Df|^\star(\X)=|D_\Y g|^\star(\Y)\leq|Df|_{tp}^\star(\X)\). Combining this fact with Theorem~\ref{thm:incl_BV_spaces},
we complete the proof.
\end{proof}
As explained in the introduction, in locally complete spaces the space $BV^\star$
should be dropped
from the equivalence list. With the remaining list, by a localization of the previous statement, the following result holds.
\begin{theorem}[Equivalence of BV spaces in locally complete spaces]\label{thm:equiv_BV_loc-complete}
Let \((\X,\sfd,\mm)\) be a metric measure space such that \((\X,\sfd)\) is locally complete. Then
\[
BV(\X)= BV_N(\X)=BV_{am}^0(\X)=BV_{am}^1(\X)=BV_{\AM}(\X)=BV_{tp}^\star(\X).
\]
Moreover, for all open sets $\Omega\subseteq\X$ and all functions $f\in BV(\X)$,
\[
|Df|(\Omega)=|Df|_N(\Omega)=|Df|_{am}^0(\Omega)=|Df|_{am}^1(\Omega)=|Df|_{\AM}(\X)=|Df|_{tp}^\star(\Omega).
\] 
In particular, \(|Df|=|Df|_N=|Df|_{am}^0\) for every \(f\in BV(\X)\).
\end{theorem}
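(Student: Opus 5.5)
By Theorem~\ref{thm:incl_BV_spaces}, the inclusions $BV(\X)\subseteq BV_N(\X)\subseteq BV_{am}^0(\X)\subseteq BV_{\AM}(\X)\cap BV_{am}^1(\X)$ and $BV_{\AM}(\X)\cup BV_{am}^1(\X)\subseteq BV_{tp}^\star(\X)$ hold in any metric measure space. The corresponding inequalities on open sets also hold: $|Df|_{tp}^\star(\Omega)\leq\min\{|Df|_{am}^1(\Omega),|Df|_{\AM}(\Omega)\}\leq|Df|_{am}^0(\Omega)\leq|Df|_N(\Omega)\leq|Df|(\Omega)$. So everything reduces to proving $BV_{tp}^\star(\X)\subseteq BV(\X)$ together with $|Df|(\Omega)\leq|Df|_{tp}^\star(\Omega)$ for every open $\Omega$. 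I plan to localize Theorem~\ref{thm:equiv_BV_complete} and glue the local pieces using the countable subadditivity of $|Df|$, which is an outer regular measure.

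\textbf{Localization.} Fix $f\in BV_{tp}^\star(\X)$ and an open set $\Omega$. Because $\X$ is locally complete, $\Omega$ is covered by open balls $B$ such that $\bar B_{2r}$ is complete and $\overline{B}\subseteq\Omega$. By Remark~\ref{rmk:good_cover} and the separability of ${\rm spt}(\mm)$ from Proposition~\ref{prop:main_Radon}, countably many such balls $B_i=B_{r_i}(x_i)$ cover $\Omega\cap{\rm spt}(\mm)$, and we may assume $\mm(\bar B_{2r_i}(x_i))<\infty$. On each ball consider the complete space $Z_i\coloneqq\bar B_{2r_i}(x_i)$ with the finite, hence boundedly finite, measure $\mm\llcorner Z_i$. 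By \eqref{eq:restr_BV_tp}, $f_{|Z_i}\in BV_{tp}^\star(Z_i)$ with $|Df_{|Z_i}|_{tp}^\star(Z_i)\leq|Df|_{tp}^\star(\Omega)$. Theorem~\ref{thm:equiv_BV_complete} then gives $f_{|Z_i}\in BV(Z_i)$ with $|D_{Z_i}f|(Z_i)\leq|Df|_{tp}^\star(\Omega)$. Since $B_{r_i}(x_i)$ is open both in $Z_i$ and in $\X$, Remark~\ref{rmk:local_property_BV} gives $|D_\X f|(B_i)=|D_{Z_i}f|(B_i)$.

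\textbf{Gluing, the main obstacle.} Summing these local bounds over $i$ overcounts. What we need instead is $|D_{Z_i}f|(U)\leq|Df|_{tp}^\star(U)$ for every open $U\subseteq B_i$, so that we are comparing measures rather than total masses. I would obtain this by applying the argument above with $U$ in place of $\Omega$: test plans in $U\cap Z_i$ are test plans in $U$, and Theorem~\ref{thm:equiv_BV_complete} applied to a complete space built around $U$ gives a comparison of total masses only, not of measures. A cleaner route is to apply Theorem~\ref{thm:equiv_BV_complete} on $Z_i$ to obtain the identity of measures $|D_{Z_i}f|=|D_{Z_i}f|_{am}^0$. Then $|Df|_{am}^0$ on $\X$ restricts on $B_i$ to something at most $|D_{Z_i}f|_{am}^0$, and $|Df|\geq|Df|_{am}^0$ globally; combining these yields $|Df|=|Df|_{am}^0$ on subsets of $B_i$.

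For the lower bound by $|Df|_{tp}^\star$, I would use instead that $U\mapsto|Df|_{tp}^\star(U)$ is superadditive on open sets at positive distance: concatenating normalized plans gives test plans on the union. With this, and since $|D_{Z_i}f|(U)=|D_{Z_i\cap \bar U}f|_{tp}^\star$-type bounds hold via complete subballs, we obtain $|Df|(K)\leq|Df|_{tp}^\star(\Omega)$ for finite unions $K$ of well-separated pieces of balls. Inner regularity on open sets then yields $|Df|(\Omega)\leq|Df|_{tp}^\star(\Omega)$. Concretely, I would prove $|Df|(\Omega)\leq\sup\sum_j|Df|(V_j)$ over finite families of pairwise positively separated open $V_j$, each contained in a single complete ball, and bound $\sum_j|Df|(V_j)\leq\sum_j|Df|_{tp}^\star(V_j)\leq|Df|_{tp}^\star(\Omega)$. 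This separation and approximation step, which needs the continuity from below as in the proof of Theorem~\ref{thm:cor_of_DeGiorgi-Letta}, is where I expect the real difficulty. Once it is done, $|Df|(\X)<\infty$, so $f\in BV(\X)$, and the whole chain of inequalities collapses to equalities on every open set; in particular $|Df|=|Df|_N=|Df|_{am}^0$ by outer regularity.
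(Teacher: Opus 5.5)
Your reduction to proving $|Df|(\Omega)\le|Df|_{tp}^\star(\Omega)$ for $f\in BV_{tp}^\star(\X)$ is correct. So is the single-ball step, provided you choose $Z_i\subseteq\Omega$, which your radii do not guarantee. The gap is the gluing step, and none of the routes you sketch closes it.

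In the ${\rm am}_0$ route, the two facts you combine both point the same way: on $B_i$ you get $|Df|_{am}^0\le|D_{Z_i}f|_{am}^0=|D_{Z_i}f|=|Df|$, and globally $|Df|_{am}^0\le|Df|$. Neither gives an equality, and $|Df|_{tp}^\star$ never enters, while the missing inequality is precisely $|Df|\le|Df|_{tp}^\star$.

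In the separated-pieces route there are three problems. First, the bound $|Df|(V_j)\le|Df|_{tp}^\star(V_j)$ for an open $V_j$ inside a complete ball is just the theorem itself for the incomplete space $V_j$. Theorem~\ref{thm:equiv_BV_complete} applied on $\overline{V_j}$ only bounds $|Df|(V_j)$ by test plans in $\overline{V_j}$. Those plans can cross $\partial V_j$ (think of a slit disc) and detect variation there, so you would need separated open enlargements of the $\overline{V_j}$. Second, the inner approximation $|Df|(\Omega)\le\sup\sum_j|Df|(V_j)$ presupposes that $|Df|$ is already a locally finite measure on $\Omega$. Third, superadditivity of $|Df|_{tp}^\star$ on separated sets needs a convex combination of plans with equalized Lipschitz constants, not a concatenation. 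None of this is carried out.

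The missing idea is that no gluing is needed: exhaust $\Omega$ instead of covering it. Put $\Omega_n\coloneqq B_1\cup\dots\cup B_n$. Then $\overline{\Omega_n}$ is a closed subset of the finite union $Z_1\cup\dots\cup Z_n$ of complete sets, hence complete, and $\mm(\overline{\Omega_n})<\infty$; this is exactly what Remark~\ref{rmk:good_cover} provides. Apply Theorem~\ref{thm:equiv_BV_complete} on $\overline{\Omega_n}$, together with \eqref{eq:restr_BV_tp}, Remark~\ref{rmk:local_property_BV} and $\overline{\Omega_n}\subseteq\Omega$. This gives, uniformly in $n$,
\[
|Df|(\Omega_n)=|Df_{|\overline{\Omega_n}}|(\Omega_n)\le|Df_{|\overline{\Omega_n}}|^\star(\overline{\Omega_n})=|Df_{|\overline{\Omega_n}}|_{tp}^\star(\overline{\Omega_n})\le|Df|_{tp}^\star(\Omega).
\]
Continuity from below of $|Df|$ then yields the bound on $\bigcup_n\Omega_n\supseteq\Omega\cap{\rm spt}(\mm)$. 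The part outside ${\rm spt}(\mm)$ is removed with \eqref{eq:restr_BV} and Theorem~\ref{thm:invar_BV}.

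This is the paper's proof. It is done first for $\Omega=\X$, and the statement for an arbitrary open $\Omega$ follows by applying the global result to $\Omega$ itself, which is again locally complete. Your single-ball computation is the right ingredient; it just has to be applied to finite unions of balls rather than to individual balls.
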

\begin{proof}
Let \(f\in BV_{tp}^\star(\X)\) be given. By Remark \ref{rmk:good_cover}, we can find an increasing sequence
\((\Omega_n)_n\) of open sets in \(\X\) with \({\rm spt}(\mm)\subseteq\Omega\coloneqq\bigcup_{n\in\N}\Omega_n\)
such that \(\mm(\overline\Omega_n)<\infty\) and \((\overline\Omega_n,\sfd)\) is complete for all \(n\in\N\).
By \eqref{eq:restr_BV_tp}, we have that \(f_n\coloneqq f_{|\overline\Omega_n}\in BV_{tp}^\star(\overline\Omega_n)\)
and \(|Df_n|_{tp}^\star(\overline\Omega_n)\leq|Df|_{tp}^\star(\X)\) for all \(n\in\N\). Using
Theorem \ref{thm:equiv_BV_complete}, we obtain that \(f_n\in BV^\star(\overline\Omega_n)\)
and \(|Df_n|^\star(\overline\Omega_n)=|D f_n|_{tp}^\star(\overline\Omega_n)\) for all \(n\in\N\).
Taking Remark~\ref{rmk:local_property_BV} and~\eqref{eq:triv_ineq_BV} into account, we deduce that
\begin{align*}
|Df|(\Omega_n)=|Df_{|\Omega_n}|(\Omega_n)=|D{f_n}|(\Omega_n)
&\leq|Df_n|(\overline\Omega_n)\leq|Df_n|^\star(\overline\Omega_n)\\
&=|Df_n|_{tp}^\star(\overline\Omega_n)\leq|Df|_{tp}^\star(\X)\quad\text{ for every }n\in\N.
\end{align*}
The continuity from below of the Borel measure
\(|Df|\) then implies that 
\[
|Df|(\Omega)=\sup_n|Df|(\Omega_n)\leq|Df|_{tp}^\star(\X).
\]
By \eqref{eq:restr_BV} and Remark \ref{rmk:local_property_BV}, we get
\(|Df_{|{\rm spt}(\mm)}|({\rm spt}(\mm))\leq|Df_{|\Omega}|(\Omega)=|Df|(\Omega)\leq|Df|_{tp}^\star(\X)\).
In particular, \(f_{|{\rm spt}(\mm)}\in BV({\rm spt}(\mm))\). Thanks to Theorem \ref{thm:invar_BV}, we
deduce that \(f\in BV(\X)\) and \(|Df|(\X)=|Df_{|{\rm spt}(\mm)}|({\rm spt}(\mm))\leq|Df|_{tp}^\star(\X)\).
Combining this with Theorem \ref{thm:incl_BV_spaces}, we conclude that 
\(BV(\X)= BV_{am}^0(\X)=BV_{am}^1(\X)=BV_{tp}(\X)\)
and \(|Df|(\X)=|Df|_{am}^0(\X)=|Df|_{am}^1(\X)=|Df|_{tp}^\star(\X)\) for every \(f\in BV(\X)\). Finally, since open subsets
of locally-complete spaces are locally complete, we deduce that 
\[
|Df|(\Omega)=|Df_{|\Omega}|(\Omega)=|Df_{|\Omega}|_{tp}^\star(\Omega)=|Df|_{tp}^\star(\Omega)
\]
for every \(f\in BV(\X)\) and \(\Omega\in\tau(\X)\). 
Thus, we also have \(|Df|(\Omega)=|Df|_N(\Omega)=|Df|_{am}^0(\Omega)=|Df|_{am}^1(\Omega)\), 
again by Theorem~\ref{thm:incl_BV_spaces}.
Since \(|Df|\), \(|Df|_N\) and \(|Df|_{am}^0\) are outer regular, it also follows that \(|Df|=|Df|_{am}^0\).
\end{proof}
\begin{remark}\label{rmk:produce_meas_loc-compl}{\rm
Theorem \ref{thm:equiv_BV_loc-complete} implies that if \((\X,\sfd,\mm)\) is a metric measure space such that \((\X,\sfd)\) is locally
complete and \(f\in BV_{am}^1(\X)\) (resp.\ \(f\in BV_{tp}^\star(\X)\)) is given, then the set-function \(|Df|_{am}^1\colon\tau(\X)\to[0,\infty)\)
(resp.\ \(|Df|_{tp}^\star\colon\tau(\X)\to[0,\infty)\)) can be uniquely extended to an outer regular Borel measure on \(\X\) (which, in fact, coincides with \(|Df|\)).
\fr}\end{remark}
\section{Equivalence of \texorpdfstring{$BV$}{BV} and \texorpdfstring{$BV_{\AM}$}{BVAM} under Poincar\'e inequalities}\label{sec:doubling}
Note that Theorem~\ref{thm:incl_BV_spaces},
which establishes the inclusion $BV(\X)\subseteq BV_{\AM}(\X)$ with $|Df|(\X)\leq |Df|_{\AM}(\X)$, does not require much of
$\X$ beyond that the measure on $\X$ is Borel outer-regular. In this section we reproduce the proof of equivalence of $BV$ 
and $BV_{\AM}$ spaces originally given in~\cite{DuCa:ErBiq:Ko:Shan:19},
under the assumptions that the measure is doubling and supports a $1$-Poincar\'e 
inequality. In~\cite{DuCa:ErBiq:Ko:Shan:19} the requirement that $\mathscr{H}^1(\gamma(N_\gamma))=0$
in Definition~\ref{defAMbounding}
was replaced by the more restrictive condition that $\mathscr{L}^1(N_\gamma)=0$. This condition is 
too strong, see the discussion preceding Example~\ref{ex:strong_am_bound}, and the proof of the equivalence
as given in~\cite{DuCa:ErBiq:Ko:Shan:19} therefore has a gap. We correct this error in this section, since the
results of~\cite{DuCa:ErBiq:Ko:Shan:19} also connect the discussion of BV functions with a Semmes pencil of curves,
a topic of independent interest.

We say that a measure $\mm$ is doubling on a metric space $\X$ if there is a constant $C\ge 1$ such that 
$0<\mm(B_{2r}(x))\le C\, \mm(B_r(x))<\infty$ whenever $x\in \X$ and $r>0$.

Throughout this section, we will assume that:
\begin{equation}\label{eq:standing}
 \begin{cases}&\text{$(\X,\sfd)$ is a locally complete metric space,}\\ 
&\text{$\mm$ is a doubling measure with ${\rm spt}(\mm)=\X$}\\ 
&\text{and that for every $\eps>0$ and $x\in\X$ there exists $r_{\eps,x}>0$ such that}\\
&\text{$\mm(B_r(x))\le \eps r$ whenever $0<r\le r_{\eps,x}$.} 
\end{cases}
\end{equation}

We start with two notions of Poincar\'e inequality, and prove their equivalence with Semmes' notion of pencil of curves. 

We say that $(\X,\sfd,\mm)$ supports a \emph{$1$-Poincar\'e inequality} if there are constants $C_1>0$ and $\lambda>0$ such that 
for each ball $B=B_r(x)\subseteq\X$ and $(u,g)$ a function-upper gradient pair in $\X$, we have
\[
\fint_B|u-u_B|\, \d\mm\le C_1\, r\, \fint_{\lambda B}g\, \d\mm,
\]
where $\lambda B$ stands for the concentric ball $B_{\lambda r}(x)$, and $u_B$ denotes 
\[
\frac{1}{\mu(B)}\int_Bu\, \d\mm=\fint_B\, u\, \d\mm.
\]
A direct limiting argument with
monotone approximation from below tells us that the $1$-Poincar\'e inequality implies 
\[
\fint_B|u-u_B|\, \d\mm\le C_1\, r\, \frac{|Du|(\lambda B)}{\mm(\lambda B)}\qquad\forall u\in BV(\X).
\]

For $x,\,y\in\X$ we set $R_{xy}$ to be the function given by
\[
R_{xy}(z):=\frac{\sfd(x,z)}{\mm(B_{\sfd(x,z)}(x))}+\frac{\sfd(y,z)}{\mm(B_{\sfd(y,z)}(y))},
\]
and for $C\ge 1$ we set $\Gamma_{xy}^C$ to be the collection of all curves $\gamma\in \mathscr R(\X)$ with $\gamma_{a_\gamma}=x$,
$\gamma_{b_\gamma}=y$, and $\ell(\gamma)\le C\,\sfd(x,y)$.
We say that $(\X,\sfd,\mm)$ supports a \emph{Semmes pencil of curves} if there 
exist constants $C\ge 1$ and
$C_S\ge 1$ so that for each distinct pairs of
points $x,\,y\in\X$ there is a probability measure $\sigma_{xy}$ on $\Gamma_{xy}^C$
so that 
\[
\int_{\Gamma_{xy}^C}\ell(\gamma\cap A)\, \d\sigma_{xy}(\gamma)\le C_S \, \int_{A\cap C_S B_{xy}} R_{xy}\,\d\mm,
\]
where $C_SB_{xy}\coloneqq B(x, C_S\sfd(x,y))\cup B(y,C_S\sfd(x,y))$ and $\ell(\gamma\cap A)$ stands in as a short-hand notation
for $\mathscr{H}^1(A\cap\gamma([0,1]))$. In particular, by Cavalieri's principle, one has
\begin{equation}\label{eq:Cavalieri}
\int_{\Gamma_{xy}^C}\int_\gamma g\, \d\sigma_{xy}(\gamma)\le C_S \, \int_{C_S B_{xy}} g R_{xy}\,\d\mm,
\end{equation}
for any nonnegative Borel function $g:\X\to [0,\infty]$.

From the discussion in \cite{Heinonen,HKST}, we know that
if $(\X,\sfd,\mm)$ supports a Semmes pencil of curves, then it supports a $1$-Poincar\'e inequality. 
The 
proof of
result~\cite[Theorem~3.7]{DuCa:ErBiq:Ko:Shan:19}, as given there, is correct.

\begin{lemma}[{\cite[Theorem~3.7]{DuCa:ErBiq:Ko:Shan:19}}]
If $(\X,\sfd,\mm)$ supports a $1$-Poincar\'e inequality, then it supports a Semmes pencil of curves.
\end{lemma}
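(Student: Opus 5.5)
The plan is to go through the $1$-modulus of $\Gamma_{xy}^C$ computed with respect to the weighted measure $\nu_{xy}\coloneqq R_{xy}\,\mm\llcorner C_SB_{xy}$, and then convert a lower bound on this modulus into a probability measure $\sigma_{xy}$ by a duality (minimax) argument. Fix distinct $x,y\in\X$ and set $r\coloneqq\sfd(x,y)$.

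\emph{Step 1: a modulus lower bound.} I will show that
\[
\inf\Big\{\int_{C_SB_{xy}}\rho\,R_{xy}\,\d\mm\;:\;\rho\ \text{Borel},\ \int_\gamma\rho\ge1\ \forall\gamma\in\Gamma^C_{xy}\Big\}\ge c>0,
\]
with $c$ depending only on the doubling and Poincar\'e data, for suitable $C,C_S$. Given such a $\rho$, which I may take lower semicontinuous by Vitali--Carath\'eodory, define
\[
u(z)\coloneqq\min\Big\{1,\inf_\gamma\int_\gamma\rho\Big\},
\]
where the infimum runs over rectifiable curves from $x$ to $z$ inside a large ball around $x$ of length at most a fixed multiple of $r$. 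Then $u(x)=0$, and $\rho\nchi_{C_SB_{xy}}$ is an upper gradient of $u$ on a slightly smaller ball. Moreover $u\ge1$ along the relevant region near $y$ unless some short curve from $x$ to $y$ has $\int_\gamma\rho<1$, which is excluded because such a curve lies in $\Gamma^C_{xy}$.

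Next I apply the $1$-Poincar\'e inequality on the two chains of balls $B_i=B_{2^{-i}r}(x)$ and $B_i'=B_{2^{-i}r}(y)$, together with a ball of radius comparable to $r$ containing both points. The standard telescoping argument gives
\[
1\le|u(y)-u(x)|\le C\sum_i 2^{-i}r\fint_{\lambda B_i}\rho+(\text{same at }y)\le C\int_{C_SB_{xy}}\rho\,R_{xy}\,\d\mm,
\]
where the last inequality uses doubling to compare $2^{-i}r/\mm(B_{2^{-i}r})$ with $R_{xy}$ on $\lambda B_i\setminus\lambda B_{i+1}$.

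\emph{Main obstacle.} The telescoping requires $u(B_i)\to u(x)$ and $u(B'_i)\to u(y)$, that is, the pointwise values of $u$ at $x$ and $y$ must be seen as limits of averages. For this I will use the standing assumption that $\mm(B_s(x))\le\eps s$ for small $s$. It forces $\int_{B_s(x)}R_{xy}\,\d\mm=\infty$ near $x$, so the weighted bound is finite only if $\rho$ is small on average near $x$ and $y$. Combined with the fact that $u$ is controlled along curves, this makes $u$ continuous at $x$ and $y$ in the averaged sense. If this route fails, I would first prove the estimate with $\rho$ replaced by $\rho+\delta R_{xy}^{-1}$-type perturbations, and then let $\delta\to0$.

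\emph{Step 2: duality.} By doubling and local completeness, closed balls of radius comparable to $r$ are compact, since complete doubling sets are proper; I would need to check that this is enough locally. Hence $\Gamma^C_{xy}$, after arc-length reparametrisation to constant speed on $[0,1]$, is compact by Arzel\`a--Ascoli and lower semicontinuity of length. On this compact family the functional $(\rho,\sigma)\mapsto\int\!\int_\gamma\rho\,\d\sigma(\gamma)$ is linear in each variable and lower semicontinuous in $\sigma$ for lower semicontinuous $\rho$. A minimax theorem (Ky Fan/Sion, as in the modulus--plan duality of \cite{ADS,Plans_modulus_duality}) then produces a probability measure $\sigma$ on $\Gamma^C_{xy}$ with
\[
\int\!\!\int_\gamma\rho\,\d\sigma\le c^{-1}\int_{C_SB_{xy}}\rho\,R_{xy}\,\d\mm
\]
for all nonnegative lower semicontinuous $\rho$. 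Outer regularity extends this to Borel $\rho$, and in particular to $\rho=\nchi_A$. Enlarging $C_S$ to absorb $c^{-1}$ yields the Semmes pencil inequality.
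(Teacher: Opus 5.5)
The paper gives no argument of its own for this lemma; it defers to \cite[Theorem~3.7]{DuCa:ErBiq:Ko:Shan:19}. Your architecture is the natural one: a lower bound for the $1$-modulus of $\Gamma^C_{xy}$ with respect to $R_{xy}\,\mm\llcorner C_SB_{xy}$, followed by a minimax argument on a compact family of curves. Step~2 is fine once compactness is available: Ky Fan's theorem applies to the affine pairing, and outer regularity of the finite measure $R_{xy}\,\mm\llcorner C_SB_{xy}$ passes from lower semicontinuous $\rho$ to $\nchi_A$. The gap is in Step~1, which carries all the content.

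The last inequality in your telescoping display is false in general. A point $z$ with $\sfd(x,z)\approx 2^{-j}r$ lies in \emph{every} $\lambda B_i$ with $i\le j$. So the kernel you actually obtain is $K(z)=\sum_{i\le j}2^{-i}r/\mm(\lambda B_i)$, not the single term of the $j$-th annulus, and doubling does not bound it by $C\,\sfd(x,z)/\mm(B_{\sfd(x,z)}(x))$. Already in $\mathbb R$ with Lebesgue measure one has $R_{xy}\equiv 1$ while $K(z)\approx\log(r/|z-x|)$. Moreover, \eqref{eq:standing} only gives $s/\mm(B_s(x))\to\infty$, not the geometric growth along dyadic scales that summing the series would require. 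The standard cure is to chain along a geodesic of the (bi-Lipschitz equivalent) length metric, using Whitney-type balls centred on it with radius a small multiple of the distance to the nearer endpoint. Their $\lambda$-dilates have bounded overlap and $R_{xy}\gtrsim r_B/\mm(\lambda B)$ on each of them, which gives $|u(x)-u(y)|\le C\int g\,R_{xy}\,\d\mm$ at Lebesgue points. Alternatively, quote Keith's modulus characterisation of Poincar\'e inequalities, which is essentially your Step~1 bound.

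Two further steps in Step~1 fail as written, and Step~2 has a compactness issue.
\begin{itemize}
\item Your treatment of the ``main obstacle'' rests on a false claim. On $B_{2^{-j}s}(x)\setminus B_{2^{-j-1}s}(x)$ the first summand of $R_{xy}$ integrates to at most $C_D2^{-j}s$, and the second summand is bounded near $x$. Hence $\int_{B_s(x)}R_{xy}\,\d\mm\lesssim s<\infty$, and the pointwise blow-up of $R_{xy}$ says nothing about averages of $u$ near $x$ and $y$. The clean fix is to prove the modulus bound only for continuous bounded $\rho$, which is all Step~2 needs; you can then pass to lower semicontinuous $\rho$ by monotone approximation. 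For such $\rho$ your $u$ is locally Lipschitz by quasiconvexity, so every point is a Lebesgue point.
\item The length cap in the definition of $u$ destroys the upper gradient property. Concatenating a near-optimal curve from $x$ to $z$ with a short curve from $z$ to $w$ can exceed the cap, so $u(w)-u(z)$ is not controlled by $\int\rho$ along the short curve. Drop the cap and use $\rho+\frac{1}{Kr}$ on the relevant ball instead. Curves longer than $Kr$ then get weight at least $1$ automatically, at a cost $\lesssim 1/K$ in $\int\cdot\,R_{xy}\,\d\mm$, so you choose $K$ (that is, $C$) large.
\item Compactness of $\Gamma^C_{xy}$ in Step~2 uses properness. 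Doubling provides this under completeness, which is the setting of \cite{DuCa:ErBiq:Ko:Shan:19}, but not under mere local completeness as in \eqref{eq:standing}. In $\mathbb R^2\setminus\{0\}$ the family is not compact, so Step~2 would need an additional argument there.
\end{itemize}
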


The following lemma, which provides a control as in the definition of AM-bounding sequence on \textit{all} curves in
$\mathscr R(\X)$, will be useful.
\begin{lemma}\label{lem:AMbound-vs-universal}
Suppose that $u\in BV_{\AM}(\X)$ and that $(\rho_k)_k$ is an AM-bounding sequence for $u$. Then for each $\eps>0$ there is an AM-bounding sequence $(g_k)_k$ for $u$ such that: 
\begin{enumerate}
\item[\rm{(a)}] For each $\gamma\in \mathscr R(\X)$ there exists $N_\gamma\subseteq [a_\gamma,b_\gamma]$ with 
$\mathscr{H}^1(\gamma(N_\gamma))=0$ and for each $a,b\in[a_\gamma,b_\gamma]\setminus N_\gamma$ with $a<b$,
\[
|u(\gamma_b)-u(\gamma_a)|\le \varliminf_k\, \int_{\gamma_{|[a,b]}}\, g_k;
\]
\item[\rm{(b)}] we have
\[
\varliminf_k\int_{\X} g_k\,\d\mm\le \eps+\varliminf_k\int_{\X} \rho_k\, \d\mm.
\]
\end{enumerate}
\end{lemma}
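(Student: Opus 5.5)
The plan is to absorb the ${\rm AM}_0$-exceptional family into the bounding sequence itself, using the Fuglede-type characterisation in Proposition~\ref{prop:properties_AM}\ref{Fuglede2}. Let $\Gamma\subseteq\mathscr R(\X)$ be the ${\rm AM}_0$-null family of curves that are exceptional for $(\rho_k)_k$ in Definition~\ref{defAMbounding}. We may enlarge $\Gamma$ so that it is closed under taking superpaths. Concretely, replace $\Gamma$ by the family of all curves having a subcurve in $\Gamma$. This family is still ${\rm AM}_0$-null by Proposition~\ref{prop:properties_AM}\ref{subcurves}.

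By Proposition~\ref{prop:properties_AM}\ref{Fuglede2}, there is a sequence of Borel functions $h_k\colon\X\to[0,\infty]$ with $M\coloneqq\sup_k\int_\X h_k\,\d\mm<\infty$ and $\varliminf_k\int_\gamma h_k=\infty$ for every $\gamma\in\Gamma$. Set
\[
g_k\coloneqq\rho_k+\frac{\eps}{M+1}h_k .
\]
Then (b) is immediate, since $\varliminf_k\int g_k\le\varliminf_k\int\rho_k+\eps$. Moreover, $(g_k)_k$ is still AM-bounding with the same exceptional family, because $g_k\ge\rho_k$.

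To prove (a), fix an arbitrary $\gamma\in\mathscr R(\X)$. If $\gamma\notin\Gamma$, take $N_\gamma$ as given by the definition; monotonicity in $\rho_k\le g_k$ then yields the inequality. If $\gamma\in\Gamma$, the inequality must still hold on most subintervals, and this is the main obstacle. The naive choice $N_\gamma=\emptyset$ only works on subarcs $\gamma_{|[a,b]}$ that themselves lie in $\Gamma$, because only there does $\varliminf_k\int_{\gamma_{|[a,b]}}h_k=\infty$ hold. On other subarcs we need the $\rho_k$-inequality, which requires $\gamma_{|[a,b]}\notin\Gamma$ together with exceptional sets for those subarcs.

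So I would work with the original (non-enlarged) $\Gamma_0$, which need not be closed under subcurves, and define $N_\gamma$ through the rational-subinterval trick used after Definition~\ref{defAMbounding}. For each pair of rationals $a_0<b_0$ with $\gamma_{|[a_0,b_0]}\notin\Gamma_0$, let $N[a_0,b_0]$ be its exceptional set. Let $N_\gamma$ be the countable union of these sets, which is $\mathscr H^1$-null in image.

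Now take $a<b$ outside $N_\gamma$. Either $\gamma_{|[a,b]}\in\Gamma_0$, and then the $h_k$-term makes the right-hand side infinite. Or $\gamma_{|[a,b]}\notin\Gamma_0$, and then we want its own exceptional set to be controlled. The technical point is to pass from rational endpoints to arbitrary $a,b$. To handle this, I would include $\gamma_{|[a,b]}$ itself through a reparametrisation-invariance argument. Alternatively, I would pass instead through Lemma~\ref{lem:Nages2}: on non-exceptional subarcs, $u\circ\hat\gamma$ agrees off an $\mathscr H^1$-null set with a BV function whose variation measure is dominated by the weak limit of $(\rho_k\circ\hat\gamma)\mathscr L^1$. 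This gives the inequality at all good points simultaneously, up to a countable atom set, which can be added to $N_\gamma$.
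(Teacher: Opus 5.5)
\textbf{Verdict: there is a genuine gap, at exactly the step you flag.}

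Your choice $g_k=\rho_k+\frac{\eps}{M+1}h_k$, the proof of (b), and the case $\gamma\notin\Gamma$ are fine. This is essentially the paper's construction: there $g_k=\rho_k+\eps\tilde\rho_k$ with $\sup_k\int_\X\tilde\rho_k\,\d\mm\le 1$, and $N_\gamma$ is the union of the sets $N_I$ over rational intervals $I$ with $\varliminf_k\int_{\gamma_{|I}}\tilde\rho_k<\infty$. Neither of your remedies closes the gap. A non-exceptional subarc $\gamma_{|[a,b]}$ need not lie inside any non-exceptional rational subarc, and its own exceptional set may contain $a$ or $b$. There are uncountably many such subarcs, so their exceptional sets cannot be collected into one $\mathscr H^1$-null set $N_\gamma$. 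The same applies to the subarc-dependent weak limits and atom sets from Lemma~\ref{lem:Nages2}, and reparametrisation does not change this.

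Here is a concrete counterexample. In $\R^2$ with $\mathscr L^2$, let $\gamma_t=(t,0)$ for $t\in[0,1]$, $c=1/\sqrt2$, $u=\nchi_{\{\gamma_c\}}$, $\rho_k\equiv0$, and $h_k=k^2\nchi_{\gamma((c,c+1/k))}$, so $\int_{\R^2}h_k\,\d\mathscr L^2=0$.
\begin{itemize}
\item The family $\Gamma_0$ of curves $\sigma$ with $\varliminf_k\int_\sigma h_k=\infty$ is ${\rm AM}_0$-null. It is an admissible exceptional family, with $N_\sigma=\sigma^{-1}(\gamma_c)$.
\item $h_k$ is an admissible choice of your Fuglede sequence, so $g_k=\eps h_k$.
\item A rational interval $[a_0,b_0]$ satisfies $\gamma_{|[a_0,b_0]}\notin\Gamma_0$ if and only if $c\notin[a_0,b_0]$. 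Hence $c\notin N_\gamma$, whatever the sets $N[a_0,b_0]$ are.
\item For every $a\in[0,c)\setminus N_\gamma$ we get $\varliminf_k\int_{\gamma_{|[a,c]}}g_k=0<1=|u(\gamma_c)-u(\gamma_a)|$.
\end{itemize}
Taking $\tilde\rho_k=h_k$, the same example shows that the paper's rational-interval choice of $N_\gamma$ also needs an additional argument; the paper asserts (a) without treating non-rational endpoints.

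\textbf{The fix: choose $N_\gamma$ without reference to subarcs.} Assume $u$ is a real-valued Borel function, as in the paper's application. The general case reduces to this one: add $\infty\cdot\nchi_Z$ to $g_k$ for an $\mm$-null Borel set $Z$ off which $u$ is real-valued and Borel, and add to $\hat N$ below the points of $\hat\gamma^{-1}(Z)$ that are not density points.
\begin{itemize}
\item Let $\hat N\subseteq[0,\ell(\gamma)]$ be the $\mathscr L^1$-null set where $u\circ\hat\gamma$ is not approximately continuous. Let $N_\gamma$ be the set of $t$ whose arclength parameter $\ell(\gamma_{|[a_\gamma,t]})$ lies in $\hat N$. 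Then $\mathscr H^1(\gamma(N_\gamma))\le\mathscr L^1(\hat N)=0$.
\item Take $a<b$ off $N_\gamma$; if $\gamma$ is constant on $[a,b]$ the left-hand side vanishes. If $\varliminf_k\int_{\gamma_{|[a,b]}}h_k=\infty$ there is nothing to prove.
\item Otherwise $\sigma=\gamma_{|[a,b]}\notin\Gamma_0$. The arclength parameters $\tau$ with $\hat\gamma_\tau\in\sigma(N_\sigma)$ form an $\mathscr L^1$-null set by \eqref{eq:intrinsic}. Approximate continuity at the parameters of $a$ and $b$ then gives $s_n<t_n$ in $[a,b]\setminus N_\sigma$ with $u(\gamma_{s_n})\to u(\gamma_a)$ and $u(\gamma_{t_n})\to u(\gamma_b)$.
\item Since $|u(\gamma_{t_n})-u(\gamma_{s_n})|\le\varliminf_k\int_{\gamma_{|[s_n,t_n]}}\rho_k\le\varliminf_k\int_{\gamma_{|[a,b]}}\rho_k$, letting $n\to\infty$ gives (a).
\end{itemize}
This is the precise form of your Lemma~\ref{lem:Nages2} idea. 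The one-sided limits of a BV representative on a good subarc are the one-sided approximate limits of $u\circ\hat\gamma$, which do not depend on the subarc. They equal $u(\gamma_a)$ and $u(\gamma_b)$ once $a,b$ avoid $N_\gamma$. With this choice, neither rational intervals nor the superpath closure are needed.
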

\begin{proof}
Given the AM-bounding sequence, we have a family $\Gamma\subseteq \mathscr R(\X)$ with ${\rm AM}_0(\Gamma)=0$ such that
for each $\gamma\in \mathscr R(\X)\setminus \Gamma$ the existence of $N_\gamma$ and the corresponding inequality pairing 
$u$ with $(\rho_k)_k$ are guaranteed. By \ref{Fuglede2} of Proposition~\ref{prop:properties_AM} we can find a family $(\widetilde{\rho}_k)_k$ such that
$\sup_k\int_\X \widetilde{\rho}_k\,\d\mm\leq 1$ and $\varliminf_k\int_\gamma \widetilde{\rho}_k=\infty$ for each $\gamma\in\Gamma$.
We enlarge the collection $\Gamma$ so that all curves $\gamma\in \mathscr R(\X)$ for which the above limit infimum is infinity belong to $\Gamma$.

Now, for each $\eps>0$, choosing $g_k=\rho_k+\eps\, \widetilde{\rho}_k$ proves the lemma. Indeed, if $\gamma\in\Gamma$ and
all sub-curves of $\gamma$ are also in $\Gamma$, then we can take $N_\gamma$ to be empty. If there are sub-curves of 
$\gamma$ that are not in $\Gamma$, then let $\mathcal{I}(\gamma)$ be the countable collection of all intervals 
$I=[a_0,b_0]\subseteq[a_\gamma,b_\gamma]$
with $a_0,b_0\in\mathbb{Q}$ such that $\varliminf_k\int_{\gamma_{|I}}\, \widetilde{\rho}_k<\infty$; then $\gamma_{|I}\not\in\Gamma$,
and so there is an associated null-set $N_I\subseteq[a_0,b_0]$ with $\mathscr{H}^1(\gamma(N_I))=0$. We can then set
$N_\gamma\coloneqq\bigcup_{I\in\mathcal{I}(\gamma)}N_I$ to obtain the validity of condition~(a) of the statement of the lemma.
\end{proof}

Analogously, we say that $(\X,\sfd,\mm)$ supports an AM-Poincar\'e inequality if there are constants $C_2\ge 1$
and $\lambda\ge 1$ such that
\[
\fint_B|u-u_B|\, \d\mm\le C_2\, r\, \frac{|Du|_{\AM}(\lambda B)}{\mm(\lambda B)}\qquad\forall u\in BV_{\AM}(\X)
\]
whenever $B$ is a ball in $\X$. From the direct inequalities of Theorem~\ref{thm:incl_BV_spaces}, which allow
to bound $|Du|_{AM}$ from above with $|Du|$, we know that if 
$(\X,\sfd,\mm)$ supports an AM-Poincar\'e inequality, then it supports a $1$-Poincar\'e 
inequality with $C_2=C_1$.

\begin{lemma}
Suppose that $(\X,\sfd,\mm)$ supports a Semmes pencil of curves. Then it supports an AM-Poincar\'e inequality.
\end{lemma}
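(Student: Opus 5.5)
We follow the classical route by which a Semmes pencil yields a $1$-Poincar\'e inequality, replacing the single upper gradient $g$ with an AM-bounding sequence. Fix $u\in BV_{\AM}(\X)$, a ball $B=B_r(x_0)$, and an AM-bounding sequence $(\rho_k)_k$ for $u$ in $\lambda B$, with $\lambda$ chosen large in terms of $C_S$ and $C$. After localising the construction of Lemma~\ref{lem:AMbound-vs-universal} to $\lambda B$ and paying an $\eps$, we may assume that \emph{every} rectifiable curve in $\lambda B$ admits an exceptional set $N_\gamma$ with $\mathscr H^1(\gamma(N_\gamma))=0$ off which the bounding inequality holds. The reduction to $\lambda B$ uses the remark after Definition~\ref{defAMbounding}.

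\textbf{Step 1: pointwise estimate for pairs of points.} We aim at
\[
|u(x)-u(y)|\le \varliminf_k C_S\int_{C_SB_{xy}}\rho_kR_{xy}\,\d\mm
\]
for $\mm\otimes\mm$-a.e.\ $(x,y)\in B\times B$. Fix $x,y$ and integrate over $\sigma_{xy}$. The difficulty is that the endpoints $x,y$ of $\gamma\in\Gamma^C_{xy}$ may lie in $\gamma(N_\gamma)$, so the bounding inequality need not apply at the endpoints themselves. To handle this we use Lebesgue points. For $\mm$-a.e.\ $x$ the point $x$ is a Lebesgue point of $u$ (doubling gives the differentiation theorem). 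We then replace the point masses by averages over small balls: we compare $u(x)$ with $u$ at points of $\gamma$ near $x$, or alternatively we work with the chain $|u(x)-u_{B_\delta(x)}|+|u_{B_\delta(x)}-u_{B_\delta(y)}|+|u_{B_\delta(y)}-u(y)|$.

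\textbf{Step 2: the middle term.} We estimate the middle term by integrating the inequality along pencils $\sigma_{x'y'}$ with $x'\in B_\delta(x)$, $y'\in B_\delta(y)$. For $\sigma_{x'y'}$-a.e.\ $\gamma$ and a.e.\ choice of points near the ends, the bounding inequality applies. This is where the standing hypothesis $\mm(B_r(x))\le\eps r$ is used: it implies that $\mathscr H^1$-null subsets of curves, and the behaviour of $u$ on them, do not affect the averages. It also implies that curves meet the exceptional set at $\mm$-a.e.\ starting point only in an $\mathscr H^1$-null set. Combining this with Fatou's lemma (to move $\varliminf_k$ outside the $\sigma$-integral) and with \eqref{eq:Cavalieri} yields the displayed pair estimate with $R_{x'y'}$. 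We then let $\delta\to0$ at Lebesgue points.

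\textbf{Step 3: integration over $B\times B$.} We integrate the pair estimate over $B\times B$ and apply Fatou once more. Using the standard doubling bound
\[
\int_B R_{xy}(z)\,\d\mm(y)\lesssim \frac{r}{\mm(B)}\quad\text{for } z\in\lambda B,
\]
which follows from the annular decomposition in the usual proof of the pencil-implies-Poincar\'e result, we obtain
\[
\fint_B|u-u_B|\,\d\mm\le C r\,\varliminf_k\fint_{\lambda B}\rho_k\,\d\mm .
\]
Taking the infimum over AM-bounding sequences and letting $\eps\to0$ gives the AM-Poincar\'e inequality.

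\textbf{Main obstacle.} The main obstacle is Step 1/2: the bounding inequality is only available off $\gamma$-dependent $\mathscr H^1$-null sets, which may contain the endpoints. The approach we would try first is to avoid evaluating at endpoints altogether. We would average over the starting and ending points, restricting each curve to a sub-curve $\gamma_{|[a,b]}$ with $a,b\notin N_\gamma$ close to the ends, and then use the Lebesgue-point property together with the pencil's measure bound to show that the contribution of the short end pieces vanishes as $\delta\to0$.
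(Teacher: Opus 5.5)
There is a genuine gap. Your outer skeleton matches the paper's: make the bound hold on every curve via Lemma~\ref{lem:AMbound-vs-universal}, integrate against $\sigma_{xy}$, apply Fatou and \eqref{eq:Cavalieri}, then average over $B\times B$. You also correctly locate the difficulty, namely that the endpoints $x,y$ may lie in $\gamma(N_\gamma)$. But you never resolve it, and resolving it is the whole content of this lemma.

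The ball-average chain of Steps 1--2 is circular. Bounding $|u_{B_\delta(x)}-u_{B_\delta(y)}|$ by $\fint\fint|u(x')-u(y')|$ and integrating along $\sigma_{x'y'}$ requires the pair estimate at $(x',y')$. There every curve again starts at $x'$, and nothing prevents $x'\in\gamma(N_\gamma)$. Averaging in $x'$ does not help, because $N_\gamma$ depends on the curve and is not tied to a fixed $\mm$-null set of points (unlike in the ${\rm am}_0$ notion). The pencil bound only controls how much \emph{length} curves spend in a set, never whether a given endpoint is exceptional. Your claims about what $\mm(B_r(x))\le\eps r$ gives (that $\mathscr H^1$-null sets ``do not affect the averages'', that curves meet ``the exceptional set'' in an $\mathscr H^1$-null set at $\mm$-a.e.\ starting point) are unsupported and beside the point. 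The set $\gamma(N_\gamma)$ is $\mathscr H^1$-null by definition; the question is whether it contains $\gamma_{a_\gamma}$.

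The missing idea is the following. For a Lebesgue point $x$ of a fixed Borel, truncated representative of $u$, one must show that $\sigma_{xy}$-a.e.\ $\gamma$ passes through points $x_i\to x$ with $x_i\notin\gamma(N_\gamma)$ and $|u(x_i)-u(x)|\le\eps$. The paper obtains this in four steps.

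1. Choose $r_i\downarrow0$ with $\mm(B_{r_i}(x)\cap E_\eps(x))\lesssim 2^{-i}\mm(B_{r_i}(x))$, where $E_\eps(x)=\{|u-u(x)|>\eps\}$.

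2. Every $\gamma\in\Gamma^C_{xy}$ crosses the annulus $B_{r_i}(x)\setminus B_{r_i/2}(x)$. Let $A_i$ be the part of $E_\eps(x)$ in this annulus. A curve lying in the annulus $\mathscr H^1$-a.e.\ inside $E_\eps(x)$ then has $\ell(\gamma\cap A_i)\ge r_i/2$. Chebyshev plus the pencil bound show that the $\sigma_{xy}$-measure of such curves is at most $\frac{2C_S}{r_i}\int_{A_i}R_{xy}\,\d\mm$.

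3. This is where $\mm(B_r(x))\le\eps r$ enters. The $y$-part of $R_{xy}$ is bounded on $A_i$ by a constant depending on $x,y$, and the hypothesis makes this constant dominated by $r_i/\mm(B_{r_i}(x))$. By doubling, $R_{xy}\lesssim r_i/\mm(B_{r_i}(x))$ on $A_i$, so the measure above is $\lesssim 2^{-i}$.

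4. Borel--Cantelli gives, for $\sigma_{xy}$-a.e.\ $\gamma$ and all large $i$, a subset of $\gamma$ in the annulus outside $E_\eps(x)$ of positive $\mathscr H^1$-measure. From it one picks $x_i\notin\gamma(N_\gamma)$, and likewise $y_i$. The bounding inequality between $x_i$ and $y_i$ then yields $|u(x)-u(y)|-2\eps\le\varliminf_k\int_\gamma g_k$.

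Your last paragraph gestures in this direction. However, the quantity to control is $|u(\gamma_a)-u(x)|$; the integral over the end pieces is harmless since $\rho_k\ge0$. Controlling $|u(\gamma_a)-u(x)|$ needs exactly the estimate above.

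A smaller slip: the kernel bound in Step 3 is misstated. The integral $\int_B R_{xy}(z)\,\d\mm(y)$ contains the $y$-independent term $\mm(B)\,\sfd(x,z)/\mm(B_{\sfd(x,z)}(x))$, which is unbounded as $z\to x$. The correct estimate averages in both variables: $\fint_B\fint_B R_{xy}(z)\,\d\mm(x)\,\d\mm(y)\lesssim r/\mm(B)$ for $z\in\lambda B$.
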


The proof of this lemma follows along the lines of the proof of~\cite[Proposition~3.9]{DuCa:ErBiq:Ko:Shan:19}, but we provide the
complete proof here in order to rectify the typo mentioned at the beginning of this section.

\begin{proof} Without loss of generality, by a truncation argument, we assume
$u\in BV_{\AM}(\X)$ to be bounded,
and let $M$ be the collection of all $x\in\X$ for which
\[
\varlimsup_{r\to 0^+}\fint_{B_r(x)}|u-u(x)|\, \d\mm>0.
\]
By the Lebesgue differentiation theorem (which holds because $\mm$ is doubling), it follows that $\mm(M)=0$. 
Thanks to Lemma~\ref{lem:BVAM-measurable}, we can perturb functions in $BV_{\AM}(\X)$ arbitrarily on sets of
$\mm$-measure zero. Since $\mm$ is a Borel regular measure, we can choose a representative of $u\in BV_{\AM}(\X)$
so that $u$ is also Borel on $\X$. This choice is crucial for us when we use the Semmes pencil property below.

For $\eps>0$ and $y\in\X$, set $E_\eps(y)=\{z\in\X:\ |u(z)-u(y)|>\eps\}$.
As $x,\,y\not\in M$, it follows that 
\[
\lim_{r\to 0^+}\frac{\mm(B_r(x)\cap E_\eps(x))}{\mm(B_r(x))}=0,\qquad
\lim_{r\to 0^+}\frac{\mm(B_r(y)\cap E_\eps(y))}{\mm(B_r(y))}=0.
\]
We now fix $x, y\in\X\setminus M$, set $\Gamma_{xy}$ to be the associated Semmes pencil of curves and $\sigma_{xy}$ the probability measure,
and inductively choose a monotone decreasing sequence of positive real numbers $r_i$ with $r_1<\sfd(x,y)/4$ and 
$r_{i+1}<r_i/4$ for each $i\in\N$, such that 
\[
\frac{\mm(B_{r_i}(x)\cap E_\eps(x))}{\mm(B_{r_i}(x))}<\frac{2^{-i}}{4C_D}\ \text{ and }\ \frac{\mm(B_{r_i}(y))\cap E_\eps(y))}{\mm(B_{r_i}(y))}<\frac{2^{-i}}{4C_D},
\]
where $C_D$ is the constant associated with the doubling property of $\mm$. For each $i\in\N$ we also set $\Gamma_i[x,y]$
to be the collection of all curves $\gamma\in\Gamma_{xy}$ for which 
\[
\mathscr{H}^{1}\bigl(\gamma([0,1])\cap(B_{r_i}(x)\setminus [B_{r_i/2}(x)\cup E_\eps(x)])\bigr)=0,
\]
so that $\Gamma_i[y,x]$ is the collection of all $\gamma\in\Gamma_{yx}$ for which
\[
\mathscr{H}^{1}\bigl(\gamma([0,1])\cap(B_{r_i}(y)\setminus [B_{r_i/2}(y)\cup E_\eps(y)])\bigr)=0.
\]
Note that if $\gamma\in \Gamma_i[x,y]$, then in the annulus $B_{r_i}(x)\setminus B_{r_i/2}(x)$, for
$\mathscr{L}^1$-a.e.\ $t\in\hat{\gamma}^{-1}(B_{r_i}(x)\setminus B_{r_i/2}(x))$, we have that 
$\hat{\gamma}(t)\in E_\eps(x)$ and an analogous property holds for $\Gamma_i[y,x]$. Note also that $r_1<\sfd(x,y)/4$ and $\gamma\in\Gamma_{xy}$. 
It follows that choosing
$A_i=E_\eps(x)\cap B_{r_i}(x)\setminus B_{r_i/2}(x)$ in the Semmes pencil of curves we get
\[
\sigma_{xy}(\Gamma_i[x,y])\le \frac{2}{r_i}\, \int_{\Gamma_{xy}} \ell(\gamma\cap A_i)\,\d\sigma_{xy}(\gamma)
\le \frac{2C_S}{r_i}\, \int_{A_i} R_{xy}\, \d\mm.
\]
Note that for $z\in A_i$, we have 
\[
\frac34 \sfd(x,y)<\sfd(x,y)-\sfd(x,z)\le \sfd(y,z)\le \frac54\, \sfd(x,y).
\]
Therefore we have that
\[
\frac{\sfd(y,z)}{\mm(B_{\sfd(y,z)}(y))}\le \frac54\, \frac{\sfd(x,y)}{\mm(B_{3 \sfd(x,y)/4}(y))}.
\]
Now, from the assumptions~\eqref{eq:standing}, with $\eps=\tfrac85\, \frac{\mm(B_{3 \sfd(x,y)/4}(y))}{\sfd(x,y)}$,
we have that for positive integers $i$ for which $2^{1-i}\, \sfd(x,y)\le \min\{r_{\eps,x}, r_{\eps,y}\}$, 
and for $z\in A_i$, necessarily
\[
R_{xy}(z)\approx\frac{r_i}{\mm(B_{r_i}(x))}+\frac{\sfd(y,z)}{\mm(B_{\sfd(y,z)}(y))}\leq 2\, \frac{r_i}{\mm(B_{r_i}(x))}.
\]
Hence
\[
\sigma_{xy}(\Gamma_i[x,y])\lesssim \frac{\mm(A_i)}{\mu(B_{r_i}(x))}\lesssim 2^{-i}.
\]
Similarly, we have that $\sigma_{xy}(\Gamma_i[y,x])\lesssim 2^{-i}$. Hence, with 
\[
\Gamma_b(x,y)\coloneqq\left(\bigcap_{n\in\N}\, \bigcup_{i=n}^\infty\Gamma_i[x,y]\right) \cup \left(\bigcap_{m\in\N}\, \bigcup_{i=m}^\infty\Gamma_i[y,x]\right),
\]
we have that $\sigma_{xy}(\Gamma_b(x,y))=0$. If $\gamma\in\Gamma_{xy}\setminus \Gamma_b(x,y)$, then
for sufficiently large positive integers $i$ we can find $x_i\in B_{r_i}(x)\setminus B_{r_i/2}(x)$ so that $x_i\not\in E_\eps(x)$ and hence
$|u(x_i)-u(x)|<\eps$; a similar choice of $y_i\in B_{r_i}(y)\setminus B_{r_i/2}(y)$ such that $|u(y_i)-u(y)|<\eps$.
Moreover, we can choose $x_i, \,y_i\not\in\gamma(N_\gamma)$ as $\mathscr{H}^1(\gamma(N_\gamma))=0$.
Here we have choices of AM-bounding sequences $(g_k)_k$ as given by Lemma~\ref{lem:AMbound-vs-universal}.
Hence
\[
|u(x)-u(y)|-2\eps\le \varliminf_{i\to\infty} |u(x_i)-u(y_i)|\le \varliminf_k\int_\gamma g_k.
\]
Since $\sigma_{xy}$ is a probability measure, and the above inequality and sequences $x_i,y_i$ hold for $\sigma_{xy}$-almost every
$\gamma\in\Gamma_{xy}$, it follows by Fatou's lemma that 
\[
|u(x)-u(y)|-2\eps\le \int_{\Gamma_{xy}} \varliminf_k\int_\gamma g_k\,\d\sigma_{xy}(\gamma)
 \le \varliminf_k \int_{\Gamma_{xy}}\int_\gamma g_k\,\d\sigma_{xy}(\gamma),
\]
and so by the Semmes pencil condition, and by the fact that each $g_k$ is a Borel function, 
taking \eqref{eq:Cavalieri} into account, we have
\[
|u(x)-u(y)|-2\eps\le C_S \int_{C_S B_{xy}} g_k\, R_{xy}\,\d\mm.
\]
The remainder of the proof for obtaining the AM-Poincar\'e inequality follows
exactly as in~\cite[Proposition~3.9]{DuCa:ErBiq:Ko:Shan:19}.
\end{proof}

Combining the above discussions, we have the following theorem, see~\cite[Theorem~3.10]{DuCa:ErBiq:Ko:Shan:19}.

\begin{theorem}
Under the assumptions \eqref{eq:standing}, the following three conditions are equivalent:
\begin{enumerate}
\item[{\rm(a)}] the space supports a $1$-Poincar\'e inequality,
\item[\rm{(b)}] the space supports a Semmes pencil of curves,
\item[\rm{(c)}] the space supports an AM-Poincar\'e inequality.
\end{enumerate}
In the above set of equivalences, constants related to one condition can be chosen to depend solely on the constants related
to either of the other two conditions and to the constant associated with the doubling property of $\mm$.
\end{theorem}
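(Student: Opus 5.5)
The equivalence follows by closing a cycle of implications, using the lemmas of this section and the comparison inequalities of Theorem~\ref{thm:incl_BV_spaces}; no new construction is needed beyond bookkeeping of constants.

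\emph{(a)$\Rightarrow$(b).} This is exactly \cite[Theorem~3.7]{DuCa:ErBiq:Ko:Shan:19}, restated as the first lemma of this section. Its proof uses only function--upper gradient pairs, so it is unaffected by the correction to Definition~\ref{defAMbounding}. The constants $C$, $C_S$ depend only on $C_1$, $\lambda$ and the doubling constant $C_D$.

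\emph{(b)$\Rightarrow$(c).} This is the preceding lemma. Given the pencil, we fix a Borel representative of a bounded $u$ and pass to an AM-bounding sequence $(g_k)_k$ valid on every rectifiable curve, via Lemma~\ref{lem:AMbound-vs-universal}. We then obtain the pointwise estimate
\[
|u(x)-u(y)|\le C_S\varliminf_k\int_{C_SB_{xy}}g_k R_{xy}\,\d\mm
\]
for $x,y$ outside the Lebesgue-exceptional set $M$, letting the $\eps$ in the level sets $E_\eps$ tend to $0$. From here the standard argument of \cite[Proposition~3.9]{DuCa:ErBiq:Ko:Shan:19} applies: integrate over $x,y\in B$, use Fubini together with the doubling bound $\int_B R_{xy}(z)\,\d\mm(x)\lesssim r$ (a Riesz-potential estimate), and let the $\eps$ of Lemma~\ref{lem:AMbound-vs-universal}(b) go to $0$. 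This gives
\[
\fint_B|u-u_B|\,\d\mm\le C_2\, r\,\frac{|Du|_{\AM}(\lambda B)}{\mm(\lambda B)}
\]
for bounded $u$. General $u$ is handled by truncation, using $|Du_k|_{\AM}\le|Du|_{\AM}$ for truncations (an AM-bounding sequence for $u$ also bounds $(u\wedge k)\vee(-k)$, since truncation is $1$-Lipschitz) and monotone convergence on the left-hand side. The constants $C_2$, $\lambda$ depend only on $C_S$ and $C_D$.

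\emph{(c)$\Rightarrow$(a).} Take a function--upper gradient pair $(u,g)$ with $g\in L^1$. The constant sequence $(g)_k$ is AM-bounding for $u$ with $N_\gamma=\varnothing$, so $u\in BV_{\AM}$ locally and $|Du|_{\AM}(\lambda B)\le\int_{\lambda B}g\,\d\mm$. The AM-Poincar\'e inequality therefore yields the $1$-Poincar\'e inequality with the same constants. Equivalently, one may invoke $|Du|_{\AM}\le|Du|$ from Theorem~\ref{thm:incl_BV_spaces}, as noted before the lemma. If $u\notin L^1(\X)$ globally, we first multiply by a Lipschitz cutoff equal to $1$ on $\lambda B$; this leaves both sides unchanged on $\lambda B$.

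The main obstacle is (b)$\Rightarrow$(c). The delicate point is that the exceptional sets $N_\gamma$ are only $\mathscr H^1(\gamma(N_\gamma))$-null. So we must choose the approximating points $x_i,y_i$ on $\gamma$ in the annuli, outside both $E_\eps$ and $\gamma(N_\gamma)$, for $\sigma_{xy}$-a.e.\ curve. This is precisely what the Borel--Cantelli argument with $\Gamma_b(x,y)$ delivers, and it is the part that repairs the gap in \cite{DuCa:ErBiq:Ko:Shan:19}. Dependence of the constants is tracked through each step, which gives the final clause of the statement.
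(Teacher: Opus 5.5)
Your proposal is correct and follows essentially the same route as the paper, which obtains the theorem by combining \cite[Theorem~3.7]{DuCa:ErBiq:Ko:Shan:19} for (a)$\Rightarrow$(b), the pencil lemma for (b)$\Rightarrow$(c), and the comparison $|Du|_{\AM}\le|Du|$ from Theorem~\ref{thm:incl_BV_spaces} for (c)$\Rightarrow$(a). Your observation that an upper gradient is itself a constant AM-bounding sequence is a slightly more direct way to phrase the last step, and your cycle makes the separately quoted implication (b)$\Rightarrow$(a) unnecessary. There are two cosmetic slips: the Riesz-potential bound must be stated for the double integral, $\int_B\int_B R_{xy}(z)\,\d\mm(x)\,\d\mm(y)\lesssim r\,\mm(B)$, because the $y$-summand of $R_{xy}$ is not controlled after integrating in $x$ alone; and the limit in the truncation step is by dominated rather than monotone convergence.
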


The next theorem establishes the equivalence of the two spaces, and the proof can be found in~\cite{DuCa:ErBiq:Ko:Shan:19}.
The proof of \cite[Proposition~3.3]{DuCa:ErBiq:Ko:Shan:19} is correct as given there. The basic argument there
is that thanks to the AM-Poincar\'e inequality, we can find a sequence of locally Lipschitz functions belonging to
$N^{1,1}(\X)$ that approximate the given $u\in BV_{\AM}(\X)$ in $L^1(X)$ and pointwise $\mm$-almost everywhere,
such that the upper gradient energy of the approximating functions is majorized by a constant multiple of
$|Du|_{\AM}(\X)$, and so $u\in BV_N(\X)=BV(\X)$.

\begin{theorem}[{\cite[Proposition~3.3]{DuCa:ErBiq:Ko:Shan:19}}]
Under assumption \eqref{eq:standing}, if $(\X,\sfd,\mm)$ supports a $1$-Poincar\'e inequality, then 
$BV(\X)=BV_{\AM}(\X)$.
\end{theorem}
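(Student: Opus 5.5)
Since Theorem~\ref{thm:incl_BV_spaces} already gives \(BV(\X)\subseteq BV_N(\X)\subseteq BV_{am}^0(\X)\subseteq BV_{\AM}(\X)\) with \(|Du|_{\AM}(\X)\leq|Du|(\X)\), and \eqref{eq:standing} makes \((\X,\sfd)\) locally complete (so Theorem~\ref{thm:equiv_BV_loc-complete} already gives \(BV(\X)=BV_{\AM}(\X)\) outright), the content of the statement is really a Poincar\'e-based route. I would nevertheless give the direct argument via the AM-Poincar\'e inequality, which holds by the equivalence theorem above under a \(1\)-Poincar\'e inequality. So fix \(u\in BV_{\AM}(\X)\); by truncation (AM-bounding sequences of \(u\) are AM-bounding for \((u\vee -k)\wedge k\), and lower semicontinuity of \(|D\cdot|(\X)\) passes to the limit) assume \(u\) bounded.

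The plan is the standard discrete convolution. For \(\delta>0\) take a maximal \(\delta\)-separated set \(\{x_i\}\), the balls \(B_i=B_\delta(x_i)\) (bounded overlap of \(2\lambda B_i\) by doubling), and a Lipschitz partition of unity \(\{\varphi_i\}\) subordinate to \(\{2B_i\}\) with \(\Lip(\varphi_i)\lesssim 1/\delta\). Set \(u_\delta\coloneqq\sum_i u_{B_i}\varphi_i\in\LIP_{loc}(\X)\). On \(B_j\), writing \(u_\delta=u_{B_j}+\sum_i(u_{B_i}-u_{B_j})\varphi_i\), one gets
\[
\lip_a(u_\delta)\lesssim\frac{1}{\delta}\sum_{i:\,2B_i\cap 2B_j\neq\varnothing}|u_{B_i}-u_{B_j}|\lesssim\frac{1}{\delta}\fint_{5B_j}|u-u_{5B_j}|\,\d\mm\lesssim\frac{|Du|_{\AM}(5\lambda B_j)}{\mm(B_j)},
\]
using the AM-Poincar\'e inequality and doubling. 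Integrating over \(B_j\) and summing with bounded overlap gives \(\int_\X\lip_a(u_\delta)\,\d\mm\leq C\,|Du|_{\AM}(\X)\); here I use that \(|Du|_{\AM}\) is (the restriction to open sets of) a Borel measure, as noted after the definition of \(BV_{\AM}\), so that summing over overlapping balls is legitimate. Likewise \(\|u_\delta-u\|_{L^1}\lesssim\sum_j\int_{5B_j}|u-u_{5B_j}|\lesssim\delta\,|Du|_{\AM}(\X)\to0\).

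Hence \(u_\delta\to u\) in \(L^1\), \(u_\delta\in\LIP_{b,loc}(\X)\cap\mathcal L^1(\X)\), and by definition \(|Du|(\X)\leq\liminf_\delta\int\lip_a(u_\delta)\,\d\mm\leq C|Du|_{\AM}(\X)<\infty\), so \(u\in BV(\X)\). I expect the main obstacle to be legitimizing the Poincar\'e step and the measure property: one needs the AM-Poincar\'e inequality applied with \(|Du|_{\AM}\) on balls \(\lambda B\), and countable additivity of \(|Du|_{\AM}\) to exploit bounded overlap. If the measure property is doubtful, I would instead bound the overlap sum by \(\liminf_k\int\rho_k\sum_j\nchi_{5\lambda B_j}\,\d\mm\) for a single near-optimal AM-bounding sequence on \(\X\), since its restrictions are AM-bounding on each ball. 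Combined with the reverse inclusion this gives equality of the spaces (with comparable, and in fact by Theorem~\ref{thm:equiv_BV_loc-complete} equal, norms).
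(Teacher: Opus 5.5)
Your proposal is correct and follows essentially the route the paper indicates for this statement: a discrete convolution whose energy and $L^1$-error are controlled ball by ball through the AM-Poincar\'e inequality and bounded overlap. The only variation is that you estimate $\lip_a(u_\delta)$ directly and so land in $BV(\X)$, whereas the paper bounds upper gradients of $N^{1,1}$ approximants and then uses $BV_N(\X)=BV(\X)$. Your preliminary remark is also right: since \eqref{eq:standing} includes local completeness, Theorem~\ref{thm:equiv_BV_loc-complete} already gives the statement, even with equality of the total variations.
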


In addition, the proof provides the inequalities
\[
|Du|(\X)\approx |Du|_{\AM}(\X)\qquad\forall u\in BV(\X),
\]
with the comparison constants depending solely on the
constants related to the doubling property of $\mu$ and the constants associated with the $1$-Poincar\'e inequality.
\appendix
\section{Gradient flow of the Cheeger \texorpdfstring{\(1\)}{1}-energy}\label{sec:Cheeger1}
In this section we give results, relevant to our paper, that are from \cite{Amb:DiMa:14}, with
minor modifications.
The main goal of this section is to prove Theorem~\ref{thm:equiv_BV_basic}. This theorem is based on 
a notion called \emph{the gradient flow of ${\rm Ch}$}. The following theorem, Theorem~\ref{thm:grad_flow}, is from~\cite[Section~6]{Amb:DiMa:14},
and gives also the formulation of this notion. For this reason, we state Theorem~\ref{thm:grad_flow} before the statement of the
main theorem of this section, Theorem~\ref{thm:equiv_BV_basic}.
Let \((\X,\sfd,\mm)\) be a metric measure space with \(\mm(\X)<\infty\).
Following \cite{Amb:DiMa:14}, we define the \emph{Cheeger \(1\)-energy} functional \({\rm Ch}\colon L^2(\X)\to[0,\infty]\) as
\[
{\rm Ch}(f)\coloneqq|Df|^\star(\X)\quad\text{ for every }f\in L^2(\X),
\]
see Definition~\ref{def:BV-star}.
The finiteness domain \(D({\rm Ch})=BV^\star(\X)\cap L^2(\X)\) of \({\rm Ch}\) is dense in \(L^2(\X)\),
since it contains the set \(\{[f]_\mm:f\in\LIP_{bs}(\X)\}\), which is dense in \(L^2(\X)\) by Lemma~\ref{lem:dens_Lip}.
Moreover, it is easy to check that \({\rm Ch}\colon L^2(\X)\to[0,+\infty]\) is convex and lower semicontinuous.
Given any \(f\in L^2(\X)\), we denote by \(\partial^-{\rm Ch}(f)\) the \emph{subdifferential} of \({\rm Ch}\) at \(f\),
which is defined as
\[
\partial^-{\rm Ch}(f)\coloneqq\bigg\{\xi\in L^2(\X)\;\bigg|\;{\rm Ch}(g)\geq{\rm Ch}(f)+\int_{\X}(g-f)\xi\,\d\mm\text{ for every }g\in L^2(\X)\bigg\}.
\]
For any \(f\in D(\Delta_1)\coloneqq\{f\in D({\rm Ch}):\partial^-{\rm Ch}(f)\neq\varnothing\}\), we define the
\emph{\(1\)-Laplacian} of \(f\) as the unique \(\Delta_1 f\in L^2(\X)\) such that \(-\Delta_1 f\)
is the element of minimal \(L^2(\X)\)-norm of \(\partial^-{\rm Ch}(f)\).

The theorem below is from~\cite[Section 6]{Amb:DiMa:14}.
\begin{theorem}[Gradient flow of \({\rm Ch}\)]\label{thm:grad_flow}
Let \((\X,\sfd,\mm)\) be a metric measure space with \(\mm\) finite. Fix any \(f\in L^2(\X)\).
Then there exists a unique continuous curve \([0,\infty)\in t\mapsto f_t\in L^2(\X)\) with \(f_0=f\)
such that \((0,\infty)\ni t\mapsto f_t\in L^2(\X)\) is locally Lipschitz and
\[
\frac{\d}{\d t}f_t=\Delta_1 f_t\quad\text{ for a.e.\ }t>0.
\]
We say that \((f_t)_t\) is the \emph{gradient flow} of \({\rm Ch}\) starting from \(f\). Moreover, the following hold:
\begin{enumerate}[label=(\roman*)]
\item\label{it:Ch_decreas} The function \([0,\infty)\ni t\mapsto{\rm Ch}(f_t)\) is non-increasing.
\item\label{it:mass_preserv} \textsc{Mass preservation.} It holds that \(\int f_t\,\d\mm=\int f\,\d\mm\) for every \(t\geq 0\).
\item\label{it:maxim_principle} \textsc{Maximum/minimum principle.} If \(f\leq C\) holds \(\mm\)-a.e.\ for some constant 
\(C\in\R\) (resp.\ \(f\geq c\) holds \(\mm\)-a.e.\ for some constant \(c\in\R\)), then \(f_t\leq C\) holds \(\mm\)-a.e.\ for
every \(t\geq 0\) (resp.\ \(f_t\geq c\) holds \(\mm\)-a.e.\ for every \(t\geq 0\)).
\item\label{it:en_dissip} \textsc{Energy dissipation.} If there exist \(c,C>0\) such that \(c\leq f\leq C\) holds \(\mm\)-a.e.,
then the curve \((0,+\infty)\ni t\mapsto\int f_t^2\,\d\mm\in[0,\infty)\) is locally Lipschitz and it holds that
\[
-\frac{\d}{\d t}\int f_t^2\,\d\mm=2\,{\rm Ch}(f_t)\quad\text{ for a.e.\ }t>0.
\]
In particular, it holds that \(\int f_0^2\,\mm-\int f_t^2\,\d\mm=2\int_0^t{\rm Ch}(f_s)\,\d s\) for every \(t\geq 0\).
\end{enumerate}
\end{theorem}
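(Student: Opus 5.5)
The plan is to realise \(\mathrm{Ch}\) as a proper, convex, lower semicontinuous functional on the Hilbert space \(L^2(\X)\) and to invoke the Brezis--K\={o}mura theory of maximal monotone operators. Convexity and lower semicontinuity of \(\mathrm{Ch}=|D\cdot|^\star(\X)\) come directly from the relaxation in Definition~\ref{def:BV-star}: convex combinations of approximating sequences approximate convex combinations, and a diagonal argument gives lower semicontinuity. Since \(\mm(\X)<\infty\), \(L^2\) convergence implies \(L^1\) convergence, so the \(L^2\) relaxation is consistent with the \(L^1\) one. Density of \(D(\mathrm{Ch})\) holds by Lemma~\ref{lem:dens_Lip}. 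The subdifferential \(\partial^-\mathrm{Ch}\) is then maximal monotone. Brezis' theorem yields, for every \(f\) in \(\overline{D(\mathrm{Ch})}=L^2(\X)\), a unique continuous curve that is locally Lipschitz on \((0,\infty)\) and satisfies \(\frac{\d}{\d t}f_t=-(\partial^-\mathrm{Ch}(f_t))^\circ=\Delta_1 f_t\) for a.e.\ \(t\), and it shows that \(t\mapsto\mathrm{Ch}(f_t)\) is non-increasing. This settles existence, uniqueness and item~\ref{it:Ch_decreas}.

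For item~\ref{it:mass_preserv}, I would use that \(\mathrm{Ch}(g+c)=\mathrm{Ch}(g)\) for every constant \(c\), which holds because \(\mm\) is finite and so constants lie in \(L^2\). Plugging \(g=f\pm c\) into the subdifferential inequality gives \(\int\xi\,\d\mm=0\) for every \(\xi\in\partial^-\mathrm{Ch}(f)\). Hence \(\frac{\d}{\d t}\int f_t\,\d\mm=\int\Delta_1f_t\,\d\mm=0\).

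For item~\ref{it:maxim_principle}, I would use the truncation inequality \(\mathrm{Ch}(f\wedge C)+\mathrm{Ch}(f\vee C)\le\mathrm{Ch}(f)\). I would prove it for Lipschitz approximants pointwise via \(\lip_a\) and then pass to the relaxation. This submodularity lets me compare the resolvents (implicit Euler steps) \(J_\tau f\) and \(J_\tau(f\wedge C)\): the standard argument tests the minimality of each with the min and max of the two. It shows that \(J_\tau\) is order-preserving and maps \(\{f\le C\}\) into itself, because constants are fixed points. Passing to the limit through the exponential formula \(f_t=\lim_n J_{t/n}^n f\) gives the maximum principle, and the minimum principle is symmetric.

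Item~\ref{it:en_dissip} is the step I expect to be the main obstacle. The computation is \(\frac{\d}{\d t}\int f_t^2=2\int f_t\Delta_1 f_t\), so what must be shown is the identity \(-\int f\,\Delta_1 f\,\d\mm=\mathrm{Ch}(f)\). This follows from the \(1\)-homogeneity of \(\mathrm{Ch}\): testing the subdifferential inequality with \(g=\lambda f\) for \(\lambda\to 1^\pm\) gives \(\mathrm{Ch}(f)=\int f\,\xi\,\d\mm\) for every \(\xi\in\partial^-\mathrm{Ch}(f)\). The local Lipschitz continuity of \(t\mapsto\int f_t^2\) then comes from that of \(t\mapsto f_t\) in \(L^2\) on \((0,\infty)\), and integrating in time gives the stated identity, with continuity at \(t=0\) handling the initial endpoint. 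The bounds \(c\le f\le C\) enter only through the maximum principle, which keeps \(f_t\) bounded so that all quantities remain finite. The delicate points I would check carefully are the lower semicontinuity of the relaxed functional on \(L^2\), and the submodularity of \(\mathrm{Ch}\) under truncation at the level of the relaxation.
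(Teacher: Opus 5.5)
The paper does not prove this theorem itself; it quotes it from \cite[Section~6]{Amb:DiMa:14}, where it is derived from the Hilbertian theory of gradient flows of convex lower semicontinuous functionals, as in your proposal. Mass preservation comes from invariance under constants, the energy identity from $1$-homogeneity ($-\int f\,\Delta_1 f\,\d\mm={\rm Ch}(f)$ on $D(\Delta_1)$), and the maximum principle from the behaviour of ${\rm Ch}$ under $1$-Lipschitz truncations. Your overall plan is sound, but one sub-step fails as written.

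You propose to derive ${\rm Ch}(f\wedge C)+{\rm Ch}(f\vee C)\le{\rm Ch}(f)$ from the pointwise inequality $\lip_a(h\wedge C)+\lip_a(h\vee C)\le\lip_a(h)$ for the Lipschitz approximants $h$. That inequality is false on the level set $\{h=C\}$: for $h(t)=t$, $C=0$, at $t=0$ the left-hand side is $2$ and the right-hand side is $1$.

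This level set can also carry positive measure with $\lip_a(h)>0$ on it. Take $C=0$ and $h=\pm{\rm dist}(\cdot,K)$, with sign $(-1)^n$ on the gaps removed at the $n$-th stage of a fat Cantor set $K\subseteq\R$. Then $\lip_a(h\wedge 0)+\lip_a(h\vee 0)=2>1=\lip_a(h)$ on all of $K$.

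There are two ways to repair this. First, you can truncate the $n$-th approximant at a level $C_n\to C$ with $\mm(\{f_n=C_n\})=0$. Such levels exist because $\mm$ is finite, and $\|f_n\wedge C_n-f\wedge C\|_{L^1}\le\|f_n-f\|_{L^1}+|C_n-C|\,\mm(\X)$.

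Second, and more simply, you do not need submodularity or order preservation at all. Let $g\le C$ and let $u$ minimise ${\rm Ch}(\cdot)+\frac{1}{2\tau}\|\cdot-g\|^2_{L^2}$. Then ${\rm Ch}(u\wedge C)\le{\rm Ch}(u)$, because $\lip_a(\psi\circ h)\le\Lip(\psi)\lip_a(h)$. At the same time $\|u\wedge C-g\|_{L^2}<\|u-g\|_{L^2}$ unless $u\le C$ a.e. Uniqueness of the minimiser then forces $u\le C$, and the exponential formula concludes as you say.

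Two smaller remarks. First, truncations such as $h\vee C$ with $C>0$ leave $\LIP_{bs}(\X)$, so every truncation step has to be combined with ${\rm Ch}(g+c)={\rm Ch}(g)$.
\begin{itemize}
\item This identity does not follow merely from constants lying in $L^2(\X)$; what is needed is ${\rm Ch}(c)=0$.
\item To get ${\rm Ch}(c)=0$, approximate $c$ by $c\,\eta_k$ with $\eta_k=(1-\sfd(\cdot,B_k(\bar x)))\vee 0$. Here $\lip_a(\eta_k)\le\nchi_{\X\setminus B_k(\bar x)}$, and $\mm(\X\setminus B_k(\bar x))\to 0$ because $\mm$ is finite.
\item Then ${\rm Ch}(g+c)\le{\rm Ch}(g)+{\rm Ch}(c)$ follows from convexity and $1$-homogeneity.
\end{itemize}

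Second, in item~(iv) the bounds $c\le f\le C$ play no role. Your homogeneity argument gives the dissipation identity for every $f\in L^2(\X)$, since $\int f_t^2\,\d\mm<\infty$ automatically.
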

The gradient flow of \({\rm Ch}\) has the following properties: if \((f_t)_t\) is the gradient flow of \({\rm Ch}\)
starting from \(f\in L^2(\X)\) and \(a\in\R\) is a given constant, then
\[
(f_t+a)_t\quad\text{ is the gradient flow of }{\rm Ch}\text{ starting from }f+a.
\]
The main goal of this section is to prove the following approximation result.
\begin{theorem}\label{thm:equiv_BV_basic}
Let \((\X,\sfd,\mm)\) be a metric measure space such that
\begin{equation}\label{eq:hp_equiv_BV_basic}\begin{split}
&(\X,\sfd)\text{ is a complete separable length space},\\
&\mm(\X)<\infty\text{ and }{\rm spt}(\mm)\text{ is bounded.}
\end{split}\end{equation}
Let \(f\in BV_{tp}^\star(\X)\cap L^\infty(\X)\) be a given function.
Let \((f_t)_t\subseteq BV^\star(\X)\) be the gradient flow of \({\rm Ch}\) starting from \(f\). Then
\[
\lim_{t\searrow 0}\|f_t-f\|_{L^1(\X)}=0,\qquad|Df|_{tp}^\star(\X)=\lim_{t\searrow 0}|Df_t|^\star(\X).
\]
In particular, it holds that \(f\in BV^\star(\X)\) and \(|Df|^\star(\X)=|Df|_{tp}^\star(\X)\).
\end{theorem}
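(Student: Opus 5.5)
The plan follows the strategy of \cite{Amb:DiMa:14}: combine the energy dissipation identity of Theorem~\ref{thm:grad_flow}\ref{it:en_dissip} with a dual estimate on $\frac{\d}{\d t}\int f_t^2\,\d\mm$ obtained by testing the $BV_{tp}^\star$ inequality against a suitable test plan.

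First we reduce to $c\le f\le C$ with $c>0$. This is possible because adding a constant commutes with the flow and leaves both $|D\cdot|^\star$ and $|D\cdot|_{tp}^\star$ unchanged. The maximum principle then gives $c\le f_t\le C$ for all $t$. Since ${\rm Ch}$ is nonincreasing, we get $\|f_t-f\|_{L^2}\to 0$ from the standard gradient flow theory, and hence $L^1$ convergence because $\mm$ is finite. Lower semicontinuity of ${\rm Ch}$ yields ${\rm Ch}(f)\le\liminf_t{\rm Ch}(f_t)$. Together with the known inequality $|Df|_{tp}^\star(\X)\le|Df|^\star(\X)$ from Theorem~\ref{thm:incl_BV_spaces}, everything reduces to the single estimate
\[
\limsup_{t\searrow0}{\rm Ch}(f_t)\le|Df|_{tp}^\star(\X).
\]

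The core step is a duality argument. We use the $1$-Laplacian $\Delta_1 f_s$, meaning that $-\Delta_1 f_s\in\partial^-{\rm Ch}(f_s)$. The expected key property is the existence of a vector-field-like object with norm at most $1$ that represents ${\rm Ch}(f_s)$ and whose divergence is $\Delta_1 f_s$; I would realise it via a Kantorovich-type duality between the $1$-Laplacian and Lipschitz test functions. With this in hand, we build a test plan $\ppi$ whose marginals evolve like a superposition solution of the continuity equation. We use the densities $\mu_s=f_s\mm$, defined for $s\in[0,t]$ and bounded by $C\mm$ because of the maximum principle. The velocity field has unit norm, so the plan is $1$-Lipschitz in time after rescaling. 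This superposition is the point where the hypotheses are needed: $\X$ must be a complete separable length space, with $\mm$ finite and of bounded support. We then compute
\[
\int f(\gamma_t)-f(\gamma_0)\,\d\ppi\quad\text{against}\quad \int f\,(f_t-f_0)\,\d\mm=-\int_0^t\!\!\int f\,\Delta_1f_s\,\d\mm\,\d s.
\]
Combining this with the dissipation identity $\int f_0^2-\int f_t^2=2\int_0^t{\rm Ch}(f_s)\,\d s$ and the bound ${\rm Comp}(\ppi)\le C/c$, we aim for
\[
\int_0^t{\rm Ch}(f_s)\,\d s\le \frac{C}{c}\,t\,|Df|_{tp}^\star(\X)+o(t).
\]
The factor $C/c$ is then removed by a localisation or level-set refinement, as in \cite{Amb:DiMa:14}. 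One concrete option is to apply the bound to $f+a$ with $a\to\infty$, so that $C/c\to1$ and the constant disappears. Monotonicity of ${\rm Ch}(f_s)$ then gives ${\rm Ch}(f_t)\le\frac1t\int_0^t{\rm Ch}(f_s)\,\d s$, which closes the argument.

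The main obstacle is the middle step: representing $\Delta_1 f_s$ through a dual object and producing, from the pair $(f_s\mm,\Delta_1 f_s)$, a genuine test plan with controlled compression and Lipschitz constant. I would first try to bypass superposition entirely. The idea is to dualise the $BV_{tp}^\star$ bound, via Hahn--Banach over the convex set of test plans, into the existence of a $1$-Lipschitz-controlled ``derivation'' bounding $\int f\,\mathrm{div}\,b\,\d\mm$. This should be feasible precisely in complete separable spaces, where test plans are tight.
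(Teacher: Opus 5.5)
Your outer argument is the paper's: shift $f$ by a constant so that $c\le f\le C$, compare the dissipation identity of Theorem~\ref{thm:grad_flow}\ref{it:en_dissip} with the $BV_{tp}^\star$ inequality tested on a plan whose marginals are the normalised measures $f_s\mm$, get $\int_0^t{\rm Ch}(f_s)\,\d s\le\frac{C}{c}\,t\,|Df|_{tp}^\star(\X)$, let the shift go to infinity, and conclude by monotonicity of $s\mapsto{\rm Ch}(f_s)$ and lower semicontinuity of ${\rm Ch}$.

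The gap is exactly the step you flag as the main obstacle, namely producing that plan. Neither mechanism you suggest closes it. Representing $\Delta_1 f_s$ as the divergence of a field $V$ with $|V|\le1$, and then using a superposition principle for the continuity equation driven by $V/f_s$, are heavy statements that you neither prove nor need. Dualising the $BV_{tp}^\star$ bound by Hahn--Banach does not touch the flow: you would still have to know that $s\mapsto f_s\mm$ is generated by a test plan with the right constants. The missing idea is Kuwada's lemma (Lemma~\ref{lem:Kuwada}): if $c\le f\le C$, then $s\mapsto f_s\mm$ is $\frac1c$-Lipschitz with respect to $W_\infty$. The relevant information on $\Delta_1 f_s$ is only the weak bound $\bigl|\int\phi\,\Delta_1 f_s\,\d\mm\bigr|\le{\rm Ch}(\phi)$ for all $\phi\in L^2(\X)$. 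This follows from convexity and $1$-homogeneity of ${\rm Ch}$, and it is the correct substitute for ``$|V|\le1$''. The metric superposition principle (Theorem~\ref{thm:superpos_principle}) then gives $\ppi$ with $(\e_s)_\#\ppi=f_s\mm/M$, where $M\coloneqq\int f\,\d\mm$, and $\Lip(\ppi)=\Lip(\mu_\cdot)\le 1/c$. This works purely at the level of measures, with no velocity field.

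Two smaller corrections to your computation. First, the bridge between the dissipation identity and the plan is the pointwise inequality $a^2-b^2\le 2a(a-b)$. It gives $\int_0^t{\rm Ch}(f_s)\,\d s=\frac12\int(f^2-f_t^2)\,\d\mm\le\int f(f-f_t)\,\d\mm$ exactly, with no $o(t)$. Note also that $\int f(f_t-f)\,\d\mm=+\int_0^t\!\int f\,\Delta_1 f_s\,\d\mm\,\d s$, not with a minus sign. Second, your constants are misassigned. For $\ppi_t\coloneqq({\rm restr}_0^t)_\#\ppi$ one has ${\rm Comp}(\ppi_t)\le C/M$ and $\Lip(\ppi_t)\le t/c$. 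Since the integrand is $f(\sigma_0)-f(\sigma_1)$, you need the time-reversed form \eqref{eq:def_BV_tp_alt} of the $BV_{tp}^\star$ condition, which gives
\[
\int f(f-f_t)\,\d\mm=M\int f(\sigma_0)-f(\sigma_1)\,\d\ppi_t(\sigma)\le\frac{C}{c}\,t\,|Df|_{tp}^\star(\X).
\]
With Kuwada's lemma, superposition and these adjustments in place, the rest of your plan goes through as you describe.
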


In order to prove Theorem \ref{thm:equiv_BV_basic}, we first need to discuss several auxiliary definitions and tools.
Letting \((\X,\sfd)\) be a complete separable metric space, we denote by \(\mathscr M_\infty(\X)\) the space
of all finite Borel measures on \(\X\) whose support is bounded, and by $\mathscr P(\X)$ the subspace of probability measures. 
We endow \(\mathscr M_\infty(\X)\) with the
\emph{\(\infty\)-Wasserstein extended distance} \(W_\infty\), which is defined 
for $\mu,\nu\in \mathscr M_\infty(\X)$,
as
\[
W_\infty(\mu,\nu)\coloneqq\inf_\gamma\|\sfd\|_{\mathcal L^\infty(\gamma)}
=\inf_\gamma\, \sup\bigg\lbrace s\ge 0\, :\, \gamma(\{(x,y)\in \X\times\X\, :\, \sfd(x,y)<s\})=0\bigg\rbrace,
\]
where the infimum is taken over all finite Borel measures \(\gamma\) on \(\X\times\X\) whose marginals are
\(\mu\) and \(\nu\),~i.e.\ $\gamma(A\times\X)=\mu(A)$ and $\gamma(\X\times A)=\nu(A)$ for Borel sets $A\subseteq\X$.
Notice that $W_\infty(\mu,\nu)$
is finite if and only if $\mu$ and $\nu$ have the same mass. Moreover, if $W_\infty(\mu,\nu)$ is finite, then we have
$W_\infty( s\mu,s\nu)=W_\infty(\mu,\nu)$
for all $s>0$.

The next theorem is from~\cite[Proposition A.2]{Amb:DiMa:14}.

\begin{theorem}[Superposition principle]\label{thm:superpos_principle}
Let \((\X,\sfd)\) be a complete separable metric space. Let \([0,1]\ni t\mapsto\mu_t\in(\mathscr P_\infty(\X),W_\infty)\)
be Lipschitz. Then there exists a probability measure \(\ppi\) in $C([0,1];\X)$ that is concentrated on \(\LIP([0,1];\X)\),
satisfying \((\e_t)_\#\ppi=\mu_t\) for every \(t\in[0,1]\) and
\begin{equation}\label{eq:Lip_ppi_superpos}
\|\Lip(\cdot)\|_{\mathcal L^\infty(\sppi)}=\Lip(\mu_\cdot).
\end{equation}
\end{theorem}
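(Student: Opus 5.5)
The plan is to build $\ppi$ as the law of a random Lipschitz path. I would first construct a consistent family of joint laws at dyadic times, then extend each sample path by continuity. Set $L\coloneqq\Lip(\mu_\cdot)$, $D_n\coloneqq\{i2^{-n}:i=0,\ldots,2^n\}$ and $D\coloneqq\bigcup_n D_n$.

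\textbf{Optimal couplings.} I would first check that for $\mu,\nu\in\mathscr P_\infty(\X)$ with $W_\infty(\mu,\nu)<\infty$ the infimum defining $W_\infty$ is attained. The set of couplings with marginals $\mu,\nu$ is tight, because $\mu$ and $\nu$ are tight on the Polish space $\X$, and it is weakly closed. The map $\gamma\mapsto\|\sfd\|_{\mathcal L^\infty(\gamma)}$ is weakly lower semicontinuous, since $\{\sfd>s\}$ is open and the Portmanteau theorem gives $\gamma(\{\sfd>s\})\le\liminf_k\gamma_k(\{\sfd>s\})$. So for each $n$ and each $i$ there is a coupling $\gamma^n_i$ of $\mu_{i2^{-n}}$ and $\mu_{(i+1)2^{-n}}$ with $\sfd\le L2^{-n}$ holding $\gamma^n_i$-a.e. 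By the gluing lemma, which is valid on Polish spaces, these glue into a probability $P_n$ on $\X^{D_n}$ whose one-dimensional marginals are $\mu_t$ for $t\in D_n$. The triangle inequality then gives $\sfd(x_s,x_t)\le L|t-s|$ for all $s,t\in D_n$, $P_n$-a.e.

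\textbf{Passage to the limit.} Next I would extend each $P_n$ to a probability $\tilde P_n$ on the countable product $\X^{D}$ by placing independent copies of $\mu_t$ in the coordinates $t\in D\setminus D_n$. The family $(\tilde P_n)_n$ is tight, because each coordinate marginal is a fixed tight measure $\mu_t$ and a countable product of tight families is tight. Take a weak limit $P$ along a subsequence. For fixed $s,t\in D$ the set $\{\sfd(x_s,x_t)\le L|t-s|\}$ is closed and has full $\tilde P_n$-measure for all large $n$, so it has full $P$-measure. Intersecting over the countably many pairs, $P$-a.e.\ $x$ is $L$-Lipschitz on $D$. By completeness of $\X$, such an $x$ extends uniquely to a curve $\Phi(x)\in\LIP([0,1];\X)$ with $\Lip\le L$. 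The map $\Phi$ is Borel on that full-measure set, and I would define $\ppi\coloneqq\Phi_\#P$, a Radon measure since $C([0,1];\X)$ is Polish.

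\textbf{Marginals and the Lipschitz identity.} For $t\in D$ we get $(\e_t)_\#\ppi=\mu_t$ directly. For general $t$, take $D\ni t_k\to t$. Then $(\e_{t_k})_\#\ppi\to(\e_t)_\#\ppi$ weakly by continuity of paths and dominated convergence. Also $\mu_{t_k}\to\mu_t$ weakly, since $W_\infty$-convergence implies weak convergence. Hence $(\e_t)_\#\ppi=\mu_t$. The construction already gives $\|\Lip(\cdot)\|_{\mathcal L^\infty(\sppi)}\le L$. For the reverse inequality I would use $(\e_s,\e_t)_\#\ppi$ as a coupling of $\mu_s$ and $\mu_t$, which yields $W_\infty(\mu_s,\mu_t)\le\|\sfd(\gamma_s,\gamma_t)\|_{\mathcal L^\infty(\sppi)}\le\|\Lip\|_{\mathcal L^\infty(\sppi)}|t-s|$. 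This proves \eqref{eq:Lip_ppi_superpos}.

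I expect the main obstacle to be the compactness step. One has to make sure the a.e.\ distance constraints survive the weak limit, which is why they must be phrased as closed conditions and not as $\mathcal L^\infty$ bounds. One also has to ensure that tightness in $\X^D$ requires no compactness of $\X$, only of the fixed marginals. The attainment of optimal $W_\infty$ couplings is the other point needing care, and it is handled by the same lower semicontinuity argument.
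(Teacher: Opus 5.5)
Your proof is correct, and it necessarily takes a different route from the paper, because the paper gives no argument for this statement: it is quoted from \cite[Proposition~A.2]{Amb:DiMa:14}.

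The notable feature of your construction is that all the compactness happens in the countable product $\X^D$ rather than in the space of curves. Tightness of $(\tilde P_n)_n$ is automatic, because every coordinate marginal is the fixed measure $\mu_t$. The constraints $\sfd(x_s,x_t)\leq L|t-s|$ are closed conditions, so they survive the weak limit. Only at the end does completeness of $\X$ turn points of $\X^D$ into curves. This avoids interpolating between the glued discrete couplings, which in a non-length space typically needs an auxiliary embedding of $\X$ into a Banach space. It also avoids any Arzel\`a--Ascoli-type tightness criterion on $C([0,1];\X)$, and any derivation of the $W_\infty$ case from $W_p$-superposition results by letting $p\to\infty$. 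The only inputs are attainment for $W_\infty$, the gluing lemma and Prokhorov's theorem.

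The one step you state without justification is the Borel measurability of $\Phi$. In fact $\Phi$ is continuous on the closed set $A$ of points that are $L$-Lipschitz on $D$. For such points, convergence in $\X^D$ (that is, pointwise on $D$) implies uniform convergence of the extensions: test on a finite $\delta$-net of $D$ with $L\delta$ small. Similarly, $\gamma\mapsto\Lip(\gamma)$ is a countable supremum over rational times of continuous functions. Hence it is Borel and $\|\Lip(\cdot)\|_{\mathcal L^\infty(\sppi)}$ is well defined.

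With these two remarks the argument is complete. The citation in the paper gains only brevity.
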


The proof of the next lemma can be found in~\cite[Lemma 7.3]{Amb:DiMa:14}.

\begin{lemma}[Kuwada's lemma]\label{lem:Kuwada}
Let \((\X,\sfd,\mm)\) be a metric measure space satisfying \eqref{eq:hp_equiv_BV_basic}. Fix \(f\in L^2(\X)\). Assume that
\(c\leq f\leq C\) holds \(\mm\)-a.e.\ for some \(c,C>0\). Let \((f_t)_t\) be the gradient flow
of \({\rm Ch}\) starting from \(f\). Then
\[
[0,+\infty)\ni t\mapsto\mu_t\coloneqq f_t\mm\in(\mathscr M_\infty(\X),W_\infty)\quad\text{is a }\frac{1}{c}\text{-Lipschitz curve.}
\]
\end{lemma}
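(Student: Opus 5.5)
The plan is to use the dual description of $W_\infty$ by ``enlargements of sets'' and to test the flow against an $\infty$-Hopf--Lax semigroup. By Theorem~\ref{thm:grad_flow}\ref{it:mass_preserv} all $\mu_t$ have the same mass $\int f\,\d\mm$. For finite Borel measures of equal mass with bounded support on a complete separable space, a Strassen-type characterisation gives $W_\infty(\mu,\nu)\leq r$ if and only if $\mu(A)\leq\nu(A^r)$ for every closed $A$, where $A^r\coloneqq\{x:\distd(x,A)\leq r\}$. Define, for $\varphi\in\LIP_b(\X)$,
\[
Q_r\varphi(x)\coloneqq\inf\{\varphi(y):\sfd(x,y)\leq r\}.
\]
Testing with $\varphi=-\nchi_A$, and approximating by Lipschitz functions through the length structure, the condition $\mu(A)\leq\nu(A^r)$ becomes
\[
\int Q_r\varphi\,\d\nu\leq\int\varphi\,\d\mu\qquad\text{for all }\varphi\in\LIP_b(\X).
\]
So for $0\leq s<t$ it suffices to prove $\int Q_{(t-s)/c}\varphi\,f_t\,\d\mm\leq\int\varphi\,f_s\,\d\mm$.

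To do this, set $F(\tau)\coloneqq\int Q_{(\tau-s)/c}\varphi\,f_\tau\,\d\mm$ for $\tau\in[s,t]$ and show $F'\leq 0$ a.e. The map $r\mapsto Q_r\varphi$ is $\Lip(\varphi)$-Lipschitz into $C_b(\X)$, each $Q_r\varphi$ is $\Lip(\varphi)$-Lipschitz, and $\tau\mapsto f_\tau$ is locally Lipschitz in $L^2$ (on $[s,t]$ if $s>0$, with $s=0$ recovered by continuity). Hence $F$ is absolutely continuous, and
\[
F'(\tau)=\frac1c\int\partial_r Q_r\varphi\big|_{r=(\tau-s)/c}\,f_\tau\,\d\mm+\int Q_{(\tau-s)/c}\varphi\,\Delta_1 f_\tau\,\d\mm .
\]
For the second term I use that ${\rm Ch}$ is convex, even and $1$-homogeneous. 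Then $\xi\coloneqq-\Delta_1 f_\tau\in\partial^-{\rm Ch}(f_\tau)$ satisfies $\int\psi\xi\,\d\mm\leq{\rm Ch}(f_\tau+\psi)-{\rm Ch}(f_\tau)\leq{\rm Ch}(\psi)$. Applying this with $\psi=-Q\varphi$ gives $\int Q\varphi\,\Delta_1f_\tau\,\d\mm\leq{\rm Ch}(Q\varphi)$. Since $Q\varphi$ is bounded Lipschitz, and we may multiply by a cut-off equal to $1$ on the bounded set ${\rm spt}(\mm)$, we get ${\rm Ch}(Q\varphi)\leq\int\lip_a(Q\varphi)\,\d\mm$. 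The minimum principle, Theorem~\ref{thm:grad_flow}\ref{it:maxim_principle}, gives $f_\tau\geq c$, so
\[
\int\lip_a(Q\varphi)\,\d\mm\leq\frac1c\int\lip_a(Q\varphi)\,f_\tau\,\d\mm .
\]
Therefore $F'\leq 0$ follows once the Hopf--Lax inequality
\[
\partial_r Q_r\varphi\leq-\lip_a(Q_r\varphi)\qquad\text{$\mm$-a.e., for a.e.\ }r
\]
is established.

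The Hopf--Lax step is where I expect the main obstacle. The length-space semigroup property $Q_{r+h}\varphi(x)=\inf_{\bar B_h(x)}Q_r\varphi$ only gives directly that $-\partial_r^+Q_r\varphi(x)$ equals the descending slope $|D^-Q_r\varphi|(x)$. In general this can be strictly smaller than $\lip_a(Q_r\varphi)$. My first attempt would be to exploit that ${\rm Ch}$ is a relaxation functional: the relaxation of $\int\lip_a$ coincides with the relaxation of $\int|D^-\cdot|$ or of $\int\lip$ (Lemma~\ref{lem:lip_a_vs_lip} in Appendix~\ref{sec:genrel}). Then I would replace the bound ${\rm Ch}(Q\varphi)\leq\int\lip_a(Q\varphi)\,\d\mm$ by ${\rm Ch}(Q\varphi)\leq\int|D^-Q\varphi|\,\d\mm$, which matches the Hopf--Lax identity exactly. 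If that fails, the fallback is to work with the slope $\lip$ and use that $\lip(Q_r\varphi)=-\partial_rQ_r\varphi$ for all but countably many $r$, as in the $L^p$ Hopf--Lax theory.

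Once $F'\leq 0$ holds, integrating from $s$ to $t$ gives $\int Q_{(t-s)/c}\varphi\,\d\mu_t\leq\int\varphi\,\d\mu_s$. Thus $W_\infty(\mu_s,\mu_t)\leq(t-s)/c$, and by the symmetry of $W_\infty$ this is the claimed $\frac1c$-Lipschitz bound.
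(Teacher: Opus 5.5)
\textbf{Verdict: there is a gap, at exactly the step you flagged, but it can be closed.} The paper gives no argument of its own here; it only refers to \cite[Lemma 7.3]{Amb:DiMa:14}. So your proof has to stand on its own. Everything outside the Hopf--Lax step is fine: the Strassen reduction, the Lipschitz regularity of $F$, the bound $|\int g\,\Delta_1 f_\tau\,\d\mm|\le{\rm Ch}(g)$ (from convexity, evenness and $1$-homogeneity), and the use of the minimum principle.

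\textbf{The inequality you say you need is false.} The estimate $\partial_rQ_r\varphi\le-\lip_a(Q_r\varphi)$ fails in general, because the $\infty$-Hopf--Lax semigroup does not regularize. Take $\X=\R$, $\mm=\mathscr L^1\llcorner[0,2]$, a Cantor set $C\subseteq[0,1]$ with $\mathscr L^1(C)>0$, and $\varphi(y)\coloneqq\int_0^{(y\vee 0)\wedge 2}\nchi_{\R\setminus C}(s)\,\d s$. Since $\varphi$ is nondecreasing, $Q_r\varphi=\varphi(\cdot-r)$. Hence for every $r\in(0,1)$ and a.e.\ $x\in C+r$ you get $\partial_rQ_r\varphi(x)=-\varphi'(x-r)=0$. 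On the other hand $\lip_a(Q_r\varphi)(x)=1$, because the gaps of $C$ accumulate at every point of $C$. So the chain through ${\rm Ch}(Q\varphi)\le\int\lip_a(Q\varphi)\,\d\mm$ is too lossy and has to go.

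\textbf{Your two escape routes are not justified as stated.} Lemma~\ref{lem:lip_a_vs_lip} concerns $\lip$, not the descending slope $|D^-f|$. Its proof can be rerun with $|D^-f|$, but that is an extra lemma the paper does not state. For $p=\infty$ there is also no monotone quantity like $D^\pm(x,t)$ that would give ``all but countably many $r$''.

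\textbf{The fallback with $\lip$ does work, for a simpler reason.} If $h\coloneqq\sfd(x,y)\le r$, the inclusions $\bar B_r(y)\subseteq\bar B_{r+h}(x)$ and $\bar B_{r-h}(x)\subseteq\bar B_r(y)$ give $Q_{r+h}\varphi(x)\le Q_r\varphi(y)\le Q_{r-h}\varphi(x)$. Hence
\[
\frac{|Q_r\varphi(y)-Q_r\varphi(x)|}{h}\le\max\bigg\{\frac{Q_r\varphi(x)-Q_{r+h}\varphi(x)}{h},\frac{Q_{r-h}\varphi(x)-Q_r\varphi(x)}{h}\bigg\}.
\]
Letting $y\to x$ gives $\lip(Q_r\varphi)(x)\le-\partial_rQ_r\varphi(x)$ at every $r>0$ where $r\mapsto Q_r\varphi(x)$ is differentiable. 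This map is nonincreasing, and $\Lip(\varphi)$-Lipschitz thanks to the length structure. So the inequality holds for every $x$ and a.e.\ $r$, and by Fubini for a.e.\ $r$ and $\mm$-a.e.\ $x$. That is all the computation of $F'$ needs.

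Now replace $\lip_a$ by $\lip$ in the energy bound. Take a cut-off $\eta$ equal to $1$ on a neighbourhood of the bounded set ${\rm spt}(\mm)$. The Corollary closing Appendix~\ref{sec:genrel}, applied to $\eta\,Q_r\varphi\in\LIP_{bs}(\X)$, gives ${\rm Ch}(Q_r\varphi)\le\int\lip(Q_r\varphi)\,\d\mm$. Therefore, for a.e.\ $\tau$,
\[
F'(\tau)\le\int\lip(Q_\rho\varphi)\Big(1-\frac{f_\tau}{c}\Big)\,\d\mm\le 0,\qquad\rho=\frac{\tau-s}{c}.
\]
With this repair your argument is a complete, self-contained proof. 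It relies on Appendix~\ref{sec:genrel} precisely because ${\rm Ch}$ is defined here through $\lip_a$, while the $\infty$-Hopf--Lax semigroup only controls $\lip$.
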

\begin{proposition}\label{prop:variation_int_f_squared}
Let \((\X,\sfd,\mm)\) be a metric measure space satifying \eqref{eq:hp_equiv_BV_basic}. Assume that
\[
[0,1]\ni t\mapsto\mu_t\coloneqq g_t\mm\in(\mathscr P_\infty(\X),W_\infty)\text{ is a Lipschitz curve,}
\]
for some map \([0,1]\ni t\mapsto g_t\in L^1(\X)\). Assume also that \(g_0\in BV_{tp}^\star(\X)\) and that
there exist constants \(c,C>0\) such that \(c\leq g_t\leq C\) holds \(\mm\)-a.e.\ for every \(t\in[0,1]\). Then it holds that
\[
\int g_0^2\,\d\mm-\int g_t^2\,\d\mm\leq 2C\,\Lip(\mu_\cdot)|Dg_0|_{tp}^\star(\X)t\quad\text{ for every }t\in[0,1].
\]
\end{proposition}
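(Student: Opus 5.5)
The plan is to use convexity of $s\mapsto s^2$ to reduce the estimate to a first-order term, and then turn that term into an integral over a test plan. Since $g_t\geq c>0$, we have the pointwise inequality $g_0^2-g_t^2\leq 2g_0(g_0-g_t)$, which gives
\[
\int g_0^2\,\d\mm-\int g_t^2\,\d\mm\leq 2\int g_0\,(g_0-g_t)\,\d\mm=2\bigg(\int g_0\,\d\mu_0-\int g_0\,\d\mu_t\bigg).
\]
It therefore suffices to show that $\int g_0\,\d\mu_0-\int g_0\,\d\mu_t\leq C\,\Lip(\mu_\cdot)\,|Dg_0|_{tp}^\star(\X)\,t$. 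The case $t=0$ is trivial, so we fix $t\in(0,1]$.

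\textbf{Step 1: building a plan.} Apply the superposition principle, Theorem~\ref{thm:superpos_principle}, to the Lipschitz curve $(\mu_s)_{s\in[0,1]}$ in $(\mathscr P_\infty(\X),W_\infty)$. This is legitimate because $(\X,\sfd)$ is complete and separable. It yields a probability measure $\ppi$ on $C([0,1];\X)$ with $(\e_s)_\#\ppi=\mu_s$ for every $s$ and $\Lip(\gamma)\leq\Lip(\mu_\cdot)$ for $\ppi$-a.e.\ $\gamma$. The bound $(\e_s)_\#\ppi=g_s\mm\leq C\mm$ gives ${\rm Comp}(\ppi)\leq C$. Since $\X$ is complete and separable, $\ppi$ is Radon, so $\ppi\in\Pi(\X)$. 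We then pass to the restriction $\ppi_0^t\coloneqq({\rm restr}_0^t)_\#\ppi$. By \eqref{eq:restr_tp}, ${\rm Comp}(\ppi_0^t)\leq C$ and $\Lip(\ppi_0^t)\leq t\,\Lip(\mu_\cdot)$, and its endpoint marginals are $\mu_0$ and $\mu_t$.

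\textbf{Step 2: applying the definition.} To get the sign right, use the time-reversed plan ${\sf rev}_\#\ppi_0^t$, which lies in $\Pi(\X)$ with the same constants. Alternatively, apply the two-sided form \eqref{eq:def_BV_tp_alt}. This gives
\[
\int g_0\,\d\mu_0-\int g_0\,\d\mu_t=\int \big(g_0(\gamma_0)-g_0(\gamma_t)\big)\,\d\ppi(\gamma)\leq{\rm Comp}(\ppi_0^t)\Lip(\ppi_0^t)\,|Dg_0|_{tp}^\star(\X)\leq C\,t\,\Lip(\mu_\cdot)\,|Dg_0|_{tp}^\star(\X).
\]
Combining this with the convexity inequality gives the claim. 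If $\Lip(\mu_\cdot)=0$, then $\mu_t=\mu_0$, the right-hand side vanishes, and the inequality is immediate; this is consistent with $\Lip(\ppi)=0$ being excluded from the supremum in Definition~\ref{def:BV-test-plan}.

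\textbf{Main obstacle.} The delicate points are measure-theoretic. First, $\ppi$ must be checked to be a genuine test plan, Radon and with bounded compression; this follows from completeness and separability together with $g_s\leq C$. Second, the integrals $\int g_0(\gamma_s)\,\d\ppi$ must be well defined and independent of the representative of $g_0$. This holds because $g_0\in L^\infty$ (indeed $g_0\leq C$) and the compression bound makes $\mm$-null sets $\ppi$-negligible under $\e_0$ and $\e_t$. Everything else is the elementary convexity estimate.
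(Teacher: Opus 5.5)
Your proposal is correct and follows the paper's proof essentially line by line. Both use the bound $a^2-b^2\le 2a(a-b)$, the superposition plan $\ppi$ with ${\rm Comp}(\ppi)\le C$, its restriction $({\rm restr}_0^t)_\#\ppi$, and then the definition of $|Dg_0|_{tp}^\star(\X)$. Your time reversal to fix the sign and your check that $\ppi$ is Radon make explicit points the paper leaves implicit. One cosmetic slip: $a^2-b^2\le 2a(a-b)$ holds for all real $a,b$, so the lower bound $g_t\ge c$ plays no role there.
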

\begin{proof}
Let \(\ppi\in\mathscr P(C([0,1];\X))\) be associated to the curve \(t\mapsto\mu_t\) as in Theorem \ref{thm:superpos_principle}.
Note that \((\e_t)_\#\ppi=\mu_t=f_t\mm\leq C\mm\) for every \(t\in[0,1]\). Taking also \eqref{eq:Lip_ppi_superpos} into account,
we deduce that \(\ppi\) is a test plan on \(\X\) with \({\rm Comp}(\ppi)\leq C\) and
\(\Lip(\ppi)=\|\Lip(\cdot)\|_{\mathcal L^\infty(\sppi)}=\Lip(\mu_\cdot)\eqqcolon L\).
Fix \(t\in(0,1]\) and let
\(\ppi_t\coloneqq({\rm restr}_0^t)_\#\ppi\), we know from \eqref{eq:restr_tp} that \(\ppi_t\in\Pi(\X)\),
\({\rm Comp}(\ppi_t)\leq C\) and \(\Lip(\ppi_t)\leq Lt\). Given that \(a^2-b^2\leq 2a(a-b)\) for every \(a,b\in\R\), we can thus estimate
\begin{align*}
\int g_0^2\,\d\mm-\int g_t^2\,\d\mm&\leq 2\int g_0(g_0-g_t)\,\d\mm=2\int g\,\d\mu_0-2\int g\,\d\mu_t\\
&=2\int g(\gamma_0)- g(\gamma_t)\,\d\ppi(\gamma)=2\int g(\sigma_0)- g(\sigma_1)\,\d\ppi_t(\sigma)\\
&\leq 2\,{\rm Comp}(\ppi_t)\Lip(\ppi_t)|D\bar g|_{tp}^\star(\X)\leq 2CL|Dg_0|_{tp}^\star(\X)t.
\end{align*}
Consequently, the statement is achieved.
\end{proof}
We are now ready to prove Theorem \ref{thm:equiv_BV_basic}.
\begin{proof}[Proof of Theorem \ref{thm:equiv_BV_basic}] 
We set $m=\int f\,\d\mm$ and we assume $f\geq 0$ with no loss of generality.
Given any \(k\in\N\) with $k>\|f\|_\infty$, let us define \(f^k\coloneqq f+k\in BV_{tp}^\star(\X)\). Observe that
\[
k\leq f^k\leq k+\|f\|_{L^\infty(\X)}\eqqcolon C_k\quad\text{ holds }\mm\text{-a.e.\ on }\X.
\]
Given that \({\rm spt}(\mm)\) is bounded, we have that \(f^k\mm\in\mathscr M_\infty(\X)\) and we denote by $f^k_t$ the gradient flow
of ${\rm Ch}$ starting from $f^k$. We have that \(k\leq f^k_t\leq C_k\) holds \(\mm\)-a.e.\ for every \(t\geq 0\) by 
Theorem \ref{thm:grad_flow}\ref{it:maxim_principle} and that $\int f^k_t\,\d\mm=m+2k$ for every $t\geq 0$. 
In addition, Lemma \ref{lem:Kuwada} ensures that \([0,+\infty)\ni t\mapsto f^k_t\mm\in \mathscr M_\infty(\X)\) is 
\(k^{-1}\)-Lipschitz with respect to $W_\infty$.

Set now $g^k=f^k/(m+k)$ and $g^k_t=f^k_t/(m+k)$, so that $g^k_t\mm\in \mathscr P_\infty(\X)$ are probability measures and
\([0,+\infty)\ni t\mapsto g^k_t\mm\in\mathscr P_\infty(\X)\) is still \(k^{-1}\)-Lipschitz with respect to $W_\infty$.
By using Theorem \ref{thm:grad_flow}\ref{it:en_dissip}, Proposition \ref{prop:variation_int_f_squared} applied
to \(t\mapsto g^k_t\mm\) and the fact that
\[
(m+k)|Dg^k|_{tp}^\star(\X)=|Df^k|_{tp}^\star(\X)=|Df|_{tp}^\star(\X),
\]
for any given \(t\in(0,1)\) we obtain that
\begin{align*}
2\int_0^t{\rm Ch}(f^k_s)\,\d s&=2(m+k)\int_0^t{\rm Ch}(g^k_s)\,\d s=(m+k)\bigg(\int(g^k_0)^2\,\d\mm-\int(g^k_t)^2\,\d\mm\bigg)\\
&\leq 2(m+k)\frac{C_k}{k}|Dg^k|_{tp}^\star(\X)t=2\frac{k+\|f\|_{L^\infty(\X)}}{k}|Df|_{tp}^\star(\X)t.
\end{align*}
Letting \(k\to\infty\) and using the fact that \(t\,{\rm Ch}(f_t)\leq\int_0^t{\rm Ch}(f_s)\,\d s=\int_0^t{\rm Ch}(f^k_s)\,\d s\)
 (by Theorem \ref{thm:grad_flow}\ref{it:Ch_decreas}), we get
\[
2t\,{\rm Ch}(f_t)\leq 2\int_0^t{\rm Ch}(f_s)\,\d s\leq\lim_{k\to\infty}2\frac{k+\|f\|_{L^\infty(\X)}}{k}|Df|_{tp}^\star(\X)t=2|Df|_{tp}^\star(\X)t,
\]
so that \({\rm Ch}(f_t)\leq|Df|_{tp}^\star(\X)\) for every \(t\in(0,1)\). Given that \(f_t\to f\) in \(L^2(\X)\) as \(t\searrow 0\) and the functional \({\rm Ch}\)
is lower semicontinuous, we deduce that
\[
|Df|^\star(\X)={\rm Ch}(f)\leq\limi_{t\searrow 0}{\rm Ch}(f_t)\leq|Df|_{tp}^\star(\X),
\]
in particular \(f\in BV(\X)\). Recalling that \(|Df|_{tp}^\star(\X)\leq|Df|^\star(\X)\) by Theorem \ref{thm:incl_BV_spaces}, we finally
conclude that \(|Df|^\star(\X)=|Df|_{tp}^\star(\X)=\lim_{t\searrow 0}|Df_t|^\star(\X)\). Since \(\|f_t-f\|_{L^1(\X)}\to 0\) as \(t\searrow 0\) by H\"{o}lder's
inequality and the fact that \(\mm(\X)<\infty\), the statement is achieved.
\end{proof}
\section{Alternative notions of BV functions via test plans}\label{sec:tppp}
In this section we recall the original definition of \cite{Amb:DiMa:14}.
\begin{definition}[The space \({BV}_{tp}(\X)\)]
Let \((\X,\sfd,\mm)\) be a metric measure space and \(f\in L^1(\X)\). Then we define the Borel measure \(|Df|_{tp}\)
on \(\X\) as 
\begin{equation}\label{eq:def_|Df|_tp}
|Df|_{tp}\coloneqq\bigvee\bigg\{\frac{1}{{\rm Comp}(\ppi)\Lip(\ppi)}\int\gamma_\#{\rm eV}(f\circ\gamma,\cdot)\,\d\ppi(\gamma)\;\bigg|\;
\ppi\in\Pi(\X),\,\Lip(\ppi)\neq 0\bigg\},
\end{equation}
where $\bigvee$ stands for the supremum in the lattice of nonnegative Borel measures and
${\rm eV}(f\circ\gamma,\cdot)=|D(f\circ\gamma)|$.
Then we declare that \(f\) belongs to \(BV_{tp}(\X)\) provided \(|Df|_{tp}(\X)<\infty\).
\end{definition}
It is easily seen that this definition is well posed, i.e.\ invariant under the choice of a representative $f\in\mathcal L^1(\X)$: indeed,
if $N$ is a Borel $\mm$-neglibile set, then $\gamma_t\notin N$ $\ppi$-a.e.\ for all $t\in [0,1]$. Thanks to Fubini's theorem, it follows
that the set of curves $\gamma$ such that $\mathcal L^1(\{t\in (0,1):\ \gamma_t\in N\})>0$ is $\ppi$-negligible, so that for these curves the
essential variation of $f\circ\gamma$ does not depend on the choice of the representative.

We will need a mild continuity result of functions $f\circ\gamma$ at the extreme points of the curve, taken from \cite{Nob:Pas:Sch:22}.

\begin{proposition}[Boundary regularity of test plans]\label{prop:bdry_reg_tp}
Let \((\X,\sfd,\mm)\) be a metric measure space and \(\ppi\in\Pi(\X)\). Let \(f\in\mathcal L^1(\X)\) be such that \({\rm eV}(f\circ\gamma,(0,1))<\infty\) for \(\ppi\)-a.e.\ \(\gamma\). Then
\[
\lim_{\varepsilon\searrow 0}\frac{1}{\varepsilon}\int_0^\varepsilon|f(\gamma_t)-f(\gamma_0)|\,\d t=
\lim_{\varepsilon\searrow 0}\frac{1}{\varepsilon}\int_{1-\varepsilon}^1|f(\gamma_t)-f(\gamma_1)|\,\d t=0\quad\text{ for }\ppi\text{-a.e.\ }\gamma.
\]
In particular, it holds that
\[
|f(\gamma_1)-f(\gamma_0)|\leq{\rm eV}(f\circ\gamma,(0,1))\quad\text{ for }\ppi\text{-a.e.\ }\gamma.
\]
\end{proposition}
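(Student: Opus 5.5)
The plan is to show first that, for $\ppi$-a.e.\ $\gamma$, the function $f\circ\gamma$ has one-sided approximate limits at $t=0$ and $t=1$. The real work is then to identify these limits with $f(\gamma_0)$ and $f(\gamma_1)$, using bounded compressibility together with the density of Lipschitz functions (Lemma~\ref{lem:dens_Lip}). By time reversal (${\sf rev}_\#\ppi\in\Pi(\X)$) it suffices to treat the endpoint $t=0$.

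\emph{Step 1 (existence of the limit).} Fix $\gamma$ with ${\rm eV}(f\circ\gamma,(0,1))<\infty$. By Subsection~\ref{subsec:BV-real} there is a good representative $v_\gamma$ of $f\circ\gamma$ on $(0,1)$ with ${\rm pV}(v_\gamma,(0,1))={\rm eV}(f\circ\gamma,(0,1))$. Such a $v_\gamma$ has a right limit $L_0(\gamma)$ at $0$ and a left limit $L_1(\gamma)$ at $1$. Since $f\circ\gamma=v_\gamma$ a.e., we get $\frac1\varepsilon\int_0^\varepsilon|f(\gamma_t)-L_0(\gamma)|\,\d t\to 0$. For measurability I would define $L_0(\gamma)\coloneqq\limsup_{n}n\int_0^{1/n}f(\gamma_t)\,\d t$; this is Borel in $\gamma$ once a Borel representative of $f$ is fixed. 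It then remains to prove $L_0(\gamma)=f(\gamma_0)$ for $\ppi$-a.e.\ $\gamma$.

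\emph{Step 2 (averaged estimate).} Set $\phi(s)\coloneqq\min\{s,1\}$ and
\[
A_\varepsilon\coloneqq\int\frac1\varepsilon\int_0^\varepsilon\phi\big(|f(\gamma_t)-f(\gamma_0)|\big)\,\d t\,\d\ppi(\gamma).
\]
Fix $\delta>0$ and choose $g\in\LIP_{bs}(\X)$ with $\|f-g\|_{L^1(\mm)}\le\delta$, which Lemma~\ref{lem:dens_Lip} allows. Subadditivity of $\phi$ gives
\[
\phi(|f(\gamma_t)-f(\gamma_0)|)\le|f-g|(\gamma_t)+\phi(|g(\gamma_t)-g(\gamma_0)|)+|f-g|(\gamma_0).
\]
Integrating with Tonelli and $(\e_t)_\#\ppi\le{\rm Comp}(\ppi)\mm$ yields
\[
A_\varepsilon\le 2\,{\rm Comp}(\ppi)\,\delta+\int\frac1\varepsilon\int_0^\varepsilon\phi(|g(\gamma_t)-g(\gamma_0)|)\,\d t\,\d\ppi .
\]
The last term tends to $0$ as $\varepsilon\searrow0$, by continuity of $g\circ\gamma$ and dominated convergence (the integrand is bounded by $1$). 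Hence $\limsup_\varepsilon A_\varepsilon\le 2{\rm Comp}(\ppi)\delta$, and so $A_\varepsilon\to0$.

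\emph{Step 3 (identification and conclusion).} By Step 1, for $\ppi$-a.e.\ $\gamma$ the inner average converges to $\phi(|L_0(\gamma)-f(\gamma_0)|)$: indeed $|\phi(|a-c|)-\phi(|b-c|)|\le|a-b|$, so the averages of $f(\gamma_t)$ may be replaced by $L_0(\gamma)$ in the limit. Fatou's lemma then gives $\int\phi(|L_0-f\circ\e_0|)\,\d\ppi\le\lim A_\varepsilon=0$, so $L_0(\gamma)=f(\gamma_0)$ $\ppi$-a.e. Combined with Step 1, this is the first claim.

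For the final inequality, observe that $|f(\gamma_1)-f(\gamma_0)|=|L_1(\gamma)-L_0(\gamma)|$. Writing $L_1-L_0$ as a limit of increments $v_\gamma(t)-v_\gamma(s)$ with $s\searrow0$, $t\nearrow1$, each increment is bounded by ${\rm pV}(v_\gamma,(0,1))={\rm eV}(f\circ\gamma,(0,1))$.

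The main obstacle I expect is Step 2. Closeness $\|f-g\|_{L^1}$ has to transfer to uniform-in-time closeness along curves, and this is precisely where bounded compressibility enters; the truncation $\phi$ is what avoids any integrability issue for $g(\gamma_t)-g(\gamma_0)$ and for the limit $L_0$.
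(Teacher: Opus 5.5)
Your proof is correct and follows essentially the same route as the paper. Both arguments first obtain one-sided approximate limits of $f\circ\gamma$ from one-dimensional BV theory, then use an $L^1(\ppi)$-averaged estimate based on Lemma~\ref{lem:dens_Lip} and bounded compressibility to identify these limits with $f(\gamma_0)$ and $f(\gamma_1)$. The differences are cosmetic: you use time reversal, truncate with $\min\{s,1\}$, apply Fatou's lemma instead of extracting an a.e.\ convergent subsequence, and rely on continuity of $g\circ\gamma$ rather than the bound $\Lip(f_\delta)\Lip(\ppi)\varepsilon$. The truncation is not actually needed, since $g\in\LIP_{bs}(\X)$ is already bounded.
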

\begin{proof}
Let \(\mathcal N\subseteq C([0,1];\X)\) be a \(\ppi\)-negligible Borel set such that \({\rm eV}(f\circ\gamma,(0,1))<\infty\) for every \(\gamma\in C([0,1];\X)\setminus\mathcal N\).
Fix any \(\gamma\in C([0,1];\X)\setminus\mathcal N\). Thanks to \cite[Theorem 3.28]{Amb:Fus:Pal:00}, there exist (unique) numbers \(c^\ell_\gamma,c^r_\gamma\in\R\) such that
\begin{equation}\label{eq:bdry_tp_1}
|f(\gamma_t)-c^\ell_\gamma|\leq{\rm eV}(f\circ\gamma,(0,t)),\quad|f(\gamma_t)-c^r_\gamma|\leq{\rm eV}(f\circ\gamma,[t,1))\quad\text{ for a.e.\ }t\in(0,1).
\end{equation}
In particular, we can estimate
\[
\varlimsup_{\varepsilon\searrow 0}\frac{1}{\varepsilon}\int_0^\varepsilon|f(\gamma_t)-c^\ell_\gamma|\,\d t\leq
\varlimsup_{\varepsilon\searrow 0}\frac{1}{\varepsilon}\int_0^\varepsilon{\rm eV}(f\circ\gamma,(0,t))\,\d t\leq
\lim_{\varepsilon\searrow 0}{\rm eV}(f\circ\gamma,(0,\varepsilon))=0
\]
and similarly \(\varlimsup_{\varepsilon\searrow 0}\frac{1}{\varepsilon}\int_{1-\varepsilon}^1|f(\gamma_t)-c^r_\gamma|\,\d t\leq\lim_{\varepsilon\searrow 0}{\rm eV}(f\circ\gamma,[1-\varepsilon,1))=0\).
All in all, we have
\begin{equation}\label{eq:bdry_tp_2}
\lim_{\varepsilon\searrow 0}\frac{1}{\varepsilon}\int_0^\varepsilon|f(\gamma_t)-c^\ell_\gamma|\,\d t=
\lim_{\varepsilon\searrow 0}\frac{1}{\varepsilon}\int_{1-\varepsilon}^1|f(\gamma_t)-c^r_\gamma|\,\d t=0\quad\text{ for all }\gamma\in C([0,1];\X)\setminus\mathcal N.
\end{equation}
Now, fix a sequence \((\varepsilon_n)_n\subseteq(0,1/2)\) such that \(\varepsilon_n\searrow 0\). Given \(\delta>0\), by Lemma \ref{lem:dens_Lip}
we can find a function  \(f_\delta\in\LIP_{bs}(\X)\cap\mathcal L^1(\X)\) such that \(\|f_\delta-f\|_{\mathcal L^1(\X)}\leq\delta\). Therefore, we can estimate
\[\begin{split}
&\frac{1}{\varepsilon_n}\int\!\!\!\int_0^{\varepsilon_n}|f(\gamma_t)-f(\gamma_0)|\,\d t\,\d\ppi(\gamma)\\
\leq\,&\frac{1}{\varepsilon_n}\int\!\!\!\int_0^{\varepsilon_n}|f-f_\delta|\circ{\sf e}_t\,\d t\,\d\ppi+
\frac{1}{\varepsilon_n}\int\!\!\!\int_0^{\varepsilon_n}|f_\delta(\gamma_t)-f_\delta(\gamma_0)|\,\d t\,\d\ppi(\gamma)+\int|f-f_\delta|\circ{\sf e}_0\,\d\ppi\\
\leq\,&2\,{\rm Comp}(\ppi)\int|f-f_\delta|\,\d\mm+\frac{\Lip(f_\delta)}{\varepsilon_n}\int\!\!\!\int_0^{\varepsilon_n}\sfd(\gamma_t,\gamma_0)\,\d t\,\d\ppi(\gamma)\\
\leq\,&2\,{\rm Comp}(\ppi)\delta+\Lip(f_\delta)\varepsilon_n\int\Lip(\gamma)\,\d\ppi(\gamma)\leq 2\,{\rm Comp}(\ppi)\delta+\Lip(f_\delta)\Lip(\ppi)\varepsilon_n.
\end{split}\]
Letting first \(n\to\infty\) and then \(\delta\searrow 0\), we obtain that
\(\lim_n\frac{1}{\varepsilon_n}\int\!\!\!\int_0^{\varepsilon_n}|f(\gamma_t)-f(\gamma_0)|\,\d t\,\d\ppi(\gamma)=0\).
Likewise, one can prove that \(\lim_n\frac{1}{\varepsilon_n}\int\!\!\!\int_{1-\varepsilon_n}^1|f(\gamma_t)-f(\gamma_1)|\,\d t\,\d\ppi(\gamma)=0.\)
Hence, we can find a subsequence \((n_k)_k\) and a \(\ppi\)-negligible Borel set \(\tilde{\mathcal N}\subseteq C([0,1];\X)\) containing \(\mathcal N\) such that
\begin{equation}\label{eq:bdry_tp_3}
\lim_{k\to\infty}\frac{1}{\varepsilon_{n_k}}\int_0^{\varepsilon_{n_k}}|f(\gamma_t)-f(\gamma_0)|\,\d t
=\lim_{k\to\infty}\frac{1}{\varepsilon_{n_k}}\int_{1-\varepsilon_{n_k}}^1|f(\gamma_t)-f(\gamma_1)|\,\d t=0
\end{equation}
for every \(\gamma\in C([0,1];\X)\setminus\tilde{\mathcal N}\). Combining \eqref{eq:bdry_tp_3} with \eqref{eq:bdry_tp_2}, we deduce that
\(c^\ell_\gamma=f(\gamma_0)\) and \(c^r_\gamma=f(\gamma_1)\) for every \(\gamma\in C([0,1];\X)\setminus\tilde{\mathcal N}\), along with
\[
\lim_{\varepsilon\searrow 0}\int_0^\varepsilon|f(\gamma_t)-f(\gamma_0)|\,\d t=
\lim_{\varepsilon\searrow 0}\int_{1-\varepsilon}^1|f(\gamma_t)-f(\gamma_1)|\,\d t=0\quad\text{ for all }\gamma\in C([0,1];\X)\setminus\tilde{\mathcal N}.
\]
This proves the first part of the statement. Finally, taking also \eqref{eq:bdry_tp_1} into account, we see that for any \(\gamma\in C([0,1];\X)\setminus\tilde{\mathcal N}\)
we can find \(t_0\in(0,1)\) such that
\[\begin{split}
|f(\gamma_1)-f(\gamma_0)|&=|c^r_\gamma-c^\ell_\gamma|\leq|c^r_\gamma-f(\gamma_{t_0})|+|f(\gamma_{t_0})-c^\ell_\gamma|\\
&\leq{\rm eV}(f\circ\gamma,[t_0,1))+{\rm eV}(f\circ\gamma,(0,t_0))={\rm eV}(f\circ\gamma,(0,1)),
\end{split}\]
which gives the last part of the statement.
\end{proof}

\begin{theorem}\label{thm:BV_along_curves}
Let \((\X,\sfd,\mm)\) be a metric measure space. Then it holds that
\[
BV(\X)\subseteq BV_{tp}(\X)\subseteq BV_{tp}^\star(\X).
\]
Moreover, \(|Df|_{tp}\leq|Df|\) for all \(f\in BV(\X)\), and
\(|Df|_{tp}^\star(\Omega)\leq|Df|_{tp}(\Omega)\) for all \(f\in BV_{tp}(\X)\) and any open set \(\Omega\subseteq\X\).
In particular, if \((\X,\sfd)\) is locally complete, then \(BV_{tp}(\X)=BV(\X)\) and
\[
|Df|_{tp}=|Df|\quad\text{ for every }f\in BV(\X).
\]
\end{theorem}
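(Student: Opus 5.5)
We prove the two inclusions separately and then close the chain using Theorem~\ref{thm:equiv_BV_loc-complete}.

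\emph{Step 1: $BV(\X)\subseteq BV_{tp}(\X)$ with $|Df|_{tp}\leq|Df|$.} Fix $f\in BV(\X)$ and $\ppi\in\Pi(\X)$ with $\Lip(\ppi)\neq0$. Since $|Df|_{tp}$ is the lattice supremum, it suffices to show, for every open $\Omega\subseteq\X$,
\[
\frac{1}{{\rm Comp}(\ppi)\Lip(\ppi)}\int{\rm eV}\bigl(f\circ\gamma,\gamma^{-1}(\Omega)\bigr)\,\d\ppi(\gamma)\leq|Df|(\Omega).
\]
Outer regularity of both measures then gives the inequality on Borel sets, and hence $|Df|_{tp}\leq|Df|$. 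By the locality in Remark~\ref{rmk:local_property_BV}, take $f_n\in\LIP_{b,loc}(\Omega)\cap\mathcal L^1(\Omega)$ with $f_n\to f$ in $L^1(\Omega)$ and $\int_\Omega\lip_a(f_n)\,\d\mm\to|Df|(\Omega)$. For each curve, $\gamma^{-1}(\Omega)$ is a countable union of open intervals. On each such interval $f_n\circ\gamma$ is locally Lipschitz, and $|(f_n\circ\gamma)'|\leq\lip_a(f_n)(\gamma_t)|\dot\gamma_t|$. Hence
\[
{\rm eV}\bigl(f_n\circ\gamma,\gamma^{-1}(\Omega)\bigr)\leq\int_{\gamma^{-1}(\Omega)}\lip_a(f_n)(\gamma_t)|\dot\gamma_t|\,\d t.
\]
Integrating in $\ppi$ and using bounded compression together with the Lipschitz bound on the curves gives
\[
\int{\rm eV}\bigl(f_n\circ\gamma,\gamma^{-1}(\Omega)\bigr)\,\d\ppi\leq{\rm Comp}(\ppi)\Lip(\ppi)\int_\Omega\lip_a(f_n)\,\d\mm.
\]
Next, by compression $\int\!\!\int_0^1|f_n-f|(\gamma_t)\,\d t\,\d\ppi\leq{\rm Comp}(\ppi)\|f_n-f\|_{L^1}\to0$. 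So along a subsequence $f_n\circ\gamma\to f\circ\gamma$ in $L^1(0,1)$ for $\ppi$-a.e.\ $\gamma$. Lower semicontinuity of essential variation on open sets of $(0,1)$ under $L^1_{loc}$ convergence, combined with Fatou's lemma, yields the displayed bound with $f$ in place of $f_n$. A small point to check is Borel measurability of $\gamma\mapsto{\rm eV}(f\circ\gamma,\gamma^{-1}(\Omega))$. I would handle it by writing eV as a supremum over countably many test functions $\phi\in C^1_c$ with rational data, paired with $f\circ\gamma$.

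\emph{Step 2: $BV_{tp}(\X)\subseteq BV_{tp}^\star(\X)$ with $|Df|_{tp}^\star(\Omega)\leq|Df|_{tp}(\Omega)$.} Fix $\ppi\in\Pi(\Omega)$, viewed as a test plan in $\Pi(\X)$ concentrated on curves in $\Omega$. Since $|Df|_{tp}(\X)<\infty$ and $\gamma_\#{\rm eV}$ is dominated after integration, we get ${\rm eV}(f\circ\gamma,(0,1))<\infty$ for $\ppi$-a.e.\ $\gamma$. Proposition~\ref{prop:bdry_reg_tp} then gives $f(\gamma_1)-f(\gamma_0)\leq{\rm eV}(f\circ\gamma,(0,1))=\gamma_\#{\rm eV}(f\circ\gamma,\cdot)(\Omega)$ for $\ppi$-a.e.\ $\gamma$. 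Integrating and using the definition of $|Df|_{tp}$ as a supremum gives
\[
\int f(\gamma_1)-f(\gamma_0)\,\d\ppi\leq{\rm Comp}(\ppi)\Lip(\ppi)\,|Df|_{tp}(\Omega),
\]
which is the claim.

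\emph{Step 3: the locally complete case.} Combine the chain with Theorem~\ref{thm:equiv_BV_loc-complete}. For $f\in BV_{tp}(\X)\subseteq BV_{tp}^\star(\X)=BV(\X)$ on every open $\Omega$ we have
\[
|Df|(\Omega)=|Df|_{tp}^\star(\Omega)\leq|Df|_{tp}(\Omega)\leq|Df|(\Omega).
\]
Both $|Df|$ and $|Df|_{tp}$ are Borel measures, and $|Df|$ is outer regular. Equality on open sets together with $|Df|_{tp}\leq|Df|$ forces $|Df|_{tp}(E)=|Df|(E)$ for every Borel $E$. To see this, apply the open-set equality to $\X$ and subtract: the inequality applied to $E$ and to $\X\setminus E$ forces equality on each, since the total masses agree and are finite.

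The main obstacle is Step 1: the lower semicontinuity argument has to be localized to $\gamma^{-1}(\Omega)$ and pushed through the $\ppi$-integral, and measurability of the integrand must be justified.
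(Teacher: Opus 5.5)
Your proposal is correct and follows the paper's proof. You use the same relaxation on $\Omega$ with compression, $L^1$ lower semicontinuity of ${\rm eV}$ and Fatou for the first inclusion, Proposition~\ref{prop:bdry_reg_tp} for the second, and Theorem~\ref{thm:equiv_BV_loc-complete} for the locally complete case. There your complement argument usefully spells out the passage from open sets to Borel sets that the paper leaves implicit. One cosmetic fix: since $f_n$ is only defined on $\Omega$, the $\ppi$-a.e.\ convergence $f_n\circ\gamma\to f\circ\gamma$ should be stated in $L^1(\gamma^{-1}(\Omega))$ rather than $L^1(0,1)$. Also, in Step~1 only the outer regularity of $|Df|$, not of both measures, is actually needed.
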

\begin{proof}
Let \(f\in BV(\X)\) and \(\ppi\in\Pi(\X)\) be given. Fix an open set \(\Omega\subseteq\X\). Take a sequence \((f_n)_n\subseteq\LIP_{b,loc}(\Omega)\cap\mathcal L^1(\Omega)\)
with \(f_n\to f_{|\Omega}\) in \(\mathcal L^1(\Omega)\) and \(\int_\Omega\lip_a(f_n)\,\d\mm\to|Df|(\Omega)\). Since
\[
\int\!\!\!\int_{\gamma^{-1}(\Omega)}|f_n(\gamma_t)-f(\gamma_t)|\,\d t\,\d\ppi(\gamma)\leq\int_0^1\!\!\!\int(\nchi_\Omega|f_n-f|)\circ{\sf e}_t\,\d\ppi\,\d t
\leq{\rm Comp}(\ppi)\int_\Omega|f_n-f|\,\d\mm,
\]
we see (up to passing to a non-relabelled subsequence in \(n\)) that \(f_n\circ\gamma\to f\circ\gamma\) in \(\mathcal L^1(\gamma^{-1}(\Omega))\) for
\(\ppi\)-a.e.\ \(\gamma\). Using the lower semicontinuity of \(\mathcal L^1(\gamma^{-1}(\Omega))\ni g\mapsto{\rm eV}(g,\gamma^{-1}(\Omega))\), the fact that
\({\rm eV}(f_n\circ\gamma,\gamma^{-1}(\Omega))\leq\int_{\gamma^{-1}(\Omega)}\lip_a(f_n)(\gamma_t)|\dot\gamma_t|\,\d t\) and Fatou's lemma, we deduce that
\[\begin{split}
\int\gamma_\#{\rm eV}(f\circ\gamma,\Omega)\,\d\ppi(\gamma)&=\int{\rm eV}(f\circ\gamma,\gamma^{-1}(\Omega))\,\d\ppi(\gamma)
\leq\int\varliminf_{n\to\infty}{\rm eV}(f_n\circ\gamma,\gamma^{-1}(\Omega))\,\d\ppi(\gamma)\\
&\leq\varliminf_{n\to\infty}\int{\rm eV}(f_n\circ\gamma,\gamma^{-1}(\Omega))\,\d\ppi(\gamma)\\
&\leq\varliminf_{n\to\infty}\int\!\!\!\int_{\gamma^{-1}(\Omega)}\lip_a(f_n)(\gamma_t)|\dot\gamma_t|\,\d t\,\d\ppi(\gamma)\\
&\leq\varliminf_{n\to\infty}\int\!\!\!\int_0^1(\nchi_\Omega\lip_a(f_n))(\gamma_t)|\dot\gamma_t|\,\d t\,\d\ppi(\gamma)\\
&\leq{\rm Comp}(\ppi)\Lip(\ppi)\lim_{n\to\infty}\int_\Omega\lip_a(f_n)\,\d\mm={\rm Comp}(\ppi)\Lip(\ppi)|Df|(\Omega).
\end{split}\]
Due to the arbitrariness of \(\Omega\), it follows that
\[
\frac{1}{{\rm Comp}(\ppi)\Lip(\ppi)}\int\gamma_\#{\rm eV}(f\circ\gamma,\cdot)\,\d\ppi(\gamma)\leq|Df|\quad\text{ for every }\ppi\in\Pi(\X)\text{ with }\Lip(\ppi)>0,
\]
so that accordingly \(|Df|_{tp}\leq|Df|\) and thus in particular \(f\in BV_{tp}(\X)\).

Conversely, assume \(f\in BV_{tp}(\X)\). For any \(\Omega\subseteq\X\) open and \(\ppi\in\Pi(\X)\), we have
\begin{equation}\label{eq:alt_BV_1}
\int\gamma_\#{\rm eV}(f\circ\gamma,\Omega)\,\d\ppi(\gamma)\leq{\rm Comp}(\ppi)\Lip(\ppi)|Df|_{tp}(\Omega).
\end{equation}
Choosing \(\Omega=\X\), we get that \(\int{\rm eV}(f\circ\gamma,(0,1))\,\d\ppi(\gamma)=\int\gamma_\#{\rm eV}(f\circ\gamma,\X)\,\d\ppi(\gamma)<\infty\),
thus in particular \({\rm eV}(f\circ\gamma,(0,1))<\infty\) for \(\ppi\)-a.e.\ \(\gamma\in C([0,1];\X)\). Therefore, if we assume in addition that \(\ppi\in\Pi(\Omega)\),
then Proposition \ref{prop:bdry_reg_tp} and \eqref{eq:alt_BV_1} ensure that
\[\begin{split}
\int|f(\gamma_1)-f(\gamma_0)|\,\d\ppi(\gamma)&\leq\int{\rm eV}(f\circ\gamma,(0,1))\,\d\ppi(\gamma)=\int\gamma_\#{\rm eV}(f\circ\gamma,\Omega)\,\d\ppi(\gamma)\\
&\leq{\rm Comp}(\ppi)\Lip(\ppi)|Df|_{tp}(\Omega),
\end{split}\]
whence it follows that \(f\in\mathcal{BV}_{tp}^\star(\X)\) and \(|Df|_{tp}^\star(\Omega)\leq|Df|_{tp}(\Omega)\) for every open set \(\Omega\subseteq\X\).

The last part of the statement is a direct consequence of Theorem~\ref{thm:equiv_BV_loc-complete}.
\end{proof}
 Using the previous theorem together with Proposition~\ref{prop:bdry_reg_tp} we can obtain a characterization of the
measure $|Df|$ based on test plans, valid in any locally complete metric measure space.
\begin{theorem}
Let \((\X,\sfd,\mm)\) be a metric measure space such that \((\X,\sfd)\) is locally complete. Then \(f\in L^1(\X)\) belongs to \(BV(\X)\) if and only if
\[
\sup\bigg\{\frac{1}{{\rm Comp}(\ppi)\Lip(\ppi)}\int{\rm eV}(f\circ\gamma,(0,1))\,\d\ppi(\gamma)\;\bigg|\;\ppi\in\Pi(\X),\,\Lip(\ppi)>0\bigg\}<\infty.
\]
Moreover, if \(f\in BV(\X)\), then for any open set \(\Omega\subseteq\X\) we have that
\begin{equation}\label{eq:equiv_BV_tp_bis}
|Df|(\Omega)=\sup\bigg\{\frac{1}{{\rm Comp}(\ppi)\Lip(\ppi)}\int{\rm eV}(f\circ\gamma,(0,1))\,\d\ppi(\gamma)\;\bigg|\;\ppi\in\Pi(\Omega),\,\Lip(\ppi)>0\bigg\}.
\end{equation}
\end{theorem}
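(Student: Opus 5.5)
Call $S(\Omega)$ the supremum on the right-hand side of \eqref{eq:equiv_BV_tp_bis}, and $S(\X)$ the one in the first claim. The plan is to sandwich $S(\Omega)$ between $|Df|_{tp}^\star(\Omega)$ and $|Df|_{tp}(\Omega)$. Both already equal $|Df|(\Omega)$ when $(\X,\sfd)$ is locally complete, by Theorem~\ref{thm:equiv_BV_loc-complete} and Theorem~\ref{thm:BV_along_curves}. I will prove the second claim first and then obtain the characterization of $BV(\X)$ from the same two inequalities with $\Omega=\X$.

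\emph{Upper bound.} Let $f\in BV(\X)$, let $\Omega$ be open, and take $\ppi\in\Pi(\Omega)$ with $\Lip(\ppi)>0$, regarded as a test plan on $\X$. Since $\ppi$-a.e.\ $\gamma$ takes values in $\Omega$, we have $\gamma^{-1}(\Omega)=[0,1]$. The Borel measure ${\rm eV}(f\circ\gamma,\cdot)$ lives on $(0,1)$, so
\[
{\rm eV}(f\circ\gamma,(0,1))=\gamma_\#{\rm eV}(f\circ\gamma,\cdot)(\Omega).
\]
Integrating in $\ppi$ and using the definition \eqref{eq:def_|Df|_tp} of $|Df|_{tp}$ as a lattice supremum gives
\[
\frac{1}{{\rm Comp}(\ppi)\Lip(\ppi)}\int{\rm eV}(f\circ\gamma,(0,1))\,\d\ppi(\gamma)\leq|Df|_{tp}(\Omega).
\]
By Theorem~\ref{thm:BV_along_curves}, $|Df|_{tp}(\Omega)\le|Df|(\Omega)$, and this holds without any completeness assumption. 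Hence $S(\Omega)\le|Df|(\Omega)$.

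\emph{Lower bound.} For $\ppi\in\Pi(\Omega)$ with $\Lip(\ppi)>0$, either $\int{\rm eV}(f\circ\gamma,(0,1))\,\d\ppi=\infty$, and then there is nothing to prove, or ${\rm eV}(f\circ\gamma,(0,1))<\infty$ for $\ppi$-a.e.\ $\gamma$. In the second case Proposition~\ref{prop:bdry_reg_tp} gives $|f(\gamma_1)-f(\gamma_0)|\le{\rm eV}(f\circ\gamma,(0,1))$ for $\ppi$-a.e.\ $\gamma$. Therefore
\[
\int f(\gamma_1)-f(\gamma_0)\,\d\ppi\le\int{\rm eV}(f\circ\gamma,(0,1))\,\d\ppi.
\]
Dividing by ${\rm Comp}(\ppi)\Lip(\ppi)$ and taking the supremum over $\ppi$ yields $|Df|_{tp}^\star(\Omega)\le S(\Omega)$. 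Since $\Omega$ is open in a locally complete space, Theorem~\ref{thm:equiv_BV_loc-complete} gives $|Df|_{tp}^\star(\Omega)=|Df|(\Omega)$, which closes the sandwich and proves \eqref{eq:equiv_BV_tp_bis}.

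\emph{Characterization of $BV(\X)$.} If $f\in BV(\X)$, the upper bound with $\Omega=\X$ gives $S(\X)\le|Df|(\X)<\infty$. Conversely, assume only $f\in L^1(\X)$ with $S(\X)<\infty$. The lower-bound argument uses nothing beyond finiteness of ${\rm eV}$ along $\ppi$-a.e.\ curve, which holds here because $\int{\rm eV}(f\circ\gamma,(0,1))\,\d\ppi<\infty$. It therefore shows that \eqref{eq:def_BV_tp} holds with $C=S(\X)$, so $f\in BV_{tp}^\star(\X)$. Theorem~\ref{thm:equiv_BV_loc-complete} then gives $f\in BV(\X)$.

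\emph{Main obstacle.} The delicate step is justifying the use of Proposition~\ref{prop:bdry_reg_tp} and the measurability of the maps involved for an arbitrary representative of $f$. I would first fix a Borel representative, which the paper allows via Lemma~\ref{lem:BVAM-measurable} and the discussion after Definition~\ref{def:BV_am}. The quantity $\gamma\mapsto{\rm eV}(f\circ\gamma,(0,1))$ is representative-independent $\ppi$-a.e., as noted after \eqref{eq:def_|Df|_tp}, and it is $\ppi$-measurable by the same lower-semicontinuity argument used in Theorem~\ref{thm:BV_along_curves}. Once these points are checked, the remaining steps are routine.
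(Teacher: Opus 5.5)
Your proposal is correct and follows the paper's proof essentially step by step. You get the upper bound $S(\Omega)\le|Df|_{tp}(\Omega)\le|Df|(\Omega)$ from Theorem~\ref{thm:BV_along_curves}, and the lower bound $|Df|_{tp}^\star(\Omega)\le S(\Omega)$ from Proposition~\ref{prop:bdry_reg_tp}, closing the argument with Theorem~\ref{thm:equiv_BV_loc-complete}. The differences are cosmetic: you note that only the completeness-free inequality $|Df|_{tp}\le|Df|$ is needed, and you handle each plan separately instead of first assuming the supremum over $\Pi(\X)$ is finite.
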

\begin{proof}
For brevity, let us denote by \(V(\Omega)\in[0,\infty]\) the right-hand side of \eqref{eq:equiv_BV_tp_bis}.
On the one hand, assuming that \(f\in BV(\X)\), we have \(f\in {BV}_{tp}(\X)\) and
\(|Df|_{tp}\leq|Df|\) by Theorem \ref{thm:BV_along_curves}, thus for any open set \(\Omega\subseteq\X\)
and any test plan \(\ppi\in\Pi(\Omega)\) with \(\Lip(\ppi)>0\) we can estimate
\[\begin{split}
\frac{1}{{\rm Comp}(\ppi)\Lip(\ppi)}\int{\rm eV}(f\circ\gamma,(0,1))\,\d\ppi(\gamma)
&=\frac{1}{{\rm Comp}(\ppi)\Lip(\ppi)}\int\gamma_\#{\rm eV}(f\circ\gamma,\Omega)\,\d\ppi(\gamma)\\
&\leq|Df|_{tp}(\Omega)=|Df|(\Omega).
\end{split}\]
Taking the supremum over all \(\ppi\in\Pi(\Omega)\) with \(\Lip(\ppi)>0\), we obtain that \(V(\Omega)\leq|Df|(\Omega)\).

On the other hand, assume that \(V(\X)<\infty\). In particular, for any \(\ppi\in\Pi(\X)\) it holds that
\(\int{\rm eV}(f\circ\gamma,(0,1))\,\d\ppi(\gamma)\leq{\rm Comp}(\ppi)\Lip(\ppi)V(\X)<\infty\) and thus
\({\rm eV}(f\circ\gamma,(0,1))<\infty\) for \(\ppi\)-a.e.\ \(\gamma\). Hence, Proposition~\ref{prop:bdry_reg_tp}
ensures that for any \(\Omega\subseteq\X\) open and \(\ppi\in\Pi(\Omega)\) we have
\[
\int|f(\gamma_1)-f(\gamma_0)|\,\d\ppi(\gamma)\leq\int{\rm eV}(f\circ\gamma,(0,1))\,\d\ppi(\gamma)
\leq{\rm Comp}(\ppi)\Lip(\ppi)V(\Omega).
\]
This shows that \(f\in {BV}_{tp}^\star(\X)\) and \(|Df|_{tp}^\star(\Omega)\leq V(\Omega)\) for
every \(\Omega\in\tau(\X)\). Taking Theorem \ref{thm:equiv_BV_loc-complete} into account, we conclude
that \(f\in {BV}(\X)\) and \(|Df|(\Omega)\leq V(\Omega)\) for every \(\Omega\in\tau(\X)\).
\end{proof}
\section{Relaxation of integral functionals depending on 
\texorpdfstring{\(\lip\)}{lip} and \texorpdfstring{\(\lip_a\)}{lip-a}}\label{sec:genrel}

The focus of this appendix is the comparison of the two notions of slopes of a Lipschitz function $f$, namely
$\lip_a(f)$ and $\lip(f)$, as given in~\eqref{eq:def_slope} and~\eqref{asylip}.
From Theorem~\ref{thm:DiMa_Gig_Pra} we know the value of considering $\lip_a(f)$, but $\lip(f)\le \lip_a(f)$ everywhere with
$\lip(f)$ an upper gradient of the Lipschitz function $f$. Indeed, when $(\X,\sfd,\mm)$ supports a doubling measure $\mm$ and
a $1$-Poincar\'e inequality, it follows from the work of Cheeger~\cite{Cheeger} that for Lipschitz functions $f$ the function
$\lip(f)$ is the minimal $1$-weak upper gradient of $f$. Therefore it is useful to consider how the two quantities $\lip(f)$ and
$\lip_a(f)$ are related. Let \((\X,\sfd)\) be a metric space. 
\begin{remark}{\rm
The asymptotic Lipschitz constant of \(f\in\LIP_{loc}(\X)\) can also be written as
\[
\lip_a(f)(x)=\lims_{y,z\to x}\frac{|f(y)-f(z)|}{\sfd(y,z)}\quad\text{ for every accumulation point }x\in\X
\]
and \(\lip_a(f)(x)=0\) for every isolated point \(x\in\X\). Consequently, we have that
\[
\lip(f)(x)\leq\lip_a(f)(x)\quad\text{ for every }x\in\X.
\]
On the other hand, since \(\lip_a(f)\) is always upper semicontinuous, while \(\lip(f)\) is not, it is
easy to construct examples where \(\lip(f)\neq\lip_a(f)\). The example below shows that \(\lip(f)=\lip_a(f)\)
can fail even in the a.e.\ sense.
\fr}\end{remark}
\begin{example}{\rm
Equip \(\R\) with the Euclidean distance \(\sfd\) and the Lebesgue measure \(\mathscr L^1\). Fix a Cantor
set \(C\subseteq[0,1]\) with \(\mathscr L^1(C)>0\) and define \(g\coloneqq\nchi_{\R\setminus C}\in\mathcal L^\infty(\R)\).
Defining \(f\in\LIP(\R)\) as \(f(t)\coloneqq\int_0^t g(s)\,\d s\) for every \(t\in\R\), we have that
\(1\geq\lip(f)(t)=|f'(t)|=g(t)\) for a.e.\ \(t\in\R\). It is then easy to check that the upper semicontinuous
envelope of \(\lip(f)\) is the constant function equal to \(1\), thus in particular
\(\lip_a(f)(t)\geq 1>0=\lip(f)(t)\) for a.e.\ \(t\in C\).
\fr}\end{example}
In the following lemma, we consider a Borel measure $\mu$ on the metric space $(\X, \sfd)$, without requiring
it to be a Radon measure; for this reason we use the notation $\mu$ as opposed to $\mm$, which is reserved for Radon measures.

\begin{lemma}\label{lem:lip_a_vs_lip}
Let \((\X,\sfd)\) be a metric space and \(\mu\) a finite Borel measure on \(\X\). Fix any \(f\in\LIP_{loc}(\X)\)
and \(\varepsilon>0\). Then there exists a Borel set \(E_\varepsilon\subseteq\X\) such that
\(\mu(\X\setminus E_\varepsilon)\leq\varepsilon\) and
\[
\lip_a(f_{|E_\varepsilon})(x)\leq\lip(f)(x)\quad\text{ for every }x\in E_\varepsilon.
\]
\end{lemma}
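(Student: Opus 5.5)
We plan an Egorov-type argument on the difference quotients defining \(\lip(f)\). For \(x\in\X\) and \(r>0\) set
\[
\phi_r(x)\coloneqq\sup_{y\in B_r(x)\setminus\{x\}}\frac{|f(y)-f(x)|}{\sfd(y,x)}
\]
(with \(\phi_r(x)\coloneqq0\) if \(x\) is isolated and \(B_r(x)=\{x\}\)). Then \(\phi_r(x)\searrow\lip(f)(x)\) as \(r\searrow0\) at every point. We first reduce to the case where \(\lip(f)\) is finite and \(f\) is Lipschitz on the relevant balls; this holds because \(f\) is locally Lipschitz, so \(\phi_r(x)<\infty\) for small \(r\). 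The first technical point is measurability. Each \(\phi_r\) with \(r\) rational is Borel: on the separable support of \(\mu\) we may take the supremum over a countable dense set of \(y\), using continuity of \(f\) and of \(\sfd\). We can also replace \(\mu\) by its restriction to a \(\sigma\)-compact set of almost full measure if inner regularity is needed, but for a finite Borel measure on a metric space closed sets suffice, since such measures are closed-regular.

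By Egorov's theorem, applied to the decreasing sequence \(\phi_{1/n}\to\lip(f)\) of Borel functions on a set of finite measure, we find a Borel set \(E'\) with \(\mu(\X\setminus E')\le\varepsilon/2\) on which the convergence is uniform. We would also like \(\lip(f)\) restricted to \(E'\) to be continuous. By Lusin's theorem for finite Borel measures on metric spaces, we can shrink to a closed set \(E_\varepsilon\subseteq E'\) with \(\mu(\X\setminus E_\varepsilon)\le\varepsilon\) on which \(\lip(f)\) is continuous, and intersecting keeps uniform convergence. Thus for every \(\eta>0\) there is \(\delta>0\) such that
\[
|f(y)-f(x)|\le(\lip(f)(x)+\eta)\,\sfd(x,y)\qquad\text{for all }x\in E_\varepsilon,\ y\in\X,\ \sfd(x,y)<\delta.
\]

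Now fix \(x\in E_\varepsilon\) and \(y,z\in E_\varepsilon\cap B_r(x)\) with \(y\neq z\). Applying the displayed bound with base point \(y\in E_\varepsilon\), which is legitimate because \(\sfd(y,z)<2r<\delta\), gives \(|f(z)-f(y)|\le(\lip(f)(y)+\eta)\sfd(y,z)\). Hence
\[
\sup_{\substack{y,z\in E_\varepsilon\cap B_r(x)\\ y\neq z}}\frac{|f(y)-f(z)|}{\sfd(y,z)}\le\sup_{y\in E_\varepsilon\cap B_r(x)}\lip(f)(y)+\eta.
\]
Letting \(r\to0\) and using continuity of \(\lip(f)_{|E_\varepsilon}\), the right-hand side tends to \(\lip(f)(x)+\eta\). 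Since \(\eta\) is arbitrary, this yields \(\lip_a(f_{|E_\varepsilon})(x)\le\lip(f)(x)\). Note that \(\lip_a\) of the restriction is computed with balls of \(E_\varepsilon\), as above.

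The main obstacle we expect is the measurability and regularity step: showing that \(\phi_r\) is Borel (or replacing it with a Borel majorant that still decreases to \(\lip(f)\)), and justifying Egorov and Lusin for a merely finite Borel, non-Radon \(\mu\) on a possibly nonseparable space. For Lusin, we would use that every finite Borel measure on a metric space is closed-regular, which is enough since we only need a closed set. For Egorov, we only need measurability of the \(\phi_r\). If countable suprema are problematic off a separable set, we would first restrict to a separable closed set carrying almost all of \(\mu\), which exists by the argument of Proposition~\ref{prop:main_Radon}\ref{it:spt_separable} together with closed regularity. A secondary detail is that uniformity must hold with base points only in \(E_\varepsilon\) but comparison points \(y\in\X\), and this is exactly what the supremum defining \(\phi_r\) provides.
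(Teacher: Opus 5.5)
Your proposal is correct and is essentially the paper's proof: the paper applies Egorov to the quotients $\Phi_n(x)=\sup\{|f(x)-f(y)|/\sfd(x,y): 0<\sfd(x,y)<2/n\}$ and Lusin to make $\lip(f)_{|E_\varepsilon}$ continuous, then bounds the quotient for $y,z\in E_\varepsilon\cap B_{1/n}(x)$ by $\Phi_n(y)$, which is exactly your choice of $y\in E_\varepsilon$ as base point with $\sfd(y,z)<2r<\delta$. The one step to repair is your measurability justification. The supremum defining $\phi_r$ runs over all $y\in\X$, so a countable dense subset of ${\rm spt}(\mu)$ does not suffice, and closed regularity alone does not give a separable set of almost full measure (compare the use of the Radon property in Proposition~\ref{prop:main_Radon}). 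No separability is needed, however: $\phi_r$ is lower semicontinuous, hence Borel, since it is the supremum over $y\in\X$ of the lower semicontinuous functions $x\mapsto\nchi_{\{0<\sfd(\cdot,y)<r\}}(x)\,|f(y)-f(x)|/\sfd(x,y)$.
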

\begin{proof}
Note that \(\lip(f)(x)=\lim_{n\to\infty}\Phi_n(x)\) for every \(x\in\X\), where we set
\[
\Phi_n(x)\coloneqq\sup\bigg\{\frac{|f(x)-f(y)|}{\sfd(x,y)}\;\bigg|\;y\in\X\setminus\{x\},\,\sfd(x,y)<\frac{2}{n}\bigg\}
\quad\text{ for every }n\in\N\text{ and }x\in\X.
\]
Applying Lusin's theorem and Egorov's theorem, we find a Borel set \(E_\varepsilon\subseteq\X\) with \(\mu(\X\setminus E_\varepsilon)\leq\varepsilon\)
such that \(\lip(f)_{|E_\varepsilon}\) is continuous and
\[
\delta_n\coloneqq\sup_{x\in E_\varepsilon}|\Phi_n(x)-\lip(f)(x)|\to 0\quad\text{ as }n\to\infty.
\]
Therefore, for any \(x\in E_\varepsilon\) we have that
\begin{align*}
\lip_a(f_{|E_\varepsilon})(x)&=\lim_{n\to\infty}\Lip(f_{|E_\varepsilon\cap B_{1/n}(x)})\\
&=\lim_{n\to\infty}\sup\bigg\{\frac{|f(y)-f(z)|}{\sfd(y,z)}\;\bigg|\;y,z\in E_\varepsilon,\,y\neq z,\,\sfd(x,y),\sfd(x,z)<\frac{1}{n}\bigg\}\\
&\leq\varliminf_{n\to\infty}\sup_{E_\varepsilon\cap B_{1/n}(x)}\Phi_n
\leq\lim_{n\to\infty}\bigg(\delta_n+\sup_{E_\varepsilon\cap B_{1/n}(x)}\lip(f)\bigg)=\lip(f)(x),
\end{align*}
proving the statement.
\end{proof}

Given a metric measure space \((\X,\sfd,\mm)\) and \(p\in[1,\infty)\), we denote by
\({\rm sc}^-\Phi\colon L^p(\X)\to[0,\infty]\) the lower semicontinuous envelope of a
given functional \(\Phi\colon L^p(\X)\to[0,\infty]\).
\begin{theorem}[Relaxation of functionals depending on \(\lip\) and \(\lip_a\)]\label{thm:relax_funct_lip}
Let \((\X,\sfd,\mm)\) be a metric measure space and \(p\in[1,\infty)\). Assume \(\mm\) is boundedly finite.
Let \(\phi\colon[0,+\infty)\to[0,+\infty)\) be a non-decreasing continuous function with \(\phi(0)=0\).
Given any map \(\varrho\colon\LIP_{bs}(\X)\to\mathcal L^\infty(\mm)^+\), we define the functional
\(\Phi_\varrho\colon L^p(\X)\to[0,\infty]\) as
\[
\Phi_\varrho(f)\coloneqq\inf\bigg\{\int_{\X}\phi(\varrho(\bar f)(x))\,\d\mm(x)\;\bigg|\;\bar f\in\LIP_{bs}(\X),\,[\bar f]_\mm=f\bigg\}
\quad\text{ for every }f\in L^p(\X).
\]
Then it holds that \({\rm sc}^-\Phi_\lip={\rm sc}^-\Phi_{\lip_a}\).
\end{theorem}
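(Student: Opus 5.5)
Since \(\lip(f)\leq\lip_a(f)\) pointwise and \(\phi\) is non-decreasing, we have \(\Phi_\lip\leq\Phi_{\lip_a}\), hence \({\rm sc}^-\Phi_\lip\leq{\rm sc}^-\Phi_{\lip_a}\). For the converse it is enough to prove \({\rm sc}^-\Phi_{\lip_a}(f)\leq\Phi_\lip(f)\) for every \(f\in L^p(\X)\). Indeed, \({\rm sc}^-\Phi_{\lip_a}\) is lower semicontinuous and lies below \(\Phi_\lip\), so it lies below the largest lower semicontinuous minorant \({\rm sc}^-\Phi_\lip\). So we fix \(\bar f\in\LIP_{bs}(\X)\) and construct \(g_k\in\LIP_{bs}(\X)\) with \(g_k\to\bar f\) in \(L^p\) and
\[
\limsup_k\int\phi(\lip_a(g_k))\,\d\mm\leq\int\phi(\lip(\bar f))\,\d\mm .
\]

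\emph{Construction.} Let \(L=\Lip(\bar f)\) and let \(B\) be a bounded open set containing a neighbourhood of \({\rm spt}(\bar f)\). Since \(\mm\) is boundedly finite, \(\mu\coloneqq\mm\llcorner B\) is a finite Borel measure. For \(\varepsilon>0\), apply Lemma~\ref{lem:lip_a_vs_lip} to \(\mu\) and \(\bar f\) to obtain a Borel set \(E_\varepsilon\) with \(\mu(\X\setminus E_\varepsilon)\leq\varepsilon\) and \(\lip_a(\bar f_{|E_\varepsilon})\leq\lip(\bar f)\) on \(E_\varepsilon\). Replacing \(E_\varepsilon\) by \(E_\varepsilon\cup(\X\setminus\bar B')\) for a slightly smaller bounded open \(B'\supseteq{\rm spt}(\bar f)\) with positive distance from \(\X\setminus B\) does not increase the bound. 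The reason is that near points outside \(\bar B'\) the function \(\bar f\) vanishes on a neighbourhood, and the two pieces are at positive distance. Now apply Theorem~\ref{thm:DiMa_Gig_Pra} to \(\bar f_{|E_\varepsilon}\), which has bounded support. This gives \(g_\varepsilon\in\LIP_{bs}(\X)\) extending it, with \(\Lip(g_\varepsilon)\leq L+\varepsilon\), \(|g_\varepsilon|\leq\sup|\bar f|\), and \(\lip_a(g_\varepsilon)=\lip_a(\bar f_{|E_\varepsilon})\leq\lip(\bar f)\) on \(E_\varepsilon\). Boundedness of the support should be uniform in \(\varepsilon\). I expect that a cut-off argument, or the construction in that theorem, keeps it inside \(B\).

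\emph{Estimates.} Since \(g_\varepsilon=\bar f\) on \(E_\varepsilon\), both functions are bounded by \(\sup|\bar f|\), and \(\mm(B\setminus E_\varepsilon)\leq\varepsilon\), we get
\[
\|g_\varepsilon-\bar f\|_{L^p}^p\leq(2\sup|\bar f|)^p\varepsilon\to0 .
\]
For the energy, on \(E_\varepsilon\) we have \(\phi(\lip_a(g_\varepsilon))\leq\phi(\lip(\bar f))\), because \(\phi\) is non-decreasing. On the set \(B\setminus E_\varepsilon\), of measure at most \(\varepsilon\), we use \(\lip_a(g_\varepsilon)\leq L+1\), which bounds the contribution by \(\phi(L+1)\varepsilon\). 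Outside \(B\), \(g_\varepsilon\) vanishes near those points, so \(\lip_a(g_\varepsilon)=0\) there and \(\phi(0)=0\). Letting \(\varepsilon\to0\) gives the desired inequality, and hence \({\rm sc}^-\Phi_{\lip_a}(f)\leq\Phi_\lip(f)\).

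\emph{Main obstacle.} The delicate point is controlling the extension outside \(B\). The extension must have bounded support and must contribute nothing there. I would handle this by putting the region \(\X\setminus B'\), where \(\bar f=0\), into the set being extended from, as described above. Then \(\bar f_{|E_\varepsilon}\) vanishes there, the extension agrees with \(0\) on \(\X\setminus B'\), and \(\lip_a=0\) on the interior of that region. Points on \(\partial B'\) carry no measure issue, since \(\lip_a\) is bounded there, and the integrals over them are controlled by the term \(\phi(L+1)\,\mm(B\setminus E_\varepsilon)\).
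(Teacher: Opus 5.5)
Your argument is correct and follows essentially the same route as the paper. You apply Lemma~\ref{lem:lip_a_vs_lip} to a finite restriction of \(\mm\), then use the \(\lip_a\)-preserving extension of Theorem~\ref{thm:DiMa_Gig_Pra}, and both the \(L^p\) error and the energy error live on a set of small measure where \(|g_\varepsilon|\) and \(\lip_a(g_\varepsilon)\) are uniformly bounded.

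The differences are only organisational:
\begin{enumerate}
\item You prove \({\rm sc}^-\Phi_{\lip_a}\leq\Phi_\lip\) for one fixed representative and invoke the maximality of the lower semicontinuous envelope. The paper instead diagonalises along a recovery sequence for \({\rm sc}^-\Phi_\lip(f)\).
\item You get bounded support by adding \(\X\setminus\bar B'\) to the extension domain. The paper instead multiplies by the cut-off \(\eta_{k(n)}\) and uses \eqref{eq:Leibniz_lip_a}.
\end{enumerate}

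One correction to your justification of the enlargement step. It works for a different reason than the ``positive distance between the two pieces'' you cite. Near any point of \(B'\) the enlarged set coincides with \(E_\varepsilon\). Every point of \(\X\setminus B'\) lies outside \({\rm spt}(\bar f)\), so \(\bar f\equiv 0\) near it.
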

\begin{proof}
Since \(\lip\leq\lip_a\), we have that \(\Phi_\lip\leq\Phi_{\lip_a}\) and thus \({\rm sc}^-\Phi_\lip\leq{\rm sc}^-\Phi_{\lip_a}\).
To prove the converse inequality, fix \(f\in L^p(\X)\). Recall that we assume $\mm$ to be a Radon measure. Therefore, we can pick
\((\bar f_n)_n\subseteq\LIP_{bs}(\X)\) such that
\(f_n\coloneqq[\bar f_n]_\mm\to f\) in \(L^p(\X)\) and \(\int\phi\circ\lip(\bar f_n)\,\d\mm\to{\rm sc}^-\Phi_\lip(f)\).
Fix \(\bar x\in\X\) and define \(\eta_k\coloneqq(1-\sfd(\cdot,B_k(\bar x)))\vee 0\in\LIP_{bs}(\X)\) for every \(k\in\N\).
Now, fix any \(n\in\N\). Since \(\phi\) is non-decreasing, we can estimate
\[
\phi\circ(\eta_k\lip(\bar f_n)+|\bar f_n|\lip_a(\eta_k))\leq\phi\circ\big((\Lip(\bar f_n)+\|\bar f_n\|_{C_b(\X)})\nchi_{{\rm spt}(\bar f_n)}\big).
\]
Since \(\mm\) is boundedly finite and \(\phi(0)=0\), we have that
\[
\int_{\X}\phi\circ\big((\Lip(\bar f_n)+\|\bar f_n\|_{C_b(\X)})\nchi_{{\rm spt}(\bar f_n)}\big)\,\d\mm
=\phi\bigg(\Lip(\bar f_n)+\|\bar f_n\|_{C_b(\X)}\bigg)\mm({\rm spt}(\bar f_n))<\infty.
\]
Moreover, we have that \(\lim_k(\eta_k\lip(\bar f_n)+|\bar f_n|\lip_a(\eta_k))(x)=\lip(\bar f_n)(x)\) for every \(x\in\X\). Using the continuity of \(\phi\)
and the dominated convergence theorem, we deduce that 
\[
Q_n^k\coloneqq\int\phi\circ(\eta_k\lip(\bar f_n)+|\bar f_n|\lip_a(\eta_k))\,\d\mm=\int\phi\circ\lip(\bar f_n)\,\d\mm\quad\text{ as }n\to\infty.
\]
Hence, we can find \(k(n)\in\N\) such that \({\rm spt}(\bar f_n)\subseteq B_{k(n)}(\bar x)\) and \(\big|Q_n-\int\phi\circ\lip(\bar f_n)\,\d\mm\big|\leq 1/n\),
where we set \(Q_n\coloneqq Q_n^{k(n)}\). In particular, we have \(Q_n\to{\rm sc}^-\Phi_\lip(f)\). By virtue of Theorem \ref{thm:DiMa_Gig_Pra} and Lemma
\ref{lem:lip_a_vs_lip}, for any \(n\in\N\) we can find a Borel set \(E_n\subseteq B_n\coloneqq\bar B_{k(n)+1}(\bar x)\) such that
\[
\mm(B_n\setminus E_n)\leq\frac{1}{n}\min\bigg\{\frac{1}{2^p\|\bar f_n\|_{C_b(\X)}^p+1},\frac{1}{\phi(\Lip(\bar f_n)+\|\bar f_n\|_{C_b(\X)}+1)+1}\bigg\}
\]
along with a function \(h_n\in\LIP_b(\X)\) such that
\[
h_n|_{E_n}=\bar f_n|_{E_n},\quad\|h_n\|_{C_b(\X)}\leq\|\bar f_n\|_{C_b(\X)},\quad\Lip(h_n)\leq\Lip(\bar f_n)+1,\quad\lip_a(h_n)\leq\lip(\bar f_n)\text{ on }E_n.
\]
Define \(\bar g_n\coloneqq\eta_{k(n)} h_n\in\LIP_{bs}(\X)\). Observe that \(\bar g_n|_{E_n}=\bar f_n|_{E_n}\), \(\|\bar g_n\|_{C_b(\X)}\leq\|\bar f_n\|_{C_b(\X)}\) and
\[
\Lip(\bar g_n)\leq\Lip(h_n)\|\eta_{k(n)}\|_{C_b(\X)}+\Lip(\eta_{k(n)})\|h_n\|_{C_b(\X)}\leq\Lip(\bar f_n)+\|\bar f_n\|_{C_b(\X)}+1.
\]
Moreover, we have that \(\int_{\X\setminus B_n}\phi\circ\lip_a(\bar g_n)\,\d\mm=0\) and \(\int_{E_n}\phi\circ\lip_a(\bar g_n)\,\d\mm\leq Q_n\), since
\[
\phi\circ\lip_a(\bar g_n)\leq\phi\circ\big(\eta_{k(n)}\lip_a(h_n)+|h_n|\lip_a(\eta_{k(n)})\big)\leq\phi\circ\big(\eta_{k(n)}\lip(\bar f_n)+|\bar f_n|\lip_a(\eta_{k(n)})\big)\text{ on }E_n.
\]
Therefore, we have that \(g_n\coloneqq[\bar g_n]_\mm\in L^p(\X)\) satisfies
\begin{align*}
\|g_n-f_n\|_{L^p(\X)}^p&=\int_{E_n}(1-\eta_{k(n)})^p|\bar f_n|^p\,\d\mm+\int_{B_n\setminus E_n}|\bar g_n-\bar f_n|^p\,\d\mm\\
&=\int_{B_n\setminus E_n}|\bar g_n-\bar f_n|^p\,\d\mm\leq 2^p\|\bar f_n\|_{C_b(\X)}^p\mm(B_n\setminus E_n)\leq\frac{1}{n},\\
\Phi_{\lip_a}(g_n)&\leq\int_{\X}\phi\circ\lip_a(\bar g_n)\,\d\mm\leq Q_n+\int_{B_n\setminus E_n}\phi\circ\big(\eta_{k(n)}\lip_a(h_n)+|h_n|\lip_a(\eta_{k(n)})\big)\,\d\mm\\
&\leq Q_n+\phi\big(\Lip(\bar f_n)+\|\bar f_n\|_{C_b(\X)}+1\big)\mm(B_n\setminus E_n)\leq Q_n+\frac{1}{n}.
\end{align*}
By letting \(n\to\infty\), we finally conclude that \(g_n\to f\) in \(L^p(\mm)\) and thus accordingly
\[
{\rm sc}^-\Phi_{\lip_a}(f)\leq\varliminf_{n\to\infty}\Phi_{\lip_a}(g_n)\leq\lim_{n\to\infty}\bigg(Q_n+\frac{1}{n}\bigg)={\rm sc}^-\Phi_\lip(f),
\]
which yields the statement.
\end{proof}
Choosing $p=1$ and \(\phi(t)\coloneqq t\) for every \(t\geq 0\) in Theorem~\ref{thm:relax_funct_lip} we obtain the following corollary.
Notice that, since Theorem~\ref{thm:relax_funct_lip} has been stated under general growth assumptions, similar results of invariance in the
choice of the pseudo-gradient in the relaxation procedure could be obtained in the Sobolev setting as well.
\begin{corollary}
Let \((\X,\sfd,\mm)\) be a metric measure space such that \(\mm\) is boundedly finite. Then
\[
|Df|^\star(\X)=\inf\left\{\limi_{n\to\infty}\int\lip(f_n)\,\d\mm\;\middle|\;(f_n)_n\subseteq\LIP_{bs}(\X),\,f_n\to f\text{ in }L^1(\X)\right\}
\]
for every \(f\in L^1(\X)\).
\end{corollary}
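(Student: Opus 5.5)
The identity will follow from Theorem~\ref{thm:relax_funct_lip} with $p=1$ and $\phi(t)\coloneqq t$, once both sides are identified with lower semicontinuous envelopes in $L^1(\X)$.

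\emph{Step 1: the left-hand side is ${\rm sc}^-\Phi_{\lip_a}$.} With these choices,
\[
\Phi_{\lip_a}(f)=\inf\Big\{\int\lip_a(\bar f)\,\d\mm\;:\;\bar f\in\LIP_{bs}(\X),\,[\bar f]_\mm=f\Big\}.
\]
For any functional $\Phi$ on a metric space such as $L^1(\X)$, one has
\[
{\rm sc}^-\Phi(f)=\inf\Big\{\limi_n\Phi(f_n)\;:\;f_n\to f\Big\}.
\]
Applying this to $\Phi_{\lip_a}$, the infimum over representatives $\bar f$ inside $\Phi_{\lip_a}$ can be absorbed into the choice of the approximating sequence. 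This gives
\[
{\rm sc}^-\Phi_{\lip_a}(f)=\inf\Big\{\limi_n\int\lip_a(f_n)\,\d\mm\;:\;(f_n)_n\subseteq\LIP_{bs}(\X),\,f_n\to f\text{ in }L^1(\X)\Big\}.
\]

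\emph{Step 2: the $\mathcal L^1$ constraint is harmless.} Definition~\ref{def:BV-star} additionally requires $f_n\in\mathcal L^1(\X)$. Since $\mm$ is boundedly finite, every element of $\LIP_{bs}(\X)$ is bounded with bounded support, hence lies in $\mathcal L^1(\X)$. So this constraint is automatic, and the right-hand side of Step~1 is exactly $|Df|^\star(\X)$.

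\emph{Step 3: the right-hand side of the corollary is ${\rm sc}^-\Phi_\lip$.} Repeating Step~1 with $\lip$ in place of $\lip_a$ identifies the right-hand side of the corollary with ${\rm sc}^-\Phi_\lip(f)$.

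\emph{Step 4: conclusion.} Theorem~\ref{thm:relax_funct_lip} gives ${\rm sc}^-\Phi_\lip={\rm sc}^-\Phi_{\lip_a}$, which together with Steps~1--3 proves the claim.

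The only point needing care is the sequential characterization of ${\rm sc}^-$ used in Step~1. The inequality ``$\le$'' is immediate from lower semicontinuity. For ``$\ge$'', use a diagonal argument: choose $g_n\to f$ with $\Phi(g_n)$ close to ${\rm sc}^-\Phi(f)$, then for each $n$ pick a representative $\bar g_n$ nearly attaining $\Phi(g_n)$. This step is routine.
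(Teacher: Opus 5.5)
Your proposal is correct and follows the paper's approach: the paper obtains the corollary directly from Theorem~\ref{thm:relax_funct_lip} with $p=1$ and $\phi(t)=t$. You additionally spell out the routine identifications of ${\rm sc}^-\Phi_{\lip_a}(f)$ with $|Df|^\star(\X)$ and of ${\rm sc}^-\Phi_{\lip}(f)$ with the right-hand side, and note that bounded finiteness makes the $\mathcal L^1$ constraint automatic. The diagonal argument you sketch is really the step that absorbs the choice of representatives, and the sequential formula for ${\rm sc}^-$ in metric spaces is standard, so nothing is missing.
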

%
%
%
%

\begin{thebibliography}{10}

\bibitem{Amb:DiMa:14}
{\sc L.~Ambrosio and S.~Di~Marino}, {\em Equivalent definitions of {$BV$} space and of total variation on metric measure spaces}, J. Funct. Anal., 266 (2014), pp.~4150--4188.

\bibitem{ADS}
{\sc L.~Ambrosio, S.~Di~Marino, and G.~Savar\'e}, {\em On the duality between {$p$}-modulus and probability measures}, J. Eur. Math. Soc. (JEMS), 17 (2015), pp.~1817--1853.

\bibitem{Amb:Fus:Pal:00}
{\sc L.~Ambrosio, N.~Fusco, and D.~Pallara}, {\em Functions of bounded variation and free discontinuity problems}, Oxford Mathematical Monographs, The Clarendon Press, Oxford University Press, New York, 2000.

\bibitem{Density}
{\sc L.~Ambrosio, N.~Gigli, and G.~Savar\'e}, {\em Density of {L}ipschitz functions and equivalence of weak gradients in metric measure spaces}, Rev. Mat. Iberoam., 29 (2013), pp.~969--996.

\bibitem{Inventiones}
\leavevmode\vrule height 2pt depth -1.6pt width 23pt, {\em Calculus and heat flow in metric measure spaces and applications to spaces with {R}icci bounds from below}, Invent. Math., 195 (2014), pp.~289--391.

\bibitem{AmbrosioIkonenPasqualetto}
{\sc L.~Ambrosio, T.~Ikonen, D.~Lu\v{c}i\'{c}, and E.~Pasqualetto}, {\em Metric {S}obolev spaces {I}: {E}quivalence of definitions}, Milan J. Math., 92 (2024), pp.~255--347.

\bibitem{Ambrosio_Tilli}
{\sc L.~Ambrosio and P.~Tilli}, {\em Topics on analysis in metric spaces}, vol.~25 of Oxford Lecture Series in Mathematics and its Applications, Oxford University Press, Oxford, 2004.

\bibitem{Cheeger}
{\sc J.~Cheeger}, {\em Differentiability of {L}ipschitz functions on metric measure spaces}, Geom. Funct. Anal., 9 (1999), pp.~428--517.

\bibitem{Cob:Mic:Nic:19}
{\sc {\c{S}}.~Cobza{\c{s}}, R.~Miculescu, and A.~Nicolae}, {\em Lipschitz {F}unctions}, Lecture Notes in Mathematics, Springer International Publishing, 2019.

\bibitem{DiMaPhD:14}
{\sc S.~Di~Marino}, {\em Recent advances on {BV} and {S}obolev spaces in metric measure spaces}, PhD thesis, Scuola Normale Superiore (Pisa), 2014.
\newblock CVGMT preprint.

\bibitem{DiMa:Gig:Pra:21}
{\sc S.~Di~Marino, N.~Gigli, and A.~Pratelli}, {\em Global {L}ipschitz extension preserving local constants}, Rend. Lincei Mat. Appl., 31 (2020), pp.~757--765.

\bibitem{DuCa:ErBiq:Ko:Shan:19}
{\sc E.~Durand-Cartagena, S.~Eriksson-Bique, R.~Korte, and N.~Shanmugalingam}, {\em Equivalence of two {B}{V} classes of functions in metric spaces, and existence of a {S}emmes family of curves under a $1$-{P}oincar\'{e} inequality}, Advances in Calculus of Variations, 14 (2021), pp.~231--245.

\bibitem{Gut:20}
{\sc V.~Gutev}, {\em Lipschitz extensions and approximations}, Journal of Mathematical Analysis and Applications, 491 (2020), p.~124242.

\bibitem{Hajlasz}
{\sc P.~Haj{\l}asz}, {\em Sobolev spaces on metric-measure spaces}, in Heat kernels and analysis on manifolds, graphs, and metric spaces ({P}aris, 2002), vol.~338 of Contemp. Math., Amer. Math. Soc., Providence, RI, 2003, pp.~173--218.

\bibitem{Heinonen}
{\sc J.~Heinonen}, {\em Nonsmooth calculus}, Bull. Amer. Math. Soc. (N.S.), 44 (2007), pp.~163--232.

\bibitem{HKST}
{\sc J.~Heinonen, P.~Koskela, N.~Shanmugalingam, and J.~T. Tyson}, {\em Sobolev spaces on metric measure spaces. An approach based on upper gradients}, vol.~27 of New Mathematical Monographs, Cambridge University Press, Cambridge, 2015.

\bibitem{Plans_modulus_duality}
{\sc V.~Honzlov\'a{}~Exnerov\'a, O.~r. F.~K. Kalenda, J.~Mal\'y, and O.~Martio}, {\em Plans on measures and {$AM$}-modulus}, J. Funct. Anal., 281 (2021), pp.~Paper No. 109205, 35.

\bibitem{HMMartio-1}
{\sc V.~Honzlov\'a{}~Exnerov\'a, J.~Mal\'y, and O.~Martio}, {\em Functions of bounded variation and the {$AM$}-modulus in {$\mathbb R^n$}}, Nonlinear Anal., 177 (2018), pp.~553--571.

\bibitem{Koskela_MacManus}
{\sc P.~Koskela and P.~MacManus}, {\em Quasiconformal mappings and {S}obolev spaces}, Studia Math., 131 (1998), pp.~1--17.

\bibitem{Martio}
{\sc O.~Martio}, {\em Functions of bounded variation and curves in metric measure spaces}, Adv. Calc. Var., 9 (2016), pp.~305--322.

\bibitem{Martio-ConfGeomDyn}
{\sc O.~Martio}, {\em The space of functions of bounded variation on curves in metric measure spaces}, Conform. Geom. Dyn., 20 (2016), pp.~81--96.

\bibitem{McShane}
{\sc E.~J. McShane}, {\em Extension of range of functions}, Bull. Amer. Math. Soc., 40 (1934), pp.~837--842.

\bibitem{Miranda}
{\sc M.~Miranda, Jr.}, {\em Functions of bounded variation on ``good'' metric spaces}, J. Math. Pures Appl. (9), 82 (2003), pp.~975--1004.

\bibitem{Nob:Pas:Sch:22}
{\sc F.~Nobili, E.~Pasqualetto, and T.~Schultz}, {\em On master test plans for the space of {BV} functions}, Adv. Calc. Var., 16 (2023), pp.~1061--1092.

\bibitem{Federichi_Nobili}
{\sc F.~Nobili, F.~Renzi, and F.~Vitillaro}, {\em Mosco-convergence of {C}heeger energies on varying spaces satisfying curvature dimension conditions}, Arxiv 2511.13320,  (2025).

\bibitem{Pas:Sod:25}
{\sc E.~Pasqualetto and G.~E. Sodini}, {\em Functions of bounded variation and {L}ipschitz algebras in metric measure spaces}, ESAIM:COCV, 32 (2026).

\bibitem{RoyFitz}
{\sc H.~L. Royden and P.~M. Fitzpatrick}, {\em Real Analysis}, Pearson, 2010.

\bibitem{Sav:22}
{\sc G.~Savar\'{e}}, {\em Sobolev spaces in extended metric-measure spaces}, in New trends on analysis and geometry in metric spaces, vol.~2296 of Lecture Notes in Math., Springer, Cham, 2022, pp.~117--276.

\bibitem{Srivastava}
{\sc S.~M. Srivastava}, {\em A course on {B}orel sets}, vol.~180 of Graduate Texts in Mathematics, Springer-Verlag, New York, 1998.

\bibitem{Sto:48}
{\sc A.~H. Stone}, {\em Paracompactness and product spaces}, Bulletin of the American Mathematical Society, 54 (1948), pp.~977--982.

\end{thebibliography}
%
%

%
%
%
%
\end{document}